
\documentclass[a4paper,12pt,customfont,custombib]{Classes/PhDThesisPSnPDF} 

\input{preamble}

\title{Extremal solutions to some art~gallery and terminal-pairability problems}


\author{Tamás Róbert Mezei}

\dept{Department of Mathematics and its Applications}

\university{Central European University}
\crest{\includegraphics[width=0.65\textwidth]{}}


\supervisor{Prof.\ Ervin Győri}








\degreetitle{Doctor of Philosophy}

\college{}

\degreedate{Budapest, September 2017}

\subject{Mathematics} \keywords{{PhD Thesis} {Mathematics} {Combinatorics} {Central European University, Budapest} {Art gallery theorems} {Orthogonal polygons} {Edge-Disjoint paths} {Terminal-pairability} {Algorithms}}


\ifdefineAbstract
 \pagestyle{empty}
 \includeonly{declaration, abstract}
\fi


\ifdefineChapter
 \includeonly{artcomplexity}
\fi

\begin{document}

\frontmatter

\maketitle

\begin{dedication}

	\itshape I would like to dedicate this thesis to my family, without whose support this would never have been written.

\end{dedication}


\begin{declaration}

	I hereby declare that except where specific reference is made to the work of
	others, the contents of this dissertation are original and have not been
	submitted in whole or in part for consideration for any other degree or
	qualification in this, or any other university.
	This dissertation is my own work and contains nothing which is the outcome of work done in collaboration with others, except as specified in the text.

\end{declaration}


\begin{acknowledgements}

	I would like to thank Ervin Győri and Gábor Mészáros for suggesting the topics discussed in this thesis. I would also like to thank them for a fruitful collaboration. I am ever so grateful to Ervin for all he has taught me (and the great stories he told me!) during the many hours of supervision over the years.

	\medskip

	I am thankful to the examiners for their helpful comments.

\end{acknowledgements}

\begin{abstract}
	The thesis consists of two parts. In both parts, the problems studied are of significant interest, but are either \textsc{NP}-hard or unknown to be polynomially decidable. Realistically, this forces us to relax the objective of optimality or restrict the problem. As projected by the title, the chosen tool of this thesis is an \emph{extremal type approach}. The lesson drawn by the theorems proved in the thesis is that surprisingly small compromise is necessary on the efficacy of the solutions to make the approach work. The problems studied have several connections to other subjects (e.g.,\ geometric algorithms, graph immersions, multi-commodity flow problem) and practical applications (e.g., VLSI design, image processing, routing traffic in networks).
	Therefore, even slightly improving constants in existing results is beneficial.

	\medskip

	The first part of the thesis is concerned with orthogonal art galleries. A sharp extremal bound is proved on partitioning orthogonal polygons into at most 8-vertex polygons using established techniques in the field of art gallery problems. This fills in the gap between already known results for partitioning into at most 6- and 10-vertex orthogonal polygons.

	\medskip

	Next, these techniques are further developed to prove a new type of extremal art gallery result. The novelty provided by this approach is that it establishes a connection between mobile and stationary guards. This theorem has strong computational consequences, in fact, it provides the basis for an $\frac83$-approximation algorithm for guarding orthogonal polygons with rectangular vision.

	\medskip

	In the second part, the graph theoretical concept of terminal-pairability is studied in complete and complete grid graphs. Once again, the extremal approach is conductive to discovering efficient methods to solve the problem.

	\medskip

	In the case of a complete base graph, the new demonstrated lower bound on the maximum degree of realizable demand graphs is 4 times higher than previous best results. The techniques developed are then used to solve the classical extremal edge number problem for the terminal-pairability problem in complete base graphs.

	\medskip

	The complete grid base graph lies on the other end of the spectrum in terms density amongst path-pairable graphs. It is shown that complete grid graphs are relatively efficient in routing edge-disjoint paths. In fact, as a corollary, the minimum maximum degree a path-pairable graph may have is lowered to $O(\log n)$ (prior studies show a lower bound of $\Omega(\log n/\log\log n)$).
\end{abstract}


\listoftodos

\tableofcontents

\listoffigures

\listoftables




\mainmatter

\chapter*{Preface}
\addcontentsline{toc}{chapter}{Preface}

By merely reading the title of this work, the reader might wonder (and justifiably so) how the two main problems discussed in this thesis relate to each other. Well, I believe one of the main connection between them is my and Ervin's taste in mathematics. Let me explain.

\bigskip

Both the art gallery and the terminal-pairability problems encumber a vast family of natural questions. This is a direct consequence of the intuitiveness of these problems: they are very abstract models of challenges that appear in the real world, therefore they lend themselves to innumerable variations.
Unfortunately, the generality of these problems --- their computational complexity is either \textsc{NP}-hard or unknown to be polynomial --- prevents us from finding an optimal solution (a lazy excuse, I know).

\bigskip

However, this is no reason to give up. The logical next step (at least to us) is to relax the goal of seeking an optimal solution to finding a bound, that which a solution achieving is guaranteed to exist. Hence, the purpose of this thesis is to find such bounds that are either sharp (the orthogonal art gallery theorems in Part~\ref{part:artgalleries}), or up to a small constant sharp (the terminal-pairability theorems in Part~\ref{part:terminals}). This \emph{extremal} approach is a main theme of this thesis.

\bigskip

A pleasant phenomenon accompanying this approach is that we are also able to find efficient algorithms that construct the above described solutions. Moreover, our theorems guarantee that these solutions are constant approximations of the optimal solution, and thus are even relevant \emph{in practice}.

\bigskip

I am hoping this preface provides a satisfying explanation of the apparent dichotomy present in the title. Now, I invite you, dear reader, to join me in my 3-year journey into discrete geometry, graph theory, algorithms, complexity, and combinatorics in general. 

%
%

\part{Orthogonal art galleries}\label{part:artgalleries}
\chapter{Introduction to orthogonal art galleries}

\section{Origins and summary of the new results}
The original art gallery problem was stated by Victor Klee in 1973~\cite{MR0472273}. He posed the following question: given a simple polygon of $n$ vertices, how many stationary guards are required to cover the interior of the polygon? To clarify, a point in the gallery is visible to the guard if the line segment spanned by the point and the guard lies in the closed gallery (line of sight vision).

\medskip

The problem was solved by Vašek Chvátal in 1975:
\begin{theorem}[\citet{Chvatal75}]\label{thm:chávtal}
	$\lfloor\frac{n}{3}\rfloor$ guards are sufficient and sometimes necessary to cover a domain bounded by a simple closed polygon.
\end{theorem}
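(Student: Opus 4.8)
The plan is to treat the two halves of the statement separately: first exhibit a family of polygons that \emph{force} $\lfloor n/3\rfloor$ guards (the ``sometimes necessary'' direction), and then prove that this many always \emph{suffice}. For the lower bound I would use the classical ``comb'' construction: a polygon built from $k$ thin triangular teeth (prongs) mounted on a common base, realizable with exactly $n=3k$ vertices. The geometry is arranged so that the apex region of each tooth is visible only from points lying inside that tooth, whence no single guard can cover two distinct teeth. This forces at least $k=n/3$ guards. For $n$ not divisible by $3$ I would pad the construction with one or two extra vertices that do not create new visibility, yielding the bound $\lfloor n/3\rfloor$ in general.

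The substantive direction is sufficiency, for which I would follow the triangulation-and-colouring strategy. The first ingredient is that every simple polygon on $n$ vertices admits a triangulation by non-crossing diagonals into $n-2$ triangles; I would establish this by induction, showing that every polygon with $n\ge 4$ possesses a diagonal (equivalently, an \emph{ear}), which splits it into two smaller polygons to which the inductive hypothesis applies. The second, and crucial, ingredient is that the vertices of any such triangulation can be properly $3$-coloured so that \emph{every} triangle receives all three colours.

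I would prove this $3$-colourability lemma by induction on $n$, again via ear removal. The weak dual of the triangulation is a tree, so there always exists a triangle bounded by two edges of the polygon (an ear); removing its apex vertex leaves a triangulated polygon on $n-1$ vertices. By induction this smaller polygon admits a valid $3$-colouring; the two endpoints of the diagonal that closed off the ear carry two distinct colours, and the removed apex is then forced to take the unique remaining third colour, extending the colouring. Combining the ingredients finishes the argument: in a valid $3$-colouring the pigeonhole principle gives a colour class of size at most $\lfloor n/3\rfloor$, and placing a guard at each vertex of that class covers every triangle (each triangle contains a vertex of that colour, and a guard standing at a triangle's corner sees the whole triangle), hence the entire polygon, since the triangles tile it.

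The main obstacle I anticipate is the $3$-colourability lemma together with its structural underpinning --- verifying that an ear always exists and that the dual of the triangulation is a tree, so that the ear-removal induction goes through cleanly and the colour extension is genuinely forced at each step.
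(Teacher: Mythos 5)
Your proposal is correct and follows essentially the same route as the paper, which reproduces Fisk's argument: triangulate by induction on the existence of a diagonal, observe that the dual of the triangulation is a tree so an ear (a leaf triangle with a degree-2 apex) always exists, $3$-colour by ear-removal induction, and take the smallest colour class, with the comb supplying the matching lower bound. The only cosmetic slip is your parenthetical ``(equivalently, an ear)'' --- diagonal existence and ear existence are not literally equivalent at that stage; ear existence is what you later extract from the leaf of the dual tree, exactly as the paper does, so the induction still goes through as written.
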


\medskip

It is easy to see that at least $\lfloor\frac{n}{3}\rfloor$ guards are required, even if only the vertices of the polygon must be covered:
\begin{figure}[H]
	\begin{center}
		\begin{tikzpicture}[scale=0.4]
			\def \n {4}
			\foreach \x in {1,...,\n}
			\draw[fill=none, cap=round] (3*\x+0.5,6)--(3*\x+1,1)--(3*\x+3,1)--(3*\x+3.5,6);

			\draw[fill=none, cap=round] (3*\n+3.5,6)--(3*\n+4,-1)--(3,-1)--(3.5,6);

			%
		\end{tikzpicture}
	\end{center}
	\caption{The tips of the tooth are only visible from pairwise disjoint regions; therefore, a guard has to be placed in each tooth}
\end{figure}
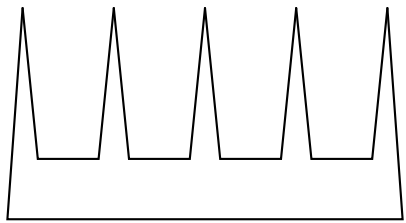

\medskip

The original proof by Chvátal used an inductive partitioning argument with 3 main cases and a couple of small subcases. Not much later, in 1978, Steve Fisk found such a beautiful proof of this theorem, that it is said to be ``from the book'' (see~\cite{MR3288091}).
\begin{proof}[Proof of \Fref{thm:chávtal}~\cite{Fisk78}]
	First, we prove the well-known fact that simple closed polygons can be triangulated, i.e., we can select $n-3$ pairs of vertices of the polygon, such that the line segments spanned by the pairs are in the polygon and these segments may only intersect in their endpoints. The proof is by induction. For $n=3$, the statement is trivial. By sweeping the plane with a line whose slope is different from the slope of every side of the polygon, we can find a convex vertex $v_2$. Let $v_1$ and $v_3$ be its two neighbors.
	\begin{itemize}
		\item If the line segment $\overline{v_1 v_3}$ is intersects the polygon in two points, we found a diagonal. Proceed by induction on the polygon obtained by deleting $v_2$ and adding $\overline{v_1 v_3}$ as a new side.
		\item Otherwise, $L_0=\{v_2\}$ and $L_1=\overline{v_1 v_3}$. For $t\in (0,1)$, let \[ L_t=\{ (1-t)\cdot v_2+t\cdot x\ :\ x\in L_1\},\]
		and take the minimum $t$ for which $L_t$ intersects the polygon in more than 2 points. One of these points must be a vertex $y$ of the polygon, which is not contained in $\overline{v_1 v_2}\cup \overline{v_2 v_3}$. Clearly, $\overline{v_2 y}$ is a diagonal of the polygon, which cuts it into two pieces, say, of $n_1$ and $n_2$ vertices. As $n_1+n_2=n+2$, and $n_1,n_2\ge 3$, we may proceed by induction to obtain $1+(n_1-3)+(n_2-3)=n-3$ diagonals.
	\end{itemize}

	\medskip

	To any triangulation of the (interior) of the polygon there is a corresponding planar graph $G$, whose outer face has exactly $n$ points, but every other face of $G$ is a triangle. Thus, the dual of this graph without the node corresponding to the outer face is a 3-regular tree. If $G$ has 3 vertices, it is trivially 3-colorable. If $G$ has more than 3 vertices, remove a degree 2 vertex (and its edges) of a face which is a leaf in the dual of $G$. By induction, the obtained graph is 3-colorable, and we can easily extend the 3-coloring to the removed degree 2-vertex.

	\medskip

	The smallest color class $A$ has size at most $\lfloor\frac{n}{3}\rfloor$. Clearly, the vertices in $A$ cover the interior of the polygon, as any triangle face has a vertex in $A$.
\end{proof}

Observe, that the proof produces an interesting partition of the domain bounded by the polygon: triangles sharing the same vertex from $A$ form a fan (imagine a handheld one without gaps), which can trivially be covered by one guard.

\medskip

With the original problem of \citeauthor{Chvatal75} solved, interest turned to different variations of the art gallery problem. One such version is when instead of a general polygon, the gallery is assumed to be bounded by an orthogonal polygon.
In 1980, \citeauthor{MR699771} proved that
\begin{theorem}[\citet{MR699771}]\label{thm:K3}
	$\lfloor\frac{n}{4}\rfloor$ guards are sufficient and sometimes necessary to cover a domain bounded by an orthogonal polygon (even on a Riemann surface whose singularities lie outside the polygon).
\end{theorem}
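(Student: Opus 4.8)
The plan is to emulate Fisk's proof of \Fref{thm:chávtal}, replacing its two ingredients by their orthogonal analogues. In that argument a simple polygon is triangulated, the triangulation graph is $3$-coloured so that every triangle is rainbow, and the smallest colour class (of size $\le\lfloor n/3\rfloor$) guards the polygon because one vertex of a triangle sees the whole triangle. Here I would instead (i) partition the orthogonal polygon $P$ into \emph{convex quadrilaterals} whose corners are all vertices of $P$ --- a \emph{convex quadrilateralization} --- then (ii) $4$-colour the resulting graph so that each quadrilateral receives all four colours, and finally (iii) place guards on the smallest colour class. Being convex, a quadrilateral is entirely visible from each of its four corners, so the chosen class covers $P$; an orthogonal $n$-gon decomposes into $n/2-1$ quadrilaterals whose corners are exactly the $n$ vertices of $P$, and these $n$ vertices split into four colour classes, the smallest of which has at most $\lfloor n/4\rfloor$ vertices.

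The crux --- and the step I expect to be hardest --- is (i), the existence of a convex quadrilateralization. Unlike triangulation this is genuinely delicate: an orthogonal polygon need not possess a single axis-parallel chord joining two of its vertices, so one cannot simply cut along horizontal or vertical diagonals and stay inside the class of orthogonal polygons. I would therefore prove existence by induction while allowing \emph{slanted} diagonals, the task being to exhibit one diagonal $\overline{uv}$ between two vertices that lies in the interior of $P$ and splits $P$ into two pieces each with an even number of vertices (the parity is forced, since a convex-quad partition of an $m$-gon needs $m$ even). Locating such an ``even'' diagonal near a reflex vertex, and verifying that both resulting pieces again admit convex quadrilateralizations, is the technical heart of the argument.

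Granting (i), step (ii) becomes easy. A convex quadrilateralization of the simply connected region $P$ into $Q=n/2-1$ quadrilaterals uses $D=(n-4)/2$ interior diagonals, each shared by exactly two quadrilaterals; hence the dual graph, with quadrilaterals as nodes and shared diagonals as edges, has $Q$ nodes and $Q-1$ edges and, being connected, is a \emph{tree}. I would $4$-colour by peeling this tree: a leaf quadrilateral shares a single diagonal $\overline{ab}$ with the rest, while its other two corners $c,d$ belong to no other quadrilateral. Colouring the smaller region by induction and then assigning $c,d$ the two colours missing from $\{a,b\}$ makes every quadrilateral rainbow, which is exactly what the guarding argument in (iii) needs.

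Finally, for the ``sometimes necessary'' direction I would use a comb with $\lfloor n/4\rfloor$ prongs, each prong an orthogonal spike whose tip is visible only from a private region, forcing one guard per prong. The extension to a Riemann surface whose singularities lie outside the polygon requires no new idea: quadrilateralization and the tree-based colouring are purely local and combinatorial, so the same partition and colour classes survive provided the metric singularities never meet $P$.
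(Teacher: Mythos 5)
Your steps (ii) and (iii) and your necessity example are exactly the argument the paper gives for \Fref{thm:K3}: quadrilateralize, observe the dual (without the outer face) is a tree, $4$-colour by peeling leaf quadrilaterals (your verification that the two off-diagonal corners of a leaf quad lie in no other quadrilateral is the point the paper compresses into ``the degree 2 vertices \dots can always be properly coloured''), take the smallest colour class of size $\le\lfloor n/4\rfloor$, and use the orthogonal comb for the lower bound. All of that is sound and matches the paper.

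The genuine gap is in step (i), which you correctly identify as the crux but whose proposed proof would fail. Your induction is not closed under its own cutting operation: once you cut along a slanted diagonal $\overline{uv}$, the two pieces are no longer orthogonal polygons, so the induction hypothesis --- stated for orthogonal polygons --- does not apply to them, and you must first formulate the lemma for a larger class of polygons and prove it there. Worse, your proposed criterion for a usable diagonal (both pieces have an even number of vertices) is necessary but nowhere near sufficient: a non-convex quadrilateral (a ``dart'') has an even number of vertices yet admits no convex quadrilateralization at all, since its only quadrilateralization is itself. So an ``even'' diagonal can produce pieces on which the induction dies, and convex quadrilateralization genuinely fails outside a carefully delimited class. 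This is precisely why the paper does not prove \Fref{lemma:K3} but cites it: the original proof of \citet{MR699771} runs some ten pages and is formulated on Riemann surfaces, and even \citet{Lubiw1985}'s shorter inductive proof requires a sophisticated strengthening of the statement to make the induction go through. Relatedly, your closing remark inverts the logic of the Riemann-surface clause: that generality is a feature of the quadrilateralization \emph{lemma}, whereas the covering \emph{theorem} needs simple connectedness precisely so that the dual is a tree and your peeling colouring exists --- with holes, the dual has cycles and the $4$-colouring argument breaks.
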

The sharp example is the orthogonal comb:
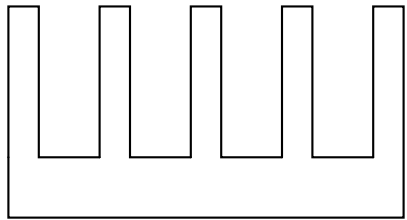
\begin{figure}[H]
	\begin{center}
		\begin{tikzpicture}[scale=0.4]
			\def \n {4}
			\foreach \x in {1,...,\n}
			\draw[cap=round] (3*\x,1)--(3*\x,6)--(3*\x+1,6)--(3*\x+1,1)--(3*\x+3,1);

			\draw[cap=round] (3*\n+3,1)--(3*\n+3,6)--(3*\n+4,6)--(3*\n+4,-1)--(3,-1)--(3,1);

		\end{tikzpicture}
	\end{center}
	\caption{An orthogonal comb; a guard has to be placed for each tooth}
\end{figure}

In fact, the trio proved the following deep geometric lemma.
\begin{lemma}[\cite{MR699771}]\label{lemma:K3}
	Any closed region bounded by a finite number of straight lines, each parallel to one of two orthogonal axes,  has a convex quadrilateralization (even on a Riemann surface whose singularities lie outside the closed region).
\end{lemma}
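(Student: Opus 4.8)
The plan is to argue by induction on the number of reflex ($270^\circ$) vertices, which for a region bounded by $n$ axis-parallel sides equals $r=(n-4)/2$. I would in fact aim for something sharper than the bare statement: a quadrilateralization that uses \emph{only the $n$ original vertices}, adds no Steiner points, and consists of exactly $(n-2)/2$ convex quadrilaterals joined along $r$ interior diagonals. A quick Euler count shows these numbers are forced, and (since the region is connected) that the adjacency graph of the quadrilaterals is a tree. Both features are what make the lemma useful for \Fref{thm:K3}: keeping the vertex set at size $n$ is exactly what a later $4$-colouring needs to recover the bound $\lfloor n/4\rfloor$, and the tree structure is what lets that colouring be carried out. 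The base case $r=0$ is a rectangle, already a convex quadrilateral. For the Riemann-surface strengthening I would note that every diagonal drawn below lies in a simply connected neighbourhood of the boundary and hence avoids the singularities (which lie outside the region by hypothesis); the argument is purely local and transfers without change.

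For the inductive step I would resolve one reflex vertex $v$ by a single diagonal from $v$ to another \emph{vertex} $w$, chosen so that $vw$ lies in the interior, splits the $270^\circ$ angle at $v$ into two angles below $180^\circ$, and separates the region into two smaller quadrilateralizable pieces; pasting the two recursive partitions along $vw$ finishes the step. Because $w$ must be an existing vertex, $vw$ is frequently \emph{not} axis-parallel, and this is exactly why the lemma asks for convex quadrilaterals rather than rectangles: already for the $L$-shaped hexagon no Steiner-point-free partition into rectangles exists, whereas the single skew diagonal from the reflex vertex to the opposite convex vertex cuts it into two convex quadrilaterals meeting edge-to-edge. When two reflex vertices face each other the resolving diagonal may be taken axis-parallel (as for the four inner corners of a plus-shaped region), but in general skew cuts are unavoidable.

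The step I expect to be the main obstacle is proving that such a resolving diagonal always exists and that the induction can actually be run, where two difficulties compound. First, this is the quadrilateral analogue of finding an ``ear'' in a triangulation, but genuinely harder: for a \emph{general} polygon a Steiner-free convex quadrilateralization need not exist (there are parity obstructions), so the argument must exploit the $90^\circ$ structure in an essential way, through a case analysis on the local configuration of edges at and near $v$ and on which vertex $v$ can ``see'' along an admissible direction. Second, as soon as one skew diagonal is drawn the two pieces are no longer orthogonal polygons, so the induction cannot be phrased for orthogonal polygons alone; I would have to isolate a broader class of polygons --- closed under the cut yet still carrying enough right-angle/parity structure to guarantee a resolving diagonal --- and run the induction there. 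Setting up this class correctly, and verifying in every local case that a diagonal exists which keeps both pieces inside it while creating only convex angles, is the technical heart of the lemma.

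Finally I would close with the bookkeeping. Each diagonal strictly lowers the reflex count; the edge-to-edge condition is automatic because $vw$ is a single shared segment and no vertex is ever placed in the interior of an edge, so no $T$-junction can arise; and the two recursive tree-structured partitions glue across $vw$ into one tree-structured partition of the whole region. Terminating at rectangles then yields the convex quadrilateralization, on the plane and, by the locality remark, equally on a Riemann surface whose singularities avoid the region.
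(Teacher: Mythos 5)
The paper never proves this lemma: it is quoted from Kahn, Klawe and Kleitman \cite{MR699771}, whose original geometric proof the text describes as roughly ten pages of deep geometric insight, and the paper points instead to Lubiw's much shorter inductive proof \cite{Lubiw1985}. Measured against those arguments, your proposal stops exactly where the lemma begins. You yourself flag the existence of a resolving diagonal --- together with the design of a broader polygon class, closed under the skew cuts, in which the induction can actually run --- as the ``technical heart'', and you do not supply either. That is not a verification one may defer: it is the entire content of the result, and it is precisely what costs \cite{MR699771} their ten pages and what Lubiw's proof handles by a delicate choice of inductive class and cutting moves. Your instinct to enlarge the class beyond orthogonal polygons is the right one (it is, in spirit, how Lubiw proceeds), but announcing that such a class is needed is not the same as exhibiting one and checking, in every local configuration at a reflex vertex, that an admissible diagonal exists and that both pieces stay in the class. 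As written, the proposal is a roadmap whose feasibility is exactly what the lemma asserts.

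Separately, the scaffolding you build around the induction is valid only in the simply connected case, whereas the statement covers \emph{any} closed region bounded by axis-parallel segments --- in particular regions with holes, as the text surrounding the lemma stresses. With $h$ holes the reflex count is $r=(n-4+4h)/2$, not $(n-4)/2$; the dual graph of a quadrilateralization then contains cycles, so your tree claim fails (indeed the paper invokes simple connectedness \emph{on top of} the lemma precisely to get the tree needed for the $4$-colouring in \Fref{thm:K3}); and a diagonal incident to a hole does not ``separate the region into two smaller pieces'' --- it lowers the connectivity instead, so the inductive step must be organized around a different progress measure than splitting into two subregions. The Riemann-surface clause is likewise not disposed of by a locality remark: the region may wind around a singularity, and one must check that the chosen diagonals are well defined and non-crossing on the surface, which is part of why \cite{MR699771} state the theorem in that generality. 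Your Euler-count bookkeeping is correct in the simple, hole-free setting, but these three gaps mean the induction as designed does not prove the lemma that is stated.
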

To prove \Fref{thm:K3} using this lemma, they follow Fisk's argument. We need to assume that the gallery is bounded by an orthogonal polygon (i.e., holes are prohibited), so that that the dual graph of its quadrilateralization (without the outer face) is a tree. Add the two diagonals to each quadrilateral face. Notice, that this graph is 4-colorable, as the degree 2 (3 with the diagonals) vertices of a quadrilateral face which is a leaf in the dual can always be properly colored. The smallest color class covers the gallery.

\medskip

As we noted earlier, the original proof of \Fref{lemma:K3} uses deep geometrical insight, and its proof is about 10 pages long. However, \citet{Lubiw1985} gives a sophisticated and much shorter proof by induction. Moreover, her proof is more general, as it includes certain polygons with holes, and it even leads to an efficient algorithm.

\medskip

Even though \Fref{lemma:K3} applies to even orthogonal polygons with holes, we need simply connectedness to construct the 4-coloring whose existence proves \Fref{thm:K3}.

\medskip

In the first half of the 1980's, Győri and O'Rourke independently gave a simple and short proof of \Fref{thm:K3}.

\begin{theorem}[{\citet{MR844048}~and~\citet[{Thm.~2.5}]{ORourke}}]\label{thm:static}
	Every orthogonal polygon of $n$ vertices can be
	partitioned into $\lfloor  \frac{n}{4} \rfloor$  orthogonal polygons of at most 6 vertices.
\end{theorem}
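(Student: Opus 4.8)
The plan is to recast the statement in terms of reflex vertices and argue by induction. A simple orthogonal polygon on $n$ vertices has an even number of vertices, and if $r$ denotes the number of reflex (that is, $270^\circ$) vertices and $c$ the number of convex ones, then comparing exterior turns gives $c = r+4$, so $n = 2r+4$ and hence $\lfloor n/4\rfloor = \lfloor r/2\rfloor + 1$. Moreover an orthogonal polygon with at most $6$ vertices is exactly one with $r\le 1$, i.e.\ a rectangle or an L-shape. Thus the theorem is equivalent to the following: every orthogonal polygon with $r$ reflex vertices can be partitioned into at most $\lfloor r/2\rfloor + 1$ pieces, each having at most one reflex vertex. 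I would prove this by induction on $r$, the cases $r=0$ (a rectangle) and $r=1$ (an L-shape) being immediate, since the polygon itself is already one admissible piece.

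For the inductive step the only tool is to cut along an axis-parallel chord emanating from a reflex vertex; such a chord always exists, because extending either of the two edges incident to a reflex vertex enters the interior and must eventually meet the boundary. I would first record how a single cut changes the reflex count. If the chord is the common extension of edges at \emph{both} of its endpoints (a reflex--reflex chord), it creates no new vertices and makes both endpoints convex, so the pieces $P_1,P_2$ satisfy $r_1 + r_2 = r - 2$. If the chord runs from a reflex vertex to the relative interior of an edge, it resolves one reflex vertex and contributes two convex vertices at the landing point, so $r_1 + r_2 = r - 1$. Feeding these into the induction and using the superadditivity $\lfloor r_1/2\rfloor + \lfloor r_2/2\rfloor \le \lfloor (r_1+r_2)/2\rfloor$, a reflex--reflex chord always stays within budget, as it yields $(\lfloor r_1/2\rfloor+1)+(\lfloor r_2/2\rfloor+1) \le \lfloor (r-2)/2\rfloor + 2 = \lfloor r/2\rfloor + 1$ pieces. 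The reflex--edge chord is tighter, but it also stays within budget as soon as one of the two pieces is an L-shape: peeling off a single L-shape ($r_1 = 1$) with one chord leaves $r_2 = r-2$ and produces $1 + (\lfloor (r-2)/2\rfloor + 1) = \lfloor r/2\rfloor + 1$ pieces, exactly meeting the bound.

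The proof therefore reduces to a geometric existence lemma: whenever $r \ge 2$, the polygon admits either a reflex--reflex chord, or a single chord that peels off a piece with exactly one reflex vertex. I would attack this by an extremal choice of reflex vertex — say a topmost one, with ties broken by $x$-coordinate — and analyze the two chords obtained by extending its incident edges. If one of them lands on another reflex vertex in edge-aligned position we are in the reflex--reflex case; otherwise I would use the extremality to argue that the region cut off on one side contains no reflex vertex other than the one being resolved, so the peeled piece is a rectangle or an L-shape, and then choose the cut direction so that it is an L-shape.

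I expect the main obstacle to be precisely this lemma, and within it the staircase-type configurations where no reflex--reflex chord exists. There the accounting is unforgiving: peeling off a rectangle rather than an L-shape with a single chord loses one unit of budget when $r$ is odd, so one genuinely must guarantee an L-shaped piece, which is what makes the careful extremal choice of the reflex vertex and of the cut direction essential. Once the lemma is established the induction closes at once, and placing one guard in each of the resulting $\lfloor n/4\rfloor$ L-shaped (hence star-shaped) pieces re-derives \Fref{thm:K3}.
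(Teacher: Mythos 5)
Your framework is sound and is exactly the classical cut-and-induct scheme that Győri's and O'Rourke's proofs use (and that this thesis generalizes for \Fref{thm:mobile}, where \Fref{lemma:tech} plays the role of your floor bookkeeping): the identities $n=2r+4$, $r_1+r_2=r-2$ for a reflex--reflex chord, $r_1+r_2=r-1$ for a chord landing in an edge interior, and the resulting budget arithmetic are all correct. But the proposal has a genuine gap, and it sits exactly where you predicted: your existence lemma \emph{is} the theorem, and the extremal sketch you offer for it does not work. By your own extremality, no reflex vertex lies strictly above a topmost reflex vertex, so the horizontal chord from it always peels off a \emph{rectangle}, never an L-shape; and nothing constrains the reflex counts of the two sides of the vertical chord from that same vertex. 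Concretely, take a wide rectangle with several notches of pairwise different depths cut into its top edge (distinct depths destroy all reflex--reflex chords, since the outward horizontal chords from notch-floor corners land on wall interiors). The topmost reflex vertex is a corner of the shallowest notch: its horizontal chord peels a rectangle, while the 6-vertex pieces one can peel sit at the \emph{leftmost} or \emph{rightmost} notch, i.e., at vertices extremal in the other axis direction. A correct selection rule for the cutting vertex, with its attendant case analysis, is precisely the content of the cited proofs (note the thesis does not reprove \Fref{thm:static}; it cites it and remarks only that straight cuts suffice there, unlike for the 8-vertex theorem).

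There is also a simplification you missed which would make the remaining lemma substantially weaker and is, in effect, how the classical arguments close. Redo your accounting for a 1-cut: since $r_1+r_2=r-1$, if $r$ is even then exactly one of $r_1,r_2$ is even and $\lfloor r_1/2\rfloor+\lfloor r_2/2\rfloor+2=\lfloor r/2\rfloor+1$, so \emph{every} single chord is within budget --- no L-shape is needed at all. If $r$ is odd, a 1-cut is within budget iff both sides are odd (an ``odd cut'' in O'Rourke's terminology), and your L-peel $r_1=1$ is merely its most special case; moreover, for odd $r$ the two sides automatically have equal parity, so you only need one side odd, which gives far more room (e.g., the two vertical chords from the two floor corners of a notch cut off $a$ and $a+1$ reflex vertices respectively, one of which is odd). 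So the irreducible hard case is: $r$ odd, no reflex--reflex chord, find a chord with an odd number of reflex vertices on one side. In the language of \Fref{sec:treestructure}: with no 2-cuts the horizontal $R$-tree has $r+1$ nodes and a tree edge leaves $k-1$ reflex vertices on a $k$-node side, so an odd horizontal cut is an edge splitting the tree into two even halves --- which can fail (a star), forcing an interplay with the vertical direction. Proving that the two obstructions cannot coexist is the missing core; without it your induction does not close.
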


\Fref{thm:static} is in some aspects a deeper result than that of \citeauthor{MR699771}, as any simple orthogonal polygon of 6 vertices can be covered by a stationary guard.

\medskip

Each proof so far shines light on an interesting phenomenon, which we will refer to as the ``metatheorem'':
\begin{metatheorem}\label{metatheorem}
	Each (orthogonal) art gallery theorem has an underlying
	partition theorem (into simple parts).
\end{metatheorem}

Although both \Fref{thm:K3} and \Fref{thm:static} only apply to simply connected art galleries, Hoffman showed that the same bound holds for any closed region bounded by axis parallel line segments.

\begin{theorem}[\citet{Hoffmann1990}]\label{thm:Hoffmann}
	Any orthogonal polygon with holes of a total of
	$n$ vertices can be partitioned into $\lfloor\frac{n}{4}\rfloor$ rectangular stars of at most 16 vertices.
\end{theorem}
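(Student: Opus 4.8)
The plan is to honour the metatheorem and reuse the Fisk-style recipe behind \Fref{thm:K3} and \Fref{thm:static}, upgrading it from simply connected galleries to galleries with holes. First I would apply \Fref{lemma:K3}, which is stated precisely so as to remain valid in the presence of holes, and fix a convex quadrilateralization of the closed region. Let $G$ be the plane graph carrying this subdivision; as the region is orthogonal the quadrilateralization uses only the $n$ existing vertices, so $|V(G)|=n$. Inserting both diagonals into every quadrilateral turns each face into a $K_4$, and the goal becomes a proper $4$-colouring in which, automatically, every quadrilateral is \emph{rainbow} (carries all four colours). If such a colouring exists, its smallest colour class $A$ meets each face in exactly one vertex and has $|A|\le\lfloor n/4\rfloor$, which is the count we are after.

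The genuine obstacle --- the one feature separating this statement from \Fref{thm:static} --- is the holes. In the simply connected case the dual of the quadrilateralization (minus the outer face) is a tree, so one colours by repeatedly peeling off a leaf face and extending the colouring to its exposed vertex, exactly as Fisk does for triangles. With $h$ holes the dual acquires $h$ independent cycles, the peeling stalls, and, just as triangulations of polygons with holes can fail to be $3$-colourable, a rainbow $4$-colouring need not exist at all. I would circumvent this by lowering the number of holes: choose a chain of quadrilaterals running from a hole out to the outer boundary, cut the region open along the diagonals of that chain so that the hole merges with the outer face, and iterate until the region is simply connected. Each cut only duplicates a constant number of vertices, and the copies are forced to inherit the colour of their originals, so the simply connected instance can be coloured by leaf-peeling and the colouring then pushed back without inflating $|A|$ beyond $\lfloor n/4\rfloor$. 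Keeping the duplicated colours consistent across every cut while keeping the count honest is the delicate heart of the argument, and the place where I expect to spend the most effort.

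With a rainbow colouring in hand the partition demanded by the metatheorem falls out combinatorially: each quadrilateral contains exactly one vertex of $A$, so assigning every quadrilateral to that vertex carves the region into the pieces $S_a=\bigcup\{\text{quadrilaterals assigned to }a\}$, one for each $a\in A$, hence at most $\lfloor n/4\rfloor$ pieces in all. Since the quadrilaterals collected at $a$ are convex and all contain $a$, their union is visible from $a$, so each guard is responsible for a star-shaped portion of the region.

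It then remains to upgrade this combinatorial partition to the geometric one claimed, which I would treat as bookkeeping rather than as a source of new ideas: that each cell can be taken to be a \emph{rectangular} star, and that its boundary carries at most $16$ vertices. The jump from the $6$ vertices of \Fref{thm:static} to $16$ is precisely the toll paid for opening the holes, and I would expect to recover the bound by controlling how many quadrilateral corners a single guard can accrue along the cuts and by grouping the collected quadrilaterals according to the quadrant of $a$ in which they lie. The conceptual content of the proof is thus concentrated entirely in the hole-removal step of the second paragraph.
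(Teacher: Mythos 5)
Your plan founders at exactly the point you yourself flag as delicate, and the difficulty there is not technical but fundamental. When you cut the region open along a chain of quadrilaterals and then $4$-colour the resulting simply connected quadrilateralization by leaf-peeling, the two copies of a duplicated vertex are coloured at different moments of the peeling process, and nothing in the greedy extension lets you \emph{force} a copy to inherit the colour of its original: the exposed vertex of a leaf face must avoid the three colours already present on that face, and these constraints can contradict any prescribed identification across the cut. This is precisely why the paper remarks, after \Fref{lemma:K3}, that simply connectedness is needed to construct the $4$-colouring proving \Fref{thm:K3}. If you give up the identification and simply keep the duplicated vertices, the vertex count inflates by a constant per hole and the smallest colour class is only bounded by roughly $\lfloor (n+2h)/4\rfloor$, which is the classical cut-open bound and strictly weaker than the claim; eliminating the dependence on $h$ was exactly the content of Hoffmann's result, and it is known to resist the Fisk-style colouring technique rather than to follow from it.

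The final step is also not bookkeeping. The pieces $S_a$ are fans of convex quadrilaterals sharing the apex $a$; since the quadrilateralization of \Fref{lemma:K3} uses diagonals that are in general not axis-parallel, $S_a$ is typically not an orthogonal polygon at all, and since $a$ may lie in unboundedly many quadrilaterals, $S_a$ may have unboundedly many vertices --- so neither the ``rectangular star'' structure nor the $16$-vertex bound can be recovered afterwards by local repair. Your fallback of splitting each fan by the quadrant around $a$ would multiply the number of pieces by up to $4$ and destroy the $\lfloor n/4\rfloor$ count. Hoffmann's actual argument goes the other way around: it constructs the partition into at most $16$-vertex rectangular stars \emph{directly}, by a geometric induction that cuts the polygon with holes into smaller such polygons (in the spirit of the inductive cutting used for \Fref{thm:static} and \Fref{thm:mobile} in this thesis), and the covering statement is then a corollary of the partition, consistent with the metatheorem. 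A colouring-first strategy cannot produce the partition this theorem asserts.
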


\citeauthor{Hoffmann1990}'s theorem also verifies the metatheorem. Soon after this result, \citet{MR1114588} provided an efficient algorithm to construct such a partition.

\medskip

In \Fref{chap:partition}, we present further evidence that the metatheorem holds,
namely we prove the following partition theorem:

\begin{restatable}[\citet{GyM2016}]{theorem}{thmmobile}\label{thm:mobile}
	Any simple orthogonal polygon of $n$ vertices
	can be partitioned into at most $\lfloor\frac{3n+4}{16}\rfloor$ orthogonal polygons of
	at most 8 vertices.
\end{restatable}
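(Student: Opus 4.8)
The plan is to argue by induction, after reformulating the target in terms of the number $r$ of reflex (interior angle $270^\circ$) vertices. An orthogonal polygon with $c$ convex and $r$ reflex vertices satisfies $c-r=4$ and $n=c+r$, so $r=\frac{n-4}{2}$ and the bound $\lfloor\frac{3n+4}{16}\rfloor$ turns into $f(r):=\lfloor\frac{3r+8}{8}\rfloor$. The admissible pieces --- orthogonal polygons of at most $8$ vertices --- are exactly those with $r\le 2$, so the base cases $r\le 2$ are a single piece and are immediate. Reading off $f$, its value jumps by one as $r$ passes through the residues in a repeating run-length pattern $3,3,2$; equivalently, a valid partition must amortized over the whole polygon dispose of $\tfrac83$ reflex vertices per piece. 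This is the quantitative heart of the matter: since one piece can only \emph{contain} two reflex vertices, most cuts must in addition \emph{resolve} reflex vertices, turning them convex.

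The inductive step I would run is a cutting argument. Call an axis-parallel chord with interior inside $P$ and at least one reflex endpoint a \emph{cut}: a cut between two reflex vertices resolves both (each $270^\circ$ angle splits into a $90^\circ$ corner on one side and a straight $180^\circ$ point on the other), lowering $r$ by two and creating no new reflex vertex, while a cut from a reflex vertex to the relative interior of an edge resolves one reflex vertex and introduces only convex corners. The aim of the step is to detach one finished piece $Q$ with $r(Q)\le 2$, leaving a simple orthogonal polygon $P'$ with $r(P')=r-\Delta$ where $1+f(r-\Delta)\le f(r)$. Because of the $3,3,2$ pattern this demands $\Delta\ge 3$ at the ``expensive'' residues (those largest in a run) and only $\Delta\ge 2$ otherwise. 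As $Q$ holds at most two reflex vertices, reaching $\Delta\ge 3$ forces $Q$ to be peeled off along a chord between two reflex vertices while itself still enclosing one or two reflex vertices of $P$, so that the two resolved endpoints plus the enclosed ones give the needed reduction.

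The main obstacle is proving that such an efficient cut always exists when $r\ge 3$, and this is where a finite but careful case analysis is unavoidable. I would fix an extreme reflex vertex, say a topmost one, extend its horizontal and vertical chords until they first strike the boundary, and classify the constant-size local picture by how many reflex vertices fall into the pocket thus cut off and by the position of the nearest other reflex vertex; in most configurations one can select $Q$ so that $\Delta=3$. The delicate point --- and the reason the constant is $\tfrac{3}{8}$ and not $\tfrac13$ --- is that there are forced local patterns, precisely the ones iterated by the sharp examples, in which no single peel beats $\Delta=2$. The bookkeeping is then closed by the floor function, which absorbs such cheap steps exactly at the frequency the $3,3,2$ pattern allows; I expect the full proof in \Fref{chap:partition} to make this rigorous through a potential or discharging argument that charges each inefficient $\Delta=2$ peel against the convex corners its cut produces, guaranteeing the cheap steps never exceed their budget.
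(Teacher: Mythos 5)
There is a genuine gap, and it sits exactly where you placed your hopes. First, your cut repertoire is too small: you define a cut as an axis-parallel \emph{chord}, i.e., a straight segment. The paper shows (\Fref{fig:LisNeccessary}) a $14$-vertex orthogonal polygon ($r=5$, budget $f(5)=2$ pieces) that cannot be split into two at-most-$8$-vertex polygons by any straight cut; one is forced to use an L-shaped cut, a two-segment cut whose endpoints are both reflex vertices. With straight cuts only, every cut of that polygon leaves a part with at least $10$ vertices, so $1+f(r-\Delta)\le f(r)$ fails and your induction stalls at $r=5$ already. Your step ``$\Delta\ge 3$ forces $Q$ to be peeled off along a chord between two reflex vertices'' is also not forced: a $1$-cut detaching a piece $Q$ with $r(Q)=2$ gives $\Delta=1+r(Q)=3$ as well, so the case analysis you sketch is incomplete even within your own framework.

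Second, the sentence ``the main obstacle is proving that such an efficient cut always exists'' is the theorem; the extreme-vertex classification plus an unspecified discharging argument is a placeholder for all of the actual content. The paper's mechanism is structurally different in two ways worth noting. (i) It never insists that each step detach a \emph{finished} piece with $r(Q)\le 2$: a good cut may split $D$ into two large parts, both handled by induction (the corridor case produces exactly such cuts), and this extra freedom is used essentially. (ii) Instead of producing one good cut outright, it builds \emph{good cut-systems} (\Fref{def:goodcutsystem}): up to three nested cuts whose smaller-side vertex counts, adjusted by cut type, cover three consecutive even residues modulo $16$, whence \Fref{lemma:tech} guarantees that at least one of them satisfies the floor inequality --- this is how the $3,3,2$ residue pattern is absorbed, rather than by charging cheap peels to convex corners. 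The case analysis is then organized by the horizontal $R$-tree of the domain (paths, corridors, pockets, and the residual case), together with extension lemmas that transport a cut-system found in a sub-domain back to $D$. So your reformulation in terms of $r$ and the amortized $\tfrac83$ count is a correct reading of the bound, but as a proof the proposal is missing both the necessary cut type and the combinatorial device that closes the modular bookkeeping.
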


A mobile guard is one who can patrol a line segment in the gallery, and it covers a point $x$ of the gallery if there is a point $y$ on its patrol such that the line segment $[x,y]$ is contained in the gallery. The upper bound of the mobile guard art gallery theorem for orthogonal polygons follows immediately from \Fref{thm:mobile}, as an orthogonal polygon of at most 8 vertices can be covered by a mobile guard.

\begin{theorem}[\citet{Ag}, {\cite[also in][Thm.~3.3]{ORourke}}]\label{thm:mobilecover}
	$\lfloor\frac{3n+4}{16}\rfloor$ mobile guards are sufficient for covering an $n$-vertex simple orthogonal polygon.
\end{theorem}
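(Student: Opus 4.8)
The plan is to read the covering off the partition theorem directly. First I would apply \Fref{thm:mobile} to the given simple orthogonal polygon $P$, obtaining a partition of $P$ into $k\le\lfloor\frac{3n+4}{16}\rfloor$ orthogonal polygons $P_1,\dots,P_k$, each with at most $8$ vertices. If every $P_i$ can be covered by a single mobile guard whose patrol segment lies in $P_i$ (hence in $P$), then placing one guard in each piece yields $k\le\lfloor\frac{3n+4}{16}\rfloor$ mobile guards covering $P$: each point of $P$ lies in some $P_i$ and is seen by that piece's guard along a segment contained in $P_i\subseteq P$. So the whole statement reduces to one local claim, which I would isolate as a lemma: \emph{every orthogonal polygon $Q$ with at most $8$ vertices is covered by a single mobile guard.}

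To prove the local claim I would organize the argument by the number $r$ of reflex ($270^\circ$) vertices of $Q$. Since in an orthogonal polygon the number of convex vertices exceeds the number of reflex ones by exactly $4$, an $\le 8$-vertex polygon has $r\in\{0,1,2\}$. For $r=0$, $Q$ is a rectangle, covered by any interior point (a degenerate patrol). For $r=1$, $Q$ is an L-shape and its reflex vertex already sees all of $Q$, being a common corner of the two rectangles whose union is $Q$ and therefore seeing each of them; so a stationary, i.e.\ degenerate-mobile, guard suffices. The substance is the case $r=2$, the genuine $8$-vertex polygons.

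For $r=2$ I would use the following dichotomy as the organizing principle: with only two reflex vertices $Q$ can contain at most one axis-aligned notch, so $Q$ is horizontally convex (every horizontal section is one interval) or vertically convex (every vertical section is one interval). Say $Q$ is horizontally convex. I would then argue that there is a vertical line $x=c$ meeting every horizontal section, and let the guard patrol the maximal vertical chord of $Q$ on $x=c$; each $(x_0,y_0)\in Q$ is then seen horizontally from $(c,y_0)$, since the horizontal section at height $y_0$ is an interval containing both $c$ and $x_0$. The vertically convex case is symmetric. Up to the rotations and reflections of the square this leaves only a handful of combinatorial types to check explicitly --- the ``staircase'' and the ``$T$'' (convex in both directions), and the ``$U$'' (convex in one direction, with a full bar opposite the notch supplying the common transversal) --- and for each I would write down $c$ and verify the sight segments stay inside.

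The hard part is this $r=2$ step, and I expect the real work to be twofold: (i) confirming that two reflex vertices can never destroy horizontal and vertical convexity simultaneously, which I would prove by an orientation/counting argument showing that each perpendicular obstruction consumes its own reflex pair, so destroying both would force $r\ge 4$; and (ii) checking, for the one-sided (``$U$''-type) shapes, that the claimed chord transversal really exists and that the perpendicular lines of sight never leave $Q$. These are finite, elementary checks, but they are where an error could hide; the reduction via \Fref{thm:mobile} and the $r\le 1$ cases are immediate.
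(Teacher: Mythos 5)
Your proposal is correct and follows essentially the same route as the paper: the paper derives \Fref{thm:mobilecover} in one line from the partition result \Fref{thm:mobile} together with the observation that every orthogonal polygon of at most $8$ vertices can be covered by a single mobile guard. Your case analysis by the number of reflex vertices (noting that with $r=2$ the piece must be horizontally or vertically convex, so a perpendicular chord transversal serves as the patrol) is just the routine verification of that local fact, which the paper leaves implicit, and it checks out.
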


The lower bound for the previous two theorems is given by stringing together a series of swastikas (\Fref{fig:swastikas}). Observe, that a mobile guard may cover some points of the end of an arm of a swastika for at most one arm. Therefore a mobile guard has to be put in each arm. For $n\not\equiv 0\pmod{16}$, a spiral has to attached to one of the arms.
\begin{figure}[hb]
	\begin{center}
		\begin{tikzpicture}[scale=0.4]
			\def \n {4}
			\foreach \x in {1,...,\n}
			\foreach \y in {0,180}
			\draw[cap=round,shift={(6*\x+\y/30+\y/180,\y/60)},rotate=\y] (0,1.5)--(0,3)--(4,3)--(4,4)--(2,4)--(2,5)--(5,5)--(5,1)--(6,1)--(6,1.5);

			\draw (6,1.5)--(6,0)--(7,0)--(7,1.5);
			\draw (6*\n+6,1.5)--(6*\n+6,3)--(6*\n+7,3)--(6*\n+7,1.5);

			\foreach \x in {1,...,\n}
				{
					\draw[densely dashed,ultra thin,shift={(6*\x,0)}] (2,2) -- (2,3);
					\draw[densely dashed,ultra thin,shift={(6*\x,0)}] (4,0) -- (4,3);
					\draw[densely dashed,ultra thin,shift={(6*\x,0)}] (6,0) -- (6,1);
				}
		\end{tikzpicture}
	\end{center}
	\caption{The dashed lines show a minimum cardinality partition into at most 8-vertex pieces}\label{fig:swastikas}
\end{figure}
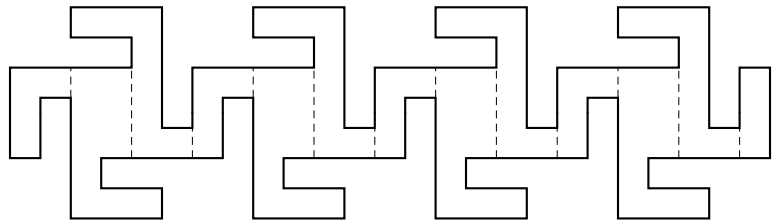

\medskip

\Fref{thm:mobile} is a stronger result than \Fref{thm:mobilecover} and it is
interesting on its own. It fits into the series of  results in
\cites{MR844048}{MR1114588}[Thm.~2.5]{ORourke}{GyH} showing that
orthogonal art gallery theorems are based on theorems on partitions into smaller (``one guardable'') pieces.

\medskip

Moreover, \Fref{thm:mobile} directly implies the following corollary
which strengthens the previous theorem and answers two
questions raised by \citet[Section 3.4]{ORourke}.

\begin{corollary}[\citet{GyM2016}]\label{corollary:mobilecoverstrong}
	$\lfloor\frac{3n+4}{16}\rfloor$ mobile guards are sufficient for covering an $n$-vertex
	simple orthogonal polygon such that the patrols of two guards do
	not pass through one another and visibility is only required at the
	endpoints of the patrols.
\end{corollary}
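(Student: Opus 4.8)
The plan is to deduce the corollary directly from the partition theorem. Applying \Fref{thm:mobile}, I fix a partition of the given simple orthogonal polygon into at most $\lfloor\frac{3n+4}{16}\rfloor$ orthogonal polygons, each with at most $8$ vertices and with pairwise disjoint interiors. I then assign exactly one mobile guard to each piece, so the guard count matches the claimed bound, and everything reduces to a purely local statement about a single $\le 8$-vertex piece together with a global argument that the resulting patrols do not interfere.

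For the local statement I would prove the following: every orthogonal polygon $P$ with at most $8$ vertices admits a mobile guard whose patrol is a single segment $[p,q]\subseteq P$ such that every point of $P$ is visible from $p$ or from $q$, so that visibility is genuinely only needed at the two endpoints. Since a simply connected orthogonal polygon with $c$ convex and $r$ reflex vertices satisfies $c-r=4$, having at most $8$ vertices leaves only $r\in\{0,1,2\}$. For $r=0$ the piece is a rectangle and any interior point works; for $r=1$ it is an L-shape, which is star-shaped with the reflex corner in its kernel, so a degenerate patrol $p=q$ already sees all of $P$. For $r=2$ the shapes are, up to congruence, a T, a notched rectangle (U), and a two-step staircase: the T is still star-shaped (a point just inside the crossbar above the junction sees everything), whereas the U and the staircase are not, yet in each a single axis-parallel segment suffices—for the U a horizontal segment across the base lets each prong be seen from its near endpoint and the base from both, and an analogous segment handles the staircase. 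Verifying endpoint-visibility across these finitely many shapes is the technical heart of the argument.

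To obtain the non-crossing guarantee I would observe that in every case above the patrol $[p,q]$ can be chosen in the \emph{open} interior of its piece while still seeing the (boundary) points of $P$ from its endpoints; this needs a small perturbation, e.g.\ for the T the single guard must be nudged into the crossbar rather than the stem, since the opposite choice loses sight of the far corners. Because the pieces of the partition have pairwise disjoint interiors, patrols confined to those interiors are pairwise disjoint, and in particular no two of them pass through one another. Combined with the endpoint-visibility from the local claim and the bound on the number of pieces, this yields the corollary and in particular recovers \Fref{thm:mobilecover}.

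The main obstacle I anticipate is the $r=2$ case analysis: one must check, for each non-star-shaped $8$-vertex shape, that a single segment can be positioned so that its two endpoints jointly see the whole piece, and that this segment can be retracted into the open interior without losing sight of the extreme boundary corners—exactly the delicate point illustrated by the T-shape, where an interior point on the wrong side of the junction fails to see the opposite arm.
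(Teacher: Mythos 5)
Your proposal is correct and matches the paper's (implicit) argument: the paper derives the corollary directly from \Fref{thm:mobile} in exactly this way---one guard per piece of the partition, with the internally disjoint pieces guaranteeing non-crossing patrols and each $\le 8$-vertex piece admitting a patrol segment whose endpoints jointly see the whole piece. Your case analysis over $r\in\{0,1,2\}$ reflex vertices (rectangle, L, and the T/U/staircase shapes) simply makes explicit the local verification that the paper leaves to the reader.
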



\begin{table}
	\centering
	\bgroup%
	\def\arraystretch{1.5}
	\begin{tabular}{ c  c  c  c }
		& Point guards &   & Mobile guards \\ \toprule
		Simple polygons            & $\left\lfloor\dfrac{n}{3}\right\rfloor$ &   & $\left\lfloor\dfrac{n}{4}\right\rfloor$     \\ \midrule
		Simple orthogonal polygons & $\left\lfloor\dfrac{n}{4}\right\rfloor$ &   & $\left\lfloor\dfrac{3n+4}{16}\right\rfloor$ \\ \bottomrule
	\end{tabular}
	\egroup%

	\bigskip

	\caption{The extremal number of guards required to cover an $n$-vertex gallery}\label{table:guards}
\end{table}

The results on guarding simple polygons and orthogonal polygons are summarized in \Fref{table:guards}. The proof of the sharp bound on mobile guards in simple polygons due to \citet[Thm.~3.1]{ORourke} also confirms the metatheorem. A combinatorial proof that does not use complex geometric reasoning has already existed for three of the four bounds listed in \Fref{table:guards}. The until recently missing fourth such proof is that of \Fref{thm:mobile}.

\medskip

Joseph O'Rourke pointed out in his 1987 book titled ``Art gallery theorems and algorithms''~\cite{ORourke} that there is a mysterious $4:3$ ratio between the extremal number of point and mobile guards for art galleries given by both  simple polygons and simple orthogonal polygons, as can be seen on \Fref{table:guards}.

\section{Outline of Part~\ref{part:artgalleries}}

After precisely defining the subjects of our study in \Fref{chap:partition}, we introduce the concept of $R$-trees, which is a well-known tool in the literature. The proof of \Fref{thm:mobile} follows.

\medskip

In \Fref{chap:versus} we show that this ratio between the efficacy of point and mobile guards is not only an extremal phenomenon in simple orthogonal polygons appearing for a fixed number of vertices. The magical ratio appears in an upper bound for the ratio of the minimum number of stationary guards covering the gallery and the minimum size of a special, restricted mobile guard cover. The results of \Fref{chap:partition}~and~\ref{chap:versus} have been discovered in collaboration with my supervisor, Ervin Győri.

\medskip

In the last chapter of Part~\ref{part:artgalleries} of this thesis (\Fref{chap:artcomplexity}) we discuss algorithmic versions of our proofs and the computational complexity of orthogonal art gallery problems in general.

\section{Definitions and preliminaries}\label{sec:defs}

Our universe for the study of art galleries is the plane $\mathbb{R}^2$. A \textbf{polygon} is defined by a cyclically ordered list of pairwise distinct vertices in the plane. It is drawn by joining each successive pair of vertices 
on the list by line segments, that only intersect in vertices of the polygon. The last requirement ensures that the closed domain bounded by the polygon is simply connected (to emphasize this, such polygons are often referred to as simple polygons in the literature). An \textbf{orthogonal polygon} is a polygon such that its line segments are alternatingly parallel to one of the axes of $\mathbb{R}^2$. Consequently, its angles are $\frac12\pi$ (convex) or $\frac32\pi$ (reflex).

\medskip

A \textbf{rectilinear domain} is a closed region of the plane ($\mathbb{R}^2$) whose boundary is an orthogonal polygon, i.e., a closed polygon without self-intersection, so that each segment is parallel to one of the two axes. A \textbf{rectilinear domain with holes} is a rectilinear domain with pairwise disjoint simple rectilinear domain holes. Its boundary is referred to as an \textbf{orthogonal polygon with holes}.

\medskip

The definitions imply that number of vertices of an orthogonal polygon (even with holes) is even. We denote the number of vertices of the polygon by $n(P)$, and define $n(D)=n(P)$, where $D$ is the domain bounded by $P$. Conversely, we write $P=\partial D$. We want to emphasize that in our problems not just the walls, but also the interior of the gallery must be covered. In the proofs of the theorems, therefore, we are working on rectilinear domains, not orthogonal polygons, even though one defines the other uniquely, and vice versa.

\medskip

Whenever results about objects that are allowed to have holes are mentioned, it is explicitly stated.

\medskip

To avoid confusion, we state that throughout this part, \textbf{vertices} and \textbf{sides} refer to subsets of an orthogonal polygon or a rectilinear domain; whereas any \textbf{graph} will be defined on a set of \textbf{nodes}, of which some pairs are joined by some \textbf{edges}. Given a graph $G$, the edge set $E(G)$ is a subset of the 2-element subsets of the vertices $V(G)$.

\medskip

Unless otherwise noted, we adhere to the same terminology in the subject of art galleries as O'Rourke~\cite{ORourke}.
However, for technical reasons, sometimes we need to assume extra conditions over what is traditionally assumed. In \Fref{lemma:technical}, we prove that we may, without restricting the problem, require the assumptions typeset in \emph{italics} in the following definitions.

\medskip

Two points $x,y$ in a domain $D$ have \textbf{line of sight vision}, \textbf{unrestricted vision}, or simply just \textbf{vision} of each other if the line segment induced by $x$ and $y$ is contained in $D$.

\medskip

A \textbf{point guard} in an art gallery $D$ is a point $y\in D$. It has vision of a point $x\in D$ if the line segment $\overline{xy}$ is a subset of $D$. The term ``stationary guard'' refers to the same meaning, and is used mostly in contrast with ``mobile guards''.

\medskip

A \textbf{mobile guard} is a line segment $L\subset D$. A point $x\in D$ is seen by the guard if there is a point $y\in L$ which has vision of $y$. Intuitively, a mobile guard is a point guard patrolling the line segment $L$.

\medskip

The points \textbf{covered by a guard} is just another name for the set of points of $D$ that are seen by the guard. A \textbf{system of guards} is a set of guards in $D$ which cover $D$, i.e., for any point $x\in D$, there is a guard in the system covering $x$.

\medskip

Two points $x,y$ in a rectilinear domain $D$ have \textbf{\boldmath $r$-vision} of each other (alternatively, $x$ is $r$-visible from $y$) if there exists an axis-aligned \emph{non-degenerate} 
rectangle in $D$ which contains both $x$ and $y$. This vision is natural to use in orthogonal art galleries instead of the more powerful line of sight vision. For example, $r$-vision is invariant on the transformation depicted on \Fref{fig:mgcond1}.

\medskip

A \textbf{point \boldmath $r$-guard} is a point $y\in D$, \emph{such that the two maximal axis-parallel line segments in $D$ containing $y$ do not intersect vertices of $D$}. A set of point guards \hbox{\textbf{\boldmath $r$-cover}} $D$ if any point $x\in D$ is $r$-visible from a member of the set. Such a set is called a \textbf{point \boldmath $r$-guard system}.

\medskip

A \textbf{vertical mobile \boldmath $r$-guard} is a vertical line segment in $D$, \emph{such that the maximal line segment in $D$ containing it does not intersect vertices of~$D$}. \textbf{Horizontal} mobile guards are defined analogously. A \textbf{mobile \boldmath $r$-guard} is either a vertical or a horizontal mobile $r$-guard. A mobile $r$-guard \hbox{\textbf{\boldmath  $r$-covers}} any point $x\in D$ for which there exists a point $y$ on its line segment such that $x$ is $r$-visible from $y$.

\begin{lemma}\label{lemma:technical}
	Any rectilinear domain $D$ can be transformed into another rectilinear domain $D'$ so that the point guard $r$-cover, and the vertical/horizontal mobile guard $r$-cover problems in $D$, without the restrictions typeset in italics, are equivalent to the respective problems, as per our definitions (i.e., with the restrictions), in $D'$.
\end{lemma}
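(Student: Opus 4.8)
The plan is to realize $D'$ as a carefully controlled small modification of $D$ and then to transfer guard systems back and forth without changing their cardinality, so that the minimum number of guards agrees for each of the three covering problems (point $r$-guards, vertical mobile $r$-guards, and horizontal mobile $r$-guards). Concretely, I would build $D'$ in two stages. First, perturb the coordinates of the vertices of $D$ so as to destroy every \emph{unforced} collinearity: the only pairs of vertices sharing an $x$- or $y$-coordinate should be those joined by an edge of the polygon. This is achieved by nudging each ``alignment class'' of coordinates independently by a sufficiently small amount; since the perturbation is small, it preserves the combinatorial type of $D$, i.e. the arrangement of grid lines through the vertices and the incidence pattern of the cells into which this arrangement cuts $D$. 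Second, introduce a uniform clearance $\delta>0$ around each vertex by replacing each maximal axis-parallel line through a vertex with a thin strip, that is, by inflating $D$ just enough that every vertex has a neighborhood of positive width separating it from the generic interior. The resulting $D'$ is still a rectilinear domain, and by construction every maximal axis-parallel segment that avoids the vertex strips, as well as every pair of points joined by an interior segment, admits clearance on at least one side.

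With $D'$ fixed, I would verify the equivalence in both directions. For the direction from $D'$ to $D$: a restricted cover of $D'$ satisfies the italic conditions by definition, and pulling each guard back to the corresponding location in $D$ (collapsing the clearance strips by a deformation retraction) only relaxes the constraints, so it yields an unrestricted cover of $D$ of the same size. For the direction from $D$ to $D'$: given an unrestricted cover of $D$, each point guard can be moved into the interior of the grid cell containing it so that its two maximal axis-parallel segments avoid every vertex strip, and each mobile guard's patrol can likewise be slid off the vertex strips; the clearance introduced in the second stage is exactly what guarantees this relocation is possible. Moreover, since a rectilinear domain has nonempty interior along every segment lying in it, any $r$-visibility witnessed in $D$ by a degenerate rectangle is witnessed in $D'$ by a genuinely non-degenerate one, using the clearance on the appropriate side. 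Because every relocation keeps each guard covering the same region and changes no cardinalities, the minima coincide for all three problems.

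The main obstacle, and the technical heart of the argument, is proving that these relocations never uncover a point and that the modification preserves the \emph{exact} optimum rather than merely bounding it from one side. The first point requires that $r$-visibility be an open relation after the inflation, so that moving a guard by less than the clearance keeps every previously covered point covered; this in turn rests on the general-position property established in the first stage, which ensures that the only way a maximal segment can meet a vertex is the non-generic one we have just eliminated. The second point requires showing that a sufficiently small perturbation and inflation leave the cell arrangement of $D$ combinatorially unchanged, so that the covering relation between cells --- and hence the minimum cover size --- is identical for $D$ and $D'$. I expect the bookkeeping for the mobile-guard cases to be the most delicate, since one must track not merely the endpoints of a patrol but the entire maximal segment containing it, and confirm that sliding the patrol off the vertex strips preserves both its own admissibility and the set of points it $r$-covers.
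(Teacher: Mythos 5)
Your overall strategy --- fatten the domain so that every guard can be perturbed off the forbidden positions, then transfer covers back and forth by a cardinality-preserving correspondence --- is the right one, and your second stage plus the final ``openness of $r$-visibility'' step is in spirit what the paper does. But your first stage is a genuine flaw, not a harmless preprocessing step. The paper never perturbs vertex coordinates; it only thickens, by a Minkowski sum with a vertical (resp.\ horizontal) segment of length $\varepsilon/2$, those maximal segments of $D$ that are touched by the exterior from \emph{both} sides (\Fref{fig:mgcond1}), i.e.\ exactly the segments that cannot be translated to either side. Your claim that nudging each alignment class ``preserves the combinatorial type of $D$, i.e.\ \ldots the incidence pattern of the cells'' is false precisely at the degeneracies the lemma exists to handle: an exact alignment of two non-adjacent vertices is a boundary case between two combinatorial types, and \emph{every} perturbation resolves it to one side or the other. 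Since the unrestricted problems admit degenerate witnessing rectangles, two points of $D$ may see each other only along a common axis-parallel segment whose existence depends on that exact alignment; nudging in one direction makes this visibility robust, nudging in the other destroys it, and you give neither a rule for choosing directions nor an argument that the three optima survive stage 1. For coincident boundary runs (zero-width corridors, as in the paper's figure) the situation is worse: any perturbation keeping the polygon simple must open the corridor, which changes the incidence pattern outright. Note also that stage 1 buys you nothing: with or without general position, a maximal axis-parallel segment through a guard meets a vertex only when the guard shares a coordinate with one of finitely many vertices, so the guard can a priori be perturbed; the only genuine obstruction is a segment pinned by the exterior on both sides, and only thickening cures that.

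Your second stage has a subtler but also real gap: you inflate along \emph{every} maximal axis-parallel line through \emph{every} vertex, including ordinary boundary edges with interior on one side. In $D$, two points see each other degenerately only if both lie \emph{on} a common axis-parallel segment of $D$; in your inflated $D'$, all points within distance $\delta$ of such a segment see one another through the new strip. This is strictly new visibility between pairs that previously could not see each other at all, so you must rule out that the minimum cover of $D'$ drops below that of $D$ --- and the proposal defers exactly this (your closing paragraph concedes that preserving the exact optimum is the unproved ``technical heart''). The paper's surgery is deliberately minimal so that the guard-wise correspondence between covers of $D$ and $D'$ is checkable in both directions; a blanket inflation forfeits that and would need a replacement argument which the proposal does not supply.
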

\begin{proof}
	\begin{figure}[ht]
	\centering
	\begin{tikzpicture}
	\def\dist{0.5}
	\begin{scope}[yscale=1.5,shift={(0*\dist,0.33*\dist)},rotate=-90]
		\fill[color=inside]  (0,7*\dist) rectangle (2*\dist,8*\dist) rectangle (\dist,4.5*\dist) rectangle (0,6*\dist) rectangle (\dist,2*\dist) rectangle (2*\dist,3*\dist) rectangle (\dist,-1.5*\dist) rectangle (0,0) rectangle (\dist,-3*\dist);

		\draw (0,0) -- ++(\dist,0) -- ++(0,2*\dist) -- ++(-\dist,0);
		\draw (2*\dist,3*\dist) -- ++(-1*\dist,0) -- ++(0,1.5*\dist) -- ++(\dist,0);
		\draw (0,6*\dist) -- ++(\dist,0) -- ++(0,\dist) -- ++(-\dist,0);
		\draw (0,8*\dist) -- ++(2*\dist,0);
		\draw (0,-3*\dist) -- ++(\dist,0) -- ++(0,1.5*\dist) -- ++(\dist,0);
	\end{scope}

	\draw [very thick,decorate,decoration={coil,aspect=0},->] (8.5*\dist,-\dist) -- (10.5*\dist,-\dist);

	\begin{scope}[shift={(14*\dist,0.5*\dist)}, rotate=-90]
		\fill[color=inside]  (0,7*\dist) rectangle (3*\dist,8*\dist) rectangle (\dist,4.5*\dist) rectangle (0,6*\dist);
		\fill[color=inside] (0,5*\dist) rectangle (2*\dist,2*\dist);
		\fill[color=inside] (3*\dist,3*\dist) rectangle (\dist,-1.5*\dist);
		\fill[color=inside] (0,0) rectangle (2*\dist,-3*\dist);

		\draw (0,0) -- ++(\dist,0) -- ++(0,2*\dist) -- ++(-\dist,0);
		\draw (3*\dist,3*\dist) -- ++(-1*\dist,0) -- ++(0,1.5*\dist) -- ++(\dist,0);
		\draw (0,6*\dist) -- ++(\dist,0) -- ++(0,\dist) -- ++(-\dist,0);
		\draw (0,8*\dist) -- ++(3*\dist,0);
		\draw (0,-3*\dist) -- ++(2*\dist,0) -- ++(0,1.5*\dist) -- ++(\dist,0);
	\end{scope}
\end{tikzpicture}
\caption{After this transformation, those mobile guards whose maximal containing line segment do not intersect vertices of the rectilinear domain, are just as powerful as mobile guards that are not restricted in such a way.}\label{fig:mgcond1}
\end{figure}
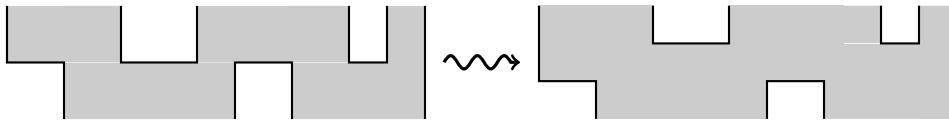
	Let $\varepsilon$ be the minimal distance between any two horizontal line segments of $\partial D$. The transformation depicted in \Fref{fig:mgcond1} in $D$ takes a maximal horizontal line segment $L$ in $D$ which is touched from both above and below by the exterior of $D$, and maps $D$ to \[D'=D\bigcup \left(L+\overline{(0,-\varepsilon/4)(0,\varepsilon/4)}\right),\]
	where addition is taken in the Minkowski sense.
	There is a trivial correspondence between the point and mobile guards of $D$ and $D'$ such that taking this correspondence guard-wise transforms a guarding system of $D$ (guards without the restrictions) into a guarding system of $D'$ (guards with the restrictions), and vice versa.

	\medskip

	After performing this operation at every vertical and horizontal occurrence, we get a rectilinear domain $D''$, in which any vertical or horizontal line segment is contained in a non-degenerate rectangle in $D''$. Therefore, degenerate vision between any two points implies non-degenerate vision between the pair. Furthermore, the line segment of any mobile guard can be translated slightly along its normal (at least in one direction) while staying inside $D''$, and this clearly does not change the set of points $r$-covered by the guard. Similarly, we can perturb the position of a point guard without changing the set of points of $D''$ it $r$-covers.
\end{proof}


\chapter{Partitioning orthogonal polygons}\label{chap:partition}

\section{Introduction}

For the sake of completeness, we mention that any orthogonal polygon of $n$ vertices can be partitioned into at most $\lfloor\frac{n}{2}\rfloor -1$ rectangles, and this bound is sharp. In this case, the interesting question is the minimum number of covering rectangles, see \Fref{chap:artcomplexity}.

\medskip

\Fref{thm:mobile} fills in a gap between two already established (sharp) results: in~\cite{MR844048} it is proved that orthogonal polygons can be partitioned into at most $\lfloor\frac{n}{4}\rfloor$ orthogonal polygons of at most 6 vertices, and in~\cite{GyH} it is proved that any orthogonal polygon in general position (an orthogonal polygon without 2-cuts) can be partitioned into $\lfloor\frac{n}{6}\rfloor$ orthogonal polygons of at most $10$ vertices. However, we do not know of a sharp theorem about partitioning orthogonal polygons into orthogonal polygons of at most 12 vertices.

\medskip

Furthermore, for $k\ge 4$, not much is known about partitioning orthogonal polygons with holes into orthogonal polygons of at most $2k$ vertices. Per the ``metatheorem,'' the first step in this direction would be proving that an orthogonal polygon of $n$ vertices with $h$ holes can be partitioned into $\lfloor\frac{3n+4h+4}{16}\rfloor$ orthogonal polygons of at most 8 vertices. This would generalize the corresponding art gallery result in~\cite[Thm.~5.]{GyH}.

\medskip

%

The proof of \Fref{thm:mobile} is similar to the
proofs of \Fref{thm:static} in that it finds a suitable cut and then
uses induction on the parts created by the cut. However, a cut
along a line segment connecting two reflex vertices is no longer automatically good. We also rely heavily on a tree structure
of the orthogonal polygon (\Fref{sec:treestructure}). However, while O'Rourke's
proof of \Fref{thm:static} only uses straight cuts, in our case this is not sufficient:
\Fref{fig:LisNeccessary} shows an orthogonal polygon of 14 vertices which cannot
be cut into 2 orthogonal polygons of at most 8 vertices using cuts along
straight lines. Therefore, we must consider L-shaped cuts too.

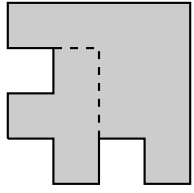
\begin{figure}[ht]
  \centering
  \begin{tikzpicture}[scale=0.6]
  \begin{scope}

  \fill[color=inside] (2,0) rectangle (5,3);
  \fill[color=inside] (1,0) rectangle (2,1);
  \fill[color=inside] (1,2) rectangle (2,3);
  \fill[color=inside] (2,-1) rectangle (3,0);
  \fill[color=inside] (4,-1) rectangle (5,0);

  \draw (1,0)	-- (1,1) -- (2,1) -- (2,2) -- (1,2) -- (1,3) -- (5,3) -- (5,-1) --(4,-1) -- (4,0) -- (3,0) -- (3, -1) -- (2,-1) -- (2,0) -- (1,0);

  \draw[dashed] (3,0) -- (3,2) -- (2,2);

  \end{scope}
  \end{tikzpicture}
  \caption{An L-shaped cut creating a partition into 8-vertex orthogonal polygons}\label{fig:LisNeccessary}
\end{figure}

\section{Definitions and preliminaries}

Let $D,D_1,D_2$ be rectilinear domains of $n,n_1,n_2$ vertices, respectively. If $D=D_1\cup D_2$, $\mathrm{int}(D_1)\cap \mathrm{int}(D_2)=\emptyset$, $0<n_1,n_2$ and $n_1+n_2\le n+2$ are satisfied, we say that $D_1,D_2$ form an \textbf{admissible partition} of $D$, which we denote by $D=D_1\dotcup D_2$.
Also, we call $L=D_1\cap D_2=\partial D_1\cap \partial D_2$ a \textbf{cut} in this case. We may describe this relationship concisely by $L(D_1,D_2)$.
If, say, we have a number of cuts $L_1,L_2$,~etc.,~then we usually write $L_i(D_1^i,D_2^i)$. Generally, if a rectilinear domain is denoted by $D_x^y$, then $y$ refers to a cut and $x\in\{1,2\}$ is the label of the piece in the partition created by said cut.
Furthermore, if
\begin{equation}\label{eq:n1n2}
  \left\lfloor\frac{3n_1+4}{16}\right\rfloor+\left\lfloor\frac{3n_2+4}{16}\right\rfloor\le \left\lfloor\frac{3n+4}{16}\right\rfloor.
\end{equation}
is also satisfied, we say that $D_1,D_2$ form an \textbf{induction-good partition} of $D$, and we call $L$ a \textbf{good~cut}.

\begin{lemma}\label{lemma:tech}
  An admissible partition $D=D_1\dotcup D_2$ is also and induction-good partition if

  \smallskip

  \textbf{\mbox{}\quad (a)}
  \customlabel{lemma:tech:a}{(a)}
  $n_1+n_2= n+2$ and $n_1\equiv 2,8,\text{ or }14 \pmod{16}$, \\
  \mbox{}\qquad\qquad\textbf{or}\\
  \textbf{\mbox{}\quad (b)}
  \customlabel{lemma:tech:b}{(b)}
  $n_1+n_2= n$ and $n_1\equiv 0,2,6,8,12,\text{ or }14 \pmod{16}$, \\
  \mbox{}\qquad\qquad\textbf{or}\\
  \textbf{\mbox{}\quad (c)}
  \customlabel{lemma:tech:c}{(c)}
  $n\not\equiv 14\mod 16$ and \textbf{either}
  \vspace{-12pt}
  \begin{align*}
  n_1+n_2 & =n   &   & \text{ and } &   & n_1\equiv 10 \pmod{16},\textbf{\quad or \quad\qquad} \\
  n_1+n_2 & =n+2 &   & \text{ and } &   & n_1\equiv 12 \pmod{16}.
  \end{align*}
\end{lemma}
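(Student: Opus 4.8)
The plan is to strip the geometry away completely: since \eqref{eq:n1n2} involves only the three integers $n,n_1,n_2$, and the equality hypothesis in each of cases (a), (b), (c) forces $n$ to equal either $n_1+n_2$ or $n_1+n_2-2$, the statement is a purely arithmetic assertion about $f(m):=\lfloor\frac{3m+4}{16}\rfloor$ evaluated on even integers (recall that every orthogonal polygon has an even number of vertices). First I would record the identity $f(m+16)=f(m)+3$, which holds because $48/16=3$ is an integer. Writing $n_i=16q_i+\rho_i$ with $\rho_i\in\{0,2,\dots,14\}$, this gives $f(n_i)=3q_i+g(\rho_i)$, where $g(\rho):=\lfloor\frac{3\rho+4}{16}\rfloor$ takes the values $g(0)=g(2)=0$, $g(4)=g(6)=g(8)=1$, and $g(10)=g(12)=g(14)=2$.

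The second step is a reduction to residues. Replacing $n_1$ by $n_1+16$ increases $f(n_1)$ by $3$ and, since then $n$ also grows by $16$, increases $f(n)$ by $3$ as well; hence \eqref{eq:n1n2} is invariant under $n_i\mapsto n_i+16$, so its validity depends only on the pair $(\rho_1,\rho_2)$ and on whether $n=n_1+n_2$ or $n=n_1+n_2-2$. In the first case the inequality collapses to
\[
g(\rho_1)+g(\rho_2)\le 3s+g\big((\rho_1+\rho_2)\bmod 16\big),
\]
where $s=1$ if $\rho_1+\rho_2\ge 16$ and $s=0$ otherwise; in the second case, using $f(n_1+n_2-2)=3(q_1+q_2)+\lfloor\frac{3(\rho_1+\rho_2)-2}{16}\rfloor$, it collapses to
\[
g(\rho_1)+g(\rho_2)\le \Big\lfloor\tfrac{3(\rho_1+\rho_2)-2}{16}\Big\rfloor.
\]

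The third step is a finite verification. Because \eqref{eq:n1n2} is symmetric in $n_1,n_2$, it suffices to fix $\rho_1$ in the residue set prescribed by each case and let $\rho_2$ range over the eight even residues. For case (b) I check the first displayed inequality for $\rho_1\in\{0,2,6,8,12,14\}$, and for case (a) the second one for $\rho_1\in\{2,8,14\}$; in each of these finitely many rows the tabulated values of $g$ make the inequality hold. The two even residues omitted from case (b), namely $\rho_1\in\{4,10\}$, are exactly those admitting a counterexample: e.g.\ $\rho_1=\rho_2=4$ gives $g(4)+g(4)=2>1=g(8)$.

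The step I expect to require the most care is case (c), where the residues $\rho_1=10$ (with $n=n_1+n_2$) and $\rho_1=12$ (with $n=n_1+n_2-2$)—the ones excluded from (b) and (a)—each fail the relevant inequality for a single value of $\rho_2$, namely $\rho_2=4$. The key observation is that in both failing rows one has $n\equiv 14\pmod{16}$, since $10+4=14$ and $12+4-2=14$. Thus the unique obstruction in each of these two residue classes occurs precisely when $n\equiv 14\pmod{16}$, so the side condition $n\not\equiv 14\pmod{16}$ deletes exactly the offending row and leaves every remaining case valid. Confirming, by a routine scan of the remaining residue pairs, that no other row fails then completes the proof.
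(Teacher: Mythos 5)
Your proposal is correct and takes essentially the same route as the paper: the paper's proof of \Fref{lemma:tech} simply invokes superadditivity of the floor function and declares the rest ``an easy case-by-case analysis, which we leave to the reader,'' and your write-up is a faithful execution of exactly that analysis, with the reduction $f(m+16)=f(m)+3$ and the table $g(0)=g(2)=0$, $g(4)=g(6)=g(8)=1$, $g(10)=g(12)=g(14)=2$ making the finite check transparent. Your identification of the unique failing rows ($\rho_2=4$ against $\rho_1=10$ with $n_1+n_2=n$, and against $\rho_1=12$ with $n_1+n_2=n+2$, both forcing $n\equiv 14\pmod{16}$) correctly explains the side condition in case (c), and I verified the remaining rows all hold.
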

\begin{proof}
  Using the fact that the floor function satisfies the triangle inequality, the proof reduces to an easy case-by-case analysis, which we leave to the reader.
\end{proof}

Any cut $L$ falls into one of the following 3 categories (see \Fref{fig:cuts}):
\begin{enumerate}
  \item[\bfseries (a)] \textbf{1-cuts:} $L$ is a line segment, and exactly one of its endpoints is a (reflex) vertex of $D$.

  \item[\bfseries (b)] \textbf{2-cuts:} $L$ is a line segment, and both of its
        endpoints are (reflex) vertices of $D$.

  \item[\bfseries (c)] \textbf{L-cuts:} $L$ consists of two connected line
        segments, and both endpoints of $L$ are (reflex) vertices of $D$.
\end{enumerate}

Note that for 1-cuts and L-cuts the size of the parts satisfy $n_1+n_2=n+2$, while for 2-cuts we have $n_1+n_2=n$.

%

\begin{figure}[H]
  \centering
  \begin{subfigure}{.50\textwidth}
  \renewcommand{\thesubfigure}{a}
  \centering
  \begin{tikzpicture}

  \begin{scope}

  \fill[color=inside] (0,0) rectangle (-2,1) rectangle (1,2);

  \draw (0,0) -- (0,1) -- (1,1);

  \draw (-2,0) -- (-2,2);

  \draw[dashed] (0,1) -- (-1,1) node {\ScissorHollowLeft}-- (-2, 1);

  \end{scope}
  \end{tikzpicture}
  \caption{1-cut}
  \end{subfigure}%
  \begin{subfigure}{.50\textwidth}
  \renewcommand{\thesubfigure}{b1}
  \centering
  \begin{tikzpicture}

  \begin{scope}

  \fill[color=inside] (-3,0) rectangle (0,1);
  \fill[color=inside] (-2,1) rectangle (1,2);

  \draw (0,0) -- (0,1) -- (1,1);

  \draw (-3,1) -- (-2,1) -- (-2,2);

  \draw[dashed] (0,1) -- (-1,1) node {\ScissorHollowLeft}-- (-2, 1) ;

  \end{scope}
  \end{tikzpicture}
  \caption{2-cut}
  \end{subfigure}

  \bigskip

  \begin{subfigure}{.50\textwidth}
  \renewcommand{\thesubfigure}{b2}
  \centering
  \begin{tikzpicture}

  \begin{scope}

  \fill[color=inside] (-3,1) rectangle (1,2);
  \fill[color=inside] (-2,0) rectangle (0,1);

  \draw (0,0) -- (0,1) -- (1,1);

  \draw (-3,1) -- (-2,1) -- (-2,0);

  \draw[dashed] (0,1) -- (-1,1) node {\ScissorHollowLeft}-- (-2, 1) ;


  \end{scope}
  \end{tikzpicture}
  \caption{2-cut}
  \end{subfigure}%
  \begin{subfigure}{.50\textwidth}
  \renewcommand{\thesubfigure}{c}
  \centering
  \begin{tikzpicture}

  \begin{scope}

  \fill[color=inside] (-3,1) rectangle (-2,2);
  \fill[color=inside] (-2,0.5) rectangle (0,2) rectangle (-1,2.5);

  \draw (-3,1) -- (-2,1) -- (-2,0.5);
  \draw (-1,2.5) -- (-1,2) -- (-3,2);

  \draw[dashed] (-2, 1) -- (-1,1) node[rotate = -135] {\ScissorHollowLeft} -- (-1,2);

  \end{scope}
  \end{tikzpicture}
  \caption{L-cut}
  \end{subfigure}
  \caption{Examples for all types of cuts. Light gray areas are subsets of $\mathrm{int}(D)$.}\label{fig:cuts}
\end{figure}
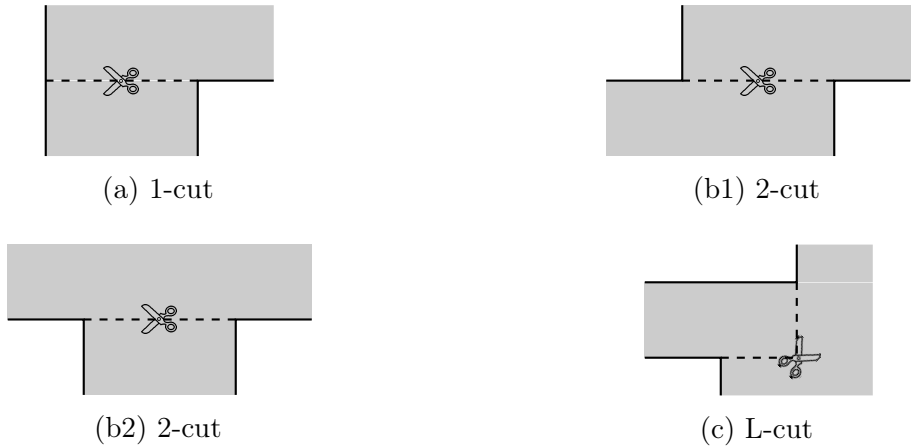

In the proof of \Fref{thm:mobile} we are searching for an induction-good partition of $D$. As a good cut defines an induction-good partition, it is sufficient to find a good cut. We could hope that a good cut of a rectilinear piece of $D$ is extendable to a good cut of $D$, but unfortunately a good cut of a rectilinear piece of $D$ may only be an admissible cut with respect to $D$ (if it is a cut of $D$ at all). \Fref{lemma:tech}, however, allows us to look for cut-systems containing a good cut. 
Fortunately, it is sufficient to consider non-crossing, nested cut-systems of at most 3 cuts, defined as follows.

\begin{definition}[Good cut-system]\label{def:goodcutsystem}
  The cuts $L_1(D_1^1,D_2^1)$, $L_2(D_1^2,D_2^2)$ and $L_3(D_1^3,D_2^3)$ (possibly $L_2=L_3$) constitute a good cut-system if $D_1^1\subset D_1^2\subseteq D_1^3$, and the set
  \[ \left\{n(D_1^i)\ |\ i\in\{1,2,3\}\right\}\cup \left\{n(D_1^i)+2\ |\ i\in\{1,2,3\}\text{ and }L_i\text{ is a 2-cut}\right\} \]
  contains three consecutive even elements modulo 16 (i.e., the union of their residue classes contains a subset of the form $\{a,a+2,a+4\}+16\mathbb{Z}$). If this is the case we also define their kernel as $\ker\{L_1,L_2,L_3\}=(D_1^1-L_1)\cup (D_2^3-L_3)$, which will be used in \Fref{lemma:extendconsecutives}.
\end{definition}

\Fref{lemma:tech:a} and~\ref{lemma:tech:b} immediately yield that any good cut-system contains a good cut.

\begin{remark}\label{rem:invert}
  It is easy to see that if a set of cuts satisfies this definition, then they obviously satisfy it in the reverse order too (the order of the generated parts is also switched).
  Actually, these are exactly the two orders in which they do so. Thus, the kernel is well-defined, and when speaking about a good cut-system it is often enough to specify the set of participating cuts.
\end{remark}



\medskip



\section{Tree structure}\label{sec:treestructure}

Any reflex vertex of a rectilinear domain $D$ defines a (1- or 2-) cut along a horizontal line segment whose interior is contained in $\mathrm{int}(D)$ and whose endpoints are the reflex vertex and another point on the boundary of
$D$. Next, we define a graph structure derived from $D$, which is a standard tool in the literature, for example it is called the $R$-graph of an orthogonal polygon in~\cite{GyH}. A similar structure is used by O'Rourke to prove \Fref{thm:static}, see~\cite[p.~76]{ORourke}.

\medskip

\begin{definition}[Horizontal (vertical) $R$-tree]\label{def:tree}
  The horizontal (vertical) $R$-tree $T$ of a rectilinear domain $D$ (or the orthogonal polygon $\partial D$ bounding it) is obtained as follows. First, partition $D$ into a set of rectangles by cutting along all the horizontal (vertical) cuts of $D$. Let $V(T)$, the vertex set of $T$ be the set of resulting (internally disjoint) rectangles. Two rectangles of $T$ are connected by an edge in $E(T)$ iff their boundaries intersect.
\end{definition}

\medskip

The graph $T$ is indeed a tree as its connectedness is trivial and since any cut creates two internally disjoint rectilinear domains, $T$ is also cycle-free. We can think of $T$ as a sort of dual of the planar graph determined by the union of $\partial D$ and its horizontal cuts. The nodes of $T$ represent rectangles of $D$ and edges of $T$ represent horizontal 1- and 2-cuts. For this reason, we may refer to nodes of $T$ as rectangles. This nomenclature also helps in distinguishing between vertices of $D$ (points) and nodes of $T$. Moreover, for an edge $e\in E(T)$, we may denote the cut represented by $e$ by simply $e$, as the context should make it clear whether we are working in the graph $T$ or in the plane.

\medskip

Note that the vertical sides of rectangles are also edges of the orthogonal polygon bounding $D$.

%

\begin{definition}\label{def:tfunc}
  Let $T$ be the horizontal $R$-tree of $D$. Define $t:E(T)\to \mathbb{Z}$ as follows:
  given any edge $\{R_1,R_2\}\in E(T)$, let
  \[ t(\{R_1,R_2\})=n(R_1\cup R_2)-8. \]
\end{definition}

Observe that
\[ t(e)=\left\{
  \begin{array}{rl}
  0,  & \text{ if $e$ represents a 2-cut;} \\
  -2, & \text{ if $e$ represents a 1-cut.} \\
  \end{array}
  \right. \]
The following claim is used throughout the chapter to count the number of vertices of a rectilinear piece of $D$.
\begin{claim}\label{claim:tsize} Let $T$ be the horizontal $R$-tree of $D$. Then
  \[ n(D)=4|V(T)|+\sum_{e\in E(T)} t(e). \]
\end{claim}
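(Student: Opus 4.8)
The plan is to prove the identity by a direct double-count, relating both sides to the number of reflex vertices of $D$ and to the number of cuts of each type. Write $r$ for the number of reflex vertices and $c$ for the number of convex vertices of $\partial D$, and let $k_1$ and $k_2$ denote the numbers of $1$-cuts and $2$-cuts among the edges of $T$, respectively. First I would record the standard angle count for a simple orthogonal polygon: summing the exterior angles around $\partial D$ gives $c-r=4$, and since $c+r=n(D)$ we obtain $n(D)=2r+4$. This disposes of the left-hand side.

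Next I would count the vertices and edges of $T$. Since each edge of $T$ represents a horizontal $1$- or $2$-cut, we have $|E(T)|=k_1+k_2$, and because $T$ is a tree (already observed above), $|V(T)|=|E(T)|+1=k_1+k_2+1$. The crucial step is to connect $r$ to $k_1,k_2$. Every reflex vertex of $D$ emits exactly one horizontal cut, namely the maximal horizontal segment running into the interior in the unique admissible direction. A $1$-cut has exactly one reflex endpoint, whereas a $2$-cut has two reflex endpoints and is thus the common cut emitted by both of them. Hence the assignment sending a reflex vertex to the horizontal cut it emits is surjective onto $E(T)$, with each $1$-cut having one preimage and each $2$-cut having two; counting preimages yields $r=k_1+2k_2$.

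Finally, using the values $t(e)=-2$ for a $1$-cut and $t(e)=0$ for a $2$-cut computed just above the claim, we get $\sum_{e\in E(T)}t(e)=-2k_1$. Substituting everything,
\[ 4|V(T)|+\sum_{e\in E(T)}t(e)=4(k_1+k_2+1)-2k_1=2k_1+4k_2+4=2r+4=n(D), \]
as desired.

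The step demanding the most care is the correspondence $r=k_1+2k_2$: one must verify that the assignment of reflex vertices to horizontal cuts is well defined (each reflex vertex emits a unique horizontal cut, no cut being counted twice except genuine $2$-cuts), that every edge of $T$ arises this way, and that degenerate incidences—such as a horizontal ray terminating at a convex vertex—do not occur. An alternative route is induction on $|V(T)|$: stripping a leaf rectangle $R$ joined to the rest of $D$ by a cut $e$ yields a smaller rectilinear domain $D'$ whose horizontal $R$-tree is $T-R$, and a short case analysis—splitting on whether $e$ is a $1$- or a $2$-cut and checking that the count does not depend on the overhang configuration—gives $n(D)-n(D')=4+t(e)$, precisely what the induction step needs, with the single-rectangle base case $n=4$. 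On that route I expect the leaf-removal bookkeeping to be the main obstacle, since one must track exactly which reflex cut-endpoint turns convex and which disappears.
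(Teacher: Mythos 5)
Your proof is correct, and since the paper disposes of this claim with a one-line ``the proof is straightforward,'' your write-up actually supplies the missing content rather than diverging from a printed argument. The global double count is sound: $n(D)=2r+4$ follows from the exterior-angle count $c-r=4$ together with $c+r=n(D)$; the tree property of $T$ (already established in the paper just after Definition~\ref{def:tree}) gives $|V(T)|=|E(T)|+1$; and the fibre count $r=k_1+2k_2$ holds because every reflex vertex has exactly one horizontal direction pointing into the interior, hence emits exactly one horizontal cut, while every cut has at least one reflex endpoint by construction. The degenerate incidence you flag does not occur, and it is worth seeing why: if the cut emitted from a reflex vertex $u$ terminates at a vertex $v$, then the relative interior of the cut lies in $\mathrm{int}(D)$, so the horizontal side incident to $v$ cannot lie on the $u$-side of $v$; it must point away from $u$, which forces the interior angle at $v$ to be $\frac32\pi$, i.e.\ $v$ is reflex, and its own emitted cut is the same segment (the open segment $uv$ is interior, so the first boundary point going back from $v$ is $u$). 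One further point implicit in your count deserves a sentence: the identification $|E(T)|=k_1+k_2$ needs that edges of $T$ correspond bijectively to cuts, i.e.\ that two slices are adjacent only along a positive-length piece of a single cut, with exactly one slice above and one below each cut. Corner-only contact of two slices is ruled out because it would force either a pinch point of $\partial D$ (contradicting simplicity) or an interior vertical segment separating two slices (contradicting that slices are bounded left and right by $\partial D$).

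Your alternative route --- leaf-stripping induction with $n(D)-n(D')=4+t(e)$ --- is also valid and is arguably closer in spirit to how the paper actually \emph{uses} the claim: the additivity $n(U_1\cup R_{\textit{BL}})=n(U_1)+n(R_{\textit{BL}})+t(\{Q_1,Q_2\})$ in \Fref{lemma:extend} and the row-refinement of \Fref{rem:refine} (where an added edge has $t=-4$, exactly offsetting the new node's $+4$) are precisely instances of that induction step. Either argument is acceptable; the double count has the advantage of handling all cut types uniformly in one computation, while the induction localizes the bookkeeping to a single leaf and generalizes immediately to refined trees as in \Fref{rem:refine}.
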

\begin{proof}
  The proof is straightforward.
  %
\end{proof}
\begin{remark}\label{rem:refine}
  Equality in the previous claim holds even if some of the rectangles of $T$ are cut into several rows (and the corresponding edges, for which the function $t$ takes $-4$, are added to $T$). 
\end{remark}

\section{Extending cuts and cut-systems}
The following two technical lemmas considerably simplify our analysis in \Fref{sec:proof}, where many cases distinguished by the relative positions of reflex vertices of $D$ on the boundary of a rectangle need to be handled. For a rectangle $R$ let us denote its top left, top right, bottom left, and bottom right vertices with $v_{\textit{TL}}(R)$, $v_{\textit{TR}}(R)$, $v_{\textit{BL}}(R)$, and $v_{\textit{BR}}(R)$, respectively.

\begin{figure}[H]
  \centering
  \begin{subfigure}{.33\textwidth}
  \centering
  \begin{tikzpicture}

  \begin{scope}

  \fill[color=inside] (0,0) rectangle (2,1);
  \fill[color=inside] (0,1) rectangle (1,2);

  \draw (0,0) -- (0,2) -- (1,2) -- (1,1) -- (2,1) -- (2,0) -- (0,0);
  \draw[dashed] (0,1) -- (1,1);

  \path (0.5,0.5) node {$R$} -- (0.5,1.5) node {$Q$};

  \filldraw (0,1) circle (2pt) node[anchor=east] {$v_{\textit{TL}}(R)$};
  \filldraw (2,1) circle (2pt) node[anchor=south] {$v_{\textit{TR}}(R)$};

  \end{scope}
  \end{tikzpicture}
  \caption{$Q\subseteq R_{\textit{TL}}$, and $e_{\textit{TL}}(R)$ is a 1-cut}\label{fig:RTL:a}
  \end{subfigure}%
  \begin{subfigure}{.34\textwidth}
  \centering
  \begin{tikzpicture}

  \begin{scope}

  \fill[color=inside] (0,0) rectangle (1,1);
  \fill[color=inside] (0,1) rectangle (2,2);

  \draw (0,0) -- (0,2) -- (2,2) -- (2,1) -- (1,1) -- (1,0) -- (0,0);
  \draw[dashed] (0,1) -- (1,1);

  \path (0.5,0.5) node {$R$} -- (0.5,1.5) node {$Q$};

  \filldraw (0,1) circle (2pt) node[anchor=east] {$v_{\textit{TL}}(R)$};
  \filldraw (1,1) circle (2pt) node[anchor=north west] {$v_{\textit{TR}}(R)$};

  \end{scope}
  \end{tikzpicture}
  \caption{$Q\subseteq R_{\textit{TL}}$, $e_{\textit{TL}}(R)$ is a 1-cut, and $R_{\textit{TR}}=\emptyset$}\label{fig:RTL:b}
  \end{subfigure}%
  \begin{subfigure}{.33\textwidth}
  \centering
  \begin{tikzpicture}

  \begin{scope}

  \fill[color=inside] (0,0) rectangle (2,1);
  \fill[color=inside] (-1,1) rectangle (1,2);

  \draw (0,0) -- (0,1) -- (-1,1) -- (-1,2) -- (1,2) -- (1,1) -- (2,1) -- (2,0) -- (0,0);
  \draw[dashed] (0,1) -- (1,1);

  \path (0.5,0.5) node {$R$} -- (0.5,1.5) node {$Q$};

  \filldraw (0,1) circle (2pt) node[anchor=north east] {$v_{\textit{TL}}(R)$};
  \filldraw (2,1) circle (2pt) node[anchor=south] {$v_{\textit{TR}}(R)$};

  \end{scope}
  \end{tikzpicture}
  \caption{$Q\subseteq R_{\textit{TL}}$, and $e_{\textit{TL}}(R)$ is a 2-cut}\label{fig:RTL:c}
  \end{subfigure}%

  \bigskip

  \begin{subfigure}{.33\textwidth}
  \centering
  \begin{tikzpicture}

  \begin{scope}

  \fill[color=inside] (0,0) rectangle (1,1);
  \fill[color=inside] (-1,1) rectangle (2,2);

  \draw (0,0) -- (0,1) -- (-1,1) -- (-1,2) -- (2,2) -- (2,1) -- (1,1) -- (1,0) -- (0,0);
  \draw[dashed] (0,1) -- (1,1);

  \path (0.5,0.5) node {$R$} -- (0.5,1.5) node {$Q$};

  \filldraw (0,1) circle (2pt) node[anchor=north east] {$v_{\textit{TL}}(R)$};
  \filldraw (1,1) circle (2pt) node[anchor=north west] {$v_{\textit{TR}}(R)$};

  \end{scope}
  \end{tikzpicture}
  \caption{$R_{\textit{TL}}=R_{\textit{TR}}=\emptyset$, $R$ is either a corridor or a pocket (see \Fref{sec:proof})}\label{fig:RTL:d}
  \end{subfigure}%
  \begin{subfigure}{.33\textwidth}
  \centering
  \begin{tikzpicture}

  \begin{scope}

  \fill[color=inside] (0,0) rectangle (1,1);
  \fill[color=inside] (-1,1) rectangle (1,2);

  \draw (0,0) -- (0,1) -- (-1,1) -- (-1,2) -- (1,2) -- (1,0) -- (0,0);
  \draw[dashed] (0,1) -- (1,1);

  \path (0.5,0.5) node {$R$} -- (0.5,1.5) node {$Q$};

  \filldraw (0,1) circle (2pt) node[anchor=north east] {$v_{\textit{TL}}(R)$};
  \filldraw (1,1) circle (2pt) node[anchor=west] {$v_{\textit{TR}}(R)$};

  \end{scope}
  \end{tikzpicture}
  \caption{$R_{\textit{TL}}=\emptyset$, but $Q\subseteq R_{\textit{TR}}$ \\ \mbox{} \\ \mbox{}}\label{fig:RTL:e}
  \end{subfigure}
  \caption{$R\cup Q$ in all essentially different relative positions of $R,Q\in V(T)$ (up to dilation and contraction of the segments of $R\cup Q$ such that its angles are preserved eventually), where $\{R,Q\}\in E(T)$ and $v_{\textit{TL}}(R)\in Q$}\label{fig:RTL}
\end{figure}
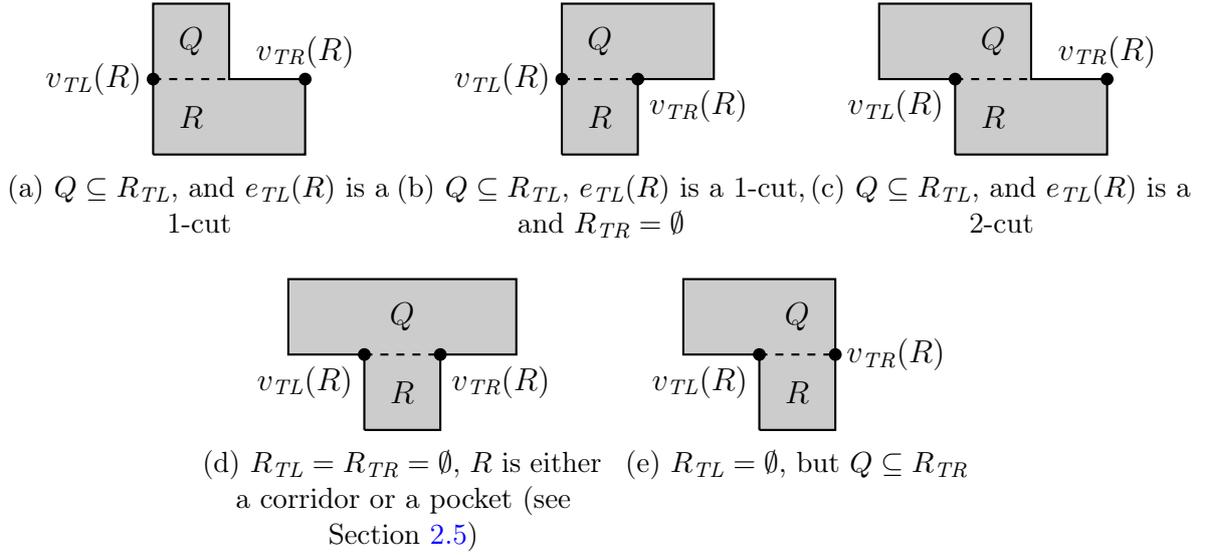

\begin{definition}\label{rectvertexns}
  Let $R,Q\in V(T)$ be arbitrary. We say that $Q$ is \textbf{adjacent} to $R$
  at $v_{\textit{TL}}(R)$, if $v_{\textit{TL}}(R)\in Q$ and $v_{\textit{TL}}(R)$ is not a vertex of the rectilinear domain $R\cup Q$, or $v_{\textit{TR}}(R)\notin Q$. Such situations are depicted on Figures~\ref{fig:RTL:a},~\ref{fig:RTL:b}, and~\ref{fig:RTL:c}.
  However, in the case of \Fref{fig:RTL:d} and~\ref{fig:RTL:e} we have $v_{\textit{TR}}(R)\in Q\not\subseteq R_{\textit{TL}}\;(=\emptyset)$.

  \medskip

  If $Q$ is adjacent to $R$ at $v_{\textit{TL}}(R)$, let $e_{\textit{TL}}(R)=\{R,Q\}$; by cutting $D$ along the dual of $e_{\textit{TL}}(R)$, i.e., $R\cap Q$, we get two rectilinear domains, and we denote the part containing $Q$ by $R_{\textit{TL}}$. If there is no such $Q$, let $e_{\textit{TL}}(R)=\emptyset$ and $R_{\textit{TL}}=\emptyset$.
  These relations can be defined analogously for top right ($R_{\textit{TR}}$, $e_{\textit{TR}}(R)$), bottom left ($R_{\textit{BL}}$, $e_{\textit{BL}}(R)$), and bottom right ($R_{\textit{BR}}$, $e_{\textit{BR}}(R)$).
\end{definition}

\begin{lemma}\label{lemma:extend}
  Let $R$ be an arbitrary rectangle such that $R_{\textit{BL}}\neq\emptyset$. Let
  $U$ be the remaining portion of the rectilinear domain, i.e., $D=R_{\textit{BL}}\dotcup U$ is
  a partition into rectilinear domains. Take an admissible partition $U=U_1\dotcup U_2$ such that $v_{\textit{BL}}(R)\in U_1$.  We can extend this to an admissible partition of $D$ where the two parts are $U_1\cup R_{\textit{BL}}$ and $U_2$.
\end{lemma}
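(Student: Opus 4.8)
The plan is to check directly that the pair $(U_1\cup R_{\textit{BL}},\,U_2)$ satisfies the three defining requirements of an admissible partition of $D$: that the two parts cover $D$ with disjoint interiors, that both are nonempty, and that $n(U_1\cup R_{\textit{BL}})+n(U_2)\le n(D)+2$. The union requirement is immediate, since $D=R_{\textit{BL}}\dotcup U=R_{\textit{BL}}\cup U_1\cup U_2=(U_1\cup R_{\textit{BL}})\cup U_2$, and both parts are nonempty because $n(U_2)>0$. So the real content lies in the interior-disjointness and the vertex count.

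I would handle interior-disjointness geometrically. Write $L_0=R_{\textit{BL}}\cap U=e_{\textit{BL}}(R)$ for the horizontal cut along which $R_{\textit{BL}}$ is glued to $U$; by the definition of a cut, $L_0\subseteq\partial U$. The only points that can lie in $\mathrm{int}(U_1\cup R_{\textit{BL}})$ but not in $\mathrm{int}(U_1)\cup\mathrm{int}(R_{\textit{BL}})$ are points in the relative interior of $L_0$. Since $U_2\subseteq U$, monotonicity of the interior gives $\mathrm{int}(U_2)\subseteq\mathrm{int}(U)$, which is disjoint from $\partial U\supseteq L_0$; moreover $\mathrm{int}(U_2)$ is disjoint from $\mathrm{int}(U_1)$ by hypothesis and from $\mathrm{int}(R_{\textit{BL}})$ because $\mathrm{int}(U)\cap\mathrm{int}(R_{\textit{BL}})=\emptyset$. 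Hence $\mathrm{int}(U_1\cup R_{\textit{BL}})\cap\mathrm{int}(U_2)=\emptyset$.

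For the vertex count, the guiding idea is that gluing $R_{\textit{BL}}$ back on costs the same whether we glue it to all of $U$ or only to $U_1$. Set $\Delta:=n(R_{\textit{BL}})+n(U)-n(D)$, which equals $2$ if $L_0$ is a 1-cut of $D$ and $0$ if it is a 2-cut, by the rule that 1-cuts satisfy $n_1+n_2=n+2$ and 2-cuts satisfy $n_1+n_2=n$. Regarding $L_0$ instead as the cut separating $R_{\textit{BL}}$ from $U_1$ inside $D_1:=U_1\cup R_{\textit{BL}}$, set $\Delta_1:=n(R_{\textit{BL}})+n(U_1)-n(D_1)$. I claim $\Delta_1\ge\Delta$; granting this, and using the admissibility bound $n(U_1)+n(U_2)\le n(U)+2$,
\[
n(D_1)+n(U_2)=n(R_{\textit{BL}})+\big(n(U_1)+n(U_2)\big)-\Delta_1\le n(R_{\textit{BL}})+n(U)+2-\Delta_1\le n(D)+2,
\]
where the last inequality uses $n(R_{\textit{BL}})+n(U)=n(D)+\Delta\le n(D)+\Delta_1$.

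The hard part will be the inequality $\Delta_1\ge\Delta$, i.e. that $L_0$ has at least as many reflex-vertex endpoints in $D$ as in $D_1$ — equivalently, that deleting $\mathrm{int}(U_2)$ to pass from $D$ to $D_1$ never turns a straight boundary point at an endpoint of $L_0$ into a reflex vertex. Both endpoints of $L_0$ lie on $\partial U$, and since $U_1$ and $U_2$ meet only along their mutual cut, $U_2$ is disjoint from the relative interior of $L_0$; thus $D$ and $D_1$ already agree near every interior point of $L_0$. At the endpoint $v_{\textit{BL}}(R)\in U_1$ the local picture is one of the configurations of Definition~\ref{rectvertexns} (Figures~\ref{fig:RTL:a}--\ref{fig:RTL:e}), and membership in $U_1$ keeps the wall of $R$ at that corner intact, so its type is unchanged. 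The only remaining possibility is that the cut between $U_1$ and $U_2$ happens to end exactly at the far endpoint of $L_0$; removing $U_2$ can then only erase a reflex vertex there, never create one, so $\Delta_1\ge\Delta$ as claimed. This endpoint bookkeeping is the main obstacle, and it is the reason only the inequality $\Delta_1\ge\Delta$ (rather than equality) is needed above; everything else is routine.
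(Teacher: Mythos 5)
Your overall route is the same as the paper's: your reduction of admissibility to the single inequality $\Delta_1\ge\Delta$ is, in the paper's notation, exactly its key inequality $t(\{Q_1,Q_2\})\le t(\{R,Q_2\})$ (with $Q_1=R\cap U_1$ and $Q_2$ the rectangle of $R_{\textit{BL}}$ adjacent to $R$), which the paper derives via \Fref{claim:tsize} and \Fref{rem:refine}; your surrounding algebra and your interior-disjointness argument are fine. The genuine gap is the step ``since $U_1$ and $U_2$ meet only along their mutual cut, $U_2$ is disjoint from the relative interior of $L_0$.'' This is false: what you actually established two sentences earlier is only $\mathrm{int}(U_2)\cap L_0=\emptyset$, but the closed region $U_2$ can perfectly well contain a subsegment of $L_0$, because a 1-cut of $U$ is allowed to have its non-vertex endpoint at any flat boundary point of $U$, and every relative-interior point of $L_0\subseteq\partial U$ is such a point. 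Far from being exotic, this is the situation the lemma exists for: in the paper's applications (see \Fref{fig:pockets2}, or the proof of \Fref{claim:opposite}) the cut of $U$ is vertical and terminates in the middle of $L_0$, so the extended cut of $D$ is L-shaped and $L=U_1\cap R_{\textit{BL}}$ is a \emph{proper} subsegment of $L_0$. Its far endpoint $p$ is then an endpoint neither of $L_0$ nor of the configurations you consider, so your dichotomy at the far end (``$U_2$ misses $L_0$'' versus ``the cut ends exactly at the far endpoint of $L_0$'') omits the generic case, and the proof of $\Delta_1\ge\Delta$ is incomplete as written.

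The hole is repairable, and seeing how also explains the paper's shorter proof. At an interior point $p$ of $L_0$ where the $U$-cut terminates, $U_1$ acquires a convex corner while $\partial R_{\textit{BL}}$ is straight there, so $p$ becomes a reflex vertex of $U_1\cup R_{\textit{BL}}$ and contributes $0$ to $\Delta_1$. On the other side, BL-adjacency (Definition~\ref{rectvertexns}) forces the far endpoint of $L_0$ to be a reflex vertex of $D$ whenever $R_{\textit{BL}}\neq\emptyset$: for a 1-cut the flat endpoint must be $v_{\textit{BL}}(R)$, since the opposite configuration (reflex at $v_{\textit{BL}}(R)$, flat at $v_{\textit{BR}}(R)$) is exactly what the adjacency definition excludes, and for a 2-cut both endpoints are reflex. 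Hence the far endpoint contributes $0$ to $\Delta$ in every case, so $\Delta$ lives entirely at $v_{\textit{BL}}(R)$, every far-end contribution to $\Delta_1$ is $\ge 0$, and the inequality follows from your (correct) observation that the local picture at $v_{\textit{BL}}(R)$ is unchanged when passing from $U$ to $U_1$. This is precisely the paper's argument in algebraic clothing: it inspects only $v_{\textit{BL}}(R)$ --- if $t(\{R,Q_2\})=-2$ then $v_{\textit{BL}}(R)$ is not a vertex of $R\cup Q_2$, hence not of $Q_1\cup Q_2$, giving $n(Q_1\cup Q_2)<8$ and $t(\{Q_1,Q_2\})\le-2$ --- and never needs to locate the other endpoint of the glued segment at all. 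Patch your far-endpoint case analysis in this way (or drop it entirely and argue only at $v_{\textit{BL}}(R)$) and your proof is complete.
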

\begin{proof}
  Let $Q_1=R\cap U_1$ and let $Q_2\in V(T)$ be the rectangle which is a subset of $R_{\textit{BL}}$ and adjacent to $R$.

  \medskip

  Observe that $U_1$ and $R_{\textit{BL}}$ only intersect on $R$'s bottom
  side, therefore their intersection is a line segment $L$ and so
  $U_1\cup R_{\textit{BL}}$ is a rectilinear domain. Trivially, $D=(U_1\cup R_{\textit{BL}})\dotcup U_2$ is partition into rectilinear domains, so only admissibility remains to be checked.

  \medskip

  Let the horizontal $R$-tree of $U_1$ and $R_{\textit{BL}}$ be $T_{U_1}$ and $T_{R_{\textit{BL}}}$, respectively.  The horizontal $R$-tree of $U_1\cup R_{\textit{BL}}$ is $T_{U_1}+ T_{R_{\textit{BL}}}+\{Q_1,Q_2\}$, except if $t(\{Q_1,Q_2\})=-4$. Either way, by referring to \Fref{rem:refine} we can use \Fref{claim:tsize} to write that
  \begin{align}\label{eq:newsize}
  n(U_1 & \cup R_{\textit{BL}})+n(U_2)-n(D)=n(U_1)+n(R_{\textit{BL}})+t(\{Q_1,Q_2\})+n(U_2)-n(D)=\nonumber \\
        & =\Big(n(U_1)+n(U_2)-n(U)\Big)+\Big(n(U)+n(R_{\textit{BL}})-n(D)\Big)+t(\{Q_1,Q_2\})=    \\
        & =\Big(n(U_1)+n(U_2)-n(U)\Big)-t(\{R,Q_2\})+t(\{Q_1,Q_2\}).\nonumber
  \end{align}
  Now it is enough to prove that $t(\{Q_1,Q_2\})\le t(\{R,Q_2\})$. If $t(\{R,Q_2\})=0$ this is trivial. The remaining case is when $t(\{R,Q_2\})=-2$. This means that $v_{\textit{BL}}(R)$ is not a vertex of $R\cup Q_2$, therefore it is not a vertex of $Q_1\cup Q_2$ either, implying that $n(Q_1\cup Q_2)<8$.
\end{proof}

\begin{lemma}\label{lemma:extendconsecutives}
  Let $R\in V(T)$ be such that $R_{\textit{BL}}\neq\emptyset$. Let $U$ be the other half of the rectilinear domain, i.e., $D=R_{\textit{BL}}\dotcup U$. If $U$ has a good cut-system $\mathcal{L}$ such that $v_{\textit{BL}}(R)\in\ker\mathcal{L}$, then $D$ also has a good cut-system.
\end{lemma}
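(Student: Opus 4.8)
The plan is to apply \Fref{lemma:extend} to each of the (at most three) cuts of $\mathcal{L}$ simultaneously, thereby re-attaching the fixed piece $R_{\textit{BL}}$ to the appropriate side of every nested cut at once, and then to verify that the resulting cuts still satisfy \Fref{def:goodcutsystem}. First I would reduce to a single case. Writing $\mathcal{L}=\{L_1,L_2,L_3\}$ with $D_1^1\subset D_1^2\subseteq D_1^3$ (all inside $U$) and $\ker\mathcal{L}=(D_1^1-L_1)\cup(D_2^3-L_3)$, the hypothesis $v_{\textit{BL}}(R)\in\ker\mathcal{L}$ splits into $v_{\textit{BL}}(R)\in D_1^1-L_1$ or $v_{\textit{BL}}(R)\in D_2^3-L_3$. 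By \Fref{rem:invert} I may reverse the cut-system, which swaps the two families $D_1^i\leftrightarrow D_2^i$; hence it suffices to treat $v_{\textit{BL}}(R)\in D_1^1-L_1$. In this case the nesting $D_1^1\subseteq D_1^2\subseteq D_1^3$ gives $v_{\textit{BL}}(R)\in D_1^i$ for every $i$, and since $v_{\textit{BL}}(R)$ lies in the open part $D_1^1-L_1$ it is off every cut $L_i$.

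Now for each $i$ I would apply \Fref{lemma:extend} with $U_1=D_1^i$ and $U_2=D_2^i$ (admissible because $\mathcal{L}$ is a cut-system of $U$, and $v_{\textit{BL}}(R)\in U_1$ by the previous step). This produces an admissible partition $D=(D_1^i\cup R_{\textit{BL}})\dotcup D_2^i$. Writing $\hat D_1^i:=D_1^i\cup R_{\textit{BL}}$, I claim the $L_i$ are unchanged as geometric cuts of $D$: since $R_{\textit{BL}}$ attaches entirely to the $D_1^i$-side, we have $\hat D_1^i\cap D_2^i=D_1^i\cap D_2^i=L_i$, so each $L_i$ keeps its type (1-, 2-, or L-cut). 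Consequently the nesting $\hat D_1^1\subset\hat D_1^2\subseteq\hat D_1^3$ holds (one adds the same set $R_{\textit{BL}}$ to each), and the ``$+2$ for 2-cuts'' bookkeeping of \Fref{def:goodcutsystem} is identical for $\mathcal{L}'=\{L_1,L_2,L_3\}$ viewed in $D$.

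The heart of the argument is that attaching $R_{\textit{BL}}$ shifts every $n(D_1^i)$ by the same constant. Repeating the computation \eqref{eq:newsize} with $U_1=D_1^i$ gives $n(\hat D_1^i)-n(D_1^i)=n(R_{\textit{BL}})+t(\{Q_1^i,Q_2\})$, where $Q_2\subseteq R_{\textit{BL}}$ is the rectangle adjacent to $R$ and $Q_1^i=R\cap D_1^i$. The only quantity that could depend on $i$ is $t(\{Q_1^i,Q_2\})$, so the main obstacle is to show it is independent of $i$. I would argue that the interface $Q_1^i\cap Q_2$ is always the full segment $R\cap Q_2$: the endpoints of any cut $L_i$ are reflex vertices of $D$, never interior points of an edge, and $L_i\subseteq U$ cannot cross the boundary segment $R\cap Q_2\subseteq\partial U$ separating $U$ from $R_{\textit{BL}}$; hence $L_i$ cannot split the interior of $R\cap Q_2$, which therefore lies wholly in $D_1^i$ together with $v_{\textit{BL}}(R)$. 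Thus $Q_2$ is glued to $Q_1^i$ in exactly the same relative position as to $R$, so $t(\{Q_1^i,Q_2\})=t(\{R,Q_2\})$ for all $i$, and $c:=n(R_{\textit{BL}})+t(\{R,Q_2\})$ is the common shift.

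Finally, since $n(\hat D_1^i)\equiv n(D_1^i)+c\pmod{16}$ with the same $c$ and the 2-cut structure is preserved, the set of residues attached to $\mathcal{L}'$ is the translate by $c$ of the set attached to $\mathcal{L}$. As the latter contains three consecutive even residues modulo $16$, so does the former, so $\mathcal{L}'$ is a good cut-system of $D$, completing the proof. I expect the delicate point to be exactly the constancy of $t(\{Q_1^i,Q_2\})$ — in particular ruling out that some $L_i$ trims $R$ near $Q_2$ and alters the alignment of the gluing — and this is precisely where the hypothesis $v_{\textit{BL}}(R)\in\ker\mathcal{L}$ (rather than merely $v_{\textit{BL}}(R)\in D_1^1$) is needed, to keep the attachment corner off all the cuts.
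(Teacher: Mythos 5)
Your overall plan diverges from the paper's proof precisely at the point where the gap lies. The paper also attaches $R_{\textit{BL}}$ to the side containing $v_{\textit{BL}}(R)$ via \Fref{lemma:extend}, but then it never computes how the modified sides $D_1^i\cup R_{\textit{BL}}$ change: it checks \Fref{def:goodcutsystem} on the \emph{untouched} sides $D_2^i=U_2^i$, invoking \Fref{rem:invert}, and the only fact it needs is that a 2-cut extends to a 2-cut, which follows from \Fref{eq:newsize} together with the inequality $t(\{Q_1,Q_2\})\le t(\{R,Q_2\})$ already established in \Fref{lemma:extend}. You instead track the modified sides, and your argument hinges on the claim that $t(\{Q_1^i,Q_2\})=t(\{R,Q_2\})$ for every $i$ and that each cut keeps its type. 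Your justification --- ``the endpoints of any cut $L_i$ are reflex vertices of $D$, never interior points of an edge'' --- contradicts the paper's own definition of a 1-cut, which has exactly \emph{one} reflex-vertex endpoint; the other endpoint may be any boundary point of $U$, in particular a point of the attachment segment $R\cap Q_2\subseteq\partial U$. The kernel hypothesis only keeps the single corner $v_{\textit{BL}}(R)$ off the cuts; it does not protect the rest of the interface.

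Concretely, suppose $t(\{R,Q_2\})=-2$ (left edges of $R$ and $Q_2$ flush) and a vertical 1-cut $L_1$ of $U$ descends to the endpoint $v_{\textit{TR}}(Q_2)$ of the interface, while $L_2,L_3$ stay clear of it. Then $Q_1^1\cup Q_2$ is a rectangle, so $t(\{Q_1^1,Q_2\})=-4$ (cf.\ \Fref{rem:refine}), and the shift $n(R_{\textit{BL}})+t(\{Q_1^i,Q_2\})$ for $i=1$ differs by $2$ from the shift for $i=2,3$: your ``common shift $c$'' does not exist. The type-preservation claim fails in the same example: after attaching $R_{\textit{BL}}$, the lower endpoint of $L_1$ becomes a reflex vertex of $D$, so the 1-cut $L_1$ of $U$ extends to a 2-cut of $D$, and $\hat D_1^1\cap D_2^1\neq L_1$ in general since $R_{\textit{BL}}\cap D_2^1$ can be a nontrivial subsegment of $R\cap Q_2$. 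Consequently the new residue set is not a translate of the old one and your final step collapses. It is worth noting that the damage is exactly compensated: the residue lost to the extra $-2$ in the shift is regained as the ``$+2$'' contribution the cut earns by becoming a 2-cut, so a careful case analysis along your lines could be made to work --- but that analysis is precisely what the paper's inclusion argument on the unchanged sides $D_2^i$ delivers for free, with no geometric computation at the interface at all.
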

\begin{proof}
  Let us enumerate the elements of $\mathcal{L}$ as $L_i$ where $i\in I$. Take $L_i(U_1^i,U_2^i)$ such that $v_{\textit{BL}}(R)\in U_1^i$. Using \Fref{lemma:extend} extend $L_i$ to a cut $L_i'(D_1^i,D_2^i)$ of $D$ such that $U_2^i=D_2^i$.

  \medskip

  \Fref{eq:newsize} and the statement following it implies that
  \[n(D_1^i)+n(D_2^i)=n(D)+2\implies n(U_1^i)+n(U_2^i)=n(U)+2.\]
  In other words, if $L_i$ is a 2-cut then so is $L_i'$. Therefore
  \begin{align*}
  \left\{n(U_2^i)\ |\ i\in I\right\} & \cup \left\{n(U_2^i)+2\ |\ i\in I\text{ and }L_i\text{ is a 2-cut}\right\}\subseteq \\
  & \subseteq \left\{n(D_2^i)\ |\ i\in I\right\}\cup \left\{n(D_2^i)+2\ |\ i\in I\text{ and }L_i'\text{ is a 2-cut}\right\},
  \end{align*}
  and by referring to \Fref{rem:invert}, we get that $\{L_i'\ |\ i\in I\}$ is a good cut-system of $D$.
\end{proof}

\section{Proof of \texorpdfstring{\Fref{thm:mobile}}{Theorem~3}}\label{sec:proof}

Let us recall the theorem to be proved.

\thmmobile*

We will prove \Fref{thm:mobile} by induction on the number of
vertices. For $n\le 8$ the theorem is trivial.

\medskip

For $n>8$, let $D$ be the rectilinear domain bounded by the orthogonal polygon wall of the gallery. We want to partition $D$ into smaller rectilinear domains. It is enough to prove that $D$ has a good cut. The rest of this proof is an extensive case study. Let $T$ be the horizontal $R$-tree of $D$. We need two more definitions.

\begin{itemize}
  \item A \textbf{pocket} in $T$ is a degree-1 rectangle $R$, whose only incident edge in $T$ is a 2-cut of $D$, and this cut covers the entire top or bottom side of $R$.
  \item A \textbf{corridor} in $T$ is a rectangle $R$ of degree $\ge 2$ in $T$, which has an incident edge in $T$ which is a 2-cut of $D$, and this cut covers the entire top or bottom side of $R$.
\end{itemize}

We distinguish 4 cases.
\begin{description}
  \itemsep0em
  \setlength{\baselineskip}{14pt}
  \item[\quad\Fref{case:Tisapath}.] $T$ is a path, \Fref{fig:cases}(a);
  \item[\quad\Fref{case:corridors}.] $T$ has a corridor, \Fref{fig:cases}(b);
  \item[\quad\Fref{case:pockets}.] $T$ does not have a corridor, but it has a pocket, \Fref{fig:cases}(c);
  \item[\quad\Fref{case:twisted}.] None of the previous cases apply, \Fref{fig:cases}(d).
\end{description}

\begin{figure}[H]
  \centering
  \begin{subfigure}{.45\textwidth}
  \centering
  \begin{tikzpicture}

  \begin{scope}

  \fill[color=inside] (2,1) rectangle (3.5,-1);
  \fill[color=inside] (2,0) rectangle (5,1);
  \fill[color=inside] (3,1) rectangle (4,3);
  \fill[color=inside] (4,2) rectangle (5,5);
  \fill[color=inside] (4,5) rectangle (1,4);
  \fill[color=inside] (0,3) rectangle (2,4);
  \fill[color=inside] (0.5,2) rectangle (1.5,3);

  \draw[dashed] (2,0) -- (3.5,0);
  \draw[dashed] (3,1) -- (4,1);
  \draw[dashed] (3,2) -- (4,2);
  \draw[dashed] (4,3) -- (5,3);
  \draw[dashed] (4,4) -- (5,4);
  \draw[dashed] (1,4) -- (2,4);
  \draw[dashed] (0.5,3) -- (1.5,3);

  \draw (2,0) -- (2,1) -- (3,1) -- (3,3) -- (4,3) -- (4,4) -- (2,4) -- (2,3) -- (1.5,3) -- (1.5,2) -- (0.5,2) -- (0.5,3) -- (0,3) -- (0,4) -- (1,4) -- (1,5) -- (5,5) -- (5,2) -- (4,2) -- (4,1) -- (5,1) -- (5,0) -- (3.5,0) -- (3.5,-1) -- (2,-1) -- (2,0);


  \end{scope}

  \end{tikzpicture}
  \caption{$T$ is a path.}
  \end{subfigure}%
  \begin{subfigure}{.45\textwidth}
  \centering
  \begin{tikzpicture}[xscale=0.5]

  \begin{scope}

  \fill[color=inside] (0,5) rectangle (2,6);
  \fill[color=inside] (3.5,5) rectangle (5.5,6);
  \fill[color=inside] (8,5) rectangle (9,6);
  \fill[color=inside] (0,4) rectangle (9,5);
  \fill[color=inside] (1,3) rectangle (8,4);
  \fill[color=inside] (1,1) rectangle (4,3);
  \fill[color=inside] (1,0) rectangle (2,1);
  \fill[color=inside] (3,0) rectangle (4,1);
  \fill[color=inside] (5,2) rectangle (6,3);
  \fill[color=inside] (7,2) rectangle (8,3);

  \draw[dashed] (0,5) -- (9,5);
  \draw[dashed] (1,4) -- (8,4);
  \draw[dashed] (1,3) -- (8,3);
  \draw[dashed] (1,1) -- (4,1);

  \draw (1,0) -- (1,4) -- (0,4) -- (0,6) -- (2,6) -- (2,5) -- (3.5,5) -- (3.5,6) -- (5.5,6) -- (5.5,5) -- (8,5) -- (8,6) -- (9,6) -- (9,4) -- (8,4) -- (8,2) -- (7,2) -- (7,3) -- (6,3) -- (6,2) -- (5,2) -- (5,3) -- (4,3) -- (4,0) -- (3,0) -- (3,1) -- (2,1) -- (2,0) -- (1,0);

  \path (4.5,3.5) node {corridor};

  \end{scope}

  \end{tikzpicture}
  \caption{$T$ has a corridor.}
  \end{subfigure}

  \bigskip

  \begin{subfigure}{.45\textwidth}
  \centering
  \begin{tikzpicture}

  \begin{scope}

  \fill[color=inside] (0.5,0) rectangle (1.5,1);
  \fill[color=inside] (3,0) rectangle (5,1);
  \fill[color=inside] (0,1) rectangle (5,2);
  \fill[color=inside] (1,2) rectangle (2,3);
  \fill[color=inside] (2.5,2) rectangle (3.5,3);
  \fill[color=inside] (4,2) rectangle (5,4) rectangle (0,5) rectangle (1,6);
  \fill[color=inside] (2,5) rectangle (3,6);

  \draw[dashed] (0,5) -- (3,5);
  \draw[dashed] (4,4) -- (5,4);
  \draw[dashed] (1,2) -- (2,2);
  \draw[dashed] (2.5,2) -- (3.5,2);
  \draw[dashed] (4,2) -- (5,2);
  \draw[dashed] (0.5,1) -- (1.5,1);
  \draw[dashed] (3,1) -- (5,1);

  \draw (0,1) -- (0,2) -- (1,2) -- (1,3) -- (2,3) -- (2,2) -- (2.5,2) -- (2.5,3) -- (3.5,3) -- (3.5,2) -- (4,2) -- (4,4) -- (0,4) -- (0,6) -- (1,6) -- (1,5) -- (2,5) -- (2,6) -- (3,6) -- (3,5) -- (5,5) -- (5,0) -- (3,0) -- (3,1) -- (1.5,1) -- (1.5,0) -- (0.5,0) -- (0.5,1) -- (0,1);

  \path (1,0.5) node {\footnotesize pocket} -- (1.5,2.5) node {\footnotesize pocket} -- (3,2.5) node {\footnotesize pocket} -- (2.5,5.5) node {\footnotesize pocket};

  \end{scope}

  \end{tikzpicture}
  \caption{$T$ does not have a corridor, but it has a pocket.}
  \end{subfigure}%
  \begin{subfigure}{.45\textwidth}
  \centering
  \begin{tikzpicture}

  \begin{scope}

  \fill[color=inside] (0,0) rectangle (1,0.5);
  \fill[color=inside] (2,0) rectangle (3,0.5);
  \fill[color=inside] (0,0.5) rectangle (3,1);
  \fill[color=inside] (1.5,1) rectangle (4,2);
  \fill[color=inside] (1.5,2) rectangle (2.5,3);
  \fill[color=inside] (3.5,2) rectangle (4,4);
  \fill[color=inside] (3,6) rectangle (4,5) rectangle (0,4) rectangle (1,3);
  \fill[color=inside] (0.5,3) rectangle (0,2);
  \fill[color=inside] (1,6) rectangle (2,5.5) rectangle (0,5);

  \draw[dashed] (0,0.5) -- (1,0.5);
  \draw[dashed] (2,0.5) -- (3,0.5);
  \draw[dashed] (1.5,1) -- (3,1);
  \draw[dashed] (1.5,2) -- (2.5,2);
  \draw[dashed] (3.5,2) -- (4,2);
  \draw[dashed] (3.5,4) -- (4,4);
  \draw[dashed] (3,5) -- (4,5);
  \draw[dashed] (1,5.5) -- (2,5.5);
  \draw[dashed] (0,5) -- (2,5);
  \draw[dashed] (0,4) -- (1,4);
  \draw[dashed] (0,3) -- (0.5,3);

  \draw  (3,1) -- (3,0) -- (2,0) -- (2,0.5) -- (1,0.5) -- (1,0) -- (0,0) -- (0,1) -- (1.5,1) -- (1.5,3) -- (2.5,3) -- (2.5,2) -- (3.5,2) -- (3.5,4) -- (1,4) -- (1,3) -- (0.5,3) -- (0.5,2) -- (0,2) -- (0,5.5) -- (1,5.5) -- (1,6) -- (2,6) -- (2,5) -- (3,5) -- (3,6) -- (4,6) -- (4,1) -- (3,1);

  \end{scope}

  \end{tikzpicture}
  \caption{$T$ does not have a corridor or a pocket, and it is not a path.}
  \end{subfigure}
  \caption{The 4 cases of the proof.}\label{fig:cases}
\end{figure}
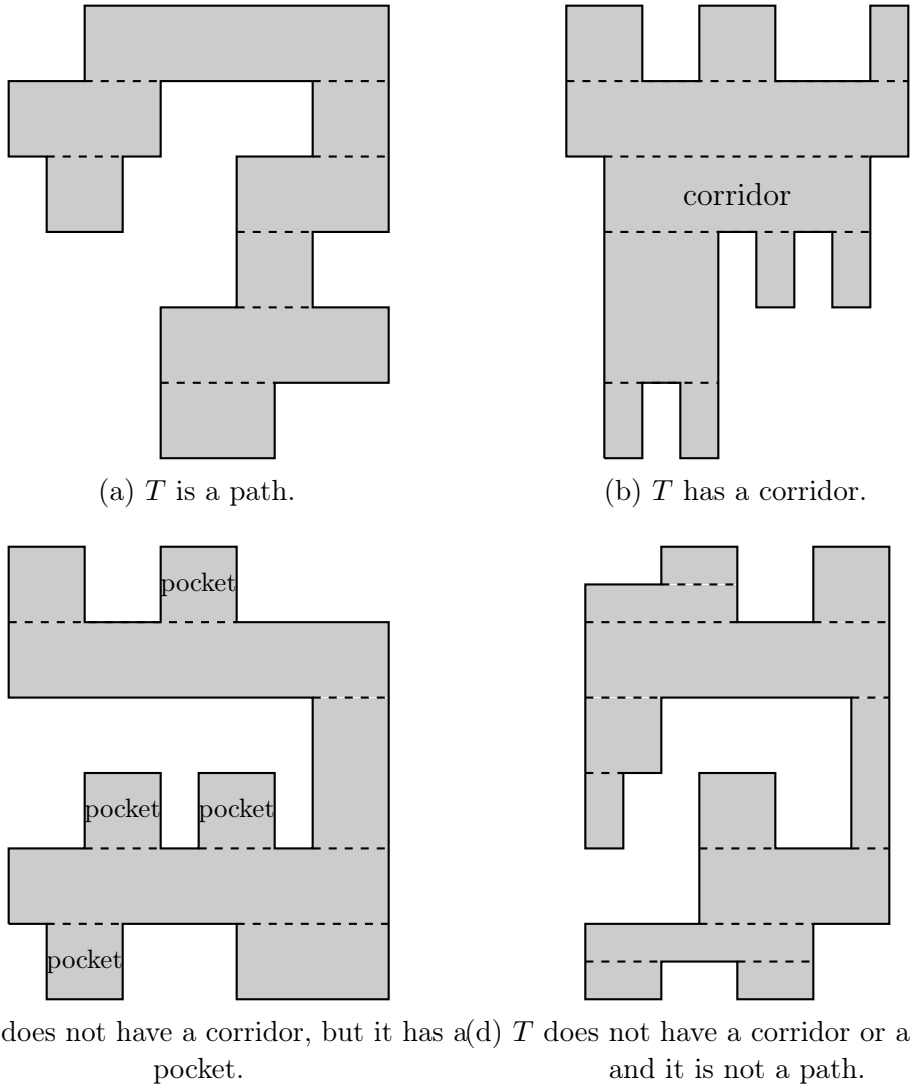

\case{\texorpdfstring{$T$}{T} is a path}\label{case:Tisapath}

\begin{claim}\label{claim:deg2doublecut}
  If an edge incident to a degree-2 vertex $R$ of $T$ is a
  2-cut of $D$, then the incident edges of $R$ form a good cut-system.
\end{claim}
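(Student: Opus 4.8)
The plan is to exhibit the two edges incident to $R$ directly as a good cut-system in the sense of \Fref{def:goodcutsystem}, using the degenerate option $L_2=L_3$ since we have only two cuts available. As $T$ is a path and $R$ has degree $2$, deleting $R$ splits $T$ into two nonempty subtrees; let $e_1$ be the edge joining $R$ to the subtree $A$ on one side and $e_2$ the edge joining $R$ to the subtree $B$ on the other. I fix the nesting by setting $L_1=e_1$ with $D_1^1$ the domain of $A$, and $L_2=L_3=e_2$ with $D_1^2=D_1^3$ the domain of $A\cup\{R\}$; then $D_1^1\subset D_1^2=D_1^3$, so the containment requirement of \Fref{def:goodcutsystem} holds. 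By \Fref{rem:invert} the choice of which side to call $A$ is immaterial.

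The only quantitative input needed is the size jump $n(D_1^2)-n(D_1^1)$. Applying \Fref{claim:tsize} to the horizontal $R$-trees of $D_1^1$ (which is the induced subtree $A$) and of $D_1^2$ (which is $A$ together with the node $R$ attached along $e_1$), the two trees differ by exactly one node and one edge, whence
\[ n(D_1^2)-n(D_1^1)=4+t(e_1). \]
Since every edge of $T$ represents a horizontal $1$- or $2$-cut, we have $t(e_1)\in\{-2,0\}$ (the $-4$ case of \Fref{rem:refine} does not arise here). Therefore $n(D_1^2)=n(D_1^1)+4$ when $e_1$ is a $2$-cut, and $n(D_1^2)=n(D_1^1)+2$ when $e_1$ is a $1$-cut.

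It then remains to read off the set of \Fref{def:goodcutsystem}, writing $a=n(D_1^1)$; by hypothesis at least one of $e_1,e_2$ is a $2$-cut, and I split into two possibilities. If $e_1$ is a $2$-cut, the base values contribute $a$ and $n(D_1^2)=a+4$, while the $2$-cut $L_1$ contributes the bonus value $a+2$, so the set contains $\{a,a+2,a+4\}$. If instead $e_1$ is a $1$-cut, then $e_2$ must be the $2$-cut, $n(D_1^2)=a+2$, and the base values $a,\,a+2$ together with the bonus value $n(D_1^2)+2=a+4$ from the $2$-cut $L_2=L_3=e_2$ again yield $\{a,a+2,a+4\}$. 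In either case these are three consecutive even integers, hence their residue classes form exactly a set $\{a,a+2,a+4\}+16\mathbb{Z}$, so the incident edges of $R$ constitute a good cut-system.

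I expect no geometric difficulty at all here; the tree-counting identity of \Fref{claim:tsize} does the genuine work, and the only point requiring care is the bookkeeping of the last paragraph, namely matching the base and bonus contributions of \Fref{def:goodcutsystem} against the two possible values of $t(e_1)$ and the hypothesis that some incident edge is a $2$-cut.
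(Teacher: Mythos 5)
Your proof is correct and takes essentially the same route as the paper's: set $L_1=e_1$, $L_2=L_3=e_2$, compute $n(D_1^2)=n(D_1^1)+4+t(e_1)$ by the tree-counting identity of \Fref{claim:tsize}, and read off three consecutive even values from \Fref{def:goodcutsystem} --- the paper tacitly relabels so that the 2-cut is $e_1$ (giving the set $\{a,a+2,a+4\}$ with the bonus $a+2$), whereas you treat both placements of the 2-cut explicitly, which is slightly more careful bookkeeping of the same computation. One small correction: drop the hypothesis that $T$ is a path --- the claim is invoked later in cases where $T$ is not a path, and your argument in fact uses only that $R$ has degree $2$ in the tree $T$, which already guarantees that deleting $R$ leaves exactly two nonempty subtrees.
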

\begin{proof}
  Let the two incident edges of $R$ be $e_1$ and $e_2$. Let their generated partitions be $e_1(D_1^1,D_2^1)$ and $e_2(D_1^2,D_2^2)$, such that $R\subseteq D_2^1\cap D_1^2$. Then $D_1^2=D_1^1\cup R$, so
  \begin{align*}
  n(D_1^2)=n(D_1^1)+n(R)+t(e_1)=n(D_1^1)+4.
  \end{align*}
  \Fref{def:goodcutsystem} is satisfied by $\{e_1,e_2\}$, as $\{n(D_1^1),n(D_1^2)\}\cup \{n(D_1^1)+2\}$ is a set of three consecutive even elements.
\end{proof}

\begin{claim}\label{claim:deg2path}
  If there are two rectangles $R_1$ and $R_2$ which are adjacent degree-2 vertices of $T$, then the union of the set of incident edges of $R_1$ and $R_2$ form a good cut-system.
\end{claim}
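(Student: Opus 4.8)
The plan is to realize the three incident edges as a nested triple of cuts and then verify the residue condition of \Fref{def:goodcutsystem} by a short case distinction on whether the two edges meeting inside $R_1\cup R_2$ are $1$- or $2$-cuts.

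First I would fix the combinatorial picture. Since $R_1$ and $R_2$ are both degree-$2$ and adjacent, write the relevant portion of the path as $A - R_1 - R_2 - B$, where $e_1=\{A,R_1\}$, $e_2=\{R_1,R_2\}$, $e_3=\{R_2,B\}$; because $T$ is a tree the neighbours $A$ and $B$ are distinct, so the union of the incident edges of $R_1$ and $R_2$ is the three-element set $\{e_1,e_2,e_3\}$. I would orient each cut so that the three ``first'' parts nest: let $D_1^1$ be the side of $e_1$ not containing $R_1$, and set $D_1^2=D_1^1\cup R_1$ and $D_1^3=D_1^2\cup R_2$ (the appropriate sides of $e_2$ and $e_3$). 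Then $D_1^1\subset D_1^2\subset D_1^3$, so the containment hypothesis of \Fref{def:goodcutsystem} holds automatically.

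Next I would compute the three sizes, exactly as in the proof of \Fref{claim:deg2doublecut}. Writing $m=n(D_1^1)$ and using \Fref{claim:tsize} (each step adds one rectangle and one edge),
\[ n(D_1^2)=m+4+t(e_1),\qquad n(D_1^3)=m+8+t(e_1)+t(e_2), \]
where $t(e_1),t(e_2)\in\{0,-2\}$ record whether the respective edge is a $2$-cut ($t=0$) or a $1$-cut ($t=-2$). I would then run the four cases for $(t(e_1),t(e_2))$ and check that the set
\[ S=\bigl\{n(D_1^i)\bigr\}_{i=1}^{3}\cup\bigl\{n(D_1^i)+2 : L_i\text{ is a }2\text{-cut}\bigr\} \]
always contains $\{m,m+2,m+4\}$. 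When both are $2$-cuts the raw sizes already give $m,m{+}4,m{+}8$ and the $+2$ terms supply $m{+}2,m{+}6$; when both are $1$-cuts the raw sizes are exactly $m,m{+}2,m{+}4$; and in the two mixed cases the single $2$-cut among $e_1,e_2$ contributes precisely the missing intermediate value ($m+2$ when $e_1$ is the $2$-cut, $m+4$ when $e_2$ is). In every case the three consecutive even numbers $m,m+2,m+4$ lie in $S$, so $\{e_1,e_2,e_3\}$ is a good cut-system.

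I do not anticipate a genuine obstacle here: the statement reduces to bookkeeping once the nesting is set up correctly. The only point demanding care is the mixed cases, where the three cut sizes alone are not consecutive, and one must verify that the augmentation term $n(D_1^i)+2$ attached to a $2$-cut lands exactly in the gap. It is also worth noting that the type of the outer edge $e_3$ never enters the argument, since $n(D_1^3)$ is already present as a raw size realizing the top of the consecutive block in each case.
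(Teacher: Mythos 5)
Your proof is correct and follows essentially the same route as the paper: the identical nested triple of cuts $e_1,e_2,e_3$ (the paper calls the third one $f$) with the part sizes computed via \Fref{claim:tsize}. The only difference is cosmetic: where the paper dispatches every case in which one of the three edges is a 2-cut to \Fref{claim:deg2doublecut} and computes only the all-1-cut case, you verify the four $(t(e_1),t(e_2))$ cases directly through the $+2$ augmentation terms of \Fref{def:goodcutsystem}, a harmless and arguably more self-contained variant.
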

\begin{proof}
  Let the two components of $T-R_1-R_2$ be $T_1$ and $T_2$, so that
  $e_1,e_2,f\in E(T)$ joins $T_1$ and $R_1$, $R_1$ and $R_2$, $R_2$ and $T_2$, respectively.
  Obviously, $\cup V(T_1)\subset (\cup V(T_1))\cup R_1 \subset (\cup V(T_1))\cup R_1\cup R_2$. If one of $\{e_1,e_2,f\}$ is a 2-cut, we are done by the previous claim. Otherwise
  \begin{align*}
  n((\cup V(T_1))\cup R_1)=n(\cup V(T_1))+n(R_1)+t(e_1)                     & =n(\cup V(T_1))+2, \\
  n((\cup V(T_1))\cup R_1\cup R_2)=n(\cup V(T_1))+n(R_1)+n(R_2)+t(e_1)+t(f) & =n(\cup V(T_1))+4,
  \end{align*}
  and so $\{n(\cup V(T_1)),n((\cup V(T_1))\cup R_1),n((\cup V(T_1))\cup R_1\cup R_2)\}$ are three consecutive even elements. This concludes the proof that $\{e_1,e_2,f\}$ is a good cut-system of $D$.
\end{proof}

Suppose $T$ is a path. If $T$ is a path of length $\le 3$, such that each edge of it is a 1-cut, then $n(D)\le 8$. Also, if $T$ is path of length 2 and its only edge represents a 2-cut, then $n(D)=8$. Otherwise, either \Fref{claim:deg2doublecut}, or \Fref{claim:deg2path} can be applied to provide a good cut-system~of~$D$.

\case{\texorpdfstring{$T$}{T} has a corridor}\label{case:corridors}

Let	$e=\{R',R\}\in E(T)$ be a horizontal 2-cut such that $R'$
is a wider rectangle than $R$, and $\deg(R)\ge 2$. Let the generated partition be $e(D_1^e,D_2^e)$ such that $R'\subseteq D_1^e$. We can handle all possible cases as follows.
\begin{enumerate}

  \item If $n(D_1^e)\not\equiv 4,10\bmod 16$ or $n(D_2^e)\not\equiv
        4,10\bmod 16$, then $e$ is a good cut by \Fref{lemma:tech:b}.

  \item If $\deg(R)=2$, we find a good cut using
        \Fref{claim:deg2doublecut}.

  \item If $R_{\textit{BL}}=\emptyset$, then $L(D_1^L,D_2^L)$ such that $R'\subseteq D_1^L$ in \Fref{fig:corridor_b} is a good cut, since $n(D_1^L)=n(D_1^e)+4-0\equiv 8,14 \bmod 16$.
        \begin{figure}[H]
        \centering
        \begin{tikzpicture}

        \begin{scope}

        \fill[color=inside] (-2.5,1) rectangle (1.5,2);
        \fill[color=inside] (-2,0) rectangle (1,1);

        \draw (1,0) -- (1,1) -- (1.5,1);

        \draw (-2.5,1) -- (-2,1) -- (-2,0) -- (-1,0) -- (-1,-0.5);

        \draw[dashed] (-1,0) -- (-1,1) node[rotate=45]{\ScissorHollowLeft} -- (1,1);

        \path (-0.5,1.5) node {$R'$} -- (0.3,1) node[anchor=north] {$L$} -- (-0.5,0.5) node {$R$};

        \end{scope}
        \end{tikzpicture}
        \caption{$L$ is a good cut}\label{fig:corridor_b}
        \end{figure}
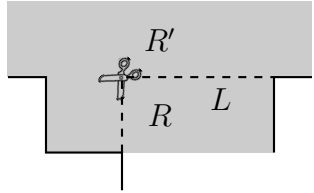

  \item If $R_{\textit{BL}}\neq\emptyset$ and $\deg(R)\ge 3$, then let us consider the following five cuts of $D$ (\Fref{fig:corridor_c}): $L_1(R_{\textit{BL}},R\cup D_1^e)$, $L_2(R_{\textit{BL}}\cup Q_1, Q_2\cup Q_3\cup D_1^e)$, $L_3(R_{\textit{BL}}\cup Q_1\cup Q_2,Q_3\cup D_1^e)$, $L_4(Q_3, R_{\textit{BL}}\cup Q_1\cup Q_2\cup D_1^e)$, and $L_5(Q_3\cup Q_2, R_{\textit{BL}}\cup Q_1\cup D_1^e)$.
        \begin{figure}[H]
        \centering
        \begin{tikzpicture}

        \begin{scope}

        \fill[color=inside] (-3,1) rectangle (2,2);
        \fill[color=inside] (-2,0) rectangle (1,1);
        \fill[color=inside] (-2,0) rectangle (-1,-1);

        \draw (1,0) -- (1,1) -- (2,1);

        \draw (-3,1) -- (-2,1) -- (-2,0);

        \draw (-1,-1) -- (-1,0) -- (0,0) -- (0,-1);

        \draw[dashed] (-2,1) -- (1,1);
        \draw[dashed] (-1,1) -- (-1,0);
        \draw[dashed] (0,1) -- (0,0);
        \draw[dashed] (-2,0) -- (-1,0);

        \path (-0.5,1.5) node {$R'$} -- (-1.5,-0.5) node {$R_{\textit{BL}}$} -- (-1.5,0.5) node{$Q_1$} -- (-0.5,0.5) node{$Q_2$} -- (0.5,0.5) node{$Q_3$};
        \end{scope}
        \end{tikzpicture}
        \caption{$\deg(R)\ge 3$ and $R_{\textit{BL}}\neq\emptyset$}\label{fig:corridor_c}
        \end{figure}
        The first pieces of these partitions have the following number of vertices (respectively).
        \begin{enumerate}
        \setlength{\baselineskip}{16pt}
        \item $n(R_{\textit{BL}})$
        \item $n(R_{\textit{BL}}\cup Q_1)=n(R_{\textit{BL}})+n(Q_1)+(t(e_{\textit{BL}}(R))-2)=n(R_{\textit{BL}})+t(e_{\textit{BL}}(R))+2$
        \item $n(R_{\textit{BL}}\cup Q_1\cup Q_2)=n(R_{\textit{BL}})+n(Q_1\cup Q_2)+t(e_{\textit{BL}}(R))=n(R_{\textit{BL}})+t(e_{\textit{BL}}(R))+4$
        \item $n(Q_3)$
        \item $n(Q_3\cup Q_2)=n(Q_3)+n(Q_2)-2=n(Q_3)+2$
        \end{enumerate}
        \begin{itemize}
        \item If $t(e_{\textit{BL}}(R))=0$, then $\{L_1,L_2,L_3\}$ is a good cut-system, so one
              of them is a good cut.

        \item If $t(e_{\textit{BL}}(R))=-2$, and none of the 5 cuts above are good cuts, then using \Fref{lemma:tech:b} on
              $L_2$ and $L_3$ gives ${n(R_{\textit{BL}})\equiv 4,10\bmod 16}$.
              The same argument can be used on $L_4$ and $L_5$ to conclude that
              $n(Q_3)\equiv 4,10\bmod 16$.
              However, previously we derived that
              \begin{align*}
                & n(D_2^e)\equiv 4,10\bmod 16,                                          \\
                & n(R_{\textit{BL}}\cup Q_1\cup Q_2\cup Q_3)=n(R_{\textit{BL}}\cup Q_1\cup Q_2)+n(Q_3)-2= \\
                & \qquad=n(R_{\textit{BL}})+n(Q_3)\equiv 4,10 \bmod 16.
              \end{align*}
              This is only possible if $n(R_{\textit{BL}})\equiv n(Q_3)\equiv 10\bmod 16$.
              Let $e_{\textit{BL}}(R)=\{R,S\}$.
              \begin{itemize}
              \item If $\deg(S)=2$, then let $E(T)\ni e'\neq e_{\textit{BL}}(R)$ be the other edge of $S$. Let the partition generated by it be $e'(D_1^{e'},D_2^{e'})$ such that $R'\subseteq D_1^{e'}$. We have
                    \begin{align*}
                    n(R_{\textit{BL}})   & =n(D_2^{e'})+n(S)+t(e')                   \\
                    n(D_2^{e'}) & =n(R_{\textit{BL}})-4-t(e')\equiv 6-t(e') \bmod 16
                    \end{align*}
                    Either $e'$ is a 1-cut, in which case $n(D_2^{e'})\equiv 8\bmod 16$, or $e'$ is a 2-cut, giving $n(D_2^{e'})\equiv 6\bmod 16$. In any case,
                    \Fref{lemma:tech} says that $e'$ is a good cut.

              \item If $\deg(S)=3$, then we can partition $D$ as in \Fref{fig:corridor_e}. Since $n(Q_5\cup Q_6)=4+n(Q_6)-2$, by \Fref{lemma:tech:a} the only case when neither
                    \begin{align*}
                      & L_6(Q_5\cup Q_6,Q_4\cup R\cup D_1^e)\text{, nor} \\
                      & L_7(Q_6,Q_4\cup Q_5\cup R\cup D_1^e)
                    \end{align*}
                    is a good cut of $D$ is when $n(Q_6)\equiv 4,10\bmod 16$. Also,
                    \begin{align*} 10\equiv n(Q_4\cup Q_5\cup Q_6) & =n(Q_4)+n(Q_5)+n(Q_6)-2-2\equiv \\
                                                    & \equiv n(Q_4)+n(Q_6)\bmod 16.
                    \end{align*}
                    \begin{itemize}
                    \item If $n(Q_6)\equiv 10\bmod 16$, then $n(Q_4)\equiv 0\bmod 16$, hence
                          \begin{align*}
                          n(Q_4\cup Q_5\cup Q_{11}\cup Q_{12}) & =                                 \\
                          =n(Q_4\cup Q_5)+4-4                  & =n(Q_4)+n(Q_5)-2\equiv 2\bmod 16,
                          \end{align*}
                          showing that $L_8(Q_4\cup Q_5\cup Q_{11}\cup Q_{12},Q_6\cup Q_{13}\cup Q_2\cup Q_3\cup D_1^e)$ is a good cut.
                    \item If $n(Q_6)\equiv 4\bmod 16$,
                          \begin{align*}
                          n(Q_6\cup Q_{13}\cup Q_2\cup Q_3) & =n(Q_6)+n(Q_{13}\cup Q_2)+n(Q_3)-2-2\equiv \\
                                                            & \equiv n(Q_6)+10\equiv 14\mod 16,
                          \end{align*}
                          therefore $L_9(Q_6\cup Q_{13}\cup Q_2\cup Q_3,Q_4\cup Q_5\cup Q_{11}\cup Q_{12}\cup D_1^e)$ is a good cut.
                    \end{itemize}
                    \begin{figure}[H]
                    \centering
                    \begin{tikzpicture}

                    \begin{scope}

                    \fill[color=inside] (-3,1) rectangle (4,2);
                    \fill[color=inside] (3,1) rectangle (-2,0) rectangle (1,-1);
                    \fill[color=inside] (-2,-1) rectangle (-1,-2);

                    \draw (3,0) -- (3,1) -- (4,1);

                    \draw (-3,1) -- (-2,1) -- (-2,-1);

                    \draw (1,-1) -- (1,0) -- (2,0) -- (2,-1);
                    \draw (-1,-2) -- (-1,-1) -- (0,-1) -- (0,-2);

                    \draw[dashed] (-2,1) -- (3,1);
                    \draw[dashed] (-1,-1) -- (-1,1);
                    \draw[dashed] (0,-1) -- (0,1);
                    \draw[dashed] (-2,0) -- (1,0);
                    \draw[dashed] (2,0) -- (2,1);
                    \draw[dashed] (1,0) -- (1,1);

                    \path (0.5,1.5) node {$R'$} -- (-1.5,0.5) node{$Q_{11}$} -- (-0.5,0.5) node {$Q_{12}$} -- (0.5,0.5) node {$Q_{13}$} -- (1.5,0.5) node {$Q_2$} -- (2.5,0.5) node {$Q_3$} -- (-1.5,-0.5) node {$Q_4$} -- (-0.5,-0.5) node {$Q_5$} -- (0.5,-0.5) node {$Q_6$};

                    \end{scope}
                    \end{tikzpicture}
                    \hspace{24pt}
                    \begin{tikzpicture}

                    \begin{scope}

                    \fill[color=inside] (-3,1) rectangle (4,2);
                    \fill[color=inside] (3,1) rectangle (-2,0) rectangle (1,-1);
                    \fill[color=inside] (-2,-1) rectangle (-1,-2);

                    \draw (3,0) -- (3,1) -- (4,1);

                    \draw (-3,1) -- (-2,1) -- (-2,-1);

                    \draw (1,-1) -- (1,0) -- (2,0) -- (2,-1);
                    \draw (-1,-2) -- (-1,-1) -- (0,-1) -- (0,-2);

                    \draw[dashed] (0,-1) -- (0,1) -- (3,1);

                    \path (0.12,0) node[rotate=90]{\ScissorHollowLeft$_{L{_9}}$};

                    \path (0.5,1.5) node {$R'$};

                    \end{scope}
                    \end{tikzpicture}
                    \caption{$\deg(Q)\ge 3$ and $Q_{\textit{BL}}\neq\emptyset$}\label{fig:corridor_e}
                    \end{figure}
                    In each of the above subcases we found a good cut.
              \end{itemize}
        \end{itemize}

\end{enumerate}

\case{There are no corridors in \texorpdfstring{$T$}{T}, but there is a pocket}\label{case:pockets}

Let $S$ be a (horizontal) pocket. Also, let $R$ be the neighbor of $S$ in $T$. If $\deg(R)=2$, then \Fref{claim:deg2doublecut} provides a good cut-system of $D$. However, if $\deg(R)\ge 3$, we have two cases.

\subcase{If \texorpdfstring{$R$}{R} is adjacent to at least two pockets}
Let $U$ be the union of $R$ and its adjacent pockets, and let $T_U$ be its \textbf{vertical} $R$-tree. It contains at least $4$ reflex vertices, therefore $|V(T_U)|\ge 3$.
\begin{itemize}
  \item If $V(T_U)=3$, then $|E(T_U)|=2$. Thus $t(e)=0$ for any $e\in E(T_U)$, and \Fref{claim:deg2doublecut} gives a good cut-system $\mathcal{L}$ of $U$ such that all 4 vertices of $R$ are contained in $\ker\mathcal{L}$.
  \item If $V(T_U)\ge 4$, then \Fref{claim:deg2path} gives a good cut-system $\mathcal{L}$ of $U$ such that all 4 vertices of $R$ are contained in $\ker\mathcal{L}$.
\end{itemize}
Since there are no corridors in $D$, we have
\[ D=\Big(\big((U\cup R_{\textit{BL}})\cup R_{\textit{TL}}\big)\cup R_{\textit{BR}}\Big)\cup R_{\textit{TR}}. \]
By applying \Fref{lemma:extendconsecutives} repeatedly, the good cut-system $\mathcal{L}$ can be extended to a good cut-system of $D$.

\subcase{If \texorpdfstring{$S$}{S} is the only pocket adjacent to \texorpdfstring{$R$}{R}}
We may assume without loss of generality that $S$ intersects the top side of $R$. Again, define $U$ as the union of $R$ and its adjacent pockets.

\begin{itemize}
  \item If $R_{\textit{TL}}\neq\emptyset$, let $V=U\dotcup R_{\textit{TL}}$. The cut-system $\{L_1,L_2,L_3\}$ in \Fref{fig:pockets1} is a good cut-system of $V$, and all 4 vertices of $R$ are contained in $\ker \{L_1,L_2,L_3\}$.
        By applying \Fref{lemma:extendconsecutives} repeatedly, we get a good cut-system of $D$, for example, see \Fref{fig:pockets2}.
        \begin{figure}[H]
        \centering
        \begin{subfigure}{0.55\textwidth}
        \centering
        \begin{tikzpicture}

        \begin{scope}

        \fill[color=insidelight] (0,0) rectangle (5,2);
        \fill[color=inside] (0,2) rectangle (2,3);
        \fill[color=inside] (3,2) rectangle (4,3);

        \draw[very thin] (0,2) -- (4,2);

        \draw (0,2) -- (0,0) -- (5,0) -- (5,2) -- (4,2);
        \draw (2,3) -- (2,2) -- (3,2) -- (3,3) -- (4,3) -- (4,2);

        \draw[dashed] (2,2) -- (2,0);
        \draw[dashed] (3,2) -- (3,0);
        \draw[dashed] (4,2) -- (4,0);

        \path (1,1) node {$R$} -- (1,2.5) node {$R_{\textit{TL}}$} -- (3.5,2.5) node {$S$};

        \path	(4-0.09,1) node[rotate=-90]{\ScissorHollowLeft$_{L_1}$};
        \path (3-0.09,1) node[rotate=-90]{\ScissorHollowLeft$_{L_2}$};
        \path (2-0.09,1) node[rotate=-90]{\ScissorHollowLeft$_{L_3}$};
        \filldraw (5,2) circle (2pt) node[anchor=west] {$v_{\textit{TR}}(R)$};

        \end{scope}
        \end{tikzpicture}
        \caption{$\{L_1,L_2,L_3\}$ is a good cut-system of $V=R\cup R_{\textit{TL}}\cup S$}\label{fig:pockets1}
        \end{subfigure}
        \begin{subfigure}{0.35\textwidth}
        \centering
        \begin{tikzpicture}

        \begin{scope}

        \fill[color=inside] (0,0) rectangle (5,2);
        \fill[color=inside] (0,2) rectangle (2,3);
        \fill[color=inside] (3,2) rectangle (4,3);
        \fill[color=inside] (3.5,0) rectangle (5,-1);
        \fill[color=inside] (0,0) rectangle (1,-1);

        \draw (0,2) -- (0,0);
        \draw (1,-1) -- (1,0) -- (3.5,0) -- (3.5,-1);
        \draw (5,0) -- (5,2) -- (4,2);
        \draw (2,3) -- (2,2) -- (3,2) -- (3,3) -- (4,3) -- (4,2);

        \draw[dashed] (2,2) -- (2,0);
        \draw[dashed] (3,2) -- (3,0);
        \draw[dashed] (4,2) -- (4,0) -- (3.5,0);


        \path	(4-0.1,1) node[rotate=-90]{\ScissorHollowLeft$_{L_1}$};
        \path (3-0.1,1) node[rotate=-90]{\ScissorHollowLeft$_{L_2}$};
        \path (2-0.1,1) node[rotate=-90]{\ScissorHollowLeft$_{L_3}$};

        \end{scope}
        \end{tikzpicture}
        \caption{The extended cuts}\label{fig:pockets2}
        \end{subfigure}
        \caption{$R$ has one pocket}
        \end{figure}
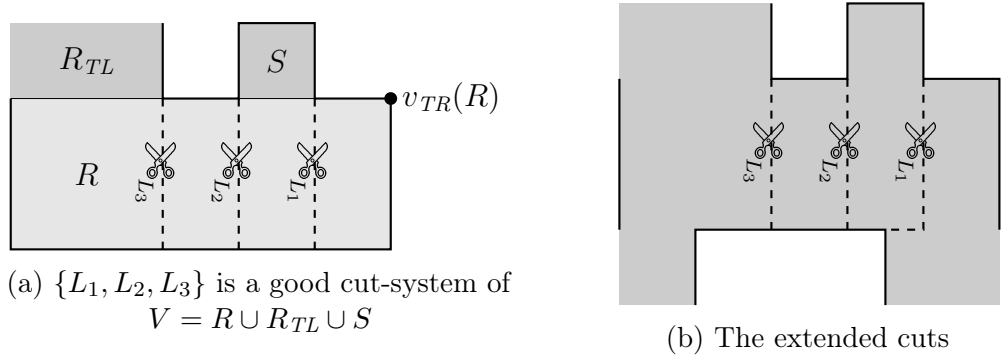
  \item If $R_{\textit{TR}}\neq\emptyset$, the case can be solved analogously to the previous case.
  \item Otherwise $R_{\textit{BL}}\neq\emptyset$ and $R_{\textit{BR}}\neq\emptyset$. Let $L_1(U_1^1,U_2^1)$ and $L_2(U_1^2,U_2^2)$ be the vertical cuts (from right to left) defined by the two reflex vertices of $U$, such that $v_{\textit{BR}}(R)\in U_1^1\subset U_1^2$. Let $V=R_{\textit{BL}}\dotcup U$.
        As before, $L_1$ and $L_2$ can be extended to cuts of $V$, say $L_1'(U_1^1,V_2^1)$, $L_2'(U_1^2,V_2^2)$. We claim that together with $e_{\textit{BL}}(R)(U,V_2^3)$, they form a good cut-system $\mathcal{L}$ of $V$. This is obvious, as $\{n(U_1^1),n(U_1^2),n(U)\}=\{4,6,8\}$.
        Since $v_{\textit{BR}}(R)\in \ker\mathcal{L}$, $D$ also has a good cut-system by \Fref{lemma:extendconsecutives}.
\end{itemize}

\case{\texorpdfstring{$T$}{T} is not a path and it does not contain either corridors or pockets}\label{case:twisted}

By the assumptions of this case, any two adjacent rectangles are adjacent at one of their vertices, so the maximum degree in $T$ is 3 or 4. We distinguish between several subcases.

\medskip

\begin{description}
  \itemsep0em
  \item[\Fref{case:toporbotcontained}.] There exists a rectangle of degree \texorpdfstring{$\ge 3$}{\textge 3} such that its top or bottom side is entirely contained in one of its neighboring rectangles;
  \item[\Fref{case:last}.] Every rectangle of degree \texorpdfstring{$\ge 3$}{\textge 3} is such that its top and bottom sides are not entirely contained in any of their neighboring rectangles;
  \vspace{-\topsep}
  \begin{description}
  \item[\Fref{case:last1}.] There exist at least two rectangles of degree \texorpdfstring{$\ge 3$}{\textge 3};
  \item[\Fref{case:last2}.] There is exactly one rectangle of degree \texorpdfstring{$\ge 3$}{\textge 3}.
  \end{description}
\end{description}

\medskip

\subcase{There exists a rectangle of degree \texorpdfstring{$\ge 3$}{\textge 3} such that its top or bottom side is entirely contained in one of its neighboring rectangles}\label{case:toporbotcontained}
Let $R$ be a rectangle and $R'$ its neighbor, such that the top or bottom side of $R$ is a subset of $\partial R'$. Moreover, choose $R$ such that if we partition $D$ by cutting $e=\{R,R'\}$, the part containing $R$ is minimal (in the set theoretic sense).

\medskip

Without loss of generality, the top side of $R$ is contained entirely by a neighboring rectangle $R'$, and $R_{\textit{TL}}=\emptyset$.
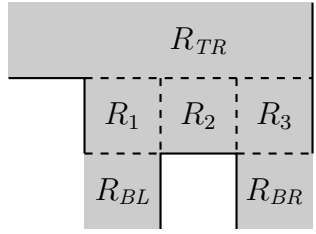
\begin{figure}
  \centering
  \begin{tikzpicture}

  \begin{scope}

  \fill[color=inside] (-3,1) rectangle (1,2);
  \fill[color=inside] (-2,0) rectangle (1,1);
  \fill[color=inside] (-2,0) rectangle (-1,-1);
  \fill[color=inside] (0,0) rectangle (1,-1);
  \fill[color=white] (1,0) rectangle (2,1);

  \draw (1,0) -- (1,2);

  \draw (-3,1) -- (-2,1) -- (-2,0);

  \draw (-1,-1) -- (-1,0) -- (0,0) -- (0,-1);

  \draw[dashed] (-2,1) -- (1,1);
  \draw[dashed] (-1,1) -- (-1,0);
  \draw[dashed] (0,1) -- (0,0);
  \draw[dashed] (-2,0) -- (-1,0);
  \draw[dashed] (0,0) -- (1,0);

  \path (-0.5,1.5) node {$R_{\textit{TR}}$} -- (-1.5,-0.5) node {$R_{\textit{BL}}$} -- (0.5,-0.5) node {$R_{\textit{BR}}$} -- (-1.5,0.5) node {$R_1$} -- (-0.5,0.5) node {$R_2$} -- (0.5,0.5) node {$R_3$};
  \end{scope}
  \end{tikzpicture}
  \caption{The top side of $R=R_1\cup R_2\cup R_3$ is contained entirely by a neighboring rectangle.}\label{fig:topside}
\end{figure}
This is pictured in \Fref{fig:topside}, where $R=R_1\cup R_2\cup R_3$. We can cut off $R_{\textit{BL}}$, $R_{\textit{BL}}\cup R_1$, and $R_{\textit{BL}}\cup R_1\cup R_2$, whose number of vertices are respectively

\medskip

\begin{enumerate}
  \item $n(R_{\textit{BL}})$,
  \item $n(R_{\textit{BL}}\cup R_1)=n(R_{\textit{BL}})+n(R_1)+(t(e_{\textit{BL}}(R))-2)=n(R_{\textit{BL}})+t(e_{\textit{BL}}(R))+2$,
  \item $n(R_{\textit{BL}}\cup R_1\cup R_2)=n(R_{\textit{BL}})+n(R_1\cup R_2)+t(e_{\textit{BL}}(R))=n(R_{\textit{BL}})+t(e_{\textit{BL}}(R))+4$.
\end{enumerate}
If $t(e_{\textit{BL}}(R))=0$, then one of the 3 cuts is a good cut by \Fref{lemma:tech:a}.

\medskip

Otherwise $t(e_{\textit{BL}}(R))=-2$, thus, one of the 3 cuts is a good cut, or $n(R_{\textit{BL}})\equiv 4,10\bmod 16$. Let $S$ be the rectangle for which $e_{\textit{BL}}(R)=\{R,S\}$. Since $e_{\textit{BL}}(R)$ is a 1-cut containing the top side of $S$, we cannot have $\deg(S)=3$, as it contradicts the choice of $R$. We distinguish between two cases.
\subsubcase{$\deg(S)=1$}
Let $U=R'\cup R\cup R_{\textit{BL}}\cup R_{\textit{BR}}$, which is depicted on \Fref{fig:topside3cuts:before}. It is easy to see that $L_1(Q_1,U_2^1)$, $L_2(Q_1\cup Q_2,U_2^2)$, and $L_3(Q_1\cup Q_2\cup Q_3,U_2^3)$ in \Fref{fig:topside3cuts:after} is a good cut-system of $U$.
\begin{figure}[H]
  \centering
  \begin{subfigure}{.40\textwidth}
  \centering
  \begin{tikzpicture}

  \begin{scope}

  \fill[color=inside] (-3,1) rectangle (1,3);
  \fill[color=inside] (-2,0) rectangle (1,1);
  \fill[color=inside] (-2,0) rectangle (-1,-1);
  \fill[color=inside] (0,0) rectangle (1,-1);

  \draw (1,0) -- (1,3) -- (-3,3) -- (-3,1) -- (-2,1) -- (-2,-1) -- (-1,-1) -- (-1,0) -- (0,0) -- (0,-1);

  \draw[dashed] (-2,1) -- (1,1);
  \draw[dashed] (-1,0) -- (-2,0);
  \draw[dashed] (0,0) -- (1,0);

  \path (-1.5,-0.5) node {$S$} -- (0.5,-0.5) node {$R_{\textit{BR}}$} -- (-0.5,0.5) node {$R$} -- (-0.5,2.0) node {$R'$};
  \end{scope}

  \end{tikzpicture}
  \caption{}\label{fig:topside3cuts:before}
  \end{subfigure}%
  \begin{subfigure}{.40\textwidth}
  \centering
  \begin{tikzpicture}

  \begin{scope}

  \fill[color=inside] (-3,1) rectangle (1,3);
  \fill[color=inside] (-2,0) rectangle (1,1);
  \fill[color=inside] (-2,0) rectangle (-1,-1);
  \fill[color=inside] (0,0) rectangle (1,-1);

  \draw (1,0) -- (1,3) -- (-3,3) -- (-3,1) -- (-2,1) -- (-2,-1) -- (-1,-1) -- (-1,0) -- (0,0) -- (0,-1);

  \draw[dashed] (-2,3) -- (-2,1);
  \draw[dashed] (-1,3) -- (-1,0);
  \draw[dashed] (0,3) -- (0,0) -- (1,0);

  \path (0.5,-0.5) node {$R_{\textit{BR}}$} -- (-1.5,1) node {$Q_2$} -- (-0.5,1) node {$Q_3$} -- (0.5,1) node {$Q_4$} -- (-2.5,2.5) node {$Q_1$};

  \path (-2+0.11,2) node[rotate=+90]{\ScissorHollowLeft$_{L_1}$};
  \path (-1+0.11,2) node[rotate=+90]{\ScissorHollowLeft$_{L_2}$};
  \path (0+0.11,2) node[rotate=+90]{\ScissorHollowLeft$_{L_3}$};

  \end{scope}
  \end{tikzpicture}
  \caption{}\label{fig:topside3cuts:after}
  \end{subfigure}
  \caption{The rectilinear domain $U$ is shown in~(a). The cuts $L_1$, $L_2$, $L_3$, shown in~(b), form a good-cut system of $U$.}\label{fig:topside3cuts}
\end{figure}
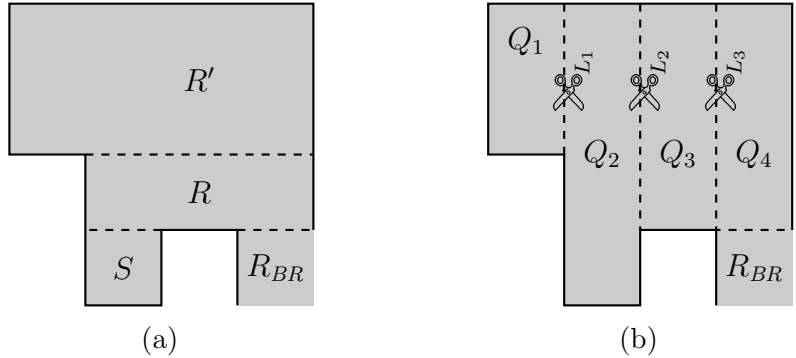
As all 4 vertices of $S$ are contained in $\ker\{L_1,L_2,L_3\}$, we can extend this good cut-system to $D$ by reattaching $S_{\textit{TL}}$, $S_{\textit{BL}}$, $S_{\textit{TR}}$ (if non-empty) via \Fref{lemma:extendconsecutives}. Therefore, $D$ has a good cut.

\subsubcase{$\deg(S)=2$}
Let $f$ be the edge of $S$ which is different from $e_{\textit{BL}}(R)=e_{\textit{TL}}(S)$. Let the partition generated by it be $f(D_1^f,D_2^f)$, where $S\subseteq D_2^f$. We have $n(D_1^f)=n(R_{\textit{BL}})-n(S)-t(f)$.
\begin{itemize}
  \item If $t(f)=-2$, then $n(D_1^f)\equiv 2,8\bmod 16$, so $f$ is a good cut by \Fref{lemma:tech:a}.
  \item If $t(f)=0$, then $n(D_1^f)\equiv 0,6\bmod 16$, so $f$ is a good cut by \Fref{lemma:tech:b}.
\end{itemize}

\subcase{Every rectangle of degree \texorpdfstring{$\ge 3$}{\textge 3} is such that its top and bottom sides are not entirely contained in any of their neighboring rectangles}\label{case:last}

Let $R$ be a rectangle of degree $\ge 3$ and $e=\{R,S\}$ be one of its edges. Let the partition generated by $e$ be $e(D_1^e,D_2^e)$, where $R\subset D_1^e$ and $S\subseteq D_2^e$. If $e$ is a 1-cut, then by the assumptions of this case $\deg(S)\le 2$.
\begin{itemize}
  \item If $\deg(S)=1$ and $t(e)=-2$, then $n(D_2^e)+t(e)=2$.
  \item If $\deg(S)=1$ and $t(e)=0$, then $n(D_2^e)+t(e)=4$.
  \item If $\deg(S)=2$ and one of the edges of $S$ is a 0-cut, then $D$ has a good cut by \Fref{claim:deg2doublecut}.
  \item If $\deg(S)=2$ and both edges of $S$, $e$ and (say) $f$ are 1-cuts:
        Let the partition generated by $f$ be $D=D_1^f\dotcup D_2^f$, such that $S\in D_1^f$. Then
        $n(D_2^e)=n(D_2^f)+n(S)+t(f)=n(D_2^f)+2$. Either one of $e$ and $f$ is a good cut, or by \Fref{lemma:tech:a} we have $n(D_2^f)\equiv 4,10\mod 16$. In other words,
        $n(D_2^e)+t(e)\equiv 4,10\mod 16$. Similarly, $n(D_1^f)=n(D_1^e)+4-2=n(D_1^e)+2$, so $n(D_1^e)\equiv 4,10\mod 16$.
  \item If $\deg(S)\ge 3$, then $t(e)=0$. Either $e$ is a good cut, or by \Fref{lemma:tech:b} we have $n(D_2^e)+t(e)\equiv 4,10\mod 16$. \Fref{lemma:tech:b} also implies $n(D_1^e)\equiv 4,10\mod 16$.
\end{itemize}
From now on, we assume that none of the edges of the neighbors of a degree $\ge 3$ rectangle represent a good cut, so in particular, we have
\[ n(D_2^e)+t(e)\equiv 2,4,\text{ or }10\mod 16.\]

In addition to the simple analysis we have just conducted, we deduce an easy claim to be used in the following subcases.

\begin{claim}\label{claim:opposite}
  Let $R\in V(T)$ be of degree $\ge 3$ and suppose both $R_{\textit{BR}}\neq\emptyset$ and $R_{\textit{TR}}\neq\emptyset$. Then $D$ has two admissible cuts $L_1$ and $L_2$ such that they form a good cut-system or
  \begin{center}
  \begin{tabular}{rl}
  \textbf{(i)}  & \text{one of the parts generated by $L_1$ has size }                                                                                                            \\
                & $\Big(n(R_{\textit{BR}})+t(e_{\textit{BR}}(R))\Big)+\Big(n(R_{\textit{TR}})+t(e_{\textit{TR}}(R))\Big)+2$,\makeatletter\def\@currentlabel{\arabic{theorem}(i)}\label{claim:opposite:2}\makeatother  \\
  \mbox{}       & \textbf{\quad and }                                                                                                                                             \\
  \textbf{(ii)} & \text{one of the parts generated by $L_2$ has size }                                                                                                            \\
                & $\Big(n(R_{\textit{BR}})+t(e_{\textit{BR}}(R))\Big)+\Big(n(R_{\textit{TR}})+t(e_{\textit{TR}}(R))\Big)+4$.\makeatletter\def\@currentlabel{\arabic{theorem}(ii)}\label{claim:opposite:4}\makeatother
  \end{tabular}
  \end{center}
\end{claim}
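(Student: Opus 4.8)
Throughout, set $R=[0,w]\times[0,h]$ and write $X=\big(n(R_{\textit{BR}})+t(e_{\textit{BR}}(R))\big)+\big(n(R_{\textit{TR}})+t(e_{\textit{TR}}(R))\big)$, so the two target sizes are $X+2$ and $X+4$. Let $p_{\textit{TR}}$ and $p_{\textit{BR}}$ be the reflex vertices of $D$ at the inner endpoints of $e_{\textit{TR}}(R)$ and $e_{\textit{BR}}(R)$; by the hypothesis of \Fref{case:last} neither horizontal side of $R$ lies entirely in a neighbour, so these are genuine vertices on the top and bottom sides of $R$. Reflecting across a horizontal line if necessary, assume $p_{\textit{TR}}$ is weakly to the left of $p_{\textit{BR}}$, and write $x(\cdot)$ for $x$-coordinates. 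The plan is to produce $L_1$ and $L_2$ as admissible cuts detaching from the right side of $R$ a region $R_{\textit{TR}}\cup R^{*}\cup R_{\textit{BR}}$, where $R^{*}$ is a vertical strip of $R$, and to evaluate $n(R_{\textit{TR}}\cup R^{*}\cup R_{\textit{BR}})$ by the additivity of \Fref{claim:tsize} and \Fref{rem:refine}, using $n(R^{*})=4$ and $n(A\cup B)=n(A)+n(B)+t(\{A,B\})$ at each gluing.

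Consider first the representative case where $e_{\textit{TR}}(R)$ and $e_{\textit{BR}}(R)$ are both $1$-cuts, so the right sides of $R_{\textit{TR}},R_{\textit{BR}}$ are flush with that of $R$. For $L_1$ take the L-cut whose vertical arm runs along $x=x(p_{\textit{BR}})$ from $p_{\textit{BR}}$ to the top of $R$ and whose horizontal arm runs left to $p_{\textit{TR}}$; this arm lies in $\mathrm{int}(D)$ because $R_{\textit{TR}}$ covers all of $[x(p_{\textit{TR}}),w]$ above $R$, and both endpoints are reflex, so $L_1$ is admissible. Here $R^{*}=[x(p_{\textit{BR}}),w]\times[0,h]$ is the minimal right strip: its bottom edge is covered exactly by $R_{\textit{BR}}$, so the two merge (a $t=-4$ edge in the refined tree of \Fref{rem:refine}) and the bottom gluing contributes $t(e_{\textit{BR}}(R))-2$, whereas on top $R_{\textit{TR}}$ merely overhangs $R^{*}$ and the top gluing contributes $t(e_{\textit{TR}}(R))$. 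Hence $n(R_{\textit{TR}}\cup R^{*}\cup R_{\textit{BR}})=4+n(R_{\textit{TR}})+n(R_{\textit{BR}})+t(e_{\textit{TR}}(R))+\big(t(e_{\textit{BR}}(R))-2\big)=X+2$, which is~\ref{claim:opposite:2}.

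For $L_2$ widen the strip to the reflex vertex $p$ of $R$'s boundary immediately to the left of $p_{\textit{TR}}$ — the right endpoint of an $e_{\textit{TL}}(R)$- or $e_{\textit{BL}}(R)$-type cut of the third neighbour, which exists since $\deg(R)\ge 3$ while $R_{\textit{TR}},R_{\textit{BR}}$ are the rightmost neighbours. Cutting $D$ through $p$ by the corresponding straight $1$-cut or L-cut detaches $R_{\textit{TR}}\cup R^{**}\cup R_{\textit{BR}}$ with $R^{**}=[x(p),w]\times[0,h]$. Since $x(p)<x(p_{\textit{TR}})\le x(p_{\textit{BR}})$, the strip $R^{**}$ properly contains the supports of both cuts and so merges with neither $R_{\textit{TR}}$ nor $R_{\textit{BR}}$; both gluings now contribute $t(e_{\textit{TR}}(R))$ and $t(e_{\textit{BR}}(R))$, giving $n(R_{\textit{TR}}\cup R^{**}\cup R_{\textit{BR}})=4+n(R_{\textit{TR}})+n(R_{\textit{BR}})+t(e_{\textit{TR}}(R))+t(e_{\textit{BR}}(R))=X+4$, which is~\ref{claim:opposite:4}. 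In the exceptional alignment where the only available left reflex vertex sits exactly at $x(p_{\textit{TR}})$, every admissible widening forces a merge and stays at $X+2$; there I instead check directly that the sizes at hand realize three consecutive even residues modulo $16$ in the sense of \Fref{def:goodcutsystem}, so that $\{L_1,L_2\}$ is already a good cut-system, the first alternative of the claim.

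The main obstacle is the merge bookkeeping once $e_{\textit{TR}}(R)$ or $e_{\textit{BR}}(R)$ is a $2$-cut: then $R_{\textit{TR}}$ or $R_{\textit{BR}}$ overhangs $R$ on the right, the flush-edge merge disappears, and one must recheck, for each position of $p_{\textit{TR}},p_{\textit{BR}}$ and each $1$-/$2$-cut assignment, that exactly one merge occurs for the narrow strip (the $+2$) and none for the wide one (the $+4$) — occasionally swapping which of $L_1,L_2$ plays which role. One must also confirm that the detaching curves are genuine admissible cuts with reflex endpoints rather than chords running along exposed parts of $\partial D$. The delicate configurations are precisely those in which a required L-arm would lie on the boundary; these I expect to route into the good-cut-system alternative, after which \Fref{lemma:tech:a} and~\ref{lemma:tech:b} deliver a good cut.
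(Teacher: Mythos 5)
Your $L_1$ and its bookkeeping agree with the paper's: the paper also detaches the strip $[x(p_{\textit{BR}}),w]\times[0,h]$ together with $R_{\textit{BR}}$ and $R_{\textit{TR}}$ and computes $4+n(R_{\textit{BR}})+n(R_{\textit{TR}})+\big(t(e_{\textit{BR}}(R))-2\big)+t(e_{\textit{TR}}(R))=X+2$. The difference is in how the cuts are produced. The paper works inside $U=R\cup R_{\textit{BL}}\cup R_{\textit{BR}}$, where the top of $R$ is flat boundary, takes the two \emph{vertical} cuts of $U$ through the two bottom reflex vertices (the inner endpoints of the cuts to $R_{\textit{BR}}$ and $R_{\textit{BL}}$), and then reattaches $R_{\textit{TR}}$ and finally $R_{\textit{TL}}$ via \Fref{lemma:extend}; whether the resulting cut of $D$ is straight or L-shaped, and whether an arm would run along $\partial D$, never needs to be inspected.

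The genuine gap is your $L_2$. You claim a reflex vertex $p$ with $x(p)<x(p_{\textit{TR}})$ exists ``since $\deg(R)\ge 3$.'' This holds when the third neighbour is $R_{\textit{TL}}$ (its cut is disjoint from the $e_{\textit{TR}}(R)$ cut on the top side), but fails when it is $R_{\textit{BL}}$: the only constraint is $x(p_{\textit{BL}})<x(p_{\textit{BR}})$, so the configuration $x(p_{\textit{TR}})<x(p_{\textit{BL}})<x(p_{\textit{BR}})$ is legitimate --- e.g.\ $R=[0,10]\times[0,1]$ with the $\textit{TR}$-cut along $[2,10]$ on top and the $\textit{BL}$- and $\textit{BR}$-cuts along $[0,4]$ and $[6,10]$ on the bottom, which is consistent with your normalization and with the ambient assumptions of \Fref{case:last}. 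There no vertex left of $p_{\textit{TR}}$ exists and your $R^{**}$ is undefined, while the claim is still true: the second cut must go through $p_{\textit{BL}}$ even though it lies \emph{between} $p_{\textit{TR}}$ and $p_{\textit{BR}}$ --- the L-cut from $p_{\textit{BL}}$ up and left to $p_{\textit{TR}}$ detaches $[x(p_{\textit{BL}}),w]\times[0,h]\cup R_{\textit{BR}}\cup R_{\textit{TR}}$, and since the $\textit{BR}$-cut lies strictly inside the strip's bottom and the strip's top lies strictly inside the $\textit{TR}$-cut, both gluings are generic and the size is $X+4$. The paper's recipe yields this automatically because its $L_2$ is \emph{always} the vertical cut of $U$ through $p_{\textit{BL}}$, with no positional hypothesis relative to $p_{\textit{TR}}$. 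Your remaining ``I expect'' items --- the alignments $x(p)=x(p_{\textit{TR}})$ and $x(p_{\textit{TR}})=x(p_{\textit{BR}})$ (the latter is inside your ``weakly left'' normalization, yet your $X+2$ computation tacitly assumes strict inequality) --- are exactly the paper's explicitly treated branch ``one of $L_1'$, $L_2'$ is a 2-cut,'' resolved as in \Fref{claim:deg2doublecut} into the good-cut-system alternative, as you guessed but did not verify. Your worry about 2-cut merge bookkeeping, on the other hand, is unfounded: the $-2$ at the bottom gluing of the narrow strip comes from the cut passing through the reflex vertex $p_{\textit{BR}}$ (cf.\ \Fref{rem:refine} and \Fref{claim:tsize}), not from flushness, so the accounting is uniform over the 1-/2-cut assignments.
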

\begin{proof}
  Let $U=R\cup R_{\textit{BL}}\cup R_{\textit{BR}}$. Let $L_1(U_1^1,U_2^1)$ and $L_2(U_1^2,U_2^2)$ be the vertical cuts of $U$
  defined by the two reflex vertices of $U$ that are on the boundary of $R$, such that $v_{\textit{BR}}(R)\in U_1^1\subset U_1^2$.
  By \Fref{lemma:extend}, $L_1$ and $L_2$ can be extended to cuts of $V=R\cup R_{\textit{BL}}\cup R_{\textit{BR}}\cup R_{\textit{TR}}$, say $L_1'(V_1^1,U_2^1)$, $L_2'(V_1^2,U_2^2)$.
  If one of $L_1'$ or $L_2'$ is a 2-cut, then similarly to \Fref{claim:deg2doublecut}, one can verify they form a good cut-system of $V$, which we can extend to $D$. Otherwise
  \begin{align*}
  n(V_1^1) & =n(R\cap U_1^1)+n(R_{\textit{BR}})+n(R_{\textit{TR}})+(t(e_{\textit{BR}}(R))-2)+t(e_{\textit{TR}}(R))=    \\
           & =\Big(n(R_{\textit{BR}})+t(e_{\textit{BR}}(R))\Big)+\Big(n(R_{\textit{TR}})+t(e_{\textit{TR}}(R))\Big)+2, \\
  n(V_1^2) & =n(R\cap U_1^2)+n(R_{\textit{BR}})+n(R_{\textit{TR}})+t(e_{\textit{BR}}(R))+t(e_{\textit{TR}}(R))=        \\
           & =\Big(n(R_{\textit{BR}})+t(e_{\textit{BR}}(R))\Big)+\Big(n(R_{\textit{TR}})+t(e_{\textit{TR}}(R))\Big)+4.
  \end{align*}
  Lastly, we extend $L_1'$ and $L_2'$ to $D$ by reattaching $R_{\textit{TL}}$ using \Fref{lemma:extend}. This step does not affect the parts $V_1^1$ and $V_1^2$, so we are done.
\end{proof}

\subsubcase{There exist at least two rectangles of degree \texorpdfstring{$\ge 3$}{\textge 3}}\label{case:last1}
In the subgraph $T'$ of $T$ which is the union of all paths of $T$ which connect two degree $\ge 3$ rectangles, let $R$ be a leaf and $e=\{R,S\}$ its edge in the subgraph. As defined in the beginning of \Fref{case:last}, the set of incident edges of $R$ (in $T$) is $\{e_i\ |\ 1\le i\le \deg(R)\}$, and without loss of generality we may suppose that $e=e_{\deg(R)}$.
The analysis also implies that for all $1\le i\le \deg(R)-1$, we have $n(D_2^{e_i})+t(e_i)=2,4$.

\medskip

By the assumptions of this case $\deg(S)\ge 2$, therefore $n(D_1^e)\equiv 4\text{ or }10\mod 16$. If $\deg(S)\ge 3$, let $Q=S$. Otherwise $\deg(S)=2$, and let $Q$ be the second neighbor of $R$ in $T'$. The degree of $Q$ cannot be 1 by its choice. If $\deg(Q)=2$, then we find a good cut using \Fref{claim:deg2path}. In any case, we may suppose from now on that $\deg(Q)\ge 3$.

\medskip

Let $\{f_i\ |\ 1\le i\le \deg(Q)\}$ be the set of incident edges of $Q$ such that they generate the partitions $D=D_1^{f_i}\dotcup D_2^{f_i}$ where $R\subset D_1^{f_i}$ and $Q\subset D_2^{f_1}$. We have
\begin{align*}
  n(D_1^{e})=n(R)+\sum_{i=1}^{\deg(R)-1}\Big(n(D_2^{e_i})+t(e_i)\Big) \in & 4+\{2,4\}+\{2,4\}+\{0,2,4\}= \\
                                                                          & =\{8,10,12,14,16\},
\end{align*}
so the only possibility is $n(D_1^e)=10$.
\begin{itemize}
  \item If $\deg(S)\ge 3$, $e$ is a 2-cut (by the assumption of \Fref{case:last}), so by \Fref{lemma:tech:c}, either $e$ is a good cut or $n(D)\equiv 14\bmod 16$. Since $Q=S$ and $e=f_1$, we have
        \begin{align*}
        &n(D_1^{f_1})+t(f_1) =n(D_1^e)+t(e)=10, \\
        &n(D_2^{f_1})=n(D)-n(D_1^{f_1})-t(f_1)\equiv 14-10\equiv 4\bmod 16.
        \end{align*}
  \item If $\deg(S)=2$, either $e$ is a 1-cut or we find a good cut using \Fref{claim:deg2doublecut}. Also, $f_1=\{S,Q\}$ is a 1-cut too (otherwise apply \Fref{claim:deg2doublecut}), so
  \[ n(D_1^{f_1})+t(f_1)=n(D_1^e)+n(S)+t(e)+t(f_1)=10. \]
  By \Fref{lemma:tech:c}, either $f_1$ is a good cut (as $n(D_1^{f_1})=12$) or $n(D)\equiv 14\bmod 16$. Thus
  \[ n(D_2^{f_1})=n(D)-n(D_1^{f_1})-t(f_1)\equiv 14-12+2\equiv 4\bmod 16. \]
\end{itemize}
We have
\begin{align*}
  n(D)&=n(Q)+\Big(n(D_1^{f_1})+t(f_1)\Big)+\sum_{i=2}^{\deg(Q)}\Big(n(D_2^{f_i})+t(f_i)\Big)\in \\ &\in 14+\{2,4,10\}+\{2,4,10\}+\{0,2,4,10\} \mod 16.
\end{align*}
The only way we can get $14\bmod 16$ on the right-hand side is when $\deg(Q)=4$ and out of
\[ n(Q_{\textit{BL}}),n(Q_{\textit{BR}}),n(Q_{\textit{TL}}),n(Q_{\textit{TR}})\mod 16, \]
one is $2$, another is $4$, and two are $10 \bmod 16$.

\medskip

The last step in this case is to apply \Fref{claim:opposite} to $Q$. If it does not give a good cut-system, then it gives an admissible cut where one of the parts has size congruent to $2+10+2=14$ or $2+4+2=8$ modulo~16, therefore we find a good cut anyway.

\subsubcase{There is exactly one rectangle of degree \texorpdfstring{$\ge 3$}{\textge 3}}\label{case:last2}
Let $R$ be the rectangle of degree $\ge 3$ in $T$, and let $\{e_i \ |\ 1\le i\le \deg(R)\}$ be the edges of $R$, which generate the partitions $D=D_1^{e_i}\dotcup D_2^{e_i}$ where $R\subset D_1^{e_i}$. Then $D_2^{e_i}$ is path for all $i$.
If either \Fref{claim:deg2doublecut} or \Fref{claim:deg2path} can be applied, $D$ has a good cut.
The remaining possibilities can be categorized into 3 types:

\[
  \begin{array}{lllll}
  \text{\bfseries Type 1: } & t(e_i)=-2, & n(D_2^{e_i})=4,       & \text{ and } & n(D_2^{e_i})+t(e_i)=2; \\
  \text{\bfseries Type 2: } & t(e_i)=-2, & n(D_2^{e_i})=4+4-2=6, & \text{ and } & n(D_2^{e_i})+t(e_i)=4; \\
  \text{\bfseries Type 3: } & t(e_i)=0,  & n(D_2^{e_i})=4,       & \text{ and } & n(D_2^{e_i})+t(e_i)=4.
  \end{array}
\]

Without loss of generality $R_{\textit{BR}}\neq\emptyset$ and $R_{\textit{TR}}\neq\emptyset$. We will now use \Fref{claim:opposite}. If it gives a good cut-system, we are done. Otherwise
\begin{itemize}
  \item If exactly one of $e_{\textit{BR}}(R)$ and $e_{\textit{TR}}(R)$ is of {\bfseries type~1}, apply \Fref{claim:opposite:2}: it gives an admissible cut which cuts off a rectilinear domain of size $2+4+2=8$, so $D$ has a good cut.
  \item If both $e_{\textit{BR}}(R)$ and $e_{\textit{TR}}(R)$ are of {\bfseries type~1}, apply \Fref{claim:opposite:4}: it gives an admissible cut which cuts off a rectilinear domain of size $2+2+4=8$, so $D$ has a good cut.
  \item If none of $e_{\textit{BR}}(R)$ and $e_{\textit{TR}}(R)$ are of {\bfseries type~1} and $n(D)\not\equiv 14 \pmod{16}$, apply  \Fref{claim:opposite:2}: it gives an admissible cut which cuts off a rectilinear domain of size $4+4+4=12$, which is a good cut by \Fref{lemma:tech:c}.
\end{itemize}
Now we only need to deal with the case where $n(D)=14$ and neither $e_{\textit{BR}}(R)$ nor $e_{\textit{TR}}(R)$ is of {\bfseries type~1}.

\medskip

If $R$ still has two edges of {\bfseries type~1}, again \Fref{claim:opposite:2} gives a good cut of $D$. If $R$ has at most one edge of {\bfseries type~1}, we have
\[ 14=n(D)=n(R)+\sum_{i=1}^{\deg(R)}\Big(n(D_2^{e_i})+t(e_i)\Big)\in 4+\{2,4\}+\{4\}+\{4\}+\{0,4\}=\{14,16,18,20\}, \]
and the only way we can get $14$ on the right hand is when $\deg(R)=3$ and both $e_{\textit{BR}}(R)$ and $e_{\textit{TR}}(R)$ are of {\bfseries type~2~or~3} while the third incident edge of $R$ is of {\bfseries type~1}. We may  assume without loss of generality that the cut represented by $e_{\textit{TR}}(R)$ is longer than the cut represented by $e_{\textit{BR}}(R)$.
\begin{itemize}
  \item If $D$ is vertically convex, its vertical $R$-tree is a path, so it has a good cut as deduced in \Fref{case:Tisapath}.
  \item If $D$ is not vertically convex, but $e_{\textit{TR}}(R)$ is a \textbf{type~2} edge of $R$, such that the only horizontal cut of $R_{\textit{TR}}$ is shorter than the cut represented by $e_{\textit{TR}}(R)$,
        then $D'=D-R_{\textit{BR}}$ is vertically convex, and has $(10 - 4) / 2 = 3$ reflex vertices.
        By \Fref{claim:deg2doublecut} or \Fref{claim:deg2path}, $D'$ has a good cut-system such that its kernel contains $v_{\textit{BR}}(R)$, since its $x$-coordinate is maximal in $D'$. \Fref{lemma:extendconsecutives} states that $D$ also has a good cut-system.
  \item Otherwise we find that the top right part of $D$ looks like to one of the cases in \Fref{fig:44}. It is easy to see that in all three pictures $L$ is an admissible cut which generates two rectilinear domains of 8 vertices.
        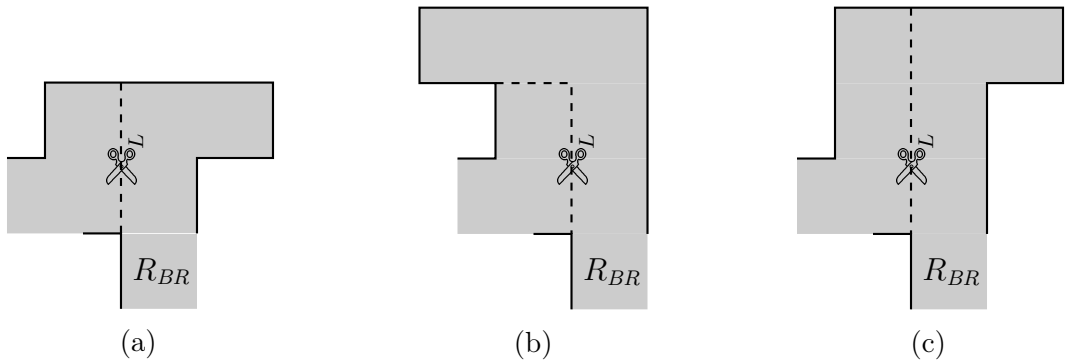
\begin{figure}[ht]
        \centering
        \begin{subfigure}{.33\textwidth}
        \centering
        \begin{tikzpicture}

        \begin{scope}

        \fill[color=white] (1,2) rectangle ++(1,1);
        \fill[color=inside] (-1,1) rectangle (2,2);
        \fill[color=inside] (-1.5,1) rectangle (1,0) rectangle (0,-1);

        \draw (1,0) -- (1,1) -- (2,1) -- (2,2) -- (-1,2) -- (-1,1) -- (-1.5,1);

        \draw (-0.5,0) -- (0,0) -- (0,-1);

        \draw[dashed] (0,2) -- (0,0);

        \path (0,-0.5) node[right] {$R_{\textit{BR}}$} ;

        \path (0.05,1) node[rotate=90]{\ScissorHollowLeft$_L$};

        \end{scope}
        \end{tikzpicture}
        \caption{}\label{fig:44:a}
        \end{subfigure}%
        \begin{subfigure}{.33\textwidth}
        \centering
        \begin{tikzpicture}

        \begin{scope}

        \fill[color=inside] (-2,3) rectangle (1,2) rectangle (-1,1);
        \fill[color=inside] (-1.5,1) rectangle (1,0) rectangle (0,-1);

        \draw (1,0) -- (1,3) -- (-2,3) -- (-2,2) -- (-1,2) -- (-1,1) -- (-1.5,1);

        \draw (-0.5,0) -- (0,0) -- (0,-1);

        \draw[dashed] (-1,2) -- (0,2) -- (0,0);

        \path (0,-0.5) node[right] {$R_{\textit{BR}}$} ;

        \path (0.05,1) node[rotate=90]{\ScissorHollowLeft$_L$};

        \end{scope}
        \end{tikzpicture}
        \caption{}\label{fig:44:b}
        \end{subfigure}%
        \begin{subfigure}{.33\textwidth}
        \centering
        \begin{tikzpicture}

        \begin{scope}

        \fill[color=inside] (2,3) rectangle (-1,2) rectangle (1,1);
        \fill[color=inside] (-1.5,1) rectangle (1,0) rectangle (0,-1);

        \draw (1,0) -- (1,2) -- (2,2) -- (2,3) -- (-1,3) -- (-1,1) -- (-1.5,1);

        \draw (-0.5,0) -- (0,0) -- (0,-1);

        \draw[dashed] (0,3) -- (0,0);

        \path (0,-0.5) node[right] {$R_{\textit{BR}}$} ;

        \path (0.05,1) node[rotate=90]{\ScissorHollowLeft$_L$};

        \end{scope}
        \end{tikzpicture}
        \caption{}\label{fig:44:c}
        \end{subfigure}%

        \caption{The last 3 cases of the proof. Since $e_{\textit{BR}}(R)$ is a \textbf{type 2 or 3} edge, cutting the rectilinear domain at $L$ creates two rectilinear domains of 8-vertices.}\label{fig:44}
        \end{figure}
\end{itemize}

The proof of \Fref{thm:mobile} is complete. To complement the formal proof, we now demonstrate the algorithm on \Fref{fig:partition}.

\begin{figure}[ht]
  \centering
  \begin{tikzpicture}

  \begin{scope}[shift={(-6,4)}] 

  \fill[color=inside] (0,5) rectangle (2,6);
  \fill[color=inside] (3.5,5) rectangle (5.5,6);
  \fill[color=inside] (0,4) rectangle (11,5);
  \fill[color=inside] (1,3) rectangle (10,4);
  \fill[color=inside] (1,1) rectangle (4,3);
  \fill[color=inside] (1,0) rectangle (2,1);
  \fill[color=inside] (3,0) rectangle (4,1);
  \fill[color=inside] (5,2) rectangle (6,3);
  \fill[color=inside] (7,2) rectangle (10,3);

  \fill[color=inside] (3.5,-1) rectangle (4,0);

  \draw (1,0) -- (1,4) -- (0,4) -- (0,6) -- (2,6) -- (2,5) -- (3.5,5) -- (3.5,6) -- (5.5,6) -- (5.5,5) -- (11,5) -- (11,4) -- (10,4) -- (10,2);
  \draw (7,2) -- (7,3) -- (6,3) -- (6,2) -- (5,2) -- (5,3) -- (4,3) -- (4,-1) -- (3.5,-1) -- (3.5,0) -- (3,0) -- (3,1) -- (2,1) -- (2,0) -- (1,0);

  \end{scope}

  \begin{scope}[shift={(2,0)}] 

  \fill[color=inside] (0,0) rectangle (1,0.5);
  \fill[color=inside] (2,0) rectangle (3,0.5);
  \fill[color=inside] (0,0.5) rectangle (3,1);
  \fill[color=inside] (1.5,1) rectangle (4,2);
  \fill[color=inside] (1.5,2) rectangle (2.5,3);
  \fill[color=inside] (3.5,2) rectangle (4,4);
  \fill[color=inside] (3,6) rectangle (4,5) rectangle (0,4) rectangle (1,3);
  \fill[color=inside] (-0.5,3) rectangle (1,2);
  \fill[color=inside] (2,6) rectangle (-1,5.5);
  \fill[color=inside] (2,5.5) rectangle (0,5);

  \draw  (3,1) -- (3,0) -- (2,0) -- (2,0.5) -- (1,0.5) -- (1,0) -- (0,0) -- (0,1) -- (1.5,1) -- (1.5,3) -- (2.5,3) -- (2.5,2) -- (3.5,2) -- (3.5,4) -- (1,4) -- (1,2) -- (-0.5,2) -- (-0.5,3) -- (0,3) -- (0,5.5) -- (-1,5.5) -- (-1,6);
  \draw (2,6) -- (2,5) -- (3,5) -- (3,6) -- (4,6) -- (4,1) -- (3,1);

  \end{scope}


  \begin{scope} 

  \draw[dashed] (-2,7) -- (-2,8) -- (4,8);
  \node[draw,thin,circle,inner sep=1pt,anchor=west,outer sep=3pt] at (-2,7.5) {1};
  \node[draw,thin,circle,inner sep=1pt,anchor=south,outer sep=3pt] at (2,8) {1};

  \draw[dotted] (-4,5) -- (-4,8) -- (-2,8);
  \node[draw,thin,circle,inner sep=1pt,anchor=west,outer sep=3pt] at (-4,6.5) {2};
  \node[draw,thin,circle,inner sep=1pt,anchor=north,outer sep=3pt] at (-3,8) {2};

  \draw[dashed] (-4,8) -- (-4,9);
  \node[draw,thin,circle,inner sep=1pt,anchor=east,outer sep=3pt] at (-4,8.5) {3};

  \draw[dotted] (1,7) -- (1,8);
  \node[draw,thin,circle,inner sep=1pt,anchor=west,outer sep=3pt] at (1,7.5) {4};

  \draw[dotted] (5.5,4) -- (6,4);
  \node[draw,thin,circle,inner sep=1pt,anchor=south,outer sep=3pt] at (5.75,4) {5};

  \draw[dashed] (3.5,1) -- (5,1);
  \node[draw,thin,circle,inner sep=1pt,anchor=south,outer sep=3pt] at (4.25,1) {6};

  \draw[dotted] (3,4)--(3,8);
  \node[draw,thin,circle,inner sep=1pt,anchor=west,outer sep=3pt] at (3,6) {7};

  \end{scope}

  \end{tikzpicture}
  \caption{The output of the algorithm on a rectilinear domain of $52$ vertices.}\label{fig:partition}
\end{figure}
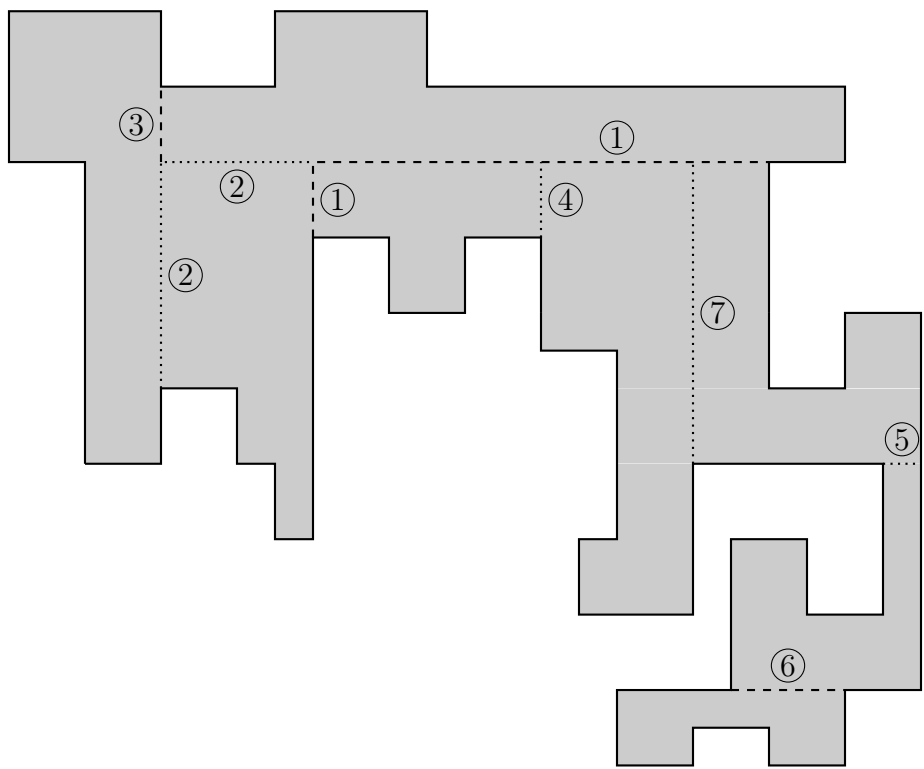

First, we resolve a corridor via the $L$-cut \circled{1}, which creates two pieces of 20 and 34 ($\equiv 2\bmod 16$) vertices. Because of this cut, a new corridor emerges in the 20-vertex piece, so we cut the rectilinear domain at \circled{2}, cutting off a piece of 8 vertices. The other piece of 14 vertices containing two pockets is further divided by \circled{3} into two pieces of 6 and 8 vertices. Another pocket is dealt with by cut \circled{4}, which divides the rectilinear domain into 8- and 28-vertex pieces. To the larger piece, \Fref{case:last} applies, and we find cut \circled{5}, which produces 16- and 14-vertex pieces. The 16-vertex piece is cut into two 8-vertex pieces by \circled{6}. Lastly, \Fref{fig:44:c} of \Fref{case:last2} applies to the 14-vertex piece, so cut \circled{7} divides it into two 8-vertex pieces.

\medskip

We got lucky with cuts \circled{2} and \circled{6} in the sense that they both satisfy the inequality in (\ref{eq:n1n2}) strictly. Hence, only 8 pieces are needed to partition the rectilinear domain of \Fref{fig:partition} into rectilinear domains of $\le 8$-vertex pieces, instead of the extremal upper bound of $(3\cdot 52+4)/16=10$.

\chapter{Mobile vs.~point guards}\label{chap:versus}

\section{Introduction}
%

The main goal of this chapter is to explore the ratio between the numbers of mobile guards and points guards required to control an orthogonal polygon  without holes. At first, this appears to be hopeless, as \Fref{fig:comb} shows a comb, which can be guarded by one mobile guard (whose patrol is shown by a dotted horizontal line). However, to cover the comb using point guards, one has to be placed for each tooth, so ten point guards are needed (marked by solid disks). Combs with arbitrarily high number of teeth clearly demonstrate that the minimum number of points guards required to control an orthogonal polygon cannot be bounded by the minimum size of a mobile guard system covering the comb.
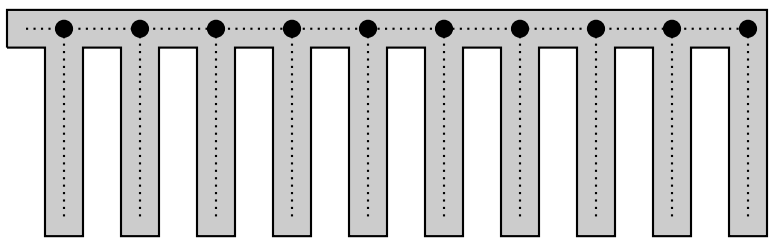
\begin{figure}[h]
	\centering
	\begin{tikzpicture}
	\def\n {10}
	\def\l {5}

	\begin{scope}

	\filldraw [fill=inside][turtle={home,forward,right}]
	\foreach \i in {1,...,\n}
		{[turtle={fd,fd}]}

	[turtle=right,fd]
	\foreach \i in {1,...,\n}
	{
		\foreach \j in {1,...,\l}
			{[turtle=fd]}
		[turtle=rt,fd,rt]
		\foreach \j in {1,...,\l}
			{[turtle=fd]}
		[turtle=lt,fd,lt]
	};

	\end{scope}

	\begin{scope}[\mgsize]
		\draw[\hpat] (\dist*0.5,\dist*0.5) -- (\dist*2*\n-\dist*0.5,\dist*0.5);

		\foreach \x in {1,...,\n}
		{
			\draw[\vpat] (\dist*2*\x-\dist*0.5,\dist*0.5) -- (\dist*2*\x-\dist*0.5,-\dist*\l+\dist*0.5);
			\filldraw[black] (\dist*2*\x-\dist*0.5,\dist*0.5) circle ( \pgsize );
		}
	\end{scope}
\end{tikzpicture}
	\caption{A comb with 10 teeth}\label{fig:comb}
\end{figure}

\medskip


\citet{KM11} defined and studied the notion of ``horizontal sliding cameras''. This notion is identical to what we call horizontal mobile $r$-guard (a horizontal mobile guard with rectangular vision). The main result of this chapter, \Fref{thm:main}, shows that a constant factor times the sum of the minimum sizes of a horizontal and a vertical mobile $r$-guard system can be used to estimate the minimum size of a point $r$-guard system. It is surprising to have such a result after encountering the comb, but it is similarly unexpected that even this ratio cannot be bounded if the region may contain holes.

\medskip

Take, for example, \Fref{fig:holes}, which generally contains $3k^2+4k+1$ square holes (in the figure $k=4$). The regions covered by line of sight vision by the black dots are pairwise disjoint, because the distance between adjacent square holes is less than half of the length of a square hole's side.
Therefore, no two of the black dots can be covered by one point guard,
so at least $k^2$ point guards are necessary to control gallery. However, $2k+2$ horizontal mobile guards can easily cover the polygon, and the same holds for vertical mobile guards.
\begin{figure}
	\centering
	\begin{tikzpicture}[scale=0.3]
	\def \n {4}
	\filldraw[fill=inside] (-0.25,-0.25) rectangle (4*\n+2.75,4*\n+2.75);

	\foreach \x in {1,...,\n}
	{
		\foreach \y in {1,...,\n}
		{
			\filldraw[fill=white] (4*\x-2,4*\y-2) rectangle ++(-1.5,-1.5);
			\filldraw[fill=white] (4*\x-2,4*\y) rectangle ++(-1.5,-1.5);
			\filldraw[fill=white] (4*\x,4*\y-2) rectangle ++(-1.5,-1.5);

			\draw[black,fill=black] (4*\x-0.75,4*\y-0.75) circle (0.5ex);
		}
	}

	\foreach \x in {1,...,\n}
	{
			\filldraw[fill=white] (4*\x-2,4*\n+2) rectangle ++(-1.5,-1.5);
			\filldraw[fill=white] (4*\x,4*\n+2) rectangle ++(-1.5,-1.5);
			\filldraw[fill=white] (4*\n+2,4*\x-2) rectangle ++(-1.5,-1.5);
			\filldraw[fill=white] (4*\n+2,4*\x) rectangle ++(-1.5,-1.5);
	}
	\filldraw[fill=white] (4*\n+2,4*\n+2) rectangle ++(-1.5,-1.5);
\end{tikzpicture}
	\caption{A polygon with holes --- unlimited ratio.}\label{fig:holes}
\end{figure}
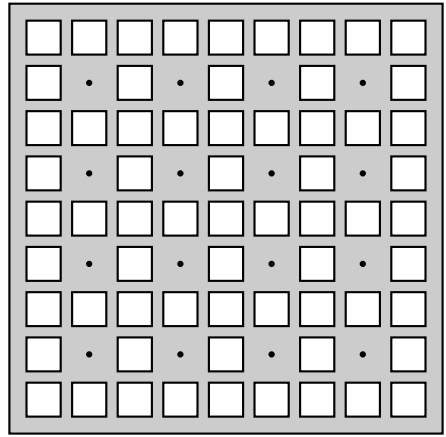

\medskip

In the next chapter, we show that a minimum size horizontal mobile $r$-guard system can be found in linear time (\Fref{thm:mgalg}). This improves the result in~\cite{KM11}, where it is shown that this problem can be solved in polynomial time.



\begin{theorem}[\citet{GyM2017}]\label{thm:main}
	Given a rectilinear domain $D$ let $m_V$ be the minimum size of a vertical mobile $r$-guard system of $D$, let $m_H$ be defined analogously for horizontal mobile $r$-guard systems, and finally let $p$ be the minimum size of a point $r$-guard system of $D$. Then
	\[ \left\lfloor\frac{4(m_V+m_H-1)}{3}\right\rfloor\ge p. \]
\end{theorem}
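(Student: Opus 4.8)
The plan is to convert optimal mobile guard systems into a point guard system by a tree-based charging argument, with the $\tfrac43$ factor arising exactly as in the gap between the $\lfloor\tfrac n4\rfloor$ and $\lfloor\tfrac{3n+4}{16}\rfloor$ bounds of \Fref{thm:mobile}. I would fix a minimum vertical mobile $r$-guard system $\mathcal V=\{v_1,\dots,v_{m_V}\}$ and a minimum horizontal one $\mathcal H=\{h_1,\dots,h_{m_H}\}$, and form the bipartite \emph{crossing graph} $G$ on $\mathcal V\cup\mathcal H$, joining $v_i$ to $h_j$ whenever the regions they $r$-cover share a point of $D$. Using both directions is essential, since \Fref{fig:comb} shows that neither $m_V$ nor $m_H$ alone can control $p$.

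First I would reduce to the case that $G$ is connected. If $G$ has components $C_1,\dots,C_c$, then the guards of each $C_t$ together $r$-cover a subregion $D_t$, and $\bigcup_t D_t=D$ because every point is $r$-seen by some member of $\mathcal V$ and some member of $\mathcal H$. Producing point guards for each $C_t$ separately and using
\[
\sum_{t=1}^{c}\Big\lfloor\tfrac43(|C_t|-1)\Big\rfloor\le\Big\lfloor\tfrac43\big(m_V+m_H-c\big)\Big\rfloor\le\Big\lfloor\tfrac43\big(m_V+m_H-1\big)\Big\rfloor
\]
would show it is enough to treat one component; this is the source of the $-1$ in the statement.

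Assuming $G$ connected, I would fix a spanning tree $T$, which has $m_V+m_H-1$ edges. Each edge $\{v_i,h_j\}$ is witnessed by a point that is simultaneously $r$-visible from $v_i$ and from $h_j$, so placing a point guard there $r$-covers the ``cross'' where the two mobile guards meet. The heart of the argument is a local covering lemma: after placing the crossing guards, the only points that can remain uncovered lie near the free ends of the mobile segments, and by analysing the finitely many relative positions in which a vertical and a horizontal mobile $r$-guard can meet (the same case analysis, organised around reflex vertices on the boundary of a rectangle, that drives \Fref{sec:proof}) I would aim to show that at most one additional point guard is needed for every three tree edges. An amortized charge over $T$ — one guard per edge, plus at most $\tfrac13$ of a guard per edge — then yields at most $\tfrac43(m_V+m_H-1)$ point guards, and the floor is inherited. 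Completeness of the cover would follow from the fact that each single-guard region $R(z)=\{x:\mathrm{bbox}(x,z)\subseteq D\}$ is connected and rectangularly star-shaped, so a guard at a crossing inherits the visibility of whole maximal segments; chasing a path in $T$ between the $v_i$ and $h_j$ that $r$-see a given $x$ and composing common bounding rectangles along it should certify that $x$ is covered.

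The main obstacle is precisely this local covering lemma and its $\tfrac43$ bookkeeping. One must show that at branchings of $T$ (nodes of degree $\ge 3$, the analogue of the high-degree rectangles that caused the most trouble in \Fref{sec:proof}) the extra guards can still be amortized, and — most delicately — that no point slips through the cracks between the local covers. I expect the tight configurations to be exactly the swastika-type pieces of \Fref{fig:swastikas} that make \Fref{thm:mobile} sharp, which is reassuring, since it is the same extremal obstruction that forces the $4:3$ ratio here; the polygon with holes in \Fref{fig:holes} is a useful sanity check for why the argument must break down once simple connectedness is dropped.
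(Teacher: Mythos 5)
There is a genuine gap, and you have in fact named it yourself: your ``local covering lemma'' is not a lemma to be checked at the end --- it \emph{is} the theorem, and the spanning-tree version of it you sketch does not hold. A point guard placed at a witness point where the covered regions of $v_i$ and $h_j$ overlap does not inherit the visibility of either patrol: in the comb of \Fref{fig:comb} one horizontal mobile guard covers the whole domain, yet a point guard at its crossing with a single vertical guard sees only one tooth, which is why even the star case (\Fref{prop:star}) needs a guard for \emph{every} adjacency, not every tree edge. More fundamentally, $r$-vision does not compose along a path in your tree: knowing $x$ is $r$-seen by $v_i$ and chasing overlaps edge by edge toward $h_j$ gives no common bounding rectangle, because the rectangles witnessing consecutive overlaps need not be chainable. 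The paper obtains exactly this contraction from the chordal bipartiteness of the pixelation graph of $D$ (\Fref{lemma:chordal}, used in \Fref{claim:conn_rvis}): any long cycle through $x$'s pixel and the guard set has a chord, so visibility is certified by a single $C_4$, and this is precisely where simple connectedness enters. Your proposal has no substitute for this mechanism, and your crossing graph (defined by overlap of covered regions rather than of the slices themselves) does not obviously carry the bichordality that makes the contraction work.

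The $\frac43$ bookkeeping is also not a tree charge in the actual proof. The paper works with dominating sets $M_V\subseteq S_V$, $M_H\subseteq S_H$ in the pixelation graph, chooses point guards among the \emph{pixels} (edges) of $M=G[M_V\cup M_H]$, and gets the count from: a decomposition of $M$ into 2-connected blocks (which is how degree-$\ge 3$ branchings, the spot you flag as delicate, are absorbed); a classification of the pixels of a block into convex, reflex, side and internal edges relative to the region $D_M$; the Euler-type identities $r'=c'-4$ and $|V(M')|=c'+\tfrac12 s'$; and a selection of roughly every third vertex along the auxiliary graph $X$ of reflex and side edges (a disjoint union of paths and cycles), yielding $c'+\tfrac13(r'+2s')=\tfrac43|V(M')|-\tfrac43$ guards per block. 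Finally, when $M$ is disconnected the paper must add $t-1$ connector guards (the set $W$), because in the slice-intersection graph the components do \emph{not} partition coverage; your denser overlap graph happens to make the partition claim true, but the per-component construction it would feed into is exactly the missing content. In short: your architecture (component reduction giving the $-1$, one guard per adjacency, amortized extras) guesses the shape of the answer correctly, but both halves of the core --- the coverage certification and the $\frac43$ count --- are absent, and the tree-based route you propose for them breaks at the first tight example.
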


\medskip

In case it is not confusing, the prefix ``$r$-'' is omitted from now on. Before moving onto the proof of \Fref{thm:main}, we discuss the aspects of its sharpness.

\medskip

For $m_V+m_H\le 6$, sharpness of the theorem is shown by the examples in \Fref{fig:sharpness}.
The polygon in \Fref{fig:sharpness6} can be easily generalized to one satisfying $m_V+m_H=3k+1$ and $p=4k$. For $m_V+m_H=3k+2$ and $m_V+m_H=3k+3$, we can attach 1 or 2 plus signs to the previously constructed polygons, as shown in \Fref{fig:sharpness4} and~\ref{fig:sharpness5}.
Thus \Fref{thm:main} is sharp for any fixed value of $m_V+m_H$.

\medskip

By stringing together a number of copies of the polygons in \Fref{fig:sharpness1} and~\ref{fig:sharpness3} in an L-shape (\Fref{fig:sharpness6} is a special case of this), we can construct rectilinear domains for any $(m_H,m_V)$ pair satisfying $m_V\le 2(m_H-1)$ and $m_H\le 2(m_V-1)$, such that the polygon satisfies \Fref{thm:main} sharply.
The analysis in \Fref{sec:translating} immediately yields that if $m_V=1$ or $m_H=1$, then $m_V+m_H-1$ is an upper bound for the minimum size of a point guard system (see~\Fref{prop:star}), whose sharpness is shown by combs (\Fref{fig:comb}).

%

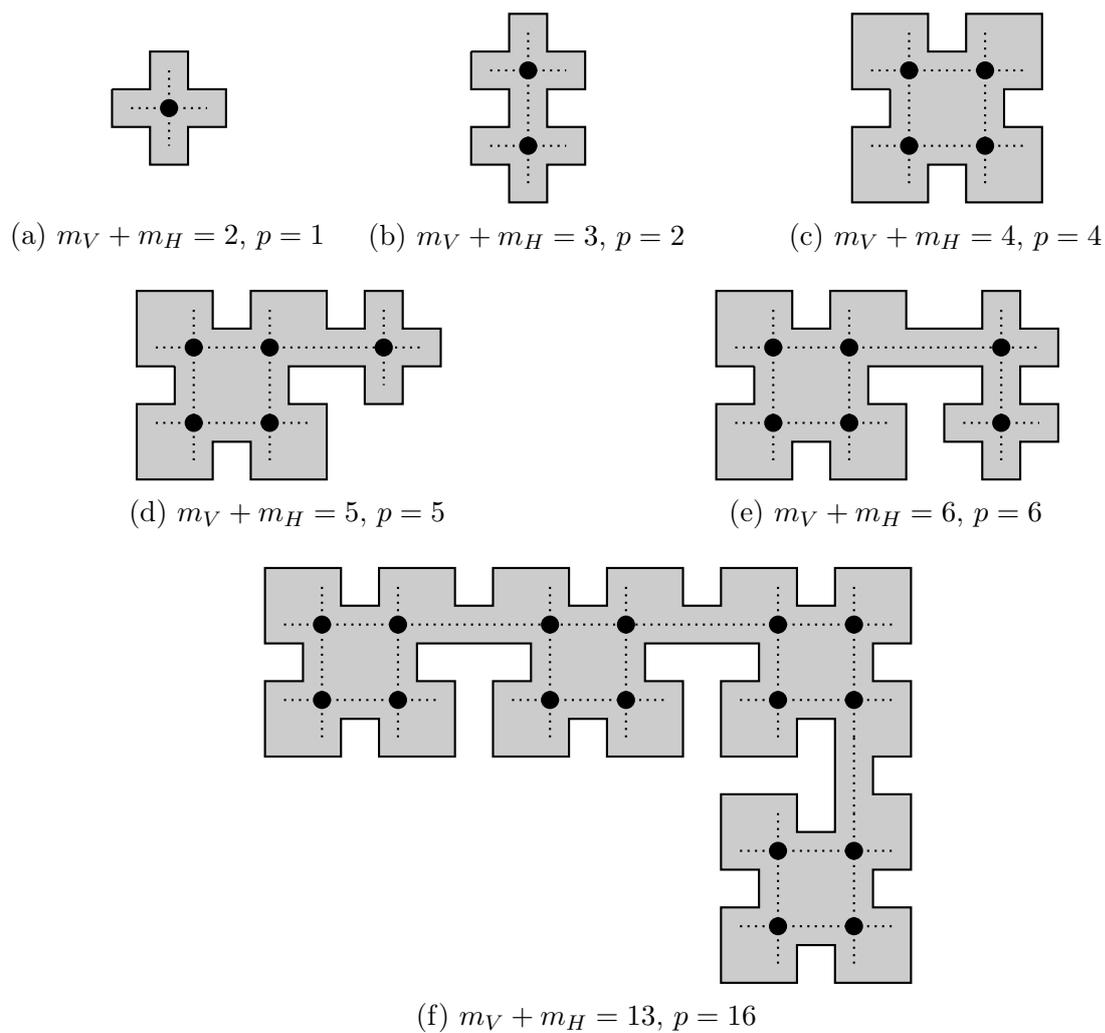
\begin{figure}
	\centering
	\begin{subfigure}{.3\textwidth}
		\centering
		\begin{tikzpicture}
	\begin{scope}

		\fill[white] (\dist,\dist) rectangle ++(\dist,\dist);
		\fill[white] (\dist,\dist*-3) rectangle ++(\dist,\dist);
		\filldraw [fill=inside][turtle=home]
		\foreach \i in {1,...,4}
		{
			[turtle={right,forward,left,forward,right,forward}]
		};
	\end{scope}

	\begin{scope}[\mgsize]
		\draw[\hpat] (\dist*0.5,\dist*-0.5) -- (\dist*2.5,\dist*-0.5);
		\draw[\vpat] (\dist*1.5,\dist*0.5) -- (\dist*1.5,\dist*-1.5);
		\filldraw[black] (\dist*1.5,\dist*-0.5) circle ( \pgsize );
	\end{scope}
\end{tikzpicture}
		\caption{\small $m_V+m_H=2$, $p=1$}\label{fig:sharpness1}
	\end{subfigure}%
	\begin{subfigure}{.3\textwidth}
		\centering
		\begin{tikzpicture}
	\begin{scope}

		\filldraw [fill=inside][turtle=home]
		\foreach \i in {1,2}
		{
			[turtle={right,forward,left,forward,right,forward}]
		}
		[turtle={right,forward,left,left}]
		\foreach \i in {1,2,3}
		{
			[turtle={right,forward,left,forward,right,forward}]
		}
		[turtle={right,forward,left,left}]
		[turtle={right,forward,left,forward,right,forward}]
		;
	\end{scope}

	\begin{scope}[\mgsize]
		\draw[\hpat] (\dist*0.5,\dist*-0.5) -- (\dist*2.5,\dist*-0.5);
		\draw[\hpat] (\dist*0.5,\dist*-2.5) -- (\dist*2.5,\dist*-2.5);
		\draw[\vpat] (\dist*1.5,\dist*0.5) -- (\dist*1.5,\dist*-3.5);
		\filldraw[black] (\dist*1.5,\dist*-0.5) circle ( \pgsize );
		\filldraw[black] (\dist*1.5,\dist*-2.5) circle ( \pgsize );

	\end{scope}
\end{tikzpicture}
		\caption{\small $m_V+m_H=3$, $p=2$}\label{fig:sharpness2}
	\end{subfigure}%
	\begin{subfigure}{.40\textwidth}
		\centering
		\begin{tikzpicture}
	\begin{scope}

	\end{scope}

	\begin{scope} 
		\filldraw [fill=inside][turtle=home]
		\foreach \i in {1,2,3,4}
		{
			[turtle={left,forward,right,forward,forward,right,forward,forward,right,forward,left,forward}]
		};

	\end{scope}

	\begin{scope}[\mgsize]

	\foreach \x in {\dist*0.5,\dist*2.5}
		\draw[\vpat] (\x,\dist*1.5) -- (\x,\dist*-2.5);

	\foreach \y in {\dist*0.5,\dist*-1.5}
		\draw[\hpat] (\dist*-0.5,\y) -- (\dist*3.5,\y);

	\foreach \x in {\dist*0.5,\dist*2.5}
	\foreach \y in {\dist*0.5,\dist*-1.5}
		\filldraw[black] (\x,\y) circle ( \pgsize );

	\end{scope}
\end{tikzpicture}
		\caption{\small $m_V+m_H=4$, $p=4$}\label{fig:sharpness3}
	\end{subfigure}

	\bigskip

	\begin{subfigure}{.5\textwidth}
		\centering
		\begin{tikzpicture}
	\begin{scope}

	\end{scope}

	\begin{scope} 
		\filldraw [fill=inside][turtle=home]			[turtle={left,forward,right,forward,forward,right,forward,forward,right,forward,left,forward}]
		[turtle={left,forward,right,forward,forward,right,forward,left}]

		\foreach \i in {1,...,3}
		{
			[turtle={forward,left,forward,right,forward,right}]
		}
		[turtle={forward,left,forward,forward,left,forward}]

		\foreach \i in {1,2}
		{
			[turtle={left,forward,right,forward,forward,right,forward,forward,right,forward,left,forward}]
		};

	\end{scope}

	\begin{scope}[\mgsize]

	\foreach \x in {\dist*0.5,\dist*2.5}
	\draw[\vpat] (\x,\dist*1.5) -- (\x,\dist*-2.5);

	\draw[\hpat] (\dist*-0.5,\dist*0.5) -- (\dist*6.5,\dist*0.5);
	\draw[\vpat] (\dist*5.5,\dist*1.5) -- (\dist*5.5,\dist*-0.5);
	\filldraw[black] (\dist*5.5,\dist*0.5) circle ( \pgsize );

	\foreach \y in {\dist*-1.5}
		\draw[\hpat] (\dist*-0.5,\y) -- (\dist*3.5,\y);

	\foreach \x in {\dist*0.5,\dist*2.5}
	\foreach \y in {\dist*0.5,\dist*-1.5}
	\filldraw[black] (\x,\y) circle ( \pgsize );

	\end{scope}

\end{tikzpicture}
		\caption{\small $m_V+m_H=5$, $p=5$}\label{fig:sharpness4}
	\end{subfigure}%
	\begin{subfigure}{.5\textwidth}
		\centering
		\begin{tikzpicture}
	\begin{scope}

	\end{scope}

	\begin{scope} 
		\filldraw [fill=inside][turtle=home]			[turtle={left,forward,right,forward,forward,right,forward,forward,right,forward,left,forward}]
		[turtle={left,forward,right,forward,forward,right,forward,left,forward}]

		\foreach \i in {1,2}
		{
			[turtle={forward,left,forward,right,forward,right}]
		}
		[turtle={forward,left,left}]
		\foreach \i in {1,2,3}
		{
			[turtle={right,forward,left,forward,right,forward}]
		}
		[turtle={right,forward,left,left}]
		[turtle={right,forward,left,forward,forward,forward,left,forward}]

		\foreach \i in {1,2}
		{
			[turtle={left,forward,right,forward,forward,right,forward,forward,right,forward,left,forward}]
		};

	\end{scope}

	\begin{scope}[\mgsize]

	\foreach \x in {\dist*0.5,\dist*2.5}
	\draw[\vpat] (\x,\dist*1.5) -- (\x,\dist*-2.5);

	\draw[\hpat] (\dist*-0.5,\dist*0.5) -- (\dist*7.5,\dist*0.5);
	\draw[\hpat] (\dist*5.5,\dist*-1.5) -- (\dist*7.5,\dist*-1.5);
	\draw[\vpat] (\dist*6.5,\dist*1.5) -- (\dist*6.5,\dist*-2.5);
	\filldraw[black] (\dist*6.5,\dist*0.5) circle ( \pgsize );
	\filldraw[black] (\dist*6.5,\dist*-1.5) circle ( \pgsize );

	\foreach \y in {\dist*-1.5}
	\draw[\hpat] (\dist*-0.5,\y) -- (\dist*3.5,\y);

	\foreach \x in {\dist*0.5,\dist*2.5}
	\foreach \y in {\dist*0.5,\dist*-1.5}
	\filldraw[black] (\x,\y) circle ( \pgsize );

	\end{scope}
\end{tikzpicture}
		\caption{\small $m_V+m_H=6$, $p=6$}\label{fig:sharpness5}
	\end{subfigure}

	\bigskip

	\begin{subfigure}{\textwidth}
		\centering
		\begin{tikzpicture}

	\def\n{2}
	\begin{scope}

		\filldraw [fill=inside][turtle=home]

		\foreach \i in {1,...,5} 
		{
			[turtle={fd,rt,fd,fd,rt,fd,lt,fd,lt}]
		}

		[turtle={fd,rt,fd}]

		\foreach \i in {1,...,3}
		{
			[turtle={fd,rt,fd,fd,rt,fd,lt,fd,lt}]
		}

		\foreach \i in {1,2,3}
		{
			[turtle={fd,rt,fd,fd,rt,fd,fd,rt,fd,lt,fd,lt}]
		}

		[turtle={fd,fd,fd,lt,fd,lt,fd,rt,fd,fd,rt,fd,fd,rt,fd,lt,fd,lt,fd}]

		\foreach \i in {1,2}
		{
			[turtle=fd]

			\foreach \j in {1,2}
			{
					[turtle={fd,lt,fd,lt,fd,rt,fd,fd,rt,fd,fd,rt}]
			}

			[turtle={fd,lt,fd,lt,fd}]

		}

		[turtle={rt,fd}]

		;
	\end{scope}

	\begin{scope}[\mgsize]

		\draw[\hpat] (\dist*0.5,\dist*-0.5) -- (\dist*16.5,\dist*-0.5);

		\foreach \z in {0,...,\n}
		{
			\begin{scope}[shift={(\z*\dist*6,0)}]
				\foreach \x in {\dist*1.5,\dist*3.5}
				\draw[\vpat] (\x,\dist*0.5) -- (\x,\dist*-3.5);

				\draw[\hpat] (\dist*0.5,\dist*-2.5) -- (\dist*4.5,\dist*-2.5);

				\foreach \x in {\dist*1.5,\dist*3.5}
				\foreach \y in {\dist*-0.5,\dist*-2.5}
				\filldraw[black] (\x,\y) circle ( \pgsize );
			\end{scope}
		}

		\begin{scope}[shift={(\n*\dist*6,-\dist*6)}]
			\foreach \x in {\dist*1.5,\dist*3.5}
				\draw[\vpat] (\x,\dist*0.5) -- (\x,\dist*-3.5);

			\draw[\vpat] (\dist*3.5,\dist*2.5) -- (\dist*3.5,\dist*0.5);

			\foreach \y in {\dist*-0.5,\dist*-2.5}
				\draw[\hpat] (\dist*0.5,\y) -- (\dist*4.5,\y);

			\foreach \x in {\dist*1.5,\dist*3.5}
			\foreach \y in {\dist*-0.5,\dist*-2.5}
			\filldraw[black] (\x,\y) circle ( \pgsize );
		\end{scope}
	\end{scope}
\end{tikzpicture}
		\caption{\small $m_V+m_H=13$, $p=16$}\label{fig:sharpness6}
	\end{subfigure}

	\caption{\small Vertical dotted lines: a minimum size vertical mobile guard system; \\ Horizontal dotted lines: a minimum size horizontal mobile guard system; \\ Solid disks: a minimum size point guard system.}\label{fig:sharpness}
\end{figure}

\section{Translating the problem into the language of graphs}\label{sec:translating}

For graph theoretical notation and theorems used in this chapter (say, the block decomposition of graphs), the reader is referred to~\cite{Diestel}.

\begin{definition}[Chordal bipartite or bichordal graph,~\cite{GolumbicGoss78}]
	A graph $G$ is chordal bipartite iff any cycle $C$ of $\ge 6$ vertices of $G$ has a chord (that is $E(G[C])\supsetneqq E(C)$).
\end{definition}

Let $S_V$ be the set of internally disjoint rectangles we obtain by cutting vertically at each reflex vertex of a rectilinear domain $D$. Similarly, let $S_H$ be defined analogously for horizontal cuts of $D$. We may refer to the elements of these sets as {\bfseries vertical and horizontal slices}, respectively. Let $G$ be the intersection graph of $S_H$ and $S_V$, i.e.,
\[ G=\left(S_H\cup S_V,\left\{\{h,v\}\ :\ h\in S_H,\ v\in S_V,\ \mathrm{int}(h)\cap \mathrm{int}(v)\neq\emptyset\right\}\right).\]
In other words, a horizontal and a vertical slice are joined by an edge iff their interiors intersect; see \Fref{fig:pixelation}. We may also refer to $G$ as the \textbf{pixelation graph} of $D$.
Clearly, the {\bfseries set of  pixels} $\{\cap e\ |\ e\in E(G)\}$ is a cover of $D$. Let us define $c(e)$ as the center of gravity of $\cap e$ (the pixel determined by $e$).
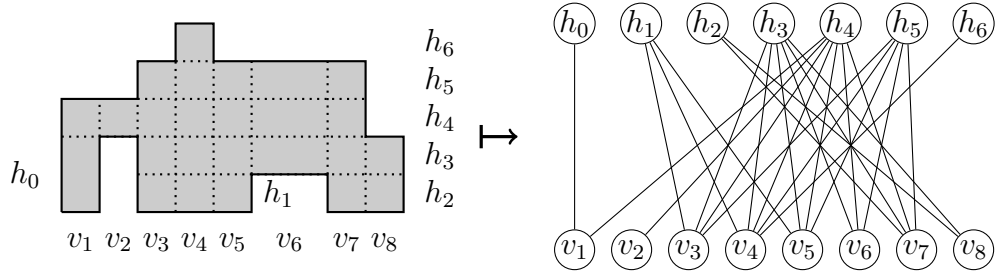
\begin{figure}[h]
	\centering
	\begin{tikzpicture}
\begin{scope}
	\filldraw[fill=inside][turtle=home]
	[turtle=fd,fd,fd,rt,fd,fd,lt,fd,rt,fd,lt,fd,rt,fd,rt,fd,lt,fd,fd,fd,fd,rt,fd,fd,lt,fd,rt,fd,fd,rt,fd,fd,rt,fd,lt,fd,fd,lt,fd,rt,fd,fd,fd,rt,fd,fd,lt,fd,lt,fd,fd,rt,fd];

	\draw[dotted] (3*\dist,4*\dist) -- ++(\dist,0);
	\draw[dotted] (2*\dist,3*\dist) -- ++(6*\dist,0);
	\draw[dotted] (0*\dist,2*\dist) -- ++(8*\dist,0);
	\draw[dotted] (2*\dist,1*\dist) -- ++(7*\dist,0);

	\draw[dotted] (1*\dist,2*\dist) -- ++(0,1*\dist);
	\draw[dotted] (2*\dist,2*\dist) -- ++(0,1*\dist);
	\draw[dotted] (3*\dist,0*\dist) -- ++(0,4*\dist);
	\draw[dotted] (4*\dist,0*\dist) -- ++(0,4*\dist);
	\draw[dotted] (5*\dist,1*\dist) -- ++(0,3*\dist);
	\draw[dotted] (7*\dist,1*\dist) -- ++(0,3*\dist);
	\draw[dotted] (8*\dist,0*\dist) -- ++(0,2*\dist);

	\foreach \x in {1,...,5}
		\draw (\dist*\x-0.5*\dist,-0.25*\dist) node[anchor=north] {$v_\x$};

	\draw (\dist*6.5-0.5*\dist,-0.25*\dist) node[anchor=north] {$v_6$};

	\foreach \x in {7,8}
		\draw (\dist*\x+0.5*\dist,-0.25*\dist) node[anchor=north] {$v_\x$};

	\foreach \y in {2,...,6}
		\draw (9.25*\dist,\dist*\y-1.5*\dist) node[anchor=west] {$h_\y$};

	\draw (-0.25*\dist,1*\dist) node[anchor=east] {$h_0$};
	\draw (5*\dist,0.5*\dist) node[anchor=west] {$h_1$};
\end{scope}

\draw [very thick,|->] (11*\dist,2*\dist) -- ++(1*\dist,0);

\begin{scope}[shift={(12*\dist,0)}]]
	\foreach \x in {0,...,6}
		\node[draw,thin,circle,inner sep=0pt,minimum size=15pt](h_\x) at (\x*7*1.5/6*\dist+1.5*\dist,5*\dist) {$h_\x$};

	\foreach \x in {1,...,8}
		\node[draw,thin,circle,inner sep=0pt,minimum size=15pt](v_\x) at
		(\x*1.5*\dist,-1*\dist) {$v_\x$};

	\foreach \v/\h in {1/0,3/1,4/1,5/1,1/4,2/4,3/3,3/4,3/5,4/3,4/4,4/5,4/6,5/3,5/4,5/5,6/3,6/4,6/5,7/2,7/3,7/4,7/5,8/2,8/3}
		\draw[thin] (v_\v) -- (h_\h); 
\end{scope}
\end{tikzpicture}
	\caption{A rectilinear domain and its associated pixelation graph}\label{fig:pixelation}
\end{figure}

\medskip

The horizontal $R$-tree $T_H$ of $D$ defined in \Fref{sec:treestructure} is equal to
\[ T_H=\left(S_H,\Big\{\{h_1,h_2\}\subseteq S_H\ :\ h_1\neq h_2,\ h_1\cap h_2\neq\emptyset\Big\}\right), \]
i.e., $T_H$ is the intersection graph of the horizontal slices of $D$. Similarly, $T_V$ is the intersection graph of the vertical slices of $D$.

%

\begin{lemma}\label{lemma:chordal}
	$G$ is a connected chordal bipartite graph.
\end{lemma}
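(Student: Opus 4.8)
My plan is to dispatch the three assertions in turn, the first two being routine and the third carrying all the weight. Bipartiteness needs no work: by construction $V(G)=S_H\cup S_V$, the two classes are disjoint (a slice records whether it came from the horizontal or the vertical decomposition), and every edge of $G$ joins a horizontal slice to a vertical one, so $(S_H,S_V)$ is a bipartition.

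For connectedness I would deduce everything from the fact that the horizontal $R$-tree $T_H$ is connected. Every slice has nonempty interior, and each interior point of $D$ lies in exactly one horizontal and one vertical slice; hence every slice is incident to an edge of $G$, and it suffices to join all of $S_H$ inside $G$. If $h,h'$ are neighbours in $T_H$ they abut along a horizontal cut $\sigma$; the vertical slice $v$ through an interior point of $\sigma$ is not severed at $\sigma$ (the cuts bounding vertical slices are vertical), so $v$ reaches into both $h$ and $h'$ and yields a path $h\,v\,h'$ in $G$. As $T_H$ is connected, all horizontal slices lie in one component of $G$, to which every vertical slice is attached; thus $G$ is connected.

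For chordal bipartiteness I must rule out an induced cycle $C=h_1v_1h_2\cdots h_kv_k$ with $k\ge 3$. The first step is a rigidity lemma for pixels: if $h\in S_H$ and $v\in S_V$ meet, then the pixel equals $\xi_v\times Y_h$, where $\xi_v$ is the $x$-range of $v$ and $Y_h$ the $y$-range of $h$. Indeed, the horizontal edges of a vertical slice lie on $\partial D$ and the vertical edges of a horizontal slice lie on $\partial D$; were a pixel not to span the full height of $h$, its limiting horizontal edge would be an edge of $\partial D$ sitting inside $\operatorname{int}(h)\subseteq\operatorname{int}(D)$, which is impossible, and symmetrically for the width. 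Rigidity makes the two pixels that a cycle-slice shares with its two neighbours have a common $x$-range (for a $v_i$) or a common $y$-range (for an $h_i$), so joining consecutive pixel centres $c(\cdot)$ produces a genuinely \emph{rectilinear} closed curve $\gamma\subset D$, alternating vertical links inside the $v_i$ with horizontal links inside the $h_{i+1}$. Since distinct vertical (resp.\ horizontal) slices have disjoint interiors, two vertical links never cross and two horizontal links never cross, while a vertical link of $v_i$ meets a horizontal link of $h_j$ only at a point of $\operatorname{int}(v_i)\cap\operatorname{int}(h_j)$, which would be an edge $\{h_j,v_i\}$ of $G$ — a chord whenever the indices are non-consecutive. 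As $C$ is chordless, $\gamma$ is therefore a simple closed curve.

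The step I expect to be the real obstacle is the final one: showing that this Jordan curve cannot live in a simply connected domain, i.e.\ that the bounded region it encloses contains a point of $\mathbb{R}^2\setminus D$. The cyclically overlapping, pairwise-otherwise-disjoint slices $v_1,h_2,v_2,\dots$ form a ``necklace'' of rectangles whose core loop is $\gamma$; keeping consecutive slices both distinct and maximal forces a vertical (resp.\ horizontal) cut to separate the two vertical (resp.\ horizontal) slices that share a common neighbour, and such a cut emanates from a reflex vertex of $\partial D$ that lies in the enclosed region. Exhibiting such a boundary point strictly inside $\gamma$ — equivalently, proving that $\gamma$ is non-contractible in $D$ — contradicts the simple-connectedness of $D$ and finishes the argument, so no chordless cycle of length $\ge 6$ can exist. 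Making this localisation rigorous (rather than merely plausible from the necklace picture) is the crux; the bipartite and connectivity parts, and the rigidity lemma underlying the construction of $\gamma$, are comparatively routine.
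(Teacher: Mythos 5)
Your work up to the construction of $\gamma$ is sound and is essentially the paper's own construction: the paper likewise joins the centres of gravity of consecutive pixels into a closed rectilinear curve, and treats a self-intersection as immediately producing a chord --- your rigidity lemma ($h\cap v=\xi_v\times Y_h$, forcing the links to be axis-parallel midline segments) and your analysis of which links can meet are a more careful write-up of exactly that case, showing that chordlessness forces $\gamma$ to be simple. The bipartiteness and connectedness arguments are also fine (the paper dispatches connectedness in one line). The genuine gap is the step you yourself flag as the crux, and it is not merely a missing detail: the localisation you propose does not work as stated. The vertical cut separating two vertical slices that share a horizontal neighbour $h_{i+1}$ runs until it meets $\partial D$, and the reflex vertex from which it emanates can lie far above or below $h_{i+1}$, entirely outside $\gamma$; nothing in the necklace picture pins a boundary point of $D$ strictly inside the curve. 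Moreover, the natural way to prove your claimed localisation is precisely the argument it was meant to replace.

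The paper closes the proof by inverting the roles of the two hypotheses: simple connectedness is used affirmatively, and the contradiction is with chordlessness, not with simple connectedness. Since $\gamma\subset D$ is a simple closed curve and $D$ is simply connected, the region bounded by $\gamma$ lies in $D$. A simple rectilinear polygon with $|V(C)|\ge 6$ corners has a reflex corner, say $c(v_1\cap h_1)$. Shoot the vertical segment from this corner into the interior of $\gamma$: it stays on the midline of $v_1$ (the top and bottom sides of a vertical slice lie on $\partial D$), and because the open region bounded by $\gamma$ is contained in $\mathrm{int}(D)$, the segment must cross $\gamma$ again before it can reach $\partial D$. That crossing point lies in $\mathrm{int}(v_1)$, so it is interior to a horizontal link of some $h_j$; vertical links of other slices are excluded since distinct vertical slices have disjoint interiors, and the links of $h_1$ and $h_2$ (the cycle-neighbours of $v_1$) are excluded because the ray moves strictly away from both of their midlines. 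Hence $\{v_1,h_j\}\in E(G)$ with $j\neq 1,2$ is a chord of $C$, contradicting inducedness. With this replacement for your final step the rest of your proposal goes through verbatim.
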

\begin{proof}
	Connectedness of $D$ immediately yields that $G$ is connected too. Suppose $C$ is a cycle of $\ge 6$ vertices in $G$. For each node of the cycle $C$, connect the centers of gravity of its two incident edges with a line segment. This way we get an orthogonal polygon $P$ in $D$.

	\medskip

	If $P$ is self-intersecting, then the vertices which are represented by the two intersecting line segments are intersecting. This clearly corresponds to a chord of $C$ in $G$.

	\medskip

	If $P$ is simple, then the number of its vertices is $|V(C)|$, thus one of them is a reflex vertex, say $c(v_1\cap h_1)$ is one. As $P$ lives in $D$, its interior is a subset of $D$ as well (here we use that $D$ is simply connected). The simpleness of $P$ also implies that the vertical line segment intersecting $c(v_1\cap h_1)$ after entering the interior of $P$ at $c(v_1\cap h_1)$, intersects $P$ at least once more when it emerges, say at $c(v_1\cap h_2)$. As this is not an intersection of the line segments corresponding to two vertices of $D$, the edge $\{v_1,h_2\}$ is a chord of $C$.
\end{proof}

It is worth mentioning that even if $D$ is a rectilinear domain with rectilinear hole(s), $G$ may still be chordal bipartite. Take, for example, ${[0,3]}^2\setminus {(1,2)}^2$; the graph associated to it has only one cycle, which is of length 4.

\medskip

We will use the following technical claim to translate $r$-vision of points of $D$ into relations in $G$.

\begin{claim}\label{claim:rectvision}
	Let $e_1,e_2\in E(G)$, where $e_1=\{v_1,h_1\}$, $e_2=\{v_2,h_2\}$, $v_1,v_2\in S_V$, and $h_1,h_2\in S_H$. The points $p_1\in \mathrm{int}(\cap e_1)$ and $p_2\in \mathrm{int}(\cap e_2)$ have $r$-vision of each other in $D$ iff $e_1\cap e_2\neq\emptyset$ or $e_1\cup e_2$ induces a 4-cycle in $G$.
\end{claim}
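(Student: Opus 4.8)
The plan is to reduce the whole statement to one reformulation of $r$-vision together with one structural fact about slices. First I would note that an axis-parallel rectangle containing two points must contain their axis-parallel bounding box; hence two points $p_1,p_2\in D$ have $r$-vision if and only if the bounding box $\mathrm{bb}(p_1,p_2)\subseteq D$. Next I would record the key property of slices: since $S_V$ arises from \emph{vertical} cuts only, the top and bottom edges of every vertical slice $v$ lie on horizontal edges of $\partial D$ and span its full width (symmetrically for horizontal slices). Writing $X(\cdot)$ and $Y(\cdot)$ for the horizontal and vertical projections of a slice, this ``wall'' property says that for $x\in\mathrm{int}\,X(v)$ the maximal vertical segment of $D$ through a point of $v$ at abscissa $x$ is exactly $\{x\}\times Y(v)$. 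From this I would deduce the crucial consequence: if $\{v,h\}\in E(G)$ then, picking a common interior abscissa, the segment $\{x\}\times\bigl(Y(v)\cup Y(h)\bigr)\subseteq D$ is connected, so maximality forces $Y(h)\subseteq Y(v)$; dually $X(v)\subseteq X(h)$. Thus the pixel is $\cap\{v,h\}=X(v)\times Y(h)$.

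For the ``if'' direction, if $e_1\cap e_2\neq\emptyset$ the two pixels lie in one common slice, a rectangle contained in $D$, so their bounding box does too. If $e_1\cup e_2$ induces a $4$-cycle, all four pairs $\{v_i,h_j\}$ are edges, and applying the consequence above to each gives $X(v_1),X(v_2)\subseteq X(h_1)\cap X(h_2)$ and $Y(h_1),Y(h_2)\subseteq Y(v_1)\cap Y(v_2)$. Writing $p_i=(a_i,b_i)$, this places the four sides of $\mathrm{bb}(p_1,p_2)$ inside $v_1,v_2,h_1,h_2$ respectively, so the entire perimeter of the bounding box lies in $D$. Since $D$ is simply connected, a rectangle whose boundary lies in $D$ has its interior in $D$ as well (a complementary component met by the interior would be either a bounded hole, contradicting simple connectedness, or unbounded, forcing a crossing of the perimeter). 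Hence $\mathrm{bb}(p_1,p_2)\subseteq D$ and the two points have $r$-vision.

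For the ``only if'' direction, assume $\mathrm{bb}(p_1,p_2)\subseteq D$ and $e_1\cap e_2=\emptyset$, so $v_1,v_2,h_1,h_2$ are four distinct nodes; it then suffices to show $\{v_1,h_2\},\{v_2,h_1\}\in E(G)$, as the induced subgraph on four such nodes is $K_{2,2}=C_4$. Consider the corner $(a_1,b_2)$ of the bounding box. The vertical segment from $p_1$ to $(a_1,b_2)$ stays in the bounding box, hence in $D$; by maximality at $a_1\in\mathrm{int}\,X(v_1)$ it lies in $\{a_1\}\times Y(v_1)$, so $b_2\in Y(v_1)$. Dually, the horizontal segment from $p_2$ to $(a_1,b_2)$ lies in the maximal row $X(h_2)\times\{b_2\}$, giving $a_1\in X(h_2)$. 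Combined with $a_1\in\mathrm{int}\,X(v_1)$ and $b_2\in\mathrm{int}\,Y(h_2)$, these yield overlapping interiors in both coordinates, i.e.\ $\{v_1,h_2\}\in E(G)$; the edge $\{v_2,h_1\}$ follows symmetrically from the corner $(a_2,b_1)$.

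I expect the main obstacle to be the structural step: verifying rigorously that slice boundaries are genuine walls and converting interior overlap of two slices into the nesting $X(v)\subseteq X(h)$, $Y(h)\subseteq Y(v)$, since everything downstream is interval bookkeeping. The only genuinely topological point is filling a rectangle from its perimeter via simple connectedness of $D$; this is exactly where the hypothesis already invoked in \Fref{lemma:chordal} is used, and it is essential, matching the unbounded-ratio behaviour for domains with holes illustrated in \Fref{fig:holes}.
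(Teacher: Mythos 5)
Your proof is correct and follows essentially the same route as the paper's: for the $4$-cycle direction the paper likewise places a closed frame (there, the union of convex hulls of consecutive pixels) inside $v_1\cup h_2\cup v_2\cup h_1$ and fills it by simple connectedness of $D$, and for the converse it likewise argues that any axis-aligned rectangle $R\subseteq D$ containing $p_1,p_2$ forces $\mathrm{int}(v_1)\cap\mathrm{int}(h_2)\neq\emptyset$ and $\mathrm{int}(v_2)\cap\mathrm{int}(h_1)\neq\emptyset$ --- your wall-property and corner-segment bookkeeping simply makes the paper's ``clearly'' explicit. The one point to watch is your opening ``iff'' between $r$-vision and $\mathrm{bb}(p_1,p_2)\subseteq D$: when $p_1,p_2$ share a coordinate the bounding box is degenerate while the paper's definition of $r$-vision demands a \emph{non-degenerate} rectangle, so in that case you need either the guarantee provided by \Fref{lemma:technical} or a slight fattening of your perimeter (available because $a_1,a_2,$ and the extreme ordinates lie in the \emph{interiors} of the relevant slice projections), a subtlety the paper's convex-hull-of-pixels frame sidesteps automatically.
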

\begin{proof}
	If $v_1\in e_1\cap e_2$, then $p_1,p_2\in v_1$, therefore $p_1$ and $p_2$ have $r$-vision of each other. If $h_1\in e_1\cap e_2$, the same holds. If $\{v_1,h_1,v_2,h_2\}$ induces a 4-cycle, then
	\[ \mathrm{Conv}((v_1\cap h_1)\cup (v_1\cap h_2))\subseteq v_1\subseteq D \]
	by $v_1$'s convexity.	Moreover,
	\begin{align*}
		B=   & \mathrm{Conv}((v_1\cap h_1)\cup (v_1\cap h_2))\cup \mathrm{Conv}((v_1\cap h_2)\cup (v_2\cap h_2))\cup \\
		\cup & \mathrm{Conv}((v_2\cap h_2)\cup (v_2\cap h_1))\cup \mathrm{Conv}((v_2\cap h_1)\cup (v_1\cap h_1))
	\end{align*}
	is contained in $D$. Since $D$ is simply connected, we have $\mathrm{Conv}(B)\subseteq D$, which is a rectangle containing both $p_1$ and $p_2$.

	\medskip

	In the other direction, suppose $e_1\cap e_2=\emptyset$. If $R$ is an axis-aligned rectangle which contains both $p_1$ and $p_2$, then $R$ clearly intersects the interiors of each element of $e_1\cup e_2$, which implies that $\mathrm{int}(v_2)\cap \mathrm{int}(h_1)\neq\emptyset$ and $\mathrm{int}(v_1)\cap \mathrm{int}(h_2)\neq\emptyset$. Thus $e_1\cup e_2$ induces a cycle in $G$.
\end{proof}
This easily implies the following claim.
\begin{claim}\label{claim:rectvision2}
	Two points $p_1,p_2\in D$ have $r$-vision of each other iff $\exists e_1,e_2\in E(G)$ such that $p_1\in \cap e_1$, $p_2\in \cap e_2$, and either $e_1\cap e_2\neq\emptyset$ or $e_1\cup e_2$ induces a 4-cycle in $G$.
\end{claim}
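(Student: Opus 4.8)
The plan is to reduce \Fref{claim:rectvision2} to \Fref{claim:rectvision}, whose hypotheses are almost identical except that it insists the two points lie in the \emph{interiors} of their pixels and fixes the edges $e_1,e_2$ in advance. Thus the whole task is to bridge the gap between interior and closed-pixel membership, and to supply the existential quantifier over edges. I would split into the two implications.

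For the backward direction, suppose $e_1,e_2$ are given with $p_1\in\cap e_1$, $p_2\in\cap e_2$ and either $e_1\cap e_2\neq\emptyset$ or $e_1\cup e_2$ inducing a $4$-cycle in $G$. Here I would simply revisit the forward direction of \Fref{claim:rectvision}: the axis-aligned rectangle it produces (the common slice when $e_1\cap e_2\neq\emptyset$, or $\mathrm{Conv}(B)$ in the $4$-cycle case) is a non-degenerate rectangle inside $D$ that contains the \emph{whole} closed pixels $\cap e_1$ and $\cap e_2$, not merely their interiors. Hence $p_1,p_2$ lie in a common rectangle in $D$, so they have $r$-vision. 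No perturbation is needed in this direction.

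For the forward direction, suppose $p_1,p_2$ have $r$-vision, so by definition there is a non-degenerate axis-aligned rectangle $R\subseteq D$ with $p_1,p_2\in R$. The key observation is that every pixel $\cap e=v\cap h$ is non-degenerate (since $\{v,h\}\in E(G)$ forces $\mathrm{int}(v)\cap\mathrm{int}(h)\neq\emptyset$), so it equals the closure of its interior, and the pixel interiors cover $D$ off the finitely many cut lines. Because $R$ is non-degenerate we have $p_1\in\overline{\mathrm{int}(R)}$, so I can pick a generic point $q_1\in\mathrm{int}(R)$ arbitrarily close to $p_1$ and off every cut line; for $q_1$ close enough, the unique pixel whose interior contains $q_1$, say $\cap e_1$, has $p_1$ in its closure, i.e. $p_1\in\cap e_1$. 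Choosing $q_2$ analogously gives $p_2\in\cap e_2$. Since $q_1,q_2\in\mathrm{int}(R)\subseteq D$ they have $r$-vision, so \Fref{claim:rectvision} applied to $q_1\in\mathrm{int}(\cap e_1)$ and $q_2\in\mathrm{int}(\cap e_2)$ yields $e_1\cap e_2\neq\emptyset$ or $e_1\cup e_2$ inducing a $4$-cycle, which is exactly what is required.

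The only genuinely delicate point, and the main obstacle, is the perturbation step: I must argue that a small enough generic displacement of a boundary point $p_i$ into $\mathrm{int}(R)$ lands in a pixel adjacent to $p_i$, so that $p_i$ remains in its closure. This relies on the arrangement of cuts being finite, which gives a uniform lower bound on the feature size and ensures that in a sufficiently small neighborhood of $p_i$ only the cells whose closure contains $p_i$ appear. I would state this cleanly and leave the elementary verification implicit, as the paper's phrase ``this easily implies'' suggests.
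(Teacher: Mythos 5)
Your proposal is correct and is precisely the argument the paper leaves implicit when it introduces \Fref{claim:rectvision2} with ``this easily implies'': a reduction to \Fref{claim:rectvision}, where the backward direction reuses the rectangle (common slice or $\mathrm{Conv}(B)$) already constructed there, which indeed contains the closed pixels, and the forward direction supplies the existential quantifier by perturbing $p_i$ into a pixel interior inside the witnessing rectangle. Your handling of the one delicate point --- that finiteness of the pixel arrangement forces a sufficiently close generic $q_i$ to land in a pixel whose closure contains $p_i$ --- is exactly the verification the paper's phrasing glosses over, so there is nothing further to add.
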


These claims motivate the following definition.

\begin{definition}[$r$-vision of edges]\label{def:rvisionedge}
	For any $e_1,e_2\in E(G)$ we say that $e_1$ and $e_2$ have $r$-vision of each other iff $e_1\cap e_2\neq\emptyset$ or there exists a $C_4$ in $G$ which contains both $e_1$ and $e_2$.
\end{definition}

Let $Z\subseteq E(G)$ be such that for any $e_0\in E(G)$ there exists an $e_1\in Z$ so that $e_1$ has $r$-vision of $e_0$.
According \Fref{claim:rectvision2}, if we choose a point from $\mathrm{int}(\cap e_1)$ for each $e_1\in Z$, then we get a point $r$-guard system of $D$.

\medskip

Observe that any vertical mobile $r$-guard is contained in $\mathrm{int}(v)$ for some $v\in S_V$ (except $\le 2$ points of the patrol).
Extending the line segment the mobile guard patrols increases the area that it covers, therefore we may assume that this line segment intersects each element of $\{\mathrm{int}(\cap e)\ |\ v\in e\in E(G)\}$, which only depends on some $v\in S_V$.
Using \Fref{claim:rectvision2}, we conclude that the set which such a mobile guard covers with $r$-vision is exactly $\cup\{ h\in S_H\ |\ \{h,v\}\in E(G) \}$. The analogous statement holds for horizontal mobile guards as well.

\medskip

Thus, a set of mobile guards of $D$ can be represented by a set $M_V\subseteq S_V$. Clearly, $M_V$ covers $D$ if and only if
\[ D = \bigcup_{v\in M_V}\left(\bigcup N_G(v)\right),\text{ which holds iff } S_H=\bigcup_{v\in M_V} N_G(v), \]
or in other words, $M_V$ dominates each element of $S_H$ in $G$. Similarly, a horizontal mobile guard system has a representative set $M_H\subseteq S_H$, which dominates $S_V$ in $G$. Equivalently, $M_H\cup M_V$ is a totally dominating set of $G$, i.e., a subset of $V(G)$ that dominates every node of $G$ (even the nodes of $M_H\cup M_V$).

\medskip

The same arguments imply that a mixed set of vertical and horizontal mobile $r$-guards is represented by a set of vertices of $S\subseteq V(G)$. The set of guards is a covering system of guards of $D$ if and only if every node $V(G)\setminus S$ has neighbor in $S$, i.e., $S$ is a dominating set in $G$.
\Fref{table:translation} is the dictionary that lists the main notions of the original problem and their corresponding phrasing in the pixelation graph.

\begin{table}
	\centering
	\bgroup%
	\def\arraystretch{1.5}
	\small
	\begin{tabular}{ c  c }
		\textbf{Orthogonal polygon}      & \textbf{Pixelation graph}                               \\ \midrule\midrule
		Mobile guard                     & Vertex                                                  \\ \toprule
		Point guard                      & Edge                                                    \\ \midrule
		Simply connected                 & Chordal bipartite ($\Rightarrow$, but $\not\Leftarrow$) \\ \midrule
		$r$-vision of two points         & $e_1\cap e_2\neq\emptyset$ or $G[e_1\cup e_2]\cong C_4$ \\ \midrule
		Horiz.~mobile guard cover        & $M_H\subseteq S_H$ dominating $S_V$                     \\ \midrule
		Covering system of mobile guards & Dominating set                                          \\ \bottomrule
	\end{tabular}
	\egroup%

	\bigskip

	\caption{Translating the orthogonal art gallery problem to the pixelation graph}\label{table:translation}
\end{table}

\medskip

As promised, the following claim has a very short proof using the definitions and claims of this section.

\begin{proposition}\label{prop:star}
	If $m_V=1$ or $m_H=1$, then $p\le m_V+m_H-1$.
\end{proposition}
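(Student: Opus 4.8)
By the symmetry between the horizontal and vertical directions it suffices to treat the case $m_V=1$; the case $m_H=1$ then follows verbatim after swapping the roles of $S_H$ and $S_V$. The first step is to unwind the hypothesis through the dictionary of this section. The statement $m_V=1$ says that a single vertical mobile guard covers $D$, which means there is one vertical slice $v^\ast\in S_V$ with $N_G(v^\ast)=S_H$, i.e.\ $v^\ast$ is adjacent in $G$ to \emph{every} horizontal slice. I would also fix a minimum horizontal mobile guard system $M_H\subseteq S_H$ with $|M_H|=m_H$; by the dictionary $M_H$ dominates $S_V$, so every $v\in S_V$ has a neighbour in $M_H$. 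The goal is then to exhibit an explicit point guard system of size $m_H=m_V+m_H-1$.

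Because $v^\ast$ meets every horizontal slice, the edges $\{v^\ast,h\}$ for $h\in M_H$ all lie in $E(G)$ and are pairwise distinct, so I would take
\[ Z=\bigl\{\{v^\ast,h\}\ :\ h\in M_H\bigr\}\subseteq E(G),\qquad |Z|=m_H. \]
By the passage from $r$-vision-dominating edge sets to point guard systems recorded just after \Fref{def:rvisionedge} (which rests on \Fref{claim:rectvision2}), it is enough to verify that every edge of $G$ has $r$-vision of some edge of $Z$; this immediately yields $p\le|Z|=m_V+m_H-1$.

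The core step is this verification. Take an arbitrary edge $e_0=\{v,h\}\in E(G)$ with $v\in S_V$, $h\in S_H$. Since $M_H$ dominates $S_V$, I can choose $h'\in M_H$ with $\{v,h'\}\in E(G)$, and set $e_1=\{v^\ast,h'\}\in Z$. If $v=v^\ast$ or $h=h'$ then $e_0\cap e_1\neq\emptyset$ and $e_0$ has $r$-vision of $e_1$. Otherwise the four vertices $v,h,v^\ast,h'$ are distinct, and all four edges
\[ \{v,h\}=e_0,\quad \{h,v^\ast\},\quad \{v^\ast,h'\}=e_1,\quad \{h',v\} \]
belong to $G$: the first and last by the choice of $e_0$ and of $h'$, and the middle one because $v^\ast$ is adjacent to all of $S_H$. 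As $G$ is bipartite, these four edges induce exactly the cycle $v\,h\,v^\ast\,h'$, a $C_4$ containing both $e_0$ and $e_1$, so by \Fref{def:rvisionedge} $e_0$ again has $r$-vision of $e_1$.

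I expect no serious obstacle here: the whole argument is a short translation into the pixelation graph followed by the $C_4$ construction. The only point needing care is the degenerate overlap $v=v^\ast$ or $h=h'$, which is precisely absorbed by the ``$e_1\cap e_2\neq\emptyset$'' alternative of \Fref{def:rvisionedge}, together with the remark that bipartiteness forbids any chord on $\{v,h,v^\ast,h'\}$, so the four edges genuinely form an induced $C_4$.
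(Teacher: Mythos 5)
Your proof is correct and essentially identical to the paper's: both take $Z$ to be the edge set of the star $G[M_H\cup M_V]$ (which, with $M_V=\{v^\ast\}$ dominating $S_H$, is exactly your $\{\{v^\ast,h\}:h\in M_H\}$ of size $m_H+m_V-1$) and verify coverage via the same dichotomy, intersection or a $C_4$ through $v^\ast$ and a dominating $h'\in M_H$. Your extra remarks — the explicit degenerate cases $v=v^\ast$ or $h=h'$, and that bipartiteness makes the $C_4$ induced — only spell out details the paper leaves implicit.
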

\begin{proof}
	Let $Z$ be the set of edges of $G$ induced by $M_H\cup M_V$. Clearly, $G[M_H\cup M_V]$ is a star, thus $|Z|=|M_H|+|M_V|-1$.

	\medskip

	We claim that $Z$ covers $E(G)$. There exist two slices, $h_1\in M_H$ and $v_1\in M_V$, which are joined by an edge to $v_0$ and $h_0$, respectively.
	Since $G[M_H\cup M_V]$ is a star, $\{v_1,h_1\}\in Z$. This edge has $r$-vision of $e_0$, as either $\{v_1,h_1\}$ intersects $e_0$, or $\{v_0,h_0,v_1,h_1\}$ induces a $C_4$ in $Z$.
\end{proof}

\medskip

Finally, we can state \Fref{thm:main} in a stronger form, conveniently via graph theoretic concepts.

\begin{thmbis}{thm:main}\label{thm:mainprime} Let $A_V$ be a set of internally disjoint axis-parallel rectangles of a rectilinear domain $D$ (we call them vertical slices). Similarly, let $A_H$ be another set with the same property, whose elements we call the horizontal slices.
	Also, suppose that for any $v\in A_V$, its top and bottom sides are a subset of $\partial D$, and for any $h\in A_H$, its left and right sides are a subset of $\partial D$.
	Furthermore, suppose that their intersection graph \[ G=\left(A_H\cup A_V,\big\{\{h,v\}\subseteq A_V\cup A_H\ :\ \mathrm{int}(v)\cap\mathrm{int}(h)\neq\emptyset\big\}\right) \]
	is connected.

	\medskip

	If $M_V\subseteq A_V$ dominates $A_H$ in $G$, and $M_H\subseteq A_H$ dominates $A_V$ in $G$, then there exists a set of edges $Z\subseteq E(G)$ such that any element of $E(G)$ is $r$-visible from some element of $Z$, and
	\[ |Z|\le \frac43\cdot(|M_V|+|M_H|-1). \]
\end{thmbis}

\medskip

Now we are ready to prove the main theorem of this chapter.

\section{Proof of \texorpdfstring{\Fref{thm:mainprime}}{Theorem~\ref{thm:main}'}}
Both $A_H$ and $A_V$ can be extended to a partition of $D$ (while preserving the assumptions of the theorem on them), so $G$ is a subgraph induced by $A_H\cup A_V$ in a chordal bipartite graph (see \Fref{lemma:chordal}), thus $G$ is chordal bipartite as well. Let $M=G[M_V\cup M_H]$ be the subgraph induced by the dominating sets. Notice, that the bichordality of $G$ is inherited by $M$.

\medskip

\begin{claim}\label{claim:conn_rvis}
	If $M$ is connected, then any edge $e_0=\{h_0,v_0\}\in E(G)$ is $r$-visible from some edge of $M$.
\end{claim}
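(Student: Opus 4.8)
The plan is to reduce $r$-vision of the arbitrary edge $e_0=\{h_0,v_0\}$ to exhibiting one well-placed edge of $M$, and then to use chordal bipartiteness to locate it. Since $M_V$ dominates $A_H$, I would fix $v_1\in M_V$ adjacent to $h_0$; since $M_H$ dominates $A_V$, I would fix $h_1\in M_H$ adjacent to $v_0$. Set $A=N_G(h_0)\cap M_V$ and $B=N_G(v_0)\cap M_H$; both are nonempty ($v_1\in A$, $h_1\in B$) subsets of $V(M)$. The key observation is that \emph{if $M$ contains an edge $\{v,h\}$ with $v\in A$ and $h\in B$}, then $\{v,h_0\},\{v,h\},\{v_0,h\},\{v_0,h_0\}$ are all edges of $G$, so $\{v,h,v_0,h_0\}$ spans a $C_4$ containing both $\{v,h\}$ and $e_0$; by \Fref{def:rvisionedge} this edge of $M$ then $r$-sees $e_0$. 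So the whole task becomes: find an edge of $M$ between $A$ and $B$.

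First I would dispatch the degenerate cases. If $v_0\in M_V$ then $v_0\in V(M)$, and since $M$ is connected with at least two vertices it has a neighbor in $M$, i.e.\ an edge of $M$ incident to $v_0\in e_0$, which $r$-sees $e_0$ by incidence; the case $h_0\in M_H$ is symmetric. Hence I may assume $v_0\notin M_V$ and $h_0\notin M_H$, so that neither $v_0$ nor $h_0$ lies in $V(M)$ at all (this is what will keep the cycle below free of repetitions).

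Next I would choose $v\in A$, $h\in B$ minimizing $d=d_M(v,h)$, which is finite (as $M$ is connected) and odd (as $M$ is bipartite with $A\subseteq M_V$, $B\subseteq M_H$), and take a shortest path $P:\ v=x_0,x_1,\dots,x_d=h$ in $M$. If $d=1$ the observation finishes the proof, so suppose $d\ge 3$ and consider the cycle $C:\ h_0,x_0,x_1,\dots,x_d,v_0,h_0$ in $G$, using the edges $\{h_0,x_0\}$ (from $x_0\in A$), the path $P$, $\{x_d,v_0\}$ (from $x_d\in B$), and $e_0$. Because $v_0,h_0\notin V(M)$ and the $x_i$ are distinct, $C$ is a genuine cycle on $d+3\ge 6$ vertices, so by the chordal bipartiteness of $G$ (\Fref{lemma:chordal}) it has a chord. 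I would then rule out every chord: a chord inside $P$ is an edge of the induced subgraph $M$ and would shorten $P$; a chord $\{h_0,x_i\}$ forces $x_i\in A$ with $d_M(x_i,h)<d$; a chord $\{v_0,x_i\}$ forces $x_i\in B$ with $d_M(v,x_i)<d$ — each contradicting either the shortest-path choice of $P$ or the minimality of $d$; and the only remaining non-adjacent pair, $\{h_0,v_0\}$, is just $e_0$, not a chord. This contradiction forces $d=1$, completing the proof.

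The hard part is precisely this last step: connectivity of $M$ only yields a \emph{path} between $A$ and $B$, whereas the $C_4$ mechanism for $r$-vision demands an actual \emph{edge} between them. Chordal bipartiteness is exactly what bridges that gap, and the delicate point is ensuring $C$ is a chordless cycle of length $\ge 6$ unless $A$ and $B$ are already adjacent — which is why the degenerate cases where $v_0$ or $h_0$ already belongs to $M$ must be separated out first, so that $v_0,h_0$ cannot coincide with interior vertices of $C$.
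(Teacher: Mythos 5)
Your proof is correct and follows essentially the same route as the paper's: handle the cases $v_0\in M_V$ or $h_0\in M_H$ by an incident edge of $M$, then close a path of $M$ through $h_0$ and $v_0$ into a cycle and invoke chordal bipartiteness to produce a $C_4$ containing $e_0$ and an edge of $M$. The only difference is that you make the paper's terse final step rigorous via the extremal choice of $v\in A$, $h\in B$ minimizing $d_M(v,h)$ and an explicit chord-elimination argument, which is a welcome filling-in of details rather than a new approach.
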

\begin{proof}
	As $N_G(M_V\cup M_H)=V(G)$, there exists two vertices, $v_1\in M_V$ and $h_1\in M_H$, such that $\{v_1,h_0\},\{v_0,h_1\}\in E(G)$.

	\medskip

	If $v_0\in M_V$ or $h_0\in M_H$, then $\{v_0,h_1\}$ or $\{v_1,h_0\}$ is in $E(M)$.

	\medskip

	Otherwise, there exists a path in $M$, whose endpoints are $v_1$ and $h_1$, and this path and the edges $\{v_1,h_0\}$,$\{h_0,v_0\}$,$\{v_0,h_1\}$ form a cycle in $G$. By the bichordality of $G$, there exists a $C_4$ in $G$ which contains an edge of $M$ and $e_0$.
\end{proof}

\medskip

We distinguish 3 cases based on the level connectivity of $M$.

\subsection{\texorpdfstring{\boldmath $M$}{𝑀} is 2-connected}\label{case:2conn}
The $\frac43$ constant in the statement of \Fref{thm:mainprime} is determined by this case. Knowing this, it is not surprising that this is the longest and most complex case of the proof.

\medskip

If $E(M)$ consists of a single edge $e$, then $Z=\{e\}$ is clearly a point guard system of $G$ by~\Fref{claim:conn_rvis}.

\medskip

Suppose now, that $M$ has more than two vertices. Any edge of $M$ is contained in a cycle of $M$, and by the bichordality property, there is such a cycle of length 4. It is easy to see that the convex hull of the pixels determined by the edges of a $C_4$ is a rectangle. Define
\[ D_M=\bigcup\limits_{\{e_1,e_2,e_3,e_4\}\text{ is a $C_4$ in }M}\mathrm{Conv}\left(\bigcup\limits_{i=1}^4 \cap e_i\right).\]
The simply connectedness of $D$ implies that $D_M\subseteq D$.

\begin{claim}\label{claim:D}
	For any slice $s\in V(M)$ the intersection of $s$ and $D_M$ is connected.
\end{claim}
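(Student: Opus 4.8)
The plan is to treat a vertical slice $s=v$ (the horizontal case being symmetric) and to reduce the statement to a one-dimensional claim about the vertical extent of $v\cap D_M$. First I would record the shape of a single building block: for a $4$-cycle $\sigma=\{v',h_a,v'',h_b\}$ of $M$ (with $v'$ left of $v''$ and $h_a$ below $h_b$), \Fref{claim:rectvision} already shows that $\mathrm{Conv}\!\big(\bigcup_i \cap e_i\big)$ is exactly the axis-parallel bounding rectangle $R_\sigma=[\,\mathrm{left}(v'),\mathrm{right}(v'')\,]\times[\,\mathrm{bot}(h_a),\mathrm{top}(h_b)\,]\subseteq D$. Consequently, whenever $v\cap R_\sigma\neq\emptyset$ the slice $v$ is either one of $v',v''$ or lies strictly between them, so $R_\sigma$ covers the whole horizontal width of $v$; thus $v\cap R_\sigma$ is a full-width horizontal band of $v$ with vertical span $[\,\mathrm{bot}(h_a),\mathrm{top}(h_b)\,]\cap[\,\mathrm{bot}(v),\mathrm{top}(v)\,]$. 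Note that $h_a,h_b$ necessarily cross $v$ (they span the column of $v$), hence $h_a,h_b\in N_M(v)$. Therefore $v\cap D_M$ is a union of full-width bands of $v$, and it is connected if and only if its vertical projection $Y=\pi_y(v\cap D_M)$ is a single interval.

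Next I would argue that $Y$ has no gap. Suppose for contradiction that $(\alpha,\beta)$ is a maximal open interval disjoint from $Y$ with $\alpha,\beta$ in the closure of $Y$. By the previous paragraph every endpoint of a covering band is of the form $\mathrm{bot}(h)$ or $\mathrm{top}(h)$ for some $h\in N_M(v)$, so the covered component just below the gap reaches up to $\alpha$ only through a rectangle whose top is $\mathrm{top}(h_A)$ for some $h_A\in N_M(v)$ lying below the gap, and symmetrically the component above the gap is supplied by some $h_B\in N_M(v)$ above the gap. It therefore suffices to produce one $4$-cycle of $M$ through $v$ of the form $\{v,h_a,v',h_b\}$ with $h_a$ below the gap and $h_b$ above it: by the first paragraph its rectangle is a full-width band of $v$ whose vertical span contains $(\alpha,\beta)$, so $v\times(\alpha,\beta)\subseteq D_M$, contradicting $(\alpha,\beta)\cap Y=\emptyset$. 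Call such a $4$-cycle a \emph{bridge}.

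To build a bridge I would invoke the standing hypothesis of \Fref{case:2conn}. Put $A=\{h\in N_M(v):h\text{ below the gap}\}$ and $B=\{h\in N_M(v):h\text{ above the gap}\}$, both nonempty by the choice of flanking slices. Since $M$ is $2$-connected, $M-v$ is connected, so there is a path in $M-v$ from some $h_A\in A$ to some $h_B\in B$; adjoining the edges $\{v,h_A\}$ and $\{v,h_B\}$ gives a cycle through $v$ with one $v$-neighbour in $A$ and the other in $B$. Among all such ``$A$–$B$ cycles through $v$'' choose one, $C$, of minimum length. If $|C|\ge 6$, bichordality of $G$, hence of its induced subgraph $M$ (\Fref{lemma:chordal}), provides a chord; splitting $C$ along that chord yields a strictly shorter cycle that still passes through $v$ with one neighbour in $A$ and one in $B$, contradicting minimality. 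Hence $C$ is a $4$-cycle $\{v,h_A,v',h_B\}$ — exactly a bridge — which gives the desired contradiction and shows $Y$ is an interval.

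I expect the bridge-construction step to be the main obstacle: the routine geometry only guarantees that the bands at the heights of $N_M(v)$ are covered, and excluding a gap between two height-groups of these neighbours is precisely what forces the combined use of $2$-connectivity and bichordality in the minimal-cycle argument. The one point needing care is the chord analysis: when the chord is incident to $v$ it must land on another neighbour of $v$ (automatic, as $M$ is bipartite with $v$ on one side), so the shorter cycle is again an $A$–$B$ cycle through $v$; when the chord is not incident to $v$, the piece retaining $v$ still joins an $A$-neighbour to a $B$-neighbour, so minimality is contradicted in either case.
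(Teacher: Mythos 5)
Your proof is correct and takes essentially the same route as the paper's: suppose two parts of $s\cap D_M$ are separated, use the $2$-connectivity of $M$ to close a cycle through $s$ avoiding $s$ internally, pass to a shortest such cycle, and use chordal bipartiteness to extract a $C_4$ whose pixel rectangle lies in $D_M$ and bridges the separation. Your band-and-gap reduction and the explicit chord case analysis are a more detailed rendering of the paper's terser argument, which in the long-cycle case instead observes that minimality forces $s$ to be joined to every second node of the cycle.
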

\begin{proof}
	Suppose that $e_1,e_2\in E(M)$ are such that $\cap e_1$ and $\cap e_2$ are in two different components of $s\cap D_M$. Since $M$ is 2-connected, there is a path connecting $e_1\setminus \{s\}$ and $e_2\setminus\{s\}$ in $M-s$.

	\medskip

	Take the shortest cycle in $M$ containing $e_1$ and $e_2$.
	If this cycle contains 4 edges, then the convex hull of their pixels is in $D_M$, which is a contradiction.
	Similarly, if the cycle contains more than 4 edges, the bichordality of $M$ implies that $s$ is joined to every second node of the cycle, which contradicts our assumption that $s\cap D_M$ is disconnected.
\end{proof}

\medskip

\begin{claim}\label{claim:DG}
	For any slice $s\in V(G)$, the intersection of $\,\mathrm{int}(s)$ and $D_M$ is connected.
\end{claim}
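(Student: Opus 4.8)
The plan is to fix a slice $s$ and assume, by the horizontal/vertical symmetry of the construction, that $s=v$ is vertical, say with $x$-range $[a,b]$. Writing $D_M=\bigcup_B B$ as the union of the rectangles $B=\mathrm{Conv}\big(\bigcup_{i=1}^4 \cap e_i\big)$ coming from the $4$-cycles of $M$, we have $\mathrm{int}(v)\cap D_M=\bigcup_B\big(\mathrm{int}(v)\cap B\big)$, so the task is to show this union is connected.

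First I would isolate the geometric fact that drives everything: if $B$ arises from a $C_4$ whose two vertical slices (columns) are $v_1,v_2$ and $v\notin\{v_1,v_2\}$, then $\mathrm{int}(v)\cap B$ is empty unless $v$ lies strictly between $v_1$ and $v_2$, in which case $\mathrm{int}(v)\cap B=\mathrm{int}(v)\cap(\mathbb{R}\times J_B)$ is a \emph{full-width} horizontal band of $v$, namely the product of $v$'s $x$-range with a $y$-interval $J_B$. This is a short coordinate check: the $x$-extent $[X_1,X_2]$ of $B$ reaches from inside the left column to inside the right column, while $[a,b]$ sits strictly between the two columns, so $X_1\le a$ and $X_2\ge b$, whence $[a,b]\subseteq[X_1,X_2]$. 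The only way full-width can fail is when $v$ is itself a column of the $C_4$, and that forces $v\in V(M)$.

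This dichotomy splits the proof. If $v\notin V(M)$, then $v$ is never a column of any $C_4$ of $M$, so \emph{every} rectangle meeting $\mathrm{int}(v)$ does so in a full-width band, and $\mathrm{int}(v)\cap D_M=\mathrm{int}(v)\cap\big(\mathbb{R}\times\bigcup_B J_B\big)$. Connectedness is now a purely one-dimensional statement: $\bigcup_B J_B$ must be a single interval. If $v\in V(M)$, I would instead invoke \Fref{claim:D}, which already gives that $v\cap D_M$ is connected; the only remaining point is that passing from $v$ to $\mathrm{int}(v)$ cannot disconnect. This holds because the connections furnished by \Fref{claim:D} are realized inside convex hulls of $4$-cycles, i.e.\ genuine rectangles, so neighbouring pieces overlap in bands of positive height rather than at a single $y$-level, and deleting $\partial v$ leaves them connected.

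The main obstacle is the one-dimensional claim when $v\notin V(M)$: ruling out a $y$-gap in $\bigcup_B J_B$. I would argue by contradiction. A gap at height $y_0$ is witnessed by two full-width bands, one just below and one just above, coming from two $4$-cycles of $M$; in particular the vertical line $\ell:\,x=x_v$ meets two rectangles $B^-,B^+\subseteq D_M$ lying below and above $y_0$, with $\cap e^-\subseteq B^-$ and $\cap e^+\subseteq B^+$ for some $e^-,e^+\in E(M)$. Since $M$ is $2$-connected there is a cycle of $M$ through $e^-$ and $e^+$; taking a shortest one and repeatedly applying bichordality, I would decompose the orthogonal polygon it bounds (drawn as in \Fref{lemma:chordal}) into $4$-cycles whose convex hulls lie in $D_M$ and cover its interior. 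Then $\ell$ crosses a connected vertical segment of $D_M$ joining $B^-$ to $B^+$, in particular passing through $(x_v,y_0)$, contradicting the gap. The delicate point — and the reason the $2$-connectivity of $M$ is indispensable here — is to guarantee that $(x_v,y_0)$ actually lies in the region bounded by the chosen cycle, so that the swept part of $D_M$ along $\ell$ is genuinely connected across $y_0$.
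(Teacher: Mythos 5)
Your two structural ingredients are sound, and one of them is actually sharper than what the paper writes down: the full-width band lemma (every hull $B$ whose $4$-cycle does not use $v$ as a column meets $\mathrm{int}(v)$, if at all, in a band spanning $v$'s whole $x$-range) is correct — the internal disjointness of the slices in $A_V$ rules out partial overlap exactly as you compute — and your treatment of the case $v\in V(M)$ via \Fref{claim:D} is at the same level of rigour as the paper itself, which simply writes ``done by \Fref{claim:D}.'' The reduction of the case $v\notin V(M)$ to the one-dimensional statement that $\bigcup_B J_B$ is an interval is also a legitimate reformulation.

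However, the proof of that one-dimensional statement has a genuine gap, and it is precisely the point you flag but do not resolve: nothing in your argument guarantees that $(x_v,y_0)$ lies in the region bounded by the chosen cycle. Two-connectivity of $M$ gives \emph{some} cycle through $e^-$ and $e^+$, and bichordality lets you chop it into $4$-cycles, but the edges $e^-$ and $e^+$ are not incident to $v$ (they cannot be, since $v\notin V(M)$), and their pixels may both lie on the \emph{same} side of $v$ — each band-generating $C_4$ has one column weakly left and one weakly right of $v$, and if you happen to pick the left-column edge of both $B^-$ and $B^+$, even a shortest cycle through them can be drawn entirely in the half-plane $x<a$; its bounded region, and the union of the hulls in your decomposition, may then simply miss $(x_v,y_0)$, and no contradiction follows. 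The paper avoids this by anchoring the cycle at $s$ itself: it takes the two edges $e_1,e_2\in E(G)$ \emph{containing} $s$ whose pixels bound the gap (their horizontal endpoints necessarily lie in $V(M)$, since the bands are full-height and bounded by slices of $M_H$), joins those endpoints by a \emph{shortest} path in $M$, and closes the cycle through $s$. Because $M$ is an induced subgraph of $G$, any chord of this cycle between two path vertices would be an edge of $M$ contradicting minimality, so bichordality of $G$ forces every chord through $s$; the resulting fan of $4$-cycles at $s$ chains full-width bands of $\mathrm{int}(s)$ consecutively across $y_0$, which is the contradiction. To repair your write-up you either need this anchoring move, or an explicit crossing/parity argument showing you can always select $e^-$ and $e^+$ (say, on opposite sides of $v$) so that the drawn cycle is forced across the strip $[a,b]$ both below and above $y_0$ — as it stands, the asserted enclosure is exactly the unproved heart of the claim.
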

\begin{proof}
	If $s\in V(M)$, we are done by \Fref{claim:D}. If $s\in V(G)\setminus V(M)$, let $e_1,e_2\in E(G)$ be the two edges such that $e_1\cap e_2=\{s\}$, $\partial(\cap e_1)\bigcap \partial D_M\neq\emptyset$, $\partial(\cap e_2)\bigcap \partial D_M\neq\emptyset$.
	Then, we must have $\left(e_1\bigcup e_2\right)\setminus \{s\}\subseteq V(M)$. Take the shortest path in $M$ joining $e_1\setminus \{ s \}$ to $e_2\setminus \{ s \}$.
	The proof can be finished as that of the previous claim.
\end{proof}

\medskip

Let $B_H\subset M_H$ be the set of those slices whose top and bottom sides both intersect $\partial D_M$ in an uncountable number of points of $\mathbb{R}^2$.

\medskip

For technical reasons, we split each element of $h\in B_H$ horizontally through $c(h)$ to get two isometric rectangles in $\mathbb{R}^2$; let the set of the resulting refined horizontal slices be $B'_H$. Replace the elements of $A_H$ and $M_H$ contained in $B_H$ with their corresponding two halves in $B'_H$ to get
\[ A'_H=B'_H\bigcup A_H\setminus B_H\quad\text{and}\quad M'_H=B'_H\bigcup M_H\setminus B_H,\]
respectively. Let $A'_V=A_V$, $M'_V=M_V$. Let $\tau$ be the function which maps $h\in B'_H$ to the $\tau(h)\in A_H$ for which $h\subseteq \tau(h)$ holds, and let $\tau$ be the identity function on $A'_V\cup A'_H\setminus B'_H$.

\medskip

Let $G'$ be the intersection graph of $A'_H$ and $A'_V$ (as in the statement of \Fref{thm:mainprime}). Also, let $M'=G'[M'_H\cup M'_V]=\tau^{-1}(M)$. Observe that $\tau$ naturally defines a graph homomorphism $\tau:G'\to G$ (edges are mapped vertex-wise).

\medskip

\begin{claim}\label{claim:tau}
	In $G'$, the set $M'_H$ dominates $A'_V$, and $M'_V$ dominates $A'_H$.
	Furthermore, if $Z'\subseteq E(M')$ is a point guard system of $G'$, then $Z=\tau(Z')\subseteq E(M)$ is a point guard system of $G$.
\end{claim}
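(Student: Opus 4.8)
The plan is to verify the two assertions separately, both resting on one elementary geometric observation about slices: if a vertical slice $v$ and a horizontal slice $h$ overlap (i.e.\ $\mathrm{int}(v)\cap\mathrm{int}(h)\neq\emptyset$), then $v\cap h$ realizes the \emph{full} vertical extent of $h$. Indeed, $v$ runs from $\partial D$ to $\partial D$ vertically, while $\mathrm{int}(h)\subseteq\mathrm{int}(D)$ cannot be crossed by any wall of $\partial D$; hence in every column of the overlap the slice $h$ occupies a $y$-interval contained in that of $v$, so $v$ meets $h$ along all of $h$'s height. Consequently, when an element $h\in B_H$ is split horizontally into halves $h^{1},h^{2}\in B'_H$, any vertical slice overlapping $h$ overlaps the interior of \emph{both} halves.

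For the first assertion, let $v\in A'_V=A_V$. Since $M_H$ dominates $A_V$ in $G$, some $h\in M_H$ overlaps $v$. If $h\notin B_H$ then $h\in M'_H$ and $\{h,v\}\in E(G')$; if $h\in B_H$ then by the observation each of its halves, which lie in $M'_H$, overlaps $v$, so $v$ is dominated. Symmetrically, let $h'\in A'_H$: if $h'\in A_H\setminus B_H$ it is dominated in $G$ by some $v\in M_V=M'_V$ and the same edge survives in $G'$; if $h'\in B'_H$ is a half of $h\in B_H$, pick $v\in M_V$ overlapping $h$, and the observation gives $\{v,h'\}\in E(G')$. Thus $M'_H$ dominates $A'_V$ and $M'_V$ dominates $A'_H$.

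For the second assertion I would first note that every edge of $G$ lifts: given $e_0=\{h_0,v_0\}\in E(G)$, either $h_0\notin B_H$ and $e_0\in E(G')$, or $h_0\in B_H$ and, by the observation, $\{h_0^{j},v_0\}\in E(G')$ for a half $h_0^{j}$; in either case this produces $e_0'\in E(G')$ with $\tau(e_0')=e_0$. Now take the guard $e_1'\in Z'$ that has $r$-vision of $e_0'$ in $G'$, and set $e_1=\tau(e_1')\in\tau(Z')=Z\subseteq E(M)$; I claim $e_1$ has $r$-vision of $e_0$ in $G$. If $e_0'\cap e_1'\neq\emptyset$, applying the vertex-wise map $\tau$ to a shared vertex $s'$ gives $\tau(s')\in\tau(e_0')\cap\tau(e_1')=e_0\cap e_1$, so they meet. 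If instead $e_0'$ and $e_1'$ lie in a common $C_4$ of $G'$, I push it forward by $\tau$; since $\tau$ fixes vertical slices the two vertical vertices stay distinct, and the only way the image can fail to be a genuine $C_4$ is if the two horizontal vertices are the two halves of a single $h\in B_H$. Either way one obtains $r$-vision of $e_0$ from $e_1$ via \Fref{def:rvisionedge}.

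I expect this last situation to be the only real obstacle, namely when the $C_4$ of $G'$ collapses under $\tau$. Here both vertical vertices $v_a,v_c$ are adjacent to the common image $h=\tau(h_0^{1})=\tau(h_0^{2})$, so the image degenerates to the two edges $\{v_a,h\}$ and $\{v_c,h\}$, which share the vertex $h$; since $e_0$ and $e_1$ are among these edges they intersect, again yielding $r$-vision. Hence $Z$ is a point guard system of $G$. Throughout, note that the $2$-connectivity of $M$ is not needed for this claim—only the definition of the refinement and the homomorphism $\tau$—so the argument is purely combinatorial once the slice observation is in place.
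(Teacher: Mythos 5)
Your proof is correct and follows essentially the same route as the paper, which compresses the argument into two sentences: your slice observation (a vertical slice spans the full height of any horizontal slice it overlaps, since its top and bottom sides lie on $\partial D$) is exactly why $\tau$ maps non-edges to non-edges, from which both the transfer of domination and the lifting of every edge of $G$ to $G'$ follow, and your push-forward of intersections and $C_4$'s is the verification that the graph homomorphism $\tau$ preserves $r$-visibility. The one point you spell out that the paper leaves implicit is the degenerate case where a $C_4$ of $G'$ collapses under $\tau$ because its two horizontal vertices are the two halves of a single $h\in B_H$; you resolve it correctly, since the image edges then share the vertex $h$ and $r$-vision still holds by \Fref{def:rvisionedge}.
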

\begin{proof}
	The first statement of this claim holds, since $\tau$ maps non-edges to non-edges, and both $M'_H=\tau^{-1}(M_H)$ and $M'_V=\tau^{-1}(M_V)$ by definition.
	As $\tau$ is a graph homomorphism, it preserves $r$-visibility, which implies the second statement of this claim.
\end{proof}

\medskip

Notice, that $M'$ is 2-connected and $D_M=D_{M'}$.
An edge $e\in E(M')$ falls into one of the following 4 categories:
\begin{figure}[H]
	\centering
	\begin{tikzpicture}[scale=0.75]
	\begin{scope}
		\draw[very thick] (0,3) -- (0,10) -- (7,10) -- (7,7) -- (11,7) -- (11,10) -- (15,10) -- (15,5) -- (12,5) -- (12,0) -- (3,0) -- (3,3) -- (0,3);

		\def\hpat{north east lines}
		\def\vpat{north west lines}
		\filldraw[thin,pattern=\hpat] (0,3) rectangle (1,10);
		\filldraw[thin,pattern=\hpat] (3,0) rectangle (4,10);
		\filldraw[thin,pattern=\hpat] (5,0) rectangle (7,10);
		\filldraw[thin,pattern=\hpat] (8.5,0) rectangle (9.5,7);
		\filldraw[thin,pattern=\hpat] (11,0) rectangle (12,10);
		\filldraw[thin,pattern=\hpat] (14,5) rectangle (15,10);

		\filldraw[thin,pattern=\vpat] (3,0) rectangle (12,1);
		\filldraw[thin,pattern=\vpat] (0,3) rectangle (12,4);
		\filldraw[thin,pattern=\vpat] (0,5) rectangle (15,7);
		\draw[thin] (0,6) -- (15,6);
		\filldraw[thin,pattern=\vpat] (0,9) rectangle (7,10);
		\filldraw[thin,pattern=\vpat] (11,9) rectangle (15,10);

		\foreach \x/\y/\l in {1/10/1,4/10/2,6.5/10/3,9.5/7/4,12/10/5,15/10/6}
			\draw (\x-0.5,\y+0.5) node[anchor=south] {$v_\l$};

		\draw [decorate,thick,decoration={brace,amplitude=10pt}]
		(-0.1,5) -- (-0.1,7);

		\foreach \x/\y/\l in {3/0/1,0/3/2,0/5.5/3,0/9/4,16.5/9/5}
			\draw (\x-0.5,\y+0.5) node[anchor=east] {$h_\l$};

		\draw (16,5+0.5) node[anchor=east] {$h'_3$};
		\draw (16,6+0.5) node[anchor=east] {$h''_3$};

		\foreach \x/\y in {1/4,1/10,6.5/10,4/1,12/1,12/10,15/10,15/6}
			\draw (\x-0.5,\y-0.5) node[rectangle,rounded corners=3pt,fill=white,text opacity=1,fill opacity=1,inner sep=2pt] {\tiny convex};

		\foreach \x/\y in {1/6,1/7,15/7,4/10,12/4,9.5/7,9.5/1,6.5/1}
			\draw (\x-0.5,\y-0.5) node[rectangle,rounded corners=3pt,fill=white,text opacity=1,fill opacity=1,inner sep=2pt] {\tiny side};

		\foreach \x/\y in {4/4,6.5/7,12/7,12/6}
			\draw (\x-0.5,\y-0.5) node[rectangle,rounded corners=3pt,fill=white,text opacity=1,fill opacity=1,inner sep=2pt] {\tiny reflex};

		\foreach \x/\y in {4/6,4/7,6.5/4,6.5/6,9.5/4,9.5/6}
			\draw (\x-0.5,\y-0.5) node[rectangle,rounded corners=3pt,fill=white,text opacity=1,fill 	opacity=1,inner sep=2pt] {\tiny internal};

	\end{scope}
\end{tikzpicture}
\caption{We have $M_H=\{h_1,h_2,h_3,h_4,h_5\}$, $M'_H=M_H-h_3+h_3'+h_3''$, and $M_V=M'_V=\{v_1,v_2,v_3,v_4,v_5,v_6\}$. The thick line is the boundary of $D_M$. Each rectangle pixel is labeled according to the type of its corresponding edge of $M'$.}\label{fig:types}
\end{figure}
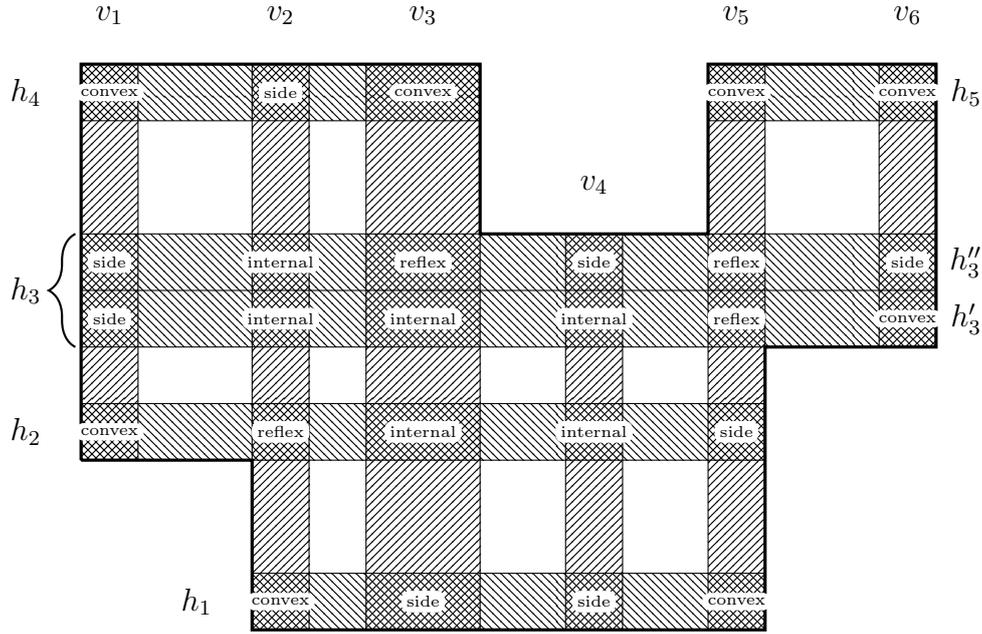

\begin{description}\setlength{\baselineskip}{1em}
	\item[Convex edge:] 3 vertices of $\cap e$ fall on $\partial D_M$, e.g., the edge $\{h_2,v_1\}$ on \Fref{fig:types};

	\item[Reflex edge:] exactly 1 vertex of $\cap e$ falls on $\partial D_M$, e.g., $\{h_3'',v_3\}$ on \Fref{fig:types};

	\item[Side edge:] two neighboring vertices of $\cap e$ fall on $\partial D_M$,  e.g., $\{h_1,v_4\}$ on \Fref{fig:types};

	\item[Internal edge:] zero vertices of $\cap e$ fall on $D_M$,  e.g., $\{h_2,v_3\}$ on \Fref{fig:types}.
\end{description}

Notice that on \Fref{fig:types}, the edge $\{h_3,v_5\}$ falls into neither of the previous categories, as two non-neighboring (diagonally opposite) vertices of pixel $h_3\cap v_5$ fall on $D_M$. This clearly cannot happen with edges of $G'$, but $G$ may contain edges of this type.

\medskip

Observe that $\tau$ maps convex edges to convex edges, and side edges to side edges. Conversely, the preimages of a convex edge are a convex edge and a side edge ($M'$ is 2-connected), the preimages of a side edge are two side edges, and the preimages of a reflex edge are a reflex edge and an internal edge.

\medskip

The following definition allow us to break our proof into smaller, transparent parts, which ultimately boils down to presenting a precise proof. It captures a condition which in certain circumstances allows us to conclude that a guard $e_1$ can be replaced by $e_2$ such that we still have complete coverage of $G'$.

\begin{definition}\label{def:dominance} For any two edges $e_1,e_2\in E(M')$, where $e_1=\{v_1,h_1\}$ and $e_2=\{v_2,h_2\}$, we write $e_2 \covers e_1$ ($e_2$ dominates $e_1$) iff either
	\begin{itemize}
		\item $e_1\cap e_2\subset A'_H$, and $\exists h_3,h_4\in M'_H$ such that $\{v_1,v_2,h_3,h_4\}$ induces a $C_4$ in $M'$, and $h_1=h_2$ is between $h_3$ and $h_4$; or

		\item $e_1\cap e_2\subset A'_V$, and $\exists v_3,v_4\in M'_V$ such that $\{v_3,v_4,h_1,h_2\}$ induces a $C_4$ in $M'$, and $v_1=v_2$ is between $v_3$ and $v_4$; or

		\item $e_1\cap e_2=\emptyset$, and $\exists v_3\in M'_V$ and $h_3\in M'_H$ such that both $\{v_1,h_2,v_2,h_3\}$ and $\{h_1,v_3,h_2,v_2\}$ induces a $C_4$ in $M'$;
		      furthermore, $v_1$ is between $v_2$ and $v_3$, and $h_1$ is between $h_2$ and $h_3$.
	\end{itemize}
	We write $e_2 \rela e_1$ iff both $e_2 \covers e_1$ and $e_1 \covers e_2$ hold. Note that $\rela$ is a symmetric, but generally intransitive relation.
\end{definition}

For example, on \Fref{fig:types}, $\{h_1,v_3\}\rela \{h_3'',v_3\}$, and $\{h_1,v_2\}\covers \{h_3'',v_3\}$.
Also, $\{h_3'',v_3\}\rela \{h_3'',v_1\}$, but $\{h_3'',v_3\}\not\rightarrow \{h_3',v_1\}$. This is a technicality which makes the proofs easier, but does not cause any issues in the end, as $\tau(\{h_3'',v_1\})=\tau(\{h_3',v_1\})$.

\medskip

We will search for a point guard system of $M'$ with very specific properties, which are described by the following definition.

\begin{definition}
	Suppose $Z'\subseteq E(M')$ is such, that
	\begin{enumerate}
		\item $Z'$ contains every convex edge of $M$,
		\item for any non-internal edge $e_1\in E(M)\setminus Z'$, there exists some $e_2\in Z'$ for which $e_2\covers e_1$, and
		\item\label{item:internal} for each $h_0\in A'_H$ for which $\mathrm{int}(h_0)\cap D_M\neq\emptyset$ holds, $\exists \{h_2,v_2\}\in Z'$ such that $\{h_0,v_2\}\in E(M')$ and $N_{M'}(h_2)\supseteq N_{G'}(h_0)\bigcap M'_V$.
	\end{enumerate}
	If these three properties hold, we call $Z'$ a \textbf{hyperguard} of $M'$.
\end{definition}

\begin{lemma}\label{lemma:zprime}
	Any hyperguard $Z'$ of $M'$ is a point guard system of $G'$, i.e., any edge of $G'$ is $r$-visible from some element of $Z'$.
\end{lemma}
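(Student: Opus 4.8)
The plan is to verify directly that an arbitrary edge $e_0=\{h_0,v_0\}\in E(G')$ is $r$-visible from some element of $Z'$, using the two domination facts recorded in \Fref{claim:tau} (that $M'_V$ dominates $A'_H$ and $M'_H$ dominates $A'_V$ in $G'$) to pull the problem back onto the edges actually present in $M'$, where the three defining properties of a hyperguard apply.

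First I would record the engine of the argument: that $\covers$ refines $r$-vision. Each clause of \Fref{def:dominance} exhibits, for $e_2\covers e_1$, either a shared slice (so $e_1\cap e_2\neq\emptyset$) or two auxiliary edges completing $e_1\cup e_2$ to a $4$-cycle, so in either case $e_1$ and $e_2$ $r$-see each other in the sense of \Fref{def:rvisionedge}. More importantly, I expect to prove the transport sublemma that $\covers$ carries coverage: if $e_2\covers e_1$ and some $e_0$ is $r$-visible from $e_1$, then $e_0$ is $r$-visible from $e_2$. The betweenness conditions of \Fref{def:dominance} place the pixel of $e_1$ inside the axis-parallel rectangle spanned by $e_2$ together with its auxiliary $C_4$, so any rectangle witnessing $r$-vision of $e_0$ from $e_1$ can be enlarged to a rectangle witnessing it from $e_2$, staying inside $D$ by the simple-connectedness reasoning of \Fref{claim:rectvision}. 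This sublemma is what allows coverage of $M'$ to propagate into coverage from $Z'$.

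Next, given $e_0=\{h_0,v_0\}$, I would produce a genuine edge of $M'$ that $r$-sees it, exactly as in \Fref{claim:conn_rvis}: domination yields $v_1\in M'_V$ with $\{v_1,h_0\}\in E(G')$ and $h_1\in M'_H$ with $\{v_0,h_1\}\in E(G')$, and the $2$-connectivity together with the bichordality of $M'$ produces a $C_4$ through which $e_0$ is $r$-visible from some $e_1\in E(M')$. If $e_1$ is non-internal, then either $e_1\in Z'$ already (property 1, which places every convex edge into $Z'$) or property 2 supplies an $e_2\in Z'$ with $e_2\covers e_1$; the transport sublemma then gives $e_0$ $r$-visible from $e_2\in Z'$, settling this case.

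The remaining, and hardest, case is when $e_0$ is $r$-seen only by internal edges of $M'$, about which properties 1 and 2 say nothing; here property 3 must be invoked. Since an internal edge lies inside $D_M$, the slice $h_0$ satisfies $\mathrm{int}(h_0)\cap D_M\neq\emptyset$, so property 3 furnishes a guard $\{h_2,v_2\}\in Z'$ with $\{h_0,v_2\}\in E(M')$ and $N_{M'}(h_2)\supseteq N_{G'}(h_0)\cap M'_V$. I would then close the $4$-cycle on $\{h_0,v_0,h_2,v_2\}$: the edge $\{h_0,v_2\}$ is given, and the crux is that $\{v_0,h_2\}\in E(G')$. This is the main obstacle, and it is where the placement of $D_M$ is used: because every vertical slice runs from $\partial D$ to $\partial D$, a vertical and a horizontal slice meet exactly when their transversal ranges overlap, and the containment $N_{M'}(h_2)\supseteq N_{G'}(h_0)\cap M'_V$ forces $h_2$ to span horizontally across every $M'_V$-neighbour of $h_0$; the restriction that only internal edges $r$-see $e_0$ should force $v_0$ to sit strictly between these neighbours, so that $h_2$ crosses the column of $v_0$ and hence $\{v_0,h_2\}$ is an edge. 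With this edge in hand, the $4$-cycle on $\{h_0,v_0,h_2,v_2\}$ witnesses that $e_0$ is $r$-visible from $\{h_2,v_2\}\in Z'$, completing the proof.
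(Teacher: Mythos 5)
Your overall architecture matches the paper's proof (produce an edge $e_1\in E(M')$ that $r$-sees $e_0$ via \Fref{claim:conn_rvis}, then split on whether $e_1$ is convex, reflex/side, or internal), but there is a genuine gap at the heart of your argument: the ``transport sublemma'' is false as you state it. If $e_2\covers e_1$ with, say, $e_1\cap e_2=\{h_1\}$ and auxiliary slices $h_3,h_4$ as in \Fref{def:dominance}, the betweenness condition only constrains the pixel of $e_1$ to lie in the band spanned by $h_3$ and $h_4$ along $v_1$; it says nothing about where $e_0$ lies. Take $v_1$ a tall slice meeting $h_0,h_3,h_1,h_4$ in descending order and $v_2$ spanning only from $h_3$ to $h_4$: then $e_2=\{v_2,h_1\}\covers e_1=\{v_1,h_1\}$, and $e_1$ may $r$-see $e_0=\{v_0,h_0\}$ through the $C_4$ on $\{v_0,h_0,v_1,h_1\}$, yet $v_2$ never reaches $h_0$, so $e_2$ neither intersects $e_0$ nor lies on a common $C_4$ with it. Your geometric justification --- enlarging the witnessing rectangle --- fails precisely because the rectangle witnessing $r$-vision from $e_1$ can extend away from the band that $e_2$ controls. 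The paper's key device, which your proposal is missing, is to choose $e_1$ among all edges of $E(M')$ seeing $e_0$ so that $\mathrm{dist}(\cap e_0,\cap e_1)$ is \emph{minimal}; this minimality is what forces $v_1\cap h_0$ to lie between $v_1\cap h_3$ and $v_1\cap h_4$ (in the configuration above, the closer edge $\{v_1,h_3\}$ would have been chosen instead), and only then does domination transport $r$-vision to $e_2$. So coverage propagates along $\covers$ only from a distance-minimal seer, not from an arbitrary one.

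The same omission infects your internal-edge case: you derive only $\mathrm{int}(h_0)\cap D_M\neq\emptyset$ from the fact that an internal edge sees $e_0$, but without the minimal-distance choice even this can fail (e.g., when $e_1$ and $e_0$ share the slice $v_1=v_0$ and $h_0$ is far from $D_M$), whereas the paper's minimal choice yields the stronger fact $\cap e_0\subset D_M$, which is what the subsequent simple-connectedness argument for $\{v_0,h_2\}\in E(G')$ actually uses. Your sketch of that last step (that $h_2$ spans across all $M'_V$-neighbours of $h_0$ and hence crosses the column of $v_0$) is consistent with the paper's argument, so the proof is repaired, essentially into the paper's own, once you insert the minimality condition on $e_1$ and re-derive the betweenness facts from it in each of the three clauses of \Fref{def:dominance}.
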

\begin{proof}
	Let $e_0=\{v_0,h_0\}\in E(G')$ be an arbitrary edge. By~\Fref{claim:conn_rvis}, there exists an edge
	$e_1\in E(M')$ which has $r$-vision of $e_0$, and we also suppose that $e_1$ is chosen so that Euclidean distance $\mathrm{dist}(\cap e_0,\cap e_1)$ is minimal.

	\medskip

	Trivially, if $e_1\in Z'$ (for example, if $e_1$ is a \textbf{convex edge} of $M'$), then $e_0$ is $r$-visible from $e_1$. Assume now, that $e_1\notin Z'$.
	\begin{itemize}
		\item If $e_1$ is a \textbf{reflex or side edge} of $M'$, then $\exists e_2\in Z'$ so that $e_2\covers e_1$. We claim that $e_2$ has $r$-vision of $e_0$ in $G'$ (this is the main motivation for \Fref{def:dominance}).
		      \begin{enumerate}
			      \item If $e_1\cap e_2\subset A'_H$: by the choice of $e_1$ and $e_2$, $v_1$ is joined to $h_0,h_3,h_4$ in $G'$. The choice of $e_1$ guarantees that $v_1\cap h_0$ is between $v_1\cap h_3$ and $v_1\cap h_4$.
			            Therefore $\mathrm{int}(v_2\cap h_0)\neq\emptyset$, so $\{v_0,h_0,v_2,h_1(=h_2)\}$ induces a $C_4$ in $G'$.

			      \item If $e_1\cap e_2\subset A'_V$: the proof proceeds analogously to the previous case.

			      \item If $e_1\cap e_2=\emptyset$: by the choice of $e_1$ and $e_2$, $v_1$ is joined to $h_0,h_3,h_2$ in $G'$, and $v_1$ is joined to $v_0,v_3,v_2$ in $G'$.
			            The choice of $e_1$ guarantees that $v_1\cap h_0$ is between $v_1\cap h_3$ and $v_1\cap h_2$, and that $v_0\cap h_1$ is between $v_3\cap h_1$ and $v_2\cap h_1$.
			            Therefore $\mathrm{int}(v_2\cap h_0)\neq\emptyset$ and $\mathrm{int}(v_0\cap h_2)\neq\emptyset$, so $\{v_0,h_0,v_2,h_2\}$ induces a $C_4$ in $G'$.
		      \end{enumerate}
		      In any of the three cases, $e_0$ is $r$-visible from $e_2$ in $G'$.

		\item If $e_1$ is an \textbf{internal edge} of $M'$, then $\cap e_0\subset D_M$. 
		      By the~\ref{item:internal}\textsuperscript{rd} property of hyperguards, there  $\exists \{h_2,v_2\}\in Z'$ such that $\{h_0,v_2\}\in E(M')$ and $N_{M'}(h_2)\supseteq N_{G'}(h_0)\bigcap M'_V$.
		      An easy argument (use that $D_M\subset D$ are both simply connected) gives that $\{v_0,h_2\}\in E(G')$.
		      Thus $\{v_0,h_2,v_2,h_0\}$ induces a $C_4$ in $G'$, so $e_0$ is $r$-visible from $\{v_2,h_2\}\in Z'$.
	\end{itemize}
	We have verified the statement in every case, so the proof of this lemma is complete.
\end{proof}

Notice, that the set of all convex, reflex, and side edges of $E(M')$ form a hyperguard of $M'$. By \Fref{lemma:zprime}, this set is a point guard system of $G'$, and \Fref{claim:tau} implies that its $\tau$-image is a point guard system of $G$. The cardinality of the $\tau$-image of this hyperguard is bounded by $2|V(M)|-4$ (we will see this shortly), which is already a magnitude lower than what the trivial choice of $E(M)$ would give (generally, $|E(M)|$ can be equal to $\Omega(|V(M)|^2)$).

\medskip

Let the number of convex, side, and reflex edges in $M'$ be $c'$, $s'$, and $r'$, respectively. \Fref{claim:D} and~\ref{claim:DG} allow us to count these objects.

\begin{enumerate}
	\item The number of reflex vertices of $D_M$ is equal to $r'$: any reflex vertex is a vertex of a reflex edge, and the way $M'$ and $D_M$ is constructed guarantees that exactly one vertex of the pixel of a reflex edge is a reflex vertex of $D_M$.

	\item The number of convex vertices of $D_M$ is equal to $c'$: any convex vertex is a vertex of the pixel of a convex edge, and the way $D_M$ is constructed guarantees that exactly one vertex of the pixel of a convex edge is a convex vertex.

	\item The cardinality of $V(M')$ is $c'+\frac12s'$: the first and last edge incident to any element of $V(M')$ ordered from left-to-right (for elements of $M'_H$) or from top-to-bottom (for elements of $M'_V$) is a convex or a side edge. Conversely, any convex edge is the first or last incident edge of exactly one element of $M'_H$ and one element of $M'_V$. A side edge is the first or last incident edge of exactly one element of $V(M')$.

	\item For any reflex edge $e_1=\{v_1,h_1\}\in E(M')$, there is exactly one reflex or side edge in $E(M')$ which contains $v_1$ and is in the $\rela$ relation with $e_1$, and the same can be said about $h_1$.

	\item Any side edge $e_1\in E(M')$ is in $\rela$ relation with exactly one reflex or side edge which it intersects. The intersection is the slice in $V(M')$ on which $e_1$ is a boundary edge.
\end{enumerate}


We can now compute the size of the set of all convex, reflex, and side edges of $E(M')$:
\[ c'+r'+s'=2c'-4+s'=2|V(M')|-4.\]
Furthermore, it is clear that taking the $\tau$-image of this set decreases its by cardinality by $2|B_H|$ (new reflex and side edges are created at both ends of slices in $B_H$ when splitting them), and $2|V(M')|-4-2|B_H|=2|V(M)|-4$, proving the claim from the previous page. Readers who are only interested in a result which is sharp up to a constant factor, may skip to \Fref{case:conn}. Further analysis of $M'$ allows us to lower the coefficient $2$ to $\frac43$. 

\medskip

Define the {\bfseries auxiliary graph \boldmath $X$} as follows: let $V(X)$ be the set of reflex and side edges of $M'$, and let
\[ E(X)=\Big\{\{e,f\}:\ e\neq f,\ e\cap f\neq\emptyset,\ e\rela f\Big\}. \]
By our observations, $X$ is the disjoint union of some cycles and $\frac12s'$ paths. This structure allows us to select a hyperguard which contains a subset of the reflex and side edges of $M'$, instead of the whole set.

\subsubsection{Constructing a hyperguard \texorpdfstring{\boldmath $Z'$ of $M'$}{𝑍' of 𝑀'}.}  We will define  ${(Z'_j)}_{j=0}^\infty$, a sequence of (set theoretically) increasing sequence of subsets of $E(M')$, and ${(X_j)}_{j=0}^\infty$, a decreasing sequence of induced subgraphs of $X$.

\medskip

Additionally, we will define a function $w_j:V(X)\to \{0,1,2\}$, and extend its domain to any subgraph $H\subseteq X$ by defining $w_j(H)=\sum_{e\in V(H)}w_j(e)$. The purpose of $w_j$, very vaguely, is that as $Z'$ will contain every third node of $X$, we need to keep count of the modulo 3 remainders. Furthermore, $w_j$ serves as buffer in a(n implicitly defined) weight function (see \fref{ineq:recursion}).

\medskip

For a set $E_0\subseteq E(X)$, let the indicator function of $E_0$ be
\[
	\mathds{1}_{E_0}(e)=\left\{
	\begin{array}{ll}
		1,\quad & \text{if }e\in E_0,               \\
		0,\quad & \text{if }e\in E(X)\setminus E_0.
	\end{array}
	\right.
\]

\medskip

Let $Z'_0=\emptyset$ and $X_0=X$. By our previous observations, $X$ does not contain isolated nodes. Define $w_0:V(X)\to \{0,1,2\}$ such that
\[
	w_0(e)=\left\{
	\begin{array}{ll}
		1,\quad & \text{if }d_{X_0}(e)=1, \\
		0,\quad & \text{if }d_{X_0}(e)=0\text{ or }2.
	\end{array}
	\right.
\]

\medskip

In each step, we will define $Z'_j$, $X_j$, and $w_j$ so that
\begin{itemize}
	\item $Z'_{j-1}\subseteq Z'_{j}$, $X_{j}\subseteq X_{j-1}$,
	\item $\{e\in V(X_j)\ |\ d_{X_j}(e)=1\}\subseteq w^{-1}_j(1)$,
	\item $\{e\in V(X_j)\ |\ d_{X_j}(e)=0\}=w^{-1}_j(2)$, and
	\item $\forall e_0\in V(X)\setminus V(X_j)$, either $e_0\in Z'_j$, or $\exists e_1\in Z'_j$ so that $e_1\covers e_0$.
\end{itemize}
If these hold, then for any path component $P_j$ in $X_j$, we have $w_j(P_j)\ge 2$.

\phase{}\label{phase:initial} Let the set of convex edges of $M'$ be $C'$.
Let
\begin{align*}
	S' & =\Big\{ e\in V(X):\ \tau(e)\text{ is a side edge, }\exists f\in V(X)\  e\rela f,\ e\cap f\subseteq B'_H\Big\},                          \\
	T' & =\Big\{ f\in V(X):\ \exists e\in S'\ f\rela e,\ \tau^{-1}(\tau(f))\setminus \{f\}\covers N_X(\tau^{-1}(\tau(e)))\setminus \{f\} \Big\}, \\
	U' & =\tau^{-1}(\tau(C'))\setminus C',                                                                                                       \\
	Q' & = \bigcup_{\substack{e_1,e_4\in S'\cup U'                                                                                               \\ e_2,e_3\in V(X) \\ e_1\rela e_2, e_2\rela e_3, e_3\rela e_4}} \{e_1,e_2,e_3,e_4\}.
\end{align*}
Take
\begin{align*}
	Z'_{1} & =\tau^{-1}\left(\tau(C')\bigcup \tau(T')\right),                                                                                                                         \\
	X_1    & =X -T'-N_X(T') -U'-N_X(U'),                                                                                                                                              \\
	w_1    & =w_0-\mathds{1}_{S'}-\mathds{1}_{U'}+\sum_{f\in T'}\mathds{1}_{N_X(N_X(f))\setminus \{f\}\setminus Q'}+\sum_{e\in U'}\mathds{1}_{N_X(N_X(e))\setminus\{e\}\setminus Q'}.
\end{align*}

\medskip

\phase{}\label{phase:cycle} Take a cycle $e_1,e_2,\ldots,e_{2k_j}$ in $X_j$ ($k_j\ge 2$, $j\ge 1$). This set of nodes of $X_j$ is the edge set of a cycle of length $2k_j$ in $M'$.
\begin{itemize}
	\item If $2k_j=4$, observe that $e_1\rela e_2$, $e_1\rela e_4$, $e_2\rela e_3$, $e_4\rela e_3$ together imply that $e_1\rela e_3$. Take
	      \begin{align*}
		      Z'_{j+1} & =\{e_1\}\bigcup Z'_j,     \\
		      X_{j+1}  & =X_j-\{e_1,e_2,e_3,e_4\}, \\
		      w_{j+1}  & = w_j.
	      \end{align*}
	\item If $2k_j\ge 6$, the chordal bipartiteness of $M'$ implies that without loss of generality there is a chord $f\in E(M')$ which forms a cycle with $e_1,e_2,e_3$ in $M'$. Take
	      \begin{align*}
		      Z'_{j+1} & =\{f\}\bigcup Z'_j,                            \\
		      X_{j+1}  & =X_j-\{e_{2k_j},e_1,e_2,e_3,e_4\},             \\
		      w_{j+1}  & =w_j+\mathds{1}_{e_5}+\mathds{1}_{e_{2k_j-1}}.
	      \end{align*}
\end{itemize}

Iterate this step until $X_{j_1}$ is cycle-free.

\medskip

\phase{}\label{phase:selfintersect} Take a path $e_1,e_2,\ldots,e_{k}$ in $X_j$ (for $j\ge j_1$), such that
\[ E\left(M'\left[\bigcup_{i=2}^{k-1} e_i\right]\right)\setminus \{e_2,\ldots e_{k-1}\}\neq\emptyset.\]
Using the bichordality of $M'$, there exists a chord $f\in E(M')$ which forms a $C_4$ with $\{e_{l-1},e_l,e_{l+1}\}$, where $3\le l\le k-2$.
It is easy to see that $e_{l-2}\rela e_{l-1}$ implies $f\covers e_{l-2}$ and $f\covers e_{l-1}$. Similarly, we have that $f\covers e_{l+1}$ and $f\covers e_{l+2}$. Also, $f\covers e_{l-1}$ and $f\covers e_{l+1}$ together imply $f\covers e_l$.
Therefore, we take
\begin{align*}
	Z'_{j+1} & =\{f\}\bigcup Z'_j,                                   \\
	X_{j+1}  & =X_j-\{e_{l-2},e_{l-1},e_{l},e_{l+1},e_{l+2}\},       \\
	w_{j+1}  & =w_j+\mathds{1}_{\{\mathrm{dist}_X(\bullet,e_l)=3\}}.
\end{align*}
Iterate this step until $X_{j_2}$ is free of the above defined paths.

\medskip

\phase{}\label{phase:3rdproperty} The set $A'_H$ is the subset of the nodes of a horizontal $R$-tree of $D$. Let $h_\text{root}\in A'_H$ be an arbitrarily chosen node serving as the root of the horizontal $R$-tree. Process the elements of $A'_H$ in decreasing distance (measured in the horizontal $R$-tree) from $h_\text{root}$.

\medskip

Suppose $h_0\in A'_H$ is the next horizontal slice to be processed. If $\mathrm{int}(h_0)\cap D_M=\emptyset$ or $h_0\in M'_H$, then move on to the next slice of $A'_H$, as the~\ref{item:internal}\textsuperscript{rd} property of hyperguards for $Z'$ is satisfied by any edge of $M'$ incident to $h_0$.

\medskip

Suppose now, that $h_0\notin M'_H$. It is easy to see that there exists a $C_4$ in $M'$ whose edge set $\{e_1,e_2,e_3,e_4\}$ satisfies
\[ h_0\cap D_M\subset \mathrm{Conv}\left(\bigcup_{i=1}^4 \cap e_i\right). \]
Without loss of generality, we may suppose that we choose the $C_4$ so that the convex hull of the pixels of its edges is minimal. Then $e_i$ (for $i=1,2,3,4$) is not an internal-edge of $M'$, as this would contradict the choice of the $C_4$.

\medskip

If $e_i$ is a convex edge of $M'$, then it is already contained in $Z'_1\subset Z'$, so it satisfies the~\ref{item:internal}\textsuperscript{rd} property of hyperguards for $Z'$ and $h_0$, and we may skip to processing the next slice. If $e_i$ is a side edge of $M'$, then for any edge $f\in Z'$ which satisfies $f\covers e_i$, we have
$\cap f\subset\mathrm{Conv}\left(\bigcup_{i=1}^4 \cap e_i\right)$,
so $f$ satisfies the~\ref{item:internal}\textsuperscript{rd} property of hyperguards for $Z'$ and $h_0$, and again, we may skip to processing the next slice.

\medskip

Suppose now, that each $e_i$ (for $i=1,2,3,4$) is a reflex edge of $M'$. Let
\[\{h_1,h_2\}=M'_H\bigcap \bigcup_{i=1}^4 e_i\quad\text{and}\quad\{v_1,v_2\}=M'_V\bigcap \bigcup_{i=1}^4 e_i.\]
The minimality of the chosen $C_4$ implies that $\{h_1,v_1\}\rela \{h_1,v_2\}$ and $\{h_2,v_1\}\rela \{h_2,v_2\}$.

\medskip

If $\{h_1,v_1\},\{h_1,v_2\}$ were removed in \Fref{phase:cycle}~or~\Fref{phase:selfintersect} in one step, then the edge by which $Z'$ is extended in the same step satisfies the~\ref{item:internal}\textsuperscript{rd} property of hyperguards for $Z'$ and $h_0$. The same holds for $\{h_2,v_1\},\{h_2,v_2\}$.
In both cases, we may skip to the next slice to be processed.

\medskip

Without loss of generality, we may suppose that $h_1$ is farther away from the root of the horizontal $R$-tree than $h_2$.

\medskip

If $\{\{h_1,v_1\},\{h_1,v_2\}\}\cap V(X_j)$ is non-empty, take the path component $P_j$ of $X_j$ containing this set; otherwise let $P_j$ be the empty graph.
Observe, that \Fref{claim:DG} implies that as a result of \Fref{phase:selfintersect}, for any node $e\in V(P)$, its horizontal slice $e\cap M_H$ is at least as far away from the root as $h_1$.

\medskip

Split the path $P_j$ into two components $P_{j,1}$ and $P_{j,2}$ by deleting $\{h_1,v_1\}$ and $\{h_1,v_2\}$ (if one of them is not in $E(X_j)$, then one of the components is empty), so that $\{h_1,v_1\}\notin V(P_{j,2})$ and $\{h_1,v_2\}\notin V(P_{j,1})$.

\begin{itemize}
	\item
	      If $|V(P_{j,1})|\not\equiv 0\pmod 3$ or $|V(P_{j,2})|\not\equiv 0\pmod 3$, then let $Y_j$ be a dominating set of $P_j$ which contains $\{h_1,v_1\}$ or $\{h_1,v_2\}$, and is minimal with respect to these conditions. Set
	      \begin{align*}
		      Z'_{j+1}   & =Y_j\bigcup Z'_j, \\
		      X_{j+1}    & =X_j-P_j,         \\
		      w_{j+1}(e) & =\left\{
		      \begin{array}{ll}
			      0,\quad     & \text{if }e\in V(P_j),    \\
			      w_j(e)\quad & \text{if }e\notin V(P_j).
		      \end{array}
		      \right.
	      \end{align*}
	      Clearly, one of $\{h_1,v_1\}$ and $\{h_1,v_2\}$ is contained in $Y_j\subset Z'_{j+1}\subseteq Z'$, and it satisfies the~\ref{item:internal}\textsuperscript{rd} property of hyperguards for $Z'$ and $h_0$.
	\item
	      If $|V(P_{j,1})|\equiv |V(P_{j,2})|\equiv 0\pmod 3$, then let $Y_j$ be a minimal dominating set of $P_j$. Moreover, if $\{\{h_2,v_1\},\{h_2,v_2\}\}\bigcap (V(X_j)\bigcup Z'_j)$ is non-empty, let $f_j$ be an element of it, otherwise set $f_j=\{h_2,v_1\}$.
	      Take
	      \begin{align*}
		      Z'_{j+1}   & =Y_j\bigcup \{f_j\}\bigcup Z'_j,   \\
		      X_{j+1}    & =X_j-P_j-\{f_j\}-N_{X_j}(\{f_j\}), \\
		      w_{j+1}(e) & =\left\{
		      \begin{array}{ll}
			      0,\quad        & \text{if }e\in V(P_j)\bigcup \big\{\{h_2,v_1\},\{h_2,v_2\}\big\}, \\
			      w_j(e)+1,\quad & \text{if }\mathrm{dist}_{X}(e,f_j)=2,                             \\
			      w_j(e)\quad    & \text{otherwise.}
		      \end{array}
		      \right.
	      \end{align*}
	      Observe, that $f_j$ satisfies the~\ref{item:internal}\textsuperscript{rd} property of hyperguards for $Z'$ and $h_0$.
\end{itemize}

In any case, some element of $Z'_{j+1}\subseteq Z'$ satisfies the~\ref{item:internal}\textsuperscript{rd} property of hyperguards for $Z'$ and $h_0$. Furthermore, this holds for any slice of $A'_H$ between $h_1$ and $h_2$, so we skip processing these elements.

\medskip

\phase{}\label{phase:path} Lastly, we get $X_{j_3}$ which is the disjoint union of paths and isolated nodes (or it is an empty graph). Take a component $P_j$ of $X_j$ (for some $j\ge j_3$). Let $Y_j$ be a dominating set of $P_j$ (if $|V(P_j)|=1$, then $Y_j=V(P_j)$). Take
\begin{align*}
	Z'_{j+1}   & =Y_j\bigcup Z'_j, \\
	X_{j+1}    & =X_j-P_j,         \\
	w_{j+1}(e) & =\left\{
	\begin{array}{ll}
		0,\quad     & \text{if }e\in V(P_j),    \\
		w_j(e)\quad & \text{if }e\notin V(P_j).
	\end{array}
	\right.
\end{align*}

\medskip

By repeating this procedure, eventually $X_{j_4}$ is the empty graph for some $j_4\ge j_3$.

\medskip

Let $Z'=Z'_{j_4}$. This procedure is orchestrated in a way to guarantee that $Z'$ is a hyperguard of $M'$, so only an upper estimate on the cardinality of $\tau(Z')$ needs to be calculated to complete the proof of \Fref{case:2conn}.

\subsubsection{Estimating the size of \texorpdfstring{\boldmath $Z=\tau(Z')$}{𝑍=𝜏(𝑍')}.}
We have
\begin{align*}
	|V(X_0)| =r'+s',\ w_0(X) =s',\ |B'_H|=|T'|+|U'|.
\end{align*}

By definition, $|Z'_1|=c'+|U'|+2|T'|$ and $ |\tau(Z'_1)|=|Z'_1|-|B'_H|$. It is easy to check that
\[ |V(X_1)|+w_1(X)+2|U'|+5|T'|\le |V(X_0)|+w_0(X). \]
Therefore, we have
\begin{align}
	|Z'_1| & + \frac{|V(X_1)|+w_1(X)}{3}\le c'+|U'|+2|T'|+\frac{|V(X_1)|+w_1(X)}{3}\le \nonumber \\
	       & \le c'+|B'_H|+\frac{|V(X_0)|+ w_0(X)-2|U'|-2|T'|}{3}\le \label{ineq:Z1}             \\
	       & \le c'+|B'_H|+\frac{r'+2s'-2|B'_H|}{3}.\nonumber
\end{align}

We now show that
\begin{align}
	|Z'_{j+1}| & +\frac{|V(X_{j+1})|+w_{j+1}(X)}{3}\le
	|Z'_{j}|+\frac{|V(X_{j})|+w_{j}(X)}{3}.\label{ineq:recursion}
\end{align}
holds for any $j\ge 1$.

\medskip

In \Fref{phase:cycle}, we choose a node from each cycle of $X_1$. \Fref{ineq:recursion} is preserved, since
\begin{align*}
	|Z'_{j+1}|   & =|Z'_j|+1,                                  \\
	|V(X_{j+1})| & =|V(X_j)|-5+\mathds{1}_{\{4\}}(k_j),        \\
	w_{j+1}(X)   & \le w_j(X)+2-2\cdot\mathds{1}_{\{4\}}(k_j).
\end{align*}

In \Fref{phase:selfintersect}, for every $j_2>j\ge j_1$, we have
\begin{align*}
	|Z'_{j+1}|   & =|Z'_j|+1,       \\
	|V(X_{j+1})| & =|V(X_{j_1})|-5, \\
	w_{j+1}(X)   & \le w_{j}(X)+2.
\end{align*}

Let $j_3>j\ge j_2$. If $|V(P_{j,1})|\not\equiv 0\pmod 3$ and $|V(P_{j,2})|\not\equiv 2\pmod 3$, then take a dominating set of $P_j$ containing $\{v_1,h_1\}$. We have
\begin{align*}
	|Y_j| & \le 1+\left\lceil\frac{|V(P_{j,1})|-2}{3}\right\rceil+\left\lceil\frac{|V(P_{j,2})|-1}{3}\right\rceil \le \\
	      & \le 1+\frac{|V(P_{j,1})|-1}{3}+\frac{|V(P_{j,2})|}{3}= \frac{|V(P_j)|+2}{3}.
\end{align*}
Similarly, if $|V(P_{j,1})|\not\equiv 2\pmod 3$ and $|V(P_{j,2})|\not\equiv 0\pmod 3$, then there is a small dominating set of $P_j$ containing $\{h_1,v_2\}$. Also, if both $|V(P_{j,1})|\equiv 2\pmod 3$ and $|V(P_{j,2})|\equiv 2\pmod 3$, then there is a small dominating set of $P_j$ containing $\{h_1,v_2\}$.
Thus, if $|V(P_{j,1})|\not\equiv 0\pmod 3$ or $|V(P_{j,2})|\not\equiv 0\pmod 3$, then
\begin{align*}
	|Z'_{j+1}|   & =|Z'_j|+|Y_j|\le |Z'_j|+\frac{|V(P_j)|+2}{3}, \\
	|V(X_{j+1})| & =|V(X_{j_1})|-|V(P_j)|,                       \\
	w_{j+1}(X)   & \le w_{j}(X)-2.
\end{align*}

\medskip

If both $|V(P_{j,1})|\equiv 0\pmod 3$ and $|V(P_{j,2})|\equiv 0\pmod 3$, then
$|Y_j|=\frac{|V(P_j)|}{3}$. Observe, that
\[ \{h_1,v_1\},\{h_1,v_2\},\{h_2,v_1\},\{h_2,v_2\}\notin V(P_k)\text{ for any }k<j. \]
If both $\{h_1,v_1\}\notin Z'_j$ and $\{h_1,v_2\}\notin Z'_j$, but were removed in different steps, then when $\{h_1,v_1\}$ is removed in step $k$ we must have set $w_k(\{h_1,v_2\})=1$, which is the consequence of the previous observation. Thus, $w_j(\{h_1,v_2\})=1$.
Similarly, we must have $w_j(\{h_1,v_1\})=1$.
This reasoning holds for $\{h_2,v_1\}$ and $\{h_2,v_2\}$, as well.

\medskip

If $P_j$ is not the empty graph or $f_j\in Z(X_j)$, then \fref{ineq:recursion} trivially holds.
If $P_j$ is the empty graph, then $w_j(\{h_1,v_1\})=w_j(\{h_1,v_2\})=1$. If $f_j\in V(X_j)$, these 2 extra weights can be used to compensate for the new degree 1 vertices of $X_{j+1}$. If $f_j\notin Z(X_j)\bigcup V(X_j)$, then even $w_j(\{h_2,v_1\})=w_j(\{h_2,v_2\})=1$, and in total the 4 extra weights compensate for adding $f_j$ to $Z'_{j+1}$.

\medskip

In any case, \fref{ineq:recursion} holds for $j_3>j\ge j_2$.

\medskip

For any $j_4>j\ge j_3$, we have
\[ |Y_j|\le \left\lceil\frac{|V(P_j)|}{3}\right\rceil\le \frac{|V(P_j)|+2}{3} \] and $w_j(P_j)=2$, so \fref{ineq:recursion} holds for $j$.

\subsubsection{Summing it all up.}
By definition, we have
\[ |Z'|=|Z'_{j_4}|,\ X_{j_4}=\emptyset,\ 0\le w_{j_4}(X). \]
\Fref{ineq:recursion} is preserved from \Fref{phase:cycle} up to \Fref{phase:path}, therefore
\[ |Z'|\le |Z'_{j_4}|+\frac{|V(X_{j_4})|+w_{j_4}(X)}{3}\le |Z'_1|+\frac{|V(X_1)|+w_1(X)}{3}. \]

Lastly, using \fref{ineq:Z1}, we get
\begin{align*}
	  & |Z|=|\tau(Z')|=|\tau(Z'\setminus Z'_1)|+|\tau(Z'_1)|\le |Z'\setminus Z'_1|+|Z'_1|-|B'_H|= \\
	  & =|Z'|-|B'_H|\le c'+\frac{r'+2s'-2|B'_H|}{3}=c'+\frac{(c'-4)+2s'-2|B'_H|}{3}=              \\
	  & =\frac{4\left(c'+\tfrac12 s'\right)-4-2|B'_H|}{3}=\frac{4|V(M')|-4-2|B'_H|}{3}=                             \\
	  & =\frac{4|M'_H|+4|M'_V|-4-2|B'_H|}{3}= \frac{4|M_H|+4|B_H|+4|M_V|-4-2|B'_H|}{3}=           \\
	  & =\frac{4(|M_H|+|M_V|)-4}{3},\text{ as desired.}
\end{align*}

\subsection{\texorpdfstring{\boldmath $M$}{𝑀} is connected, but not 2-connected}\label{case:conn}

Let the 2-connected components (or blocks) of $M$ be $M_i$ for $i=1,\ldots,q$. Since induced graphs of $G$ inherit the chordal bipartite property, by~\Fref{case:2conn}, there exists a subset $Z_i\subseteq E(M_i)$, such that for any edge $e_0\in E(G[N_G(M_i)])$, there exists an edge $e_1\in Z_i$ which has $r$-vision of $e_0$ in $G[N(M_i)]$, and $|Z_i|\le \frac43 (|V(M_i)|-1)$. Let $Z=\cup_{i=1}^q Z_i$.

\medskip

Since the intersection graph of the vertex sets of the 2-connected components is a tree (and any two components intersect in zero or one elements), we have
\[ |Z|\le \frac43 \left(-q + \sum_{i=1}^q |V(M_i)| \right)= \frac{4\left(-q + |V(M)| + (q-1)\right)}{3}=\frac{4(|V(M)|-1)}{3}. \]

\medskip

Furthermore, given an arbitrary $e_0=\{v_0,h_0\}\in E(G)$, there exists a $v_1\in M_V$ and an $h_1\in M_H$ such that $\{v_1,h_0\},\{v_0,h_1\}\in E(G)$.
\begin{itemize}
	\item If $v_0\in M_V$ or $h_0\in M_H$, then $\{v_0,h_1\}$ or $\{v_1,h_0\}$ is in $E(M)$.

	\item Otherwise, there exists a path in $M$ whose endpoints are $v_1$ and $h_1$, and this path and the edges $\{v_1,h_0\}$,$\{h_0,v_0\}$,$\{v_0,h_1\}$ form a cycle in $G$.
	By the bichordality of $G$, there exists a $C_4$ in $G$ which contains an edge of $M$ and $e_0$.
\end{itemize}
In any case, $e_0$ is $r$-visible from some $e_1\in E(M)$. As $e_1$ is an edge of one of the 2-connected components $M_i$, we have $e_0\subset N_G(M_i)$, therefore $e_0\in E(G[N_G(M_i)])$. Thus, some $e_2\in Z_i$ has $r$-vision of $e_0$.

\subsection{\texorpdfstring{\boldmath $M$}{𝑀} has more than one connected component.}

Let us take a decomposition of $M$ into connected components $M_i$ for $i=1,\ldots,t$.

\medskip

Let $N_i=N(M_i)$, so we have $M_i\subseteq N_i$ and $\cup_{i=1}^t N_i=V(G)$.

\medskip

For all $i>1$ let $q_i$ be the number of components of $G[\cup_{k=1}^{i-1} N_k\setminus\cup_{k=i}^{t}N_k]$ to which $N_i\setminus\cup_{k=i+1}^{t}N_k$ is joined in $G[\cup_{k=1}^i N_k\setminus\cup_{k=i+1}^{t}N_k]$.
Let $F_{i,j}$ be the set of edges joining $N_i\setminus\cup_{k=i+1}^{t}N_k$ to the $j^{th}$ component of $G[\cup_{k=1}^{i-1} N_k\setminus\cup_{k=i}^{t}N_k]$.
Furthermore, let $F_{i,j}^V=\{f\in F_{i,j}\ |\ f\cap A_V\cap N_i\neq \emptyset\}$ and $F_{i,j}^H=\{f\in F_{i,j}\ |\ f\cap A_H\cap N_i\neq \emptyset\}$.

\begin{claim}\label{claim:f1}
	For any two edges $f_1,f_2\in F_{i,j}^V$ either $f_1\cap f_2\neq\emptyset$ or $\exists f_3\in F_{i,j}^V$ such that $f_3$ intersects both $f_1$ and $f_2$. The analogous statement holds for $F_{i,j}^H$.
\end{claim}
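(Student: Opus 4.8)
The plan is to unwind the definitions into a pure adjacency condition and then to exploit the bichordality of $G$ (\Fref{lemma:chordal}) on a cycle built from the two connected pieces $C_j$ and $M_i$. Writing $N_i'=N_i\setminus\bigcup_{k>i}N_k$ and letting $C_j$ be the $j$-th component in the definition, an element of $F_{i,j}^V$ is exactly a pair $\{v,h\}\in E(G)$ with $v\in A_V\cap N_i'$ and $h\in A_H\cap C_j$. For two disjoint such edges $f_1=\{v_1,h_1\}$ and $f_2=\{v_2,h_2\}$ (so $v_1\neq v_2$ and $h_1\neq h_2$), a mediating $f_3$ must meet $f_1$ and $f_2$; since $f_3$ has a single vertical and a single horizontal endpoint, the only candidates are $\{v_1,h_2\}$ and $\{v_2,h_1\}$, and each lies in $F_{i,j}^V$ the moment it is an edge. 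Hence the claim reduces to showing $\{v_1,h_2\}\in E(G)$ or $\{v_2,h_1\}\in E(G)$.

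The key structural input is a separation fact: there is no edge of $G$ between $C_j$ and $M_i$, because every neighbour of $M_i$ lies in $N_i=N(M_i)$, whereas $C_j$ is disjoint from $N_i$ by construction. I would then use connectivity twice. Since $h_1,h_2\in C_j$ and $C_j$ is connected, choose a geodesic $P$ from $h_1$ to $h_2$ inside $C_j$; since $v_1,v_2\in N(M_i)$ and $M_i$ is connected, choose a geodesic $Q$ from $v_2$ to $v_1$ in $G[M_i\cup\{v_1,v_2\}]$. Concatenating $f_1$, $P$, $f_2$, $Q$ gives a cycle $Z$. Its $P$-arc has all vertices in $C_j$ and its $Q$-arc has all vertices in $N_i$ (endpoints $v_1,v_2$ together with interior vertices in $M_i$); as $C_j\cap N_i=\emptyset$, these arcs are vertex-disjoint, so $Z$ is a simple even cycle, and since $P$ and $Q$ each have length $\geq 2$ we get $|Z|\geq 6$.

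Next I would classify the chords of $Z$. Because $P$ and $Q$ are geodesics they are induced, so no chord joins two vertices of one arc; the separation fact forbids any chord between the interior of $P$ (in $C_j$) and the interior of $Q$ (in $M_i$), so the only vertices of the $Q$-arc that can receive a chord from $P$ are its endpoints $v_1,v_2$, and these land on horizontal vertices of $P$. Thus every chord is either a \emph{good} chord $\{v_1,h_2\}$ or $\{v_2,h_1\}$, or joins $v_1$ or $v_2$ to an \emph{interior} horizontal vertex of $P$. By \Fref{lemma:chordal} the cycle $Z$, having length $\geq 6$, must have a chord. A good chord finishes the argument; an interior chord, say $\{v_1,p\}$, lets me replace $f_1$ by $\{v_1,p\}\in F_{i,j}^V$ and $h_1$ by $p$, producing a strictly shorter cycle of the same form, so the natural framing is induction on $|Z|$ (equivalently on $d_{C_j}(h_1,h_2)$), with base case $d_{C_j}(h_1,h_2)=2$: there $P$ has no interior horizontal vertex, so the only possible chord is good.

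The main obstacle is closing this induction cleanly: a shortcut along $\{v_1,p\}$ may, through the inductive hypothesis, return the already-known edge instead of a new one, and the genuinely hard configuration is when $v_1$ and $v_2$ share a common horizontal neighbour $p$ strictly interior to the geodesic while neither desired edge is present. I would defeat this by passing to a minimal counterexample (minimizing $|Z|$ over all counterexample pairs and their geodesics), so that a good chord contradicts the counterexample assumption and an interior chord contradicts minimality; I expect to still need the convexity/betweenness property of slices underlying \Fref{claim:rectvision} — that a convex slice seeing two slices also sees every slice lying between them — to eliminate the shared-neighbour case. Getting this geometric step to mesh with the graph-theoretic minimality is where the real work lies; the statement for $F_{i,j}^H$ then follows verbatim by exchanging the roles of $A_V$ and $A_H$.
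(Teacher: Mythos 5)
Your plan is, in its skeleton, exactly the paper's own proof: the paper also concatenates a shortest path $Q_1$ between $f_1\cap N_i$ and $f_2\cap N_i$ whose interior lies in $V(M_i)$ with a shortest path $Q_2$ between $f_1\setminus N_i$ and $f_2\setminus N_i$ inside the $j$-th component, notes that $V(Q_2)\cap N_i=\emptyset$ forbids every chord touching $V(M_i)$, and then applies bichordality to the resulting cycle of length $\ge 6$. Your reduction to ``$\{v_1,h_2\}\in E(G)$ or $\{v_2,h_1\}\in E(G)$'' and your chord classification are both correct, and your minimality scheme does buy real progress: if a minimal counterexample has an interior chord $\{v_1,p\}$, then applying the claim to the pair $(\{v_1,p\},f_2)$ (which sits on a strictly shorter cycle, since subpaths of geodesics are geodesics) either contradicts minimality or returns $\{v_2,p\}\in E(G)$; and once both $\{v_1,p\},\{v_2,p\}\in E(G)$, the cycle $v_1\,p\,v_2\,Q_1$ admits no chord at all (separation plus bipartiteness plus the geodesic choice of $Q_1$), so bichordality forces $|V(Q_1)|=3$ --- which is precisely the paper's terse ``$|V(Q_1)|=3$ by its choice.''

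The case you flag as the remaining difficulty is a genuine gap --- but you should know that the paper's written proof does not close it either: its final sentence simply asserts that one of the two good chords exists, silently passing over chords from $v_1,v_2$ into the interior of $Q_2$. Your instinct that graph theory alone cannot finish is correct, and one can certify this with a small model satisfying every hypothesis the written argument invokes. Take verticals $v_1,v_2,u_1,u_2,v^*$ and horizontals $h_1,h_2,p,m$ with edges $v_1h_1$, $v_1p$, $v_1m$, $v_2h_2$, $v_2p$, $v_2m$, $u_1h_1$, $u_1p$, $u_2h_2$, $u_2p$, $v^*m$. This graph is connected and chordal bipartite (every cycle of length $\ge 6$ contains $v_1p$ or $v_2p$ as a chord); $M_V=\{u_1,u_2,v^*\}$ dominates the horizontals and $M_H=\{p,m\}$ dominates the verticals; $M$ has the two components $\{u_1,p,u_2\}$ and $\{v^*,m\}$; and with $M_i=\{v^*,m\}$ one gets $F_{i,j}^V=\{v_1h_1,v_1p,v_2p,v_2h_2\}$, where $f_1=v_1h_1$ and $f_2=v_2h_2$ are disjoint yet no element of $F_{i,j}^V$ meets both. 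So connectivity, domination and bichordality do not entail the claim; any complete proof must use the geometry, and this is not evidence against the claim itself --- when one tries to realize this configuration by slices, the vertical guard $v^*$ dominating $m$ is forced (by simple connectedness of $D$ together with the requirement that its top and bottom sides lie on $\partial D$) to cross $p$ as well, which would merge the two components of $M$. That is exactly the interval/betweenness structure of slice neighborhoods you anticipate needing (in the pixelation setting, $N_G(v)$ induces a path in the horizontal $R$-tree). In short: you have matched the paper's argument step for step, you have made rigorous the part the paper compresses into ``by its choice,'' and the step you honestly leave open is one the paper's two-sentence proof elides rather than resolves.
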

\begin{proof}
	Suppose $f_1$ and $f_2$ are disjoint. Since $M_i$ is connected, there is a path in $G$ whose endpoints are $f_1\cap N_i$ and $f_2\cap N_i$, while its internal points are in $V(M_i)$; let the shortest such path be $Q_1$. There is also a path in the $j^{th}$ component of $G[\cup_{k=1}^{i-1}N_k\setminus \cup_{k=i}^t N_k]$ whose endpoints are $f_1\setminus N_1$ and $f_2\setminus N_i$, let the shortest one be $Q_2$.

	\medskip

	Now $Q_1,f_1,Q_2,f_2$ form a cycle in $G[\cup_{k=1}^i N_k\setminus \cup_{k=i+1}^t N_k]$, which is bipartite chordal. Since $V(Q_2)\cap N_i=\emptyset$, there cannot be a chord between $V(M_i)\cap V(Q_1)$ and $V(Q_2)$. This implies that $|V(Q_1)|=3$ by its choice, and that either $(f_1\cap N_i)\cup(f_2\setminus N_i)$ or $(f_2\cap N_i)\cup (f_1\setminus N_i)$ is a chord.
\end{proof}

\begin{claim}\label{claim:f2}
	For any two edges $f^V\in F_{i,j}^V$ and $f^H\in F_{i,j}^H$, the two-element set
	\[ (f^V\cap N_i)\cup (f^H\cap N_i)\]
	is an edge of $G[N_i]$.
\end{claim}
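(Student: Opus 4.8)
The plan is to mimic the proof of \Fref{claim:f1}. First I would unwind the notation. Writing $f^V=\{v,h'\}$ with $v\in A_V\cap N_i$ and $f^H=\{h,v'\}$ with $h\in A_H\cap N_i$, the other endpoints $h',v'$ lie in the $j^{\text{th}}$ component $C_j$ of $G[\cup_{k<i}N_k\setminus\cup_{k\ge i}N_k]$. Since $N_i\subseteq\cup_{k\ge i}N_k$ we have $C_j\cap N_i=\emptyset$, so $h',v'\notin N_i$, and therefore $f^V\cap N_i=\{v\}$ and $f^H\cap N_i=\{h\}$. Hence $(f^V\cap N_i)\cup(f^H\cap N_i)=\{v,h\}$, and as $v,h\in N_i$ it suffices to prove that $\{v,h\}\in E(G)$.

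Suppose, for contradiction, that $v$ and $h$ are non-adjacent. As both lie in $N_i=N(M_i)$ and $M_i$ is connected, I would fix a shortest path $Q_1$ from $v$ to $h$ all of whose internal vertices lie in $V(M_i)$; bipartiteness (with $v\in A_V$ and $h\in A_H$, so they share no common neighbour) together with non-adjacency forces $Q_1$ to have odd length $\ge 3$. I would also fix a shortest path $Q_2$ from $h'$ to $v'$ inside $C_j$. Because the interior of $Q_1$ lies in $M_i\subseteq N_i$ while $Q_2\subseteq C_j$ is disjoint from $N_i$, the two paths are internally disjoint, so $Q_1$, $f^H$, $Q_2$, $f^V$ close up into a cycle $\mathcal{C}$ of length $|V(Q_1)|+|V(Q_2)|\ge 6$ in the chordal bipartite graph $G$ (\Fref{lemma:chordal}).

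Now I would classify the possible chords of $\mathcal{C}$. Chords inside $Q_1$ or inside $Q_2$ are excluded because both are shortest paths (and $\{v,h\}$ is assumed to be a non-edge); a chord between an interior vertex of $Q_1$ (lying in $M_i$) and any vertex of $Q_2$ (lying in $C_j$) is impossible, since such a neighbour of $M_i$ would have to belong to $N_i$, whereas $C_j\cap N_i=\emptyset$; and same-side pairs such as $\{v,v'\}$ or $\{h,h'\}$ are non-edges by bipartiteness. The surviving possibilities are exactly $\{v,h\}$ and chords of the form $\{v,u\}$ or $\{h,u\}$ with $u$ an interior vertex of $Q_2$. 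I would then induct on $|V(Q_2)|$: in the base case $|V(Q_2)|=2$ the path $Q_2$ has no interior vertex, so the chord guaranteed by chordal bipartiteness can only be $\{v,h\}$, contradicting non-adjacency; in the inductive step a chord $\{v,u\}$ (resp.\ $\{h,u\}$) lets me replace $f^V$ (resp.\ $f^H$) by it and shorten $Q_2$ to the subpath from $u$ to $v'$ (resp.\ to $h'$), producing a strictly shorter cycle of the same shape to which the induction hypothesis applies. Either way we reach a contradiction, so $\{v,h\}\in E(G)$.

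The main obstacle I anticipate is the chord classification together with making the induction airtight: I must be sure that, after excluding the ``geometrically forbidden'' $M_i$--$C_j$ chords and the shortest-path chords, every remaining chord other than $\{v,h\}$ genuinely shortens $Q_2$ while keeping $v,h$ and the $f^V$/$f^H$ structure intact, so that the reduced cycle still satisfies all the hypotheses (in particular that the new endpoint $u\in C_j$ sits on the correct bipartite side). The parity remark that $v$ and $h$ have no common neighbour, so $Q_1$ can never have length $2$, is what guarantees the cycle stays long enough to force a chord at every stage.
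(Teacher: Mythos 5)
Your proof is correct and takes essentially the same route as the paper: the paper's proof of \Fref{claim:f2} is literally the remark that it is similar to \Fref{claim:f1}, and your construction --- the cycle formed by a shortest path $Q_1$ through $M_i$, the edge $f^H$, a shortest path $Q_2$ inside the $j^{\text{th}}$ component, and $f^V$, followed by the chordal-bipartite chord classification that rules out $M_i$-to-component chords and shortest-path chords --- is exactly that adaptation. The only difference is that you make explicit, via the induction on $|V(Q_2)|$ with the chord replacing $f^V$ or $f^H$, the chord-shortening step that the paper's argument for \Fref{claim:f1} compresses into one sentence, which is a harmless (indeed welcome) elaboration.
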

\begin{proof}
	Similar to the proof of \Fref{claim:f1}.
\end{proof}

Let $f_{i,j}^V\in F_{i,j}^V$ be the element which intersects the maximum number of edges from $F_{i,j}$, and choose $f_{i,j}^H\in F_{i,j}^H$ in the same way. If only one of these exist, let $w_{i,j}$ be the existing one, otherwise let $w_{i,j}=(f_{i,j}^V\cap N_i)\cup (f_{i,j}^H\cap N_i)$ (as in \Fref{claim:f2}). Let us finally define
\[ W=\{w_{i,j}\ |\ i=2,\ldots,t\text{ and }j=1,\ldots,q_i\}.\]

\begin{claim}\label{claim:Wsize}
	$|W|=t-1$.
\end{claim}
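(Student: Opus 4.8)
The plan is to recast the indices through an incremental build-up of $G$ and a component count. For $1\le i\le t$ put $A_i = N_i\setminus\bigcup_{k>i}N_k$ and $B_i=\bigcup_{k\le i}A_k$. A short verification (for each vertex $v$, compare the largest index $k$ with $v\in N_k$) shows that $B_{i-1}=\bigcup_{k=1}^{i-1}N_k\setminus\bigcup_{k=i}^{t}N_k$ and $B_i=\bigcup_{k=1}^{i}N_k\setminus\bigcup_{k=i+1}^{t}N_k$; thus the $A_i$ partition $V(G)$, the quantity $q_i$ is precisely the number of components of $G[B_{i-1}]$ that $A_i$ meets inside $G[B_i]$, and $F_{i,j}$ is the set of $G$-edges from $A_i$ to the $j$-th such component. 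Hence $\#\{(i,j):2\le i\le t,\ 1\le j\le q_i\}=\sum_{i=2}^{t}q_i$, and it suffices to prove $\sum_{i=2}^{t}q_i=t-1$ together with the injectivity of the map $(i,j)\mapsto w_{i,j}$.

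The crucial structural input is that each $A_i$ induces a connected subgraph of $G$. Since $M=G[M_V\cup M_H]$ is an induced subgraph, there are no $G$-edges between distinct components $M_k$; consequently no vertex of $M_i$ lies in $N_k=N(M_k)$ for $k\neq i$, which gives $M_i\subseteq A_i$ (in particular $A_i\neq\emptyset$). As $M_i$ is connected and every vertex of $A_i\subseteq N_i=N(M_i)$ either lies in $M_i$ or is joined to $M_i$ by an edge both of whose endpoints lie in $A_i$, the graph $G[A_i]$ is connected.

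Now I count components. Let $c_i$ denote the number of connected components of $G[B_i]$. Then $c_1=1$, and for $i\ge 2$ adjoining the connected, nonempty set $A_i$ to $G[B_{i-1}]$ fuses exactly the $q_i$ components it meets into a single one, leaving the rest untouched, so $c_i=c_{i-1}-q_i+1$. Because $G=G[B_t]$ is connected (an assumption of \Fref{thm:mainprime}), we have $c_t=1$, and telescoping yields $1=c_t=c_1+\sum_{i=2}^{t}(1-q_i)=t-\sum_{i=2}^{t}q_i$, that is, $\sum_{i=2}^{t}q_i=t-1$.

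It remains to verify that the $w_{i,j}$ are pairwise distinct. By construction each $w_{i,j}$ carries an endpoint on the $A_i$-side of $f^V_{i,j}$ or $f^H_{i,j}$ (using \Fref{claim:f1} and \Fref{claim:f2}), while its other endpoint lies either again in $A_i$ (constructed case) or in the $j$-th component of $G[B_{i-1}]$ (single-edge case). Since the $A_i$ are pairwise disjoint, the index $i$ is recovered as the largest $l$ for which $w_{i,j}$ has an endpoint in $A_l$; and for fixed $i$ the sets $F_{i,1},\dots,F_{i,q_i}$ are disjoint (they consist of edges to distinct components), which separates the single-edge witnesses. The one delicate point — which I expect to be the main obstacle — is ruling out that a single constructed edge $\{v,h\}\subseteq A_i$ could be assigned to two distinct components $j\neq j'$; here one must exploit the maximal-intersection choice of $f^V_{i,j},f^H_{i,j}$ together with the ``connectedness'' of $F^V_{i,j}$ and $F^H_{i,j}$ furnished by \Fref{claim:f1} to tie each constructed witness to a single component. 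Once injectivity is secured, $|W|=\sum_{i=2}^{t}q_i=t-1$, as claimed.
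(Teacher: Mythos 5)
Your core argument is the paper's own proof, written out in full detail: the paper likewise observes that each $G[N_i\setminus\cup_{k=i+1}^{t}N_k]$ (your $G[A_i]$) is connected because it contains the connected $M_i$ and is contained in $N(M_i)$, and then concludes $\sum_{i=2}^t q_i=t-1$ from the connectedness of $G=G[\cup_{k=1}^t N_k]$ --- your telescoping count $c_i=c_{i-1}-q_i+1$ with $c_1=c_t=1$ is exactly the bookkeeping left implicit there, and your identification of $B_{i-1}$ and $B_i$ via the largest index $k$ with $v\in N_k$ is a correct and worthwhile verification. The injectivity discussion is extra care the paper omits entirely: it simply equates $|W|$ with the number of index pairs $(i,j)$. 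Your cross-$i$ separation (recovering $i$ as the largest $l$ with an endpoint in $A_l$) and the single-edge cases are fine, but the ``delicate point'' you defer --- a constructed witness $\{v,h\}\subseteq A_i$ arising for two components $j\neq j'$ --- does not appear to be resolvable by the maximal-intersection choice: nothing in \Fref{claim:f1} or the maximality of $f_{i,j}^V$ prevents $f_{i,j}^V$ and $f_{i,j'}^V$ from sharing their $A_i$-endpoint $v$ (they are distinct edges into distinct components of $G[B_{i-1}]$), and likewise on the horizontal side, so the literal equality should arguably read $|W|\le t-1$. Fortunately this is harmless: the claim is only invoked to upper-bound $|Z|$, where coinciding witnesses can only decrease $|W|$ and strengthen the estimate. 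In short, your proof matches the paper's approach and is complete for everything the paper actually uses; the one step you flagged as the main obstacle is a genuine subtlety, but one the paper silently waives and the application does not require.
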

\begin{proof}
	Observe that for every $i=1,\ldots,t$, the subgraph $G[N_i\setminus\cup_{k=i+1}^{t}N_k]$ is connected, since $M_i\subseteq N_i\setminus\cup_{k=i+1}^{t}N_k\subseteq N_i=N(M_i)$. Moreover, $G[\cup_{k=1}^{t}N_k]=G$ is connected, therefore $t-1=\sum_{i=2}^t q_i=|W|$.
\end{proof}

By~\Fref{case:conn}, there exists a subset $Z_i\subseteq E(M_i)$, such that for any edge $e_0\in E(G[N_i])$ there exists an edge $e_1\in Z_i$ which has $r$-vision of $e_0$ in $G[N_i]$, and $|Z_i|\le \frac43 (|V(M_i)|-1)$.

Let $Z=W\cup \left(\cup_{i=1}^t Z_i\right)$. An easy calculation gives that
\begin{align*}
	|Z| & \le (t-1)+\sum_{i=1}^t \frac{4|V(M_i)|-4}{3}\le \frac{4|V(M)|-4t+3(t-1)}{3}\le \\
	    & \le \frac{4(|M_H|+|M_V|-1)}{3}.
\end{align*}

Take an arbitrary edge $e_0=\{v_0,h_0\}\in E(G)$. We have three cases.

\begin{enumerate}
	\item
	      If $e_0\in F_{i,j}^V$ for some $i,j$, then we claim that $f_{i,j}^V\cap e_0\neq\emptyset$. Suppose not; by \Fref{claim:f1} there exists $f\in F_{i,j}^V$ which intersects both $e_0$ and $f_{i,j}^V$. For any edge $e\in F_{i,j}^V$ intersecting $f_{i,j}^V$ it either intersects $f$ too, or there is an edge intersecting both $e$ and $f$.
	      Thus, $f$ intersects at least as many edges as $f_{i,j}^V$, plus it intersects $e_0$ too, which contradicts the choice of $f_{i,j}^V$.

	      \medskip

	      If $w_i=f_{i,j}^V$, then $w_i$ trivially has $r$-vision of $e_0$. If both $f_{i,j}^V$ and $f_{i,j}^H$ exist, we have two cases.
	      \begin{itemize}
		      \item If $v_0\in f_{i,j}^V$, then $v_0\in w_i$ too, so $w_i$ has $r$-vision of $e_0$.

		      \item If $h_0\in f_{i,j}^V$, then \Fref{claim:f2} yields that $\{v_0\}\cup (f_{i,j}^H\cap N_i)\in E(G)$.
		            Thus, $\Big\{\{v_0,h_0\},f_{i,j}^V,w_i,\{v_0\}\cup (f_{i,j}^H\cap N_i)\Big\}$ is the edge set of a $C_4$ in $G$, so $w_i$ has $r$-vision of $e_0$.
	      \end{itemize}

	\item If $e_0\in F_{i,j}^H$ for some $i,j$, the same argument as above gives that $w_{i,j}$ has $r$-vision of $e_0$.

	\item If neither of the previous two cases holds, then $e_0\in E(G[N_i])$ for some $i$, so some element of $Z_i$ has $r$-vision of it.
\end{enumerate}

Thus, $Z$ satisfies \Fref{thm:mainprime}, and the proof is complete.

\chapter{Algorithms and complexity}\label{chap:artcomplexity}

In this chapter, we develop algorithms based on the main theorems of the previous chapters. Also, the computational complexity of art gallery problems is discussed. It turns out that our algorithms are efficient, although a considerable amount of preparation and review of literature is necessary. Given their efficiency, our algorithms have the potential to be applicable in practice, too.

\medskip

To achieve linear running times, we will rely on \citeauthor{MR1115104}'s triangulation algorithm. Although the algorithm of \citet{MR1148949} runs only in $O(n\log\log n)$, it is not nearly as complex, which may be preferable in real world applications. Nonetheless, our algorithms run in linear time, given a triangulated input.

\medskip

As data structures, we represent polygons by the (cyclically) ordered, doubly linked lists of their vertices.

\begin{theorem}[\citet{MR1115104}]\label{thm:chazelle}
  An $n$-vertex polygon can be triangulated in $O(n)$ time.
\end{theorem}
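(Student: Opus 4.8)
The plan is to reduce triangulation to the computation of the \emph{horizontal visibility map} of the polygon --- the trapezoidal decomposition obtained by shooting a horizontal ray left and right from every vertex until it first meets the boundary --- and then to argue that this map can be produced in linear time. The reduction itself is classical and unconditionally linear: given the visibility map, each trapezoid has at most one polygon vertex on the interior of its top edge and one on the interior of its bottom edge, so joining such vertices by diagonals splits the interior into $y$-monotone subpolygons; a single stack-based sweep triangulates each monotone piece in time proportional to its size, and these sizes sum to $O(n)$. Hence it suffices to compute the visibility map in $O(n)$ time, which is where the entire difficulty concentrates --- the textbook plane sweep only achieves $O(n\log n)$.

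For the map, I would proceed by divide-and-conquer on the boundary, viewed as a single closed chain. Split the vertex sequence into two contiguous subchains of roughly equal length, recursively compute the visibility map of the whole polygon \emph{restricted to each subchain} (the trapezoidation one would see using only the edges of that subchain as obstacles), and then merge the two partial maps into the visibility map of their union. If the merge of two partial maps on $n_1$ and $n_2$ vertices could be carried out in $O(n_1+n_2)$ time, the recursion would still solve only to $O(n\log n)$, so a second idea is needed to remove the logarithmic factor.

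The device that removes it --- and the genuine obstacle --- is to control the \emph{granularity} of the intermediate maps so that the total merge work telescopes to linear. Rather than maintaining exact visibility maps at every level, one maintains \emph{conforming} coarse approximations: at a coarse level only a sampled subset of the vertices act as obstacles, the map has few trapezoids, and merges are cheap; one then refines the coarse map by reinserting the omitted vertices in controlled batches, charging the cost of each reinsertion to structural features that are created once and never recreated. Making this accounting rigorous requires the key structural lemma that when two conforming maps are merged, the number of trapezoid boundaries that actually get subdivided is proportional to the number of vertices newly exposed to one another, together with a potential/weighting argument (analogous in spirit to the weight function $w_j$ used in the previous chapter) showing that the summed exposure over all levels is $O(n)$. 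Establishing this amortized bound, and designing the data structures (fractional-cascading-style search along chain boundaries) that make each elementary merge step constant-time, is the hard part; everything else is bookkeeping.

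Finally, I would verify that the recursion's base cases --- subchains of constant size --- are handled directly, and that the conversion from the completed visibility map back to an explicit triangulation (emitting the diagonals) is a single linear pass, so that the overall running time is $O(n)$ as claimed.
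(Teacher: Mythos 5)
You should first be aware that the thesis contains no proof of this statement at all: it is imported as a black box from Chazelle~\cite{MR1115104}, so there is no internal argument to compare yours against. Judged against the actual source, your outline is a faithful reconstruction of Chazelle's strategy. The reduction you describe in the first paragraph --- triangulation reduces in linear time to the horizontal visibility map (trapezoidation), because the diagonals joining vertices on trapezoid tops and bottoms cut the polygon into $y$-monotone pieces, each triangulated by a stack sweep --- is the classical Fournier--Montuno equivalence (a variant of which the thesis itself uses in \Fref{prop:Rtree}), and it is correct. Your diagnosis that the textbook sweep gives only $O(n\log n)$, that a plain divide-and-conquer with linear merges also solves to $O(n\log n)$, and that the logarithm must be removed by maintaining coarse \emph{conforming} submaps of controlled granularity, merged cheaply and refined in batches with amortized accounting, is exactly the architecture of Chazelle's algorithm.

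Nevertheless, as a proof the proposal has a genuine gap, and you have located it yourself: everything rests on the structural merge lemma (that the subdivision work in merging two conforming submaps is proportional to the newly created mutual exposure), on the potential-function argument that the total exposure over all levels telescopes to $O(n)$, and on data structures making each elementary merge step constant time. Those three items \emph{are} the proof --- in Chazelle's paper they occupy dozens of pages of invariants about conformality and granularity --- and naming them as ``the hard part'' does not discharge them; without them your recursion is not known to beat $O(n\log n)$. One further technical point you would have to handle even to set up the induction: your recursive subproblems are open boundary subchains, not simple closed polygons, so ``the visibility map restricted to a subchain'' needs its own careful definition (rays can escape to infinity, and the two sides of the chain are not yet distinguished); Chazelle defines submaps of chains precisely for this reason, and the merge lemma is stated for those objects, not for polygons. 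So: right route, correct skeleton, but the theorem's substance is deferred rather than proved.
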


A common subroutine in our art gallery algorithms is the construction of $R$-trees (\Fref{sec:treestructure}).

\begin{proposition}[{\citet[Section 5]{GyH}}]\label{prop:Rtree}
	The horizontal (vertical) $R$-tree of an orthogonal polygon $P$ can be constructed in linear time.
\end{proposition}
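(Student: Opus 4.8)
The plan is to build the horizontal $R$-tree in two stages: first compute the rectangular decomposition of $D$ promised by \Fref{def:tree}, and then read the tree off the resulting planar subdivision. Recall that a horizontal cut leaves each reflex vertex horizontally into $\mathrm{int}(D)$ and stops at the first point of $\partial D$ it meets, and that cutting along all such segments is precisely a horizontal trapezoidal decomposition of $D$; here every cell is a rectangle, since each reflex vertex is resolved by its own cut (its $270^\circ$ angle is split into a convex corner on one side and a straight pass-through on the other). First I would triangulate $\partial D$ in linear time by \Fref{thm:chazelle} --- a free step, as the input is assumed triangulated --- and then convert the triangulation into the horizontal decomposition. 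The crucial point is the classical linear-time equivalence of triangulation and trapezoidal decomposition of a simple polygon: from the triangulation one can determine, for all reflex vertices simultaneously, the landing point of each horizontal cut, and in particular whether it terminates on a vertical wall (a $1$-cut) or at another reflex vertex (a $2$-cut), in aggregate linear time. The output of this stage is a doubly-connected edge list of the subdivision of $D$ by $\partial D$ together with all horizontal cuts.

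Next I would extract $V(T)$ and $E(T)$ from this subdivision. The bounded faces are exactly the internally disjoint rectangles of \Fref{def:tree}, so they constitute $V(T)$; since each horizontal cut corresponds either to a reflex vertex yielding a $1$-cut or to a pair of reflex vertices yielding a $2$-cut, there are $O(n)$ cuts and hence $O(n)$ faces, enumerated by a single traversal. For the edges, observe that every horizontal cut is a single segment whose interior lies in $\mathrm{int}(D)$ and therefore contains no vertex of $\partial D$; consequently it borders exactly one rectangle above and one below along its entire length. Thus each cut corresponds to exactly one adjacent pair of rectangles, i.e.\ one edge of $T$, and this pair is obtained directly from the two faces incident to the cut's half-edges. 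Walking the half-edge structure once yields all of $E(T)$ in linear time; if desired, the value $t(e)$ of \Fref{def:tfunc} can be recorded simultaneously by testing whether the far endpoint of the cut is a vertex of $\partial D$.

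Correctness is immediate from \Fref{def:tree}: the algorithm returns exactly the decomposition rectangles as nodes and exactly the cut-adjacencies as edges, and the output is a tree by the argument given after \Fref{def:tree}. I expect the sole genuine obstacle to be the decomposition step, since naive horizontal ray shooting from each reflex vertex costs $\Theta(n)$ per vertex and hence $\Theta(n^2)$ overall; the linear bound rests entirely on the triangulation-to-trapezoidation reduction. Everything afterwards --- enumerating the faces and traversing the edge list --- is a routine linear-time computation.
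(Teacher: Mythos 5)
Your proposal is correct and takes essentially the same route as the paper: triangulate in linear time via \Fref{thm:chazelle}, then convert the triangulation into the horizontal cuts in aggregate linear time in the style of \citet{fournier1984triangulating}, after which assembling the $R$-tree from the resulting rectangular subdivision is routine linear bookkeeping. The only cosmetic differences are that the paper, regarding the published triangulation-to-trapezoidation algorithm as incomplete, spells this conversion out explicitly (maintaining, for each diagonal, a $y$-ordered list of visible vertical sides, with an amortized linear charging argument), and that it builds the tree by a clockwise boundary walk on doubly linked lists rather than by enumerating faces of a doubly-connected edge list.
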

\begin{proof}
  \Fref{algo:Rtree} produces the $R$-tree. The main part of this, \Fref{algo:horizontalcuts1}, is an adaptation of the (in my opinion incomplete) algorithm of \citet[Algorithm~4b]{fournier1984triangulating}.
  Suppose we are at the beginning of a loop (\Fref{step:beginloop} in \Fref{algo:horizontalcuts2}), and let $D$ be the set of sides of $P$ that are contained in triangles already deleted from $T$.
  For each diagonal $d$ drawn by the triangulation, in $S[d]$ we store a $y$-coordinate decreasing list of (the whole of or a segment of) the vertical sides of $P$ in $D$ seen (via horizontal vision) by $d$.

  \medskip

  Almost every step of the algorithm in the main loop (starting at \Fref{step:beginloop}) can be executed in $O(1)$. The following steps require further consideration.
  \begin{itemize}
    \item \Fref{step:process1} requires $O(|S[s_v]|+|S[s_w]|)$ time. After this step, their elements are discarded.
    \item Steps~\ref{step:project2}-\ref{step:process2} can be computed in $O(|R|+|S[s_w]|)$. The elements in $R$ and $S[s_w]$ are discarded.
    \item A similar result holds for Steps~\ref{step:project3}-\ref{step:process3}.
    \item Lastly, the for-loop starting following Step~\ref{step:sideofP} completes $|S[s_u]|$ cycles, after which the contents of $S[s_u]$ are discarded.
 \end{itemize}
  At any point in the algorithm, let $W$ be the set of sides of $\cup T$. Observe, that
  \[\bigcup\limits_{d\in W}S[d] \]
  contains every original side of the polygon at most twice. Every element is processed at most once by each of the previous highlighted steps, therefore \Fref{algo:horizontalcuts1} runs in $O(n)$.

  \medskip

  The rest of \Fref{algo:Rtree} is straightforward. \Fref{step:splitpolygon} runs in $O(1)$ as we store the orthogonal polygon pieces in doubly linked lists. Therefore \Fref{algo:Rtree} runs in linear time.
\end{proof}

\section{Partitioning orthogonal polygons}

\citet{Liou1989} proved that an $n$-vertex rectilinear domains can be partitioned into the minimum number of rectangles in $O(n)$ (assuming the preprocessing step uses linear time triangulation \Fref{thm:chazelle}). An $O(n\log n)$ algorithm by \citet{Lopez1996} produces an optimal partition of an $n$-vertex rectilinear domain into at most 6-vertex rectilinear domains. Both algorithms have important applications to computer graphics, image processing, and automated VLSI layout designs.

\medskip

However, beyond these cases, not much is known about the complexity of determining a minimum size partition of a rectilinear domain into at most $2k$-vertex rectilinear domains, when $k\ge 4$. Instead of insisting on finding a minimum size partition, we present an efficient algorithm to find a partition whose cardinality is not greater than the extremal optimum.

\begin{theorem}
	An $n$-vertex orthogonal polygon can be partitioned into at most $\lfloor\frac{3 n +4}{16}\rfloor$ orthogonal polygons of at most 8 vertices in linear time.
\end{theorem}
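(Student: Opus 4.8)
The statement asserts nothing new about \emph{existence} of the partition --- that is exactly \Fref{thm:mobile} --- so the entire content is to turn the inductive proof of \Fref{thm:mobile} into a linear-time algorithm. My plan is therefore to make each step of that proof constructive and to bound the total work by a global amortization argument.

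\textbf{Preprocessing.} First I would triangulate the input polygon in $O(n)$ by \Fref{thm:chazelle}, and then build the horizontal $R$-tree $T$ of $D$ in linear time via \Fref{prop:Rtree}, keeping the polygon stored as a doubly linked list of vertices with cross-pointers between the boundary and the rectangles (nodes) of $T$. Using \Fref{claim:tsize} I would also precompute, for every node of $T$, the vertex count $n(\,\cdot\,)$ of the subtree hanging below it together with the values $t(e)$; all of these are computable in one bottom-up pass, so that the size of any piece produced by cutting a tree edge is available in $O(1)$ and can be read modulo $16$.

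\textbf{Finding one good cut.} The recursive procedure mirrors the four cases of \Fref{sec:proof}. Each case is detected by a purely local inspection of $T$: whether $T$ is a path, whether some rectangle is a corridor or a pocket, and otherwise the search for a degree-$\ge 3$ rectangle and its relevant neighbours. In every case the proof exhibits a good cut-system consisting of at most three candidate cuts, and \Fref{lemma:tech:a} together with \Fref{lemma:tech:b} guarantees that one of them is a good cut; the correct one is selected by evaluating the precomputed vertex counts modulo $16$. Horizontal candidate cuts are already edges of $T$, so they are available immediately; the vertical and L-shaped candidates occurring in Cases~\ref{case:corridors}--\ref{case:twisted} all live inside a bounded region $U$ (a rectangle together with its adjacent pockets, or the union used in \Fref{claim:opposite}), so they can be produced by examining $U$ directly, for instance by building the small vertical $R$-tree of $U$. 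Thus a good cut of $D$ is located in time proportional to the local structure consulted.

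\textbf{Applying the cut and amortization.} Realizing a good cut $L(D_1,D_2)$ means splitting the doubly linked boundary list and the tree $T$ along $L$; I would do this by local relinking rather than by rebuilding, updating the affected subtree counts only along the short path touched by the cut, and invoking \Fref{lemma:extend} and \Fref{lemma:extendconsecutives} merely to justify that a cut found in the local region extends correctly to all of $D$. The recursion then continues on $D_1$ and $D_2$. Because the output partition has at most $\lfloor\frac{3n+4}{16}\rfloor=O(n)$ pieces, there are only $O(n)$ cuts in total; charging the work of each cut to the reflex vertices and boundary edges that it permanently separates off (each of which is charged a bounded number of times) yields an $O(n)$ bound on the whole recursion, which together with the linear preprocessing proves the theorem. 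The delicate point --- and the step I expect to be the main obstacle --- is precisely this maintenance of $T$ and of the subtree vertex counts under repeated splitting: a naive re-derivation of the $R$-tree after each cut would be quadratic, so the argument must show that each cut can be carried out, and the structures updated, in time proportional to the material it removes, i.e.\ that extending a locally found good cut via \Fref{lemma:extendconsecutives} never forces a fresh global traversal, giving amortized cost $O(1)$ per cut beyond the structure genuinely consumed.
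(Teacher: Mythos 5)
Your proposal takes essentially the same route as the paper: the paper's (very terse) proof of this theorem likewise observes that the inductive proof of \Fref{thm:mobile} is a recursive algorithm, constructs the horizontal $R$-tree in linear time (via \Fref{algo:Rtree}, after a linear-time triangulation) with edge lists sorted by $x$-coordinate, maintains the case-detection structures --- the lists of pockets, corridors, and the special rectangles of \Fref{case:toporbotcontained} --- across cuts, and finds and performs each of the $O(n)$ cuts in $O(1)$, which is exactly the per-cut bound your amortization is aiming for. The only divergence is bookkeeping detail: where you introduce global subtree counts updated along root paths and honestly flag their maintenance as the delicate point, the paper simply asserts that these structures (including the mod-$16$ size data implicitly needed to pick the good cut from each cut-system) can be updated in $O(1)$ per cut, so the crux you identify is real but is precisely what the paper leaves implicit as well.
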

\begin{proof}
The proof \Fref{thm:mobile} describes a recursive algorithm. Use \Fref{algo:Rtree} to construct the horizontal $R$-tree of $P$, such that the edge list of a vertex is ordered by the $x$ coordinates of the corresponding cuts. Furthermore, compute the list of pockets, corridors, and special rectangles of \Fref{case:toporbotcontained}. These structures can be maintained in $O(1)$ for the partitions after finding and performing a cut in $O(1)$.
\end{proof}

The art gallery problem corresponding to partitioning into at most 8-vertex pieces is the~\textsc{MSC} problem (see~\Fref{table:slidingcameras}). The \textsc{NP}-hardness of finding a minimum cardinality partition of an orthogonal polygon into at most 8-vertex pieces is an open problem, similarly to the \textsc{MSC} problem discussed in the next section.

\section{Finding guard systems}

\begin{table}[ht]
	\centering
	\bgroup%
	\def\arraystretch{1.5}
	\begin{tabular}{ r  l  }
		\toprule
		\multicolumn{2}{c}{\textsc{Minimum cardinality Sliding Cameras} problem (\textsc{MSC})} \\
		\midrule
		\textit{Input} & An orthogonal polygon $P$\\
    \cmidrule(lr){2-2}
		\multirow{2}{*}{\textit{Feasible solution}} & A set $M$ of vertical and horizontal mobile $r$-guards \\ & covering the domain enclosed by $P$\\
    \cmidrule(lr){2-2}
		\textit{Objective} & Minimize $|M|$ \\
    \cmidrule(lr){2-2}
    \textit{Decision version} & Given $(P,k)$, decide whether $\min |M|\le k$. \\
		\bottomrule
		& \\
		\toprule
		\multicolumn{2}{c}{\textsc{Minimum cardinality Horizontal Sliding Cameras} problem (\textsc{MHSC})} \\
		\midrule
		\textit{Input} & An orthogonal polygon $P$\\
    \cmidrule(lr){2-2}
		\multirow{2}{*}{\textit{Feasible solution}} & A set $M_H$ of \textbf{horizontal} mobile $r$-guards \\
		& covering the domain enclosed by $P$\\
    \cmidrule(lr){2-2}
		\textit{Objective} & Minimize $|M_H|$ \\
    \cmidrule(lr){2-2}
    \textit{Decision version} & Given $(P,k)$, decide whether $\min |M_H|\le k$. \\
		\bottomrule
	\end{tabular}

	\egroup%

	\bigskip

	\caption{Definitions of the sliding camera problems (\textsc{MHSC} and \textsc{MSC})}\label{table:slidingcameras}
\end{table}

\begin{table}[ht]
	\centering
	\bgroup%
	\def\arraystretch{1.5}
	\begin{tabular}{ r  l  }
		\toprule
		\multicolumn{2}{c}{\textsc{Dominating Set} problem} \\
		\midrule
		\textit{Input} & A simple graph $G$\\
    \cmidrule(lr){2-2}
		\multirow{2}{*}{\textit{Feasible solution}} & A set $S\subseteq V(G)$ which satisfies \\
		& $\forall v\in V(G)\setminus S\quad\exists u\in S\quad\text{such that}\quad\{u,v\}\in E(G)$ \\
    \cmidrule(lr){2-2}
		\textit{Objective} & Minimize $|S|$ \\
    \cmidrule(lr){2-2}
    \textit{Decision version} & Given $(G,k)$, decide whether $\min |S|\le k$. \\
		\bottomrule
		& \\
		\toprule
		\multicolumn{2}{c}{\textsc{Total Dominating Set} problem} \\
		\midrule
		\textit{Input} & A simple graph $G$\\
    \cmidrule(lr){2-2}
		\multirow{2}{*}{\textit{Feasible solution}} & A set $S\subseteq V(G)$ which satisfies \\
		& $\forall v\in V(G)\quad\exists u\in S\quad\text{such that}\quad\{u,v\}\in E(G)$ \\
    \cmidrule(lr){2-2}
		\textit{Objective} & Minimize $|S|$ \\
    \cmidrule(lr){2-2}
    \textit{Decision version} & Given $(G,k)$, decide whether $\min |S|\le k$. \\
		\bottomrule
	\end{tabular}

	\egroup%

	\bigskip

	\caption{Definitions of the \textsc{Total Dominating Set} and \textsc{Dominating Set} problem}\label{table:domination}
\end{table}

Finding a minimum cardinality horizontal mobile $r$-guard system, which is also known as the \textsc{Minimum cardinality Horizontal Sliding Cameras} or \textsc{MHSC} problem (\Fref{table:slidingcameras}),
is known to be polynomial~\cite{KM11} in orthogonal polygons without holes. In orthogonal polygons with holes, the problem is \textsc{NP}-hard as shown by \citet{BCLMMV16}. In their paper, a polynomial time constant factor approximation algorithm for the \textsc{MHSC} problem is described, too.
As explained in \Fref{sec:translating}, the \textsc{MHSC} problem translates to the \textsc{Total Dominating Set} problem (\Fref{table:domination}) in the pixelation graph (\Fref{sec:translating}), which can be solved in polynomial time for chordal bipartite graphs~\cite{DMK90}.

\medskip

Finding a minimum cardinality mixed vertical and horizontal
mobile $r$-guard system (also known as the \textsc{Minimum cardinality Sliding Cameras} or \textsc{MSC} problem) has been shown by \citet{DM13} to be \textsc{NP}-hard for orthogonal polygons with holes. For orthogonal polygons without holes, the problem translates to the \textsc{Dominating Set} problem in the pixelation graph. This reduction in itself has little use, as \citet{MB87} have shown that \textsc{Dominating Set} is \textsc{NP}-complete even in chordal bipartite graphs. To our knowledge, the complexity of \textsc{MSC} is still an open question. There is, however, a polynomial time 3-approximation algorithm by \citet{KM11} for the \textsc{MSC} problem for orthogonal polygons without holes. In case holes are allowed,~\cite{BCLMMV16} give a polynomial time constant factor approximation algorithm.

\medskip

The algorithm for the \textsc{MHSC} problem in~\cite{KM11} relies on a polynomial algorithm solving the \textsc{Clique Cover} problem in chordal graphs. Our analysis of the $R$-tree structures and the pixelation graph allows us to reduce the polynomial running time to linear.

\begin{theorem}[\citet{GyM2017}]\label{thm:mgalg}
 \Fref{algo:mhsc} finds a solution to the \textsc{MHSC} problem in linear time.
\end{theorem}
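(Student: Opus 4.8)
The plan is to exploit the translation of \Fref{sec:translating}, under which the \textsc{MHSC} problem asks for a minimum-cardinality set $M_H\subseteq S_H$ that dominates $S_V$ in the pixelation graph $G$; equivalently, $M_H$ must meet the neighbourhood $N_G(v)$ of every vertical slice $v\in S_V$. The first step is a geometric lemma: for each $v\in S_V$ the set $N_G(v)$ is a \emph{path} in the horizontal $R$-tree $T_H$. Indeed, since the top and bottom sides of $v$ lie on $\partial D$, at every height the interior of $v$ meets exactly one horizontal slice (the one containing the cross-sectional segment of $\mathrm{int}(D)$ at that height), and crossing a horizontal cut moves to a $T_H$-adjacent slice; as $T_H$ is a tree, this sequence is a simple path. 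Hence \textsc{MHSC} is exactly the problem of choosing a minimum set of nodes of $T_H$ that stabs the family $\{N_G(v):v\in S_V\}$ of paths.

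Two facts then drive the analysis. First, we cannot afford to examine the paths edge by edge, since $\sum_{v\in S_V}|N_G(v)|=|E(G)|$ may be quadratic in $n$; each path is therefore represented only by its two endpoints, in particular by its \emph{apex}, the node of least depth once $T_H$ is rooted. Second, paths are subtrees of a tree, so the family has the Helly property and its intersection graph is chordal; a minimum stabbing set thus corresponds to a minimum clique cover, which is how~\cite{KM11} obtain a polynomial-time algorithm. Our improvement is to realise the optimum directly on $T_H$ in linear time.

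The greedy is as follows. Root $T_H$ arbitrarily and process the paths in order of non-increasing depth of their apex (a bucket sort on depth, costing $O(n)$). Whenever a path that is not yet stabbed is encountered, place a guard at its apex; by the Helly property this guard stabs a maximal pairwise-intersecting subfamily, and an exchange argument — the tree analogue of the classical right-endpoint greedy for stabbing intervals on a line — shows that the resulting set has minimum size. This produces the representative set $M_H\subseteq S_H$ and, through the correspondence of \Fref{sec:translating}, an optimal horizontal mobile $r$-guard system, which is what \Fref{algo:mhsc} returns.

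For the running time, \Fref{thm:chazelle} triangulates $P$ in $O(n)$, and \Fref{prop:Rtree} then builds $T_H$ in $O(n)$; the apex and the lower endpoint of every path $N_G(v)$ can be read off during this construction in $O(1)$ each, so no general lowest-common-ancestor machinery (and its logarithmic overhead) is needed. The greedy is implemented by a single post-order pass propagating, for each node, the deepest still-unstabbed demand below it, so that each node and each path are touched a constant number of times; combined with the bucket sort this yields total time $O(n)$. The main obstacle is precisely this linear-time requirement: the difficulty is to avoid both the explicit enumeration of the (possibly quadratically many) incidences and the logarithmic factors coming from sorting and ancestor queries. Resolving it rests on the two structural observations above — that every demand is a path whose apex is available for free from the $R$-tree, and that depths can be bucket-sorted — together with an amortisation argument for the stabbing pass mirroring the one used in the proof of \Fref{prop:Rtree}; establishing optimality of the greedy via the Helly/exchange argument is the remaining piece of work.
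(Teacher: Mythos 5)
Your overall route coincides with the paper's: the same reduction (each $N_G(v)$ is a path in $T_H$, represented only by its two endpoints, so \textsc{MHSC} becomes stabbing a family of paths of a tree by nodes), and the same greedy (process demands by non-increasing depth of their apex; place a guard at the apex of any demand found unstabbed). One of your two justifications, however, is weaker than the paper's, and the other has a genuine hole. On optimality, you appeal to the Helly property and an unspecified ``exchange argument.'' The paper instead observes that the demands that trigger guards are \emph{pairwise disjoint}: if two paths of a tree intersect and the apex of one is at least as deep as the apex of the other, then the deeper apex lies on the other path (this is exactly the claim $h_r(v_i)\in N_G(v_j)$ in the paper's proof), so a demand disjoint from all previously selected ones is precisely an unstabbed one, and the selected demands form a packing whose size equals the number of guards. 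This yields $\nu=\tau$ (Dirac's theorem on subtrees of a tree, which the paper cites and reproves this way) with no exchange argument needed; your sketch could be completed, but this disjointness certificate is what the processing order is really for.

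The genuine gap is the complexity claim that ``the apex \ldots of every path $N_G(v)$ can be read off during this construction in $O(1)$ each.'' The two \emph{endpoints} of each path are indeed available in $O(1)$ via the labels set up in \Fref{prop:Rtree}, but the apex is the lowest common ancestor of those endpoints with respect to a root chosen only \emph{after} $T_H$ is built; nothing in the $R$-tree construction provides it, so your bucket sort on apex depth has no input. The paper closes exactly this hole with the \emph{off-line} lowest-common-ancestors algorithm of \citet{MR801823}, which computes all $h_r(v)$ and $\mathrm{dist}_r(v)$ in $O(n)$ total --- note that, contrary to your parenthetical, offline LCA carries no logarithmic overhead, so there was nothing to avoid. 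Your fallback, a post-order pass propagating the deepest still-unstabbed demand, could in principle detect apexes on the fly (via constant-time subtree-membership tests from an Euler tour), but as written the invariant that justifies propagating only one candidate per subtree, and hence the ``each path is touched a constant number of times'' claim, is not established. The paper's combination of offline LCA, a reverse-BFS processing order, and marking all descendants of chosen apexes --- so that a demand is stabbed if and only if one of its two endpoints is marked, checkable in $O(1)$ --- gives the linear bound directly.
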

\begin{proof}
	By \Fref{prop:Rtree}, both the horizontal $R$-tree $T_H$ and the vertical $R$-tree $T_V$ of $D$ can be constructed in linear time.

	\medskip

	The main idea of the algorithm is to only sparsely construct the pixelation graph $G$ of $D$. Observe, that the neighborhood of a vertical slice in $G$ is a path in $T_H$, and vice versa. Label each horizontal edge of $D$ by the horizontal slice that contains it. Furthermore, label each vertical edge of each horizontal slice by the edge of $D$ containing it; do this for the horizontal edges of vertical slices as well. This step also takes linear time. The endpoints of a path induced by the neighborhood of any node in $G$ can be identified via these labels in $O(1)$ time.

	\medskip

	In \Fref{sec:translating}, we showed that a horizontal guard system is a subset of $V(T_H)$ which intersects (covers) each element of $\mathcal{F}_H=\{N_G(v)\ |\ v\in V(T_V)\}$. Dirac's theorem~\cite[p.~10]{frank_dopt} states that $\nu$, the maximum number of disjoint subtrees of the family, is equal to $\tau$, the minimum number of nodes covering each subtree of the family. Obviously, $\nu\le \tau$. The other direction is proved using a greedy algorithm:
	\begin{enumerate}
		\item Choose an arbitrary node $r$ of $T_H$ to serve as its root. The distance of a vertical slice $v\in V(T_V)$ from $r$ is $\mathrm{dist}_r(v)=\min_{h\in N_G(v)}\mathrm{dist}(h,r)$, and let $h_r(v)=\arg\min_{h\in N_G(v)}\mathrm{dist}(h,r)$.
		\item Enumerate the elements of $V(T_V)$ in decreasing order of their distance from $r$, let $v_1,v_2,\ldots,v_{|V(T_V)|}$ be such an indexing. Let $S_0=\emptyset$.
		\item If $N_G(v_i)$ is disjoint from the elements of $\{N_G(v)\ |\ v\in S_{i-1}\}$, let $S_i=S_{i-1} \cup \{ v_i\}$; otherwise let $S_i=S_{i-1}$.
	\end{enumerate}
	We claim that $\{h_r(v)\ |\ v\in S_{|V(T_V)|}\}$ is a cover of $\mathcal{F}_H$. Suppose there exists $v_j\in V(T_V)$ such that $N_G(v_j)$ is not covered. Let $i$ be the smallest index such that $v_i\in S_i$ and $N_G(v_j)\cap N_G(v_i)\neq\emptyset$. Clearly, $i<j$, therefore $\mathrm{dist}_r(v_i)\ge \mathrm{dist}_r(v_j)$. However, this means that $h_r(v_i)\in N_G(v_j)$.

	\medskip

	Now $\{h_r(v)\ |\ v\in S_{|V(T_V)|}\}$ is a cover of the same cardinality as the disjoint set system $\{N_G(v)\ |\ v\in S_{|V(T_V)|}\}$, proving that $\nu=\tau$.

	\medskip

	Each neighborhood $N_G(v)$ for $v\in V(T_V)$ induces a path in $T_H$. Therefore, the first part of the algorithm, including calculating $\mathrm{dist}_r(v)$ and $h_r(v)$ for each $v$, can be performed in $O(n)$ time, using the off-line lowest common ancestors algorithm of \citet{MR801823}.

	\medskip

	Calculating the distance decreasing order takes linear time via breadth-first search started from the root. In the $i^\mathrm{th}$ step of the third part of the algorithm, we maintain for each node in $V(T_H)$ whether it is under an element of $\{h_r(v)\ |\ v\in S_i\}$. Summed up for the $|V(T_H)|$ steps, this takes only linear time. $N_G(v_{i+1})$ is disjoint from the elements of $\{N_G(v)\ |\ v\in S_i\}$ if and only if one of the ends of the path induced by $N_G(v_{i+1})$ is under one of the elements of $\{h_r(v)\ |\ v\in S_i\}$, which now can be checked in constant time. Thus, the algorithm takes in total some constant factor times the size of the input time to run.
\end{proof}

\begin{table}[ht]
	\centering
	\bgroup%
	\def\arraystretch{1.5}
	\begin{tabular}{ r  l  }
		\toprule
		\multicolumn{2}{c}{\textsc{Point guard} problem} \\
		\midrule
		\textit{Input} & An orthogonal polygon $P$ (which bounds the domain $D$)\\
    \cmidrule(lr){2-2}
		\multirow{2}{*}{\textit{Feasible solution}} & A subset of points $X\subset D$ satisfying \\
    & $\forall y\in D$ there exists $x\in X$ such that $\overline{xy}\subset D$. \\
    \cmidrule(lr){2-2}
		\textit{Objective} & Minimize $|X|$ \\
    \cmidrule(lr){2-2}
    \textit{Decision version} & Given $(P,k)$, decide whether $\min |X|\le k$. \\
		\bottomrule
		& \\
    \toprule
		\multicolumn{2}{c}{\textsc{Point $r$-guard} problem} \\
		\midrule
		\textit{Input} & An orthogonal polygon $P$ (which bounds the domain $D$)\\
    \cmidrule(lr){2-2}
		\multirow{2}{*}{\textit{Feasible solution}} & A subset of points $X\subset D$ satisfying \\
    & $\forall y\in D$ there exists $x\in X$ such that $x$ has $r$-vision of $y$ in $D$. \\
    \cmidrule(lr){2-2}
		\textit{Objective} & Minimize $|X|$ \\
    \cmidrule(lr){2-2}
    \textit{Decision version} & Given $(P,k)$, decide whether $\min |X|\le k$. \\
		\bottomrule
	\end{tabular}

	\egroup%

	\bigskip

	\caption{Definitions of the \textsc{Point ($r$-)guard} problem}\label{table:pointguards}
\end{table}

The computational complexity of the \textsc{Point guard} problem (see \Fref{table:pointguards}) in orthogonal polygons with or without holes has attracted significant interest since the inception of the problem. \citet{MR1330860} showed that even for orthogonal polygons (without holes), \textsc{Point guard} is \textsc{NP}-hard. However, a minimum cardinality \textsc{Point $r$-guard} system  of an orthogonal polygon can be computed in $\tilde O(n^{17})$ time~\cite{WM07}. To our knowledge, the exponent of the running time is still in the double digits, which makes its use impractical. Therefore, approximate solutions to the problem are still relevant. A linear-time 3-approximation algorithm is described in~\cite{LWZ12}.

\begin{corollary}
	An $\frac83$-approximation of the minimum size of a point guard system of an orthogonal polygon can be computed in linear time.
\end{corollary}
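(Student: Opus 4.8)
The plan is to combine the existence bound of \Fref{thm:main} with the linear-time mobile-guard algorithm of \Fref{thm:mgalg}, and to certify that the construction hidden in the proof of \Fref{thm:mainprime} can be executed efficiently. First I would compute a minimum vertical mobile $r$-guard system $M_V$ and a minimum horizontal mobile $r$-guard system $M_H$ of $D$: the latter is produced in linear time by \Fref{thm:mgalg}, and the former by applying the same algorithm to a $90^\circ$ rotation of $D$. This yields $m_V=|M_V|$ and $m_H=|M_H|$ together with the dominating sets themselves. Taking $A_V=S_V$ and $A_H=S_H$, the pixelation graph $G$ is connected by \Fref{lemma:chordal}, and, as established in \Fref{sec:translating}, $M_V$ dominates $S_H$ and $M_H$ dominates $S_V$, so the hypotheses of \Fref{thm:mainprime} are met. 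Feeding $M_V$ and $M_H$ into its hyperguard construction returns a set $Z\subseteq E(G)$ with $|Z|\le\frac43(m_V+m_H-1)$ that $r$-covers every edge of $G$; picking one point from $\mathrm{int}(\cap e)$ for each $e\in Z$ gives, by \Fref{claim:rectvision2}, a point $r$-guard system of $D$ of the same cardinality.

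The approximation ratio then follows from a short projection argument bounding $m_V$ and $m_H$ by $p$. Let $Z^\ast$ be an optimal point $r$-guard system with $|Z^\ast|=p$, and set $M_V^\ast=\{v\in S_V:\{v,h\}\in Z^\ast\text{ for some }h\}$ and $M_H^\ast$ analogously. I claim $M_V^\ast$ dominates $S_H$: given any $h_0\in S_H$, pick an edge $e_0=\{v_0,h_0\}\in E(G)$; by optimality $e_0$ is $r$-visible from some $e_1=\{v_1,h_1\}\in Z^\ast$, and in each case of \Fref{def:rvisionedge} --- whether $e_0\cap e_1\neq\emptyset$ or $\{v_0,h_0,v_1,h_1\}$ spans a $C_4$ --- some vertical slice of $e_1$ lies in $M_V^\ast$ and is adjacent to $h_0$. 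Symmetrically $M_H^\ast$ dominates $S_V$, so $m_V\le|M_V^\ast|\le p$ and $m_H\le|M_H^\ast|\le p$. Combining this with the bound on $|Z|$ gives
\[ |Z|\le\left\lfloor\frac{4(m_V+m_H-1)}{3}\right\rfloor\le\left\lfloor\frac{4(2p-1)}{3}\right\rfloor\le\frac83\,p, \]
which is the desired $\frac83$-approximation.

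The hard part will be carrying out the construction of \Fref{thm:mainprime} in linear time, since $G$ and even the induced graph $M=G[M_V\cup M_H]$ may have $\Omega(n^2)$ edges and cannot be written down explicitly. As in the proof of \Fref{thm:mgalg}, I would represent $G$ only sparsely through the horizontal and vertical $R$-trees (\Fref{prop:Rtree}), exploiting that each neighbourhood $N_G(v)$ induces a path in $T_H$. The block decomposition of $M$, the domain $D_M$, and the auxiliary graph $X$ (a disjoint union of cycles and paths) all have size linear in $|V(M)|\le m_V+m_H\le 2p\le n$, so the real task is to extract them, and to run the five phases of the hyperguard construction, without ever materialising a dense adjacency structure. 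I expect the reflex/side/convex edge classification and the $\rela$-relation to be computable locally from the $R$-tree orderings, and the dominating sets along the path components of $X$ to be chosen greedily in a single sweep; the off-line lowest-common-ancestor machinery already invoked in \Fref{thm:mgalg} should again resolve the $C_4$ and adjacency queries in amortised constant time. Verifying that every step of all phases admits such an implementation is the bulk of the work, but no single step presents a conceptual difficulty once the sparse encoding is in place.
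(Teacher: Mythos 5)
Your proposal over-reads the statement, and the over-reach lands you in what the paper explicitly flags as an open problem. The corollary only claims that an $\frac83$-approximation of the \emph{number} $p$ can be computed in linear time, not that a point guard system of that size can be constructed. The paper's proof is accordingly one line: compute $m_V$ and $m_H$ via \Fref{thm:mgalg} (applied to $D$ and its rotation), and output $\frac43(m_V+m_H)$; by \Fref{thm:main} and the trivial inequalities $m_V\le p$, $m_H\le p$, this number lies between $p$ and $\frac83 p$. Your first two paragraphs reproduce exactly this, and your projection argument for $m_V\le p$ and $m_H\le p$ is a correct fleshing-out of what the paper dismisses as trivial (modulo the small identification, justified by \Fref{lemma:technical}, of each point guard with an edge of the pixelation graph). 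Had you stopped there and simply output the number $\lfloor 4(m_V+m_H-1)/3\rfloor$, the proof would be complete and essentially identical to the paper's.

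Instead, your third paragraph makes the proof depend on executing the hyperguard construction of \Fref{thm:mainprime} in linear time, and the claim that ``no single step presents a conceptual difficulty once the sparse encoding is in place'' is not substantiated --- nor is it believed by the authors: immediately after this corollary the paper states that the corresponding guard system can only be computed in $O(n^2)$, precisely because the pixelation graph may have $\Omega(n^2)$ edges, and declares reducing this to linear time an interesting open problem. Concretely, it is not enough that $X$ and the reflex/side/convex edges number $O(|V(M')|)$: one must also build $D_M$, classify the edges of $M'$, evaluate the $\rela$ and $\covers$ relations, and, in \Fref{phase:3rdproperty}, find for each slice $h_0\in A'_H$ a minimal $C_4$ whose pixel hull contains $h_0\cap D_M$ --- none of which your sketch shows how to do in amortised constant time per object. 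So the proposal as written has a genuine gap; the fix is simply to delete the construction step, since the statement never required it.
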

\begin{proof}
	Compute $m_V$ and $m_H$ using the previous algorithm. By \Fref{thm:main} and the trivial statement that both $m_H\le p$ and $m_V\le p$, we get that $\frac43\cdot(m_H+m_V)$ is an $\frac83$-approximation for $p$.
\end{proof}

Unfortunately, we can only compute the corresponding solution (guard system) in $O(n^2)$, because the pixelation graph may have $\Omega(n^2)$ edges. I consider it an interesting open problem to reduce this running time to linear as well, so that it matches the algorithm of~\cite{LWZ12}.

\part{Terminal-pairability (edge-disjoint path problem)}\label{part:terminals}
\tikzset{every picture/.style={line cap=none, thick} }

\chapter{The terminal-pairability problem}

\section{Problem statement and origins}
We discuss the graph theoretic concept of \textbf{terminal-pairability} emerging from a practical networking problem introduced by
\citet{MR1189290}, further studied by
Faudree, Gyárfás, and Lehel~\cites{MR1208923,MR1167462,MR1677781} and \citet{MR1985088}.
Given a simple undirected graph $G=(V(G),E(G))$ and an undirected multigraph $D=(V(D), E(D))$  on the same vertex set ($V(D)=V(G))$, we say that $D$ can be \textbf{realized} in $G$ iff there exist edge-disjoint paths $P_1,\ldots,P_{|E(D)|}$ in $G$ such that $P_i$ joins that endpoints of $e_i\in E(D)$ for any $i=1,2,\ldots,|E(D)|$. We call $D$ and its edges the \textbf{demand graph} and the \textbf{demand edges} of $G$, respectively.
Given $G$ and a family $\mathcal{F}$ of (demand)graphs defined on $V(G)$ we call $G$ \textbf{terminal-pairable} with respect to $\mathcal{F}$ if every demand graph in $\mathcal{F}$ can be realized in $G$.

\medskip

In particular, let $|V(G)|$ be even and let $\mathcal{M}$ consist of all perfect matchings of the complete graph on $|V(G)|$ vertices; we call $G$ a \textbf{path-pairable} graph if it is terminal-pairable with respect to $\mathcal{M}$. See \Fref{fig:tpexamples} for an example of a terminal- and a path-pairability problem and their corresponding solutions.

\medskip

The way the terminal-pairability problem was originally formulated considered an instance of the problem to consist of $G$ (the graph of the internal vertices) and a prescribed number of pairwise distinct terminal vertices attached to each vertex of $G$. This graph extended with stars is called terminal-pairable if any matching of the terminal vertices has a realization. In our language, the family of demand graphs is of the form
\[ \left\{ D\ :\ V(D)=V(G),\  \forall v\in V(G)\ d_D(v)\le c(v)\right\},\]
where $c:V(G)\to \mathbb{N}$ describes the number of terminals incident to a vertex of $G$. We call the process of substituting the demand edges by disjoint paths in $G$ a \textbf{realization} of the demand graph.

\medskip

%

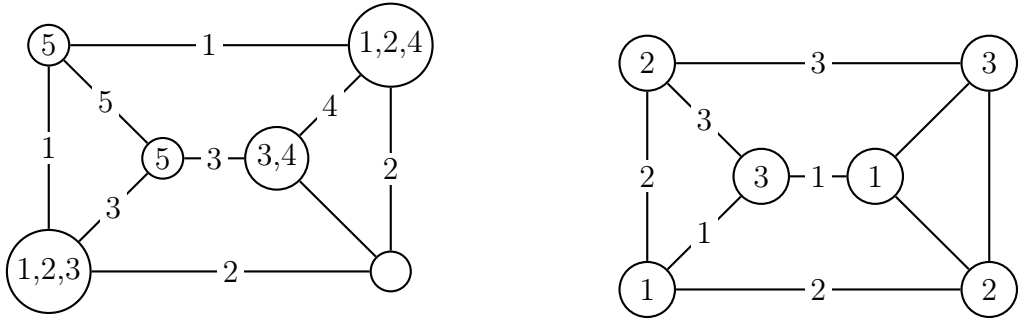
\begin{figure}
	\centering
	\begin{subfigure}{.5\textwidth}
		\centering

		\begin{tikzpicture}[inner sep=2pt]
			\def \l {3cm}

			\fill[white] (-0.25*\l,-0.25*\l) rectangle (1.75*\l,1.25*\l);

			\node[draw,circle] (0) at (0,0) {1,2,3};
			\node[draw,circle] (1) at (0,\l) {5};
			\node[draw,circle] (2) at (1.5*\l,\l) {1,2,4};
			\node[draw,circle,minimum size=15pt] (3) at (1.5*\l,0) {};

			\node[draw,circle] (4) at (0.5*\l,0.5*\l) {5};
			\node[draw,circle] (5) at (1.0*\l,0.5*\l) {3,4};

			\foreach \x/\y/\z in {0/1/1,1/2/1,0/3/2,3/2/2,0/4/3,4/5/3,5/2/4,4/1/5}
			\draw (\x) edge node[fill=white]{\z} (\y);

			\foreach \x/\y/\z in {5/3}
			\draw (\x) edge  (\y);

		\end{tikzpicture}

		\caption{A terminal-pairability problem and its solution}\label{fig1:tp}
	\end{subfigure}%
	\begin{subfigure}{.5\textwidth}
		\centering

		\begin{tikzpicture}
			\def \l {3cm}

			\fill[white] (-0.25*\l,-0.25*\l) rectangle (1.75*\l,1.25*\l);

			\node[draw,circle] (0) at (0,0) {1};
			\node[draw,circle] (1) at (0,\l) {2};
			\node[draw,circle] (2) at (1.5*\l,\l) {3};
			\node[draw,circle] (3) at (1.5*\l,0) {2};

			\node[draw,circle] (4) at (0.5*\l,0.5*\l) {3};
			\node[draw,circle] (5) at (1.0*\l,0.5*\l) {1};

			\foreach \x/\y/\z in {0/1/2,0/3/2,0/4/1,4/5/1,4/1/3,1/2/3}
			\draw (\x) edge node[fill=white, inner sep=2pt]{\z} (\y);

			\foreach \x/\y/\z in {2/3,2/5,5/3}
			\draw (\x) edge  (\y);

		\end{tikzpicture}

		\caption{A path-pairability problem and its solution}\label{fig1:pp}
	\end{subfigure}
	\caption{Examples for terminal-pairability and path-pairability problems. Each  vertex is labeled with the demand edges to which it is incident to.}\label{fig:tpexamples}
\end{figure}

\medskip

Given a simple graph $G$, one central question in the topic of terminal-pairability is the maximum value of $q$ for which any demand graph in the set
\[ \left\{ D\ :\ V(D)=V(G),\  \forall v\in V(G)\ d_D(v)\le q\right\}\]
is realizable in $G$. As at a given vertex $v\in V(G)$ at most $d_{G}(v)$ edge-disjoint paths can start, the minimum degree $\delta(G)$ of the base graph provides an obvious upper bound on $q$. Often, a better upper bound on $q$ is obtained by choosing a $q$-regular multigraph $D$, such that except $e_0$ elements, edges of $D$ can only be resolved into paths of length at least $\ell$ in $G$. By the pigeonhole principle, we must have
\[ \left(\frac12\cdot q \cdot |V(G)|-e_0\right)\cdot \ell\le e(G)-e_0,\]
or equivalently,
\begin{align}
q\le \frac{\overline{d}(G)}{\ell}+\frac{2e_0\left(\ell-1\right)}{|V(G)|}.\label{ineq:pigeonhole}
\end{align}
An improvement on this method is described by \citet{Girao2017} for $G=K_n$.

\section{Outline of Part~\ref{part:terminals}}

In \Fref{chap:complete}, the case $G=K_n$ is studied in detail, where the best known lower bound on $q$ is determined (\Fref{thm:delta_n/3}). Using the techniques of this proof, the exact extremal edge number of a demand graph realizable in $K_n$ is obtained (\Fref{thm:alpha_2n-5}).

\medskip

On the other end of the spectrum, a relatively sparse graph which is still path-pairable is sought after in \Fref{chap:grid}. At the end of the chapter, some open problems are discussed. The results of \Fref{chap:complete}~and~\ref{chap:grid} are a result of a fruitful collaboration with my supervisor, Ervin Győri, and Gábor Mészáros.

\medskip

Lastly, in \Fref{chap:termcomplexity}, subjects related to terminal-pairability and the problem's complexity are surveyed.

\tikzset{every picture/.style={line cap=none, thick} }

\chapter{Complete base graphs}\label{chap:complete}

\citet{MR1189290} studied the extremal value of the maximum degree of the demand graph for the complete graph $K_n$ as the base graph and investigated the following question:

\begin{problem}[\cite{MR1189290}]\label{problem:tpcomplete} What is the highest number $q$ for which any demand graph on $n$ vertices and maximum degree $q$ is realizable in $K_n$?
\end{problem}
One can easily verify that the parameter $q$ cannot exceed $\frac{n}{2}$. Indeed, take a demand graph $D$ obtained by replacing every edge in a one-factor on $n$ vertices by $q$ parallel edges. In order to create edge-disjoint paths, most paths need to use at least two edges in $K_n$, thus \fref{ineq:pigeonhole} implies the indicated upper bound.

\medskip

The authors of~\cite{MR1189290} conjectured that if ${n\equiv 2\pmod{4}}$, then $q=\frac{n}{2}$. However, \citeauthor{Girao2017} showed, that asymptotically, $q/n$ is less than $\frac12$.
\begin{proposition}[\citet{Girao2017}]\label{prop:upperbound}
	If every $n$-vertex demand graph $D$ with $\Delta(D)\le q$ is realizable in $K_n$, then $q\leq \frac{13}{27}n + O(1)$.
\end{proposition}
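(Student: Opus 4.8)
The plan is to establish \Fref{prop:upperbound} in contrapositive form: for infinitely many $n$ I would exhibit a $q$-regular demand multigraph $D$ on $V(K_n)$ with $q > \tfrac{13}{27}n + C$ (for a suitable absolute constant $C$) that cannot be realized in $K_n$. Since $K_n$ has no vertex cut---deleting any set of vertices leaves a complete graph---there is no topological obstruction to routing, and every obstruction must ultimately be an edge-counting one. Consequently the whole argument refines the pigeonhole bound \fref{ineq:pigeonhole}: rather than splitting the demand edges into just two classes (those realizable by short paths and those forced onto paths of length at least $\ell$), I would track the realizing paths of length $1$, $2$, and $\ge 3$ separately and bound the total capacity of the short (length $\le 2$) routes available to $D$.

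The first step is to design the demand graph. The simple extremal examples---blowing up a perfect matching by multiplicity $q$, or a complete bipartite pattern between two sets---only yield $q \le \tfrac{n}{2}+O(1)$, because in each case the abundance of length-$2$ detours (every vertex can serve as a midpoint for many pairs) lets essentially all demand edges be routed by paths of length $\le 2$. To push below $\tfrac{n}{2}$ one must instead concentrate the demand so that the midpoints and direct edges usable by the short routes are genuinely scarce relative to the number of demand edges, thereby forcing a definite fraction of the $\tfrac{qn}{2}$ demand edges onto paths of length $\ge 3$. I would parametrize the construction by the sizes of the distinguished vertex classes and by the edge multiplicities, and set up the feasibility conditions as a small linear program: the edge budget $a + 2b + 3c \le \binom{n}{2}$ on the numbers $a,b,c$ of length-$1$, length-$2$, and length-$\ge 3$ paths, together with an upper bound on $a$ (the number of demanded pairs), an upper bound on $b$ (the length-$2$ capacity), and the demand-counting identity $a+b+c = \tfrac{qn}{2}$.

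Optimizing this program over the construction parameters is where the constant $\tfrac{13}{27}$ appears. Heuristically, the outcome has the shape of an $\ell=3$ refinement of \fref{ineq:pigeonhole}: writing $\tfrac{13}{27}=\tfrac13+\tfrac{4}{27}$, the bound matches \fref{ineq:pigeonhole} with $\ell=3$ and an exceptional set of size $e_0\approx \tfrac{n^2}{27}$ of demand edges permitted to stay short, the remaining edges being forced to length $\ge 3$. The main obstacle---and the part that makes the result of \citet{Girao2017} genuinely stronger than the naive method---is twofold: proving a tight upper bound on the length-$\le 2$ capacity $b$ for the chosen demand graph (this is the delicate counting, since one must show that the scarce midpoints and direct edges cannot simultaneously and edge-disjointly absorb more than the claimed number of demands), and then verifying that the program's optimum is exactly $\tfrac{13}{27}$ rather than a weaker fraction. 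Once the capacity bound is in place, the contradiction with realizability follows by substituting into the refined inequality and letting the $O(1)$ term absorb the lower-order corrections coming from the class sizes and from the $-1$'s in $\binom{n}{2}$ and $\bar d(K_n)=n-1$.
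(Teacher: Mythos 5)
First, a point of reference: the thesis does not prove this proposition at all --- it is quoted from \citet{Girao2017} as an external result, with only the remark that it is ``an improvement on'' the pigeonhole method of \fref{ineq:pigeonhole}. So there is no in-paper proof to compare against, and your proposal has to stand on its own. On its own terms it is a strategy outline rather than a proof, and the gap is exactly where you locate it yourself: you never specify the demand graph $D$, never prove the upper bound on the length-$2$ capacity $b$, and never carry out the optimization showing the program is infeasible precisely when $q>\frac{13}{27}n+C$. Those two steps are not ``the main obstacle'' to your proof --- they \emph{are} the theorem; everything you do write down (contrapositive framing, the budget $a+2b+3c\le\binom{n}{2}$, the identity $a+b+c=\frac{qn}{2}$, the observation that matching- and bipartite-type constructions stall at $q\approx\frac n2$) is correct but was already implicit in \fref{ineq:pigeonhole} and its surrounding discussion. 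The arithmetic consistency you note, $\frac{13}{27}=\frac13+\frac{4}{27}$ matching $\ell=3$ and $e_0\approx\frac{n^2}{27}$, is reverse-engineered from the answer and carries no evidential weight about whether such a $D$ exists.

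One concrete reason the missing step is genuinely delicate, and why your ``forced onto paths of length $\ge 3$'' framing cannot work as stated: in $K_n$ no single demand edge can be locally forced to length $\ge 3$, since every pair of vertices is adjacent and has $n-2$ candidate midpoints. Scarcity can only arise globally and simultaneously, against an \emph{arbitrary} edge-disjoint routing that mixes lengths $1$, $2$, and $\ge 3$. So the bound on $b$ must be a capacity count of the form ``each vertex $w$ contributes at most about $\frac{1}{2}\left(n-1-d_D(w)\right)$ midpoint slots, because each length-$2$ path through $w$ consumes two of the $n-1$ edges of $K_n$ at $w$, some of which are already reserved for $w$'s own demands,'' summed against the specific structure of $D$; and you must then exhibit a $D$ for which the resulting linear program with \emph{independent} bounds on $a$ and $b$ is both valid and tight --- a priori the LP relaxation could be feasible even when no routing exists, or slack enough that the optimum degrades to a weaker fraction than $\frac{13}{27}$. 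Your opening heuristic that ``every obstruction must be an edge-counting one'' is harmless in this direction (any non-realizability certificate suffices for a counterexample) but also unproven and doing no work. In short: the skeleton is plausibly the right shape, but the construction and the counting that produce the constant $\frac{13}{27}$ are absent, so the proposal does not establish the proposition.
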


Csaba, Faudree, Gyárfás, Lehel, and Shelp showed the following lower bound.

\begin{theorem}[\citet{MR1189290}]
	Any demand graph $D$ on $n$-vertices with $\Delta(D)\leq \frac{n}{4+2\sqrt{3}}$ is realizable in $K_n$.
\end{theorem}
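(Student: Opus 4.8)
The plan is to realize every demand edge by a path of length at most two in $K_n$: a demand edge $uv$ is routed either by the single edge $uv$ (a \emph{direct} route) or by a two-edge path $u\,w\,v$ through an intermediate \emph{center} $w\notin\{u,v\}$. Edge-disjointness of the realization is then exactly the requirement that no edge of $K_n$ is used twice. Since a simple demand graph embeds into $K_n$ directly, all the difficulty comes from the multiplicities of $D$; so I would first route a maximal simple subgraph of $D$ by direct edges and, after discarding those $K_n$-edges, reduce to assigning a center to each remaining demand edge. I may also assume $D$ is $q$-regular with $q=\lfloor n/(4+2\sqrt3)\rfloor$, since padding the demand graph up to regularity only makes the task harder while leaving $\Delta(D)$ unchanged. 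The theorem then follows from the purely combinatorial \emph{center-assignment} problem: choose $c(e)\notin e$ for each residual demand edge $e=uv$ so that the edges $\{u,c(e)\}$ and $\{c(e),v\}$, ranging over all $e$, are pairwise distinct.

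To solve the assignment I would sweep through the demand edges and give each one a center of minimum current \emph{load}, the load $\ell(w)$ being the number of demand edges already centered at $w$, subject to a uniform cap $\ell(w)\le m$ to be fixed later. The bookkeeping at a single vertex is then immediate: a vertex incident to at most $q$ demand edges and used as a center at most $m$ times occupies at most $q+2m$ of its $n-1$ edges in $K_n$. The crux is to bound, for a demand edge $uv$ under consideration, the number of \emph{forbidden} centers $w$: those equal to $u$ or $v$; those for which $\{u,w\}$ or $\{w,v\}$ is already occupied; and those already at the load cap. The first two sources are controlled by the edges already spent at the two endpoints, while the number of saturated centers is at most the total load divided by $m$, hence at most $qn/(2m)$, since at most $qn/2$ two-edge paths are ever created.

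Balancing the endpoint obstruction against the saturation obstruction is precisely what yields the constant. A center is available as long as
\[
n \;>\; 2q \;+\; 2m \;+\; \frac{qn}{2m},
\]
and the right-hand side is minimized by $m=\tfrac12\sqrt{qn}$, giving the threshold $n>2q+2\sqrt{qn}$. Writing $q=cn$, this reads $2c+2\sqrt c<1$, i.e.\ $\sqrt c<\tfrac12(\sqrt3-1)$, which is equivalent to $c<1/(4+2\sqrt3)$ because $4+2\sqrt3=(1+\sqrt3)^2$; this is exactly the hypothesis. I expect the main obstacle to be tightening the count of forbidden centers down to the displayed $2q+2m+qn/(2m)$: a crude greedy double-counts the center occupancy at the endpoints against the saturated centers and loses the constant (landing at the weaker $n/(6+4\sqrt2)$). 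Avoiding this loss is the delicate step, and I would attack it either by an amortized argument that charges each saturated center only once, or by replacing the pointwise greedy with a global Hall-type deficiency condition on sets of demand edges, for which the collective counting of available centers is sharper.
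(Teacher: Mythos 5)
Your routing scheme (length-at-most-two paths, greedy center assignment with a load cap $m$) is a reasonable attack, and your bookkeeping is correct as far as it goes: a vertex occupies at most $q+2m$ of its $K_n$-edges, and at most $qn/(2m)$ centers can be saturated. But the availability inequality you actually establish from these facts is $n > 2q + 4m + qn/(2m) + 2$, not the displayed $n > 2q + 2m + qn/(2m)$: the blocked vertices at $u$ number up to $q+2\ell(u)$, those at $v$ up to $q+2\ell(v)$, and nothing forces these sets to overlap each other or the saturated set. Indeed, since each occupied edge at an endpoint blocks exactly one distinct vertex, the three obstruction sets can be pairwise disjoint, so no pointwise union-bound refinement can halve the $4m$ term --- any improvement must be amortized over the whole process or argued globally. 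Optimizing the inequality you have genuinely proved gives, as you yourself compute, only $\Delta(D)\le n/(6+4\sqrt{2})$. The tightening to $2q+2m+qn/(2m)$, which is exactly where the constant $4+2\sqrt{3}=(1+\sqrt{3})^2$ comes from, is therefore not a technical detail but the entire content of the theorem, and your two proposed remedies (amortized charging of saturated centers, or a Hall-type deficiency condition) are named but not carried out. As it stands, the proposal proves a correct but strictly weaker statement.

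For comparison: this theorem is cited in the thesis from \cite{MR1189290} rather than proved, and the proofs the thesis does give for its stronger and related results (\Fref{thm:delta_n/3} and \Fref{thm:alpha_2n-5}) take a structurally different route that sidesteps precisely your difficulty. Instead of assigning centers edge-by-edge and paying a union bound at every step, the thesis extracts 2-factors via Petersen's theorem and uses balanced lifting colorings to distribute all liftings onto designated vertices simultaneously, with the balancedness of the coloring playing the role of your missing amortization: the load on each lifting target is controlled globally by construction, not bounded after the fact. If you want to salvage your approach at the stated constant, that is the direction to look --- replace the online greedy by a global assignment (e.g., a defect form of Hall's theorem on the bipartite graph between residual demand edges and available center-edge-pairs, or a decomposition of the residual multigraph into matchings that are lifted in rounds), so that endpoint occupancy and center saturation are charged against the same budget rather than added.
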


We improve their result by proving the following theorem:

\begin{theorem}[\citet{terminalcomplete}]\label{thm:delta_n/3}
	Any demand graph $D$ on $n$-vertices with $\Delta(D)\leq 2\lfloor\frac{n}{6}\rfloor-4$ is realizable in $K_n$.
\end{theorem}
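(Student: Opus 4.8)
The plan is to reduce the problem to routing every demand edge as a path of length at most two and then to exhibit a single system of such paths that is globally edge-disjoint. Write $t=\lfloor n/6\rfloor$, so the hypothesis reads $\Delta(D)\le 2t-4$. First I would symmetrize degrees: by pairing up the odd-degree vertices of $D$ with auxiliary demand edges (each such vertex has degree at most $2t-5$, so this keeps $\Delta\le 2t-4$) every vertex becomes even, and I take an Eulerian orientation of each component. In the resulting digraph every vertex is the tail of at most $t-2$ arcs and, by balancedness of an Eulerian orientation, also the head of at most $t-2$ arcs. Hence at each vertex $u$ at most $2t-4$ of its $n-1$ incident edges of $K_n$ are ever forced to be the first or last edge of a path ending at $u$, and since $2t-4<n-1=6t-1$ a large surplus of free edges remains at every vertex.

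The core step is to realize each arc $(u,v)$ either directly by the still-available edge $uv$ (for a multiplicity-one demand) or by a path $u\,\text{--}\,w\,\text{--}\,v$ through a transit vertex $w\notin\{u,v\}$, in such a way that the collection of all opening edges $\{u,w\}$ and closing edges $\{w,v\}$ is pairwise distinct and avoids the directly routed edges. I would encode the choice of the middle vertices as a feasibility problem in which each vertex $w$ may host only a bounded number of transiting paths, namely about $\lfloor(4t+3)/2\rfloor=2t+1$, this being half of its residual free degree $(n-1)-(2t-4)=4t+3$. Parallel demands $uv$ cause no trouble here: they simply require distinct middles.

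A volume estimate confirms there is room. The total number of arcs is at most $\tfrac12\Delta(D)\,n\le (t-2)n\approx 6t^2$, whereas the aggregate interior capacity is about $n\cdot 2t = 12t^2$, a clean factor-two surplus, and it is precisely the inequality $n\ge 6t$ that buys this slack. I would convert this capacity bound into an actual conflict-free assignment in one of two ways: by verifying a (defect) Hall condition on the bipartite ``arc versus transit-slot'' incidence graph, or by an inductive peeling that repeatedly processes a maximum-degree vertex of the current demand graph together with its incident arcs, each time maintaining the invariant that enough free edges survive at every vertex for the remaining routing.

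The main obstacle is exactly this simultaneous edge-disjointness, which a pure counting argument does not supply: the edge $\{u,w\}$ can be demanded both as the opening edge of one path and as the closing edge of another, and distinct arcs compete for the same $K_n$-edges through shared middles. Controlling this at every vertex at once is where the achievable degree slips from the trivial $n/2$ down to $2\lfloor n/6\rfloor-4$, and where both the denominator $6$ and the additive $-4$ originate; I expect the delicate part of the argument to be the inductive routing that keeps, at each vertex, the number of path-endpoints plus twice the number of transiting paths within the budget dictated by the residual degree, using the surplus established above to guarantee that a valid choice exists at every step. That the construction is not tight is consistent with the trivial bound $q\le n/2$ flowing from \fref{ineq:pigeonhole} and with the sharper upper estimate of \citeauthor{Girao2017}.
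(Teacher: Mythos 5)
There is a genuine gap, and you have in fact pointed at it yourself: everything in your sketch up to the ``volume estimate'' is bookkeeping, and the one step that would constitute the proof --- converting the factor-two capacity surplus into a \emph{simultaneously} edge-disjoint choice of transit vertices --- is not supplied. Neither of your two proposed mechanisms closes it. The defect-Hall formulation on the bipartite ``arc versus transit-slot'' graph only enforces the vertex capacities $\approx 2t+1$ at each transit $w$; it does not encode that two arcs $u\to v$ and $u\to v'$ assigned the same transit $w$ both demand the single edge $uw$ of $K_n$, nor that $uw$ may be claimed once as the opening edge of a path with endpoint $u$ and transit $w$ and again as the closing edge of a path with endpoint $w$ and transit $u$, nor that it may already be spent as a direct route. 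These conflicts live on edges, not on vertex slots, so a matching in your bipartite graph is not a realization. The ``inductive peeling'' alternative just restates the theorem: the invariant you would need to maintain (endpoints plus twice the transits within the residual budget at \emph{every} vertex, with no edge claimed twice) is exactly what the whole argument must establish, and a greedy surplus count does not survive adversarial pile-up of parallel demands. Note also that your plan, if completed, would prove something strictly stronger than the statement --- a realization in which every path has length at most $2$ --- and nothing in the proposal justifies that this stronger conclusion is attainable at this degree threshold.

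For comparison, the paper does not attempt a one-shot length-$\le 2$ routing. It proceeds by induction on $n$, deleting vertices in independent triples: after making $D$ regular, Petersen's theorem (\Fref{thm:petersen}) supplies two edge-disjoint $2$-factors, and the key device is a \emph{balanced lifting $3$-coloring} of a $2$-matching (\Fref{claim:liftcolor}) feeding into \Fref{lemma:main}: the edges of the $2$-factor are lifted onto the three vertices of an independent set $X$ in a color-controlled way, the multiplicities created at $X$ are then resolved into distinct non-neighbors within the matching color classes, and $X$ is deleted. Two applications remove six vertices while the maximum degree drops by $2$, matching $2\lfloor (n-6)/6\rfloor-4 = 2\lfloor n/6\rfloor-6$, and the recursion bottoms out at $2$-matchings for $n<24$. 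Crucially, liftings compound across induction steps, so the realized paths may be longer than $2$ --- flexibility your scheme forgoes. The coloring is precisely the ingredient that disciplines \emph{where} lifted edges land so that no edge of $K_n$ is claimed twice; it is the analogue, made to work, of the conflict-free assignment your sketch leaves open.
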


\citet{MR1985088} investigated terminal-pairability properties of the Cartesian product of complete graphs. In their paper, the following ``Clique-Lemma'' was proved and frequently used:
\begin{lemma}[\citet{MR1985088}]\label{lemma:clique}
	Let $D$ be an $n$-vertex demand graph, where $n\geq 5$. If $\Delta(D)\le n-1$ and $e(D)=n$, then $D$ is realizable in $K_n$.
\end{lemma}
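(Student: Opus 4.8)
The plan is to realize $D$ by routing each demand edge as a short path in $K_n$, using the hypotheses $e(D)=n$ and $\Delta(D)\le n-1$ only to guarantee that enough ``middle'' vertices remain available. First I would reduce to the genuinely multigraph case. Let $\bar D$ be the underlying simple graph of $D$, with edge set $E_0$ and $t=|E_0|$ distinct demand pairs, and route each pair $\{u,v\}\in E_0$ through the direct edge $uv$ of $K_n$. These single-edge paths are automatically edge-disjoint, so if $D$ is simple we are done before $n\ge 5$ is even used. What remains are the $S=n-t$ \emph{surplus} edges, i.e.\ the parallel copies beyond the first at each pair, and each of them must be routed by a path of length $\ge 2$ that avoids both $E_0$ and every other detour.

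The heart of the argument is to realize every surplus edge by a path of length exactly $2$, say $u\text{--}w\text{--}v$, and to choose the middle vertices $w$ consistently. I would phrase this as a list edge-colouring problem on the \emph{surplus multigraph} $D'$, the graph carrying only the $S$ surplus edges: a vertex $w$ is an admissible middle (a ``colour'') for a surplus edge $\{u,v\}$ exactly when $w\notin\{u,v\}\cup N_{\bar D}(u)\cup N_{\bar D}(v)$, which forces the edges $uw,wv$ to miss $E_0$; and two surplus edges may share the same middle $w$ only if their endpoint pairs are disjoint, which is precisely the condition that the two paths through $w$ be edge-disjoint. Thus each middle vertex behaves as a colour class required to be a matching, and a valid routing is nothing but a proper list edge-colouring of $D'$. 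A surplus edge $\{u,v\}$ has at least $n-2-d_{\bar D}(u)-d_{\bar D}(v)$ admissible middles, while the number of already-coloured surplus edges meeting it is at most $d_{D'}(u)+d_{D'}(v)-2$, so a greedy colouring succeeds at $\{u,v\}$ provided
\[ d_{\bar D}(u)+d_{D'}(u)+d_{\bar D}(v)+d_{D'}(v)=d_D(u)+d_D(v)<n, \]
where I have used the identity $d_{\bar D}(x)+d_{D'}(x)=d_D(x)$.

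The main obstacle is the dense regime, where the greedy bound $d_D(u)+d_D(v)<n$ fails. Here I would exploit the rigidity forced by $e(D)=n$. Call a surplus edge \textbf{heavy} if $d_D(u)+d_D(v)\ge n$; since $\sum_x d_D(x)=2n$, at most four vertices can have degree $\ge n/2$, and every heavy edge has an endpoint among these few ``hubs''. Hence the heavy edges are localized to a bounded set of vertices, their admissible middles are \emph{non}-neighbours of high-degree vertices and therefore have small degree, and there are many of them. The plan is to colour the heavy edges \emph{first}, through these abundant low-degree middles, before any clogging occurs, and only then run the greedy colouring of the remaining light edges. The fully extremal case, where two vertices $u,v$ both have degree $n-1$, collapses to the double-star: a short count gives $m_{uv}\ge n-2$, and its $m_{uv}-1$ surplus copies are sent through the $n-2$ vertices $w\ne u,v$ as the internally disjoint paths $u\text{--}w\text{--}v$, which for $n\ge 5$ still leaves room for the one or two remaining demand edges. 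Assembling the hand-routing of the localized heavy edges with the greedy colouring of the light ones yields the required edge-disjoint paths, completing the proof; the delicate point, which I expect to absorb most of the work, is verifying that routing the heavy edges first never exhausts the admissible middles of the light edges.
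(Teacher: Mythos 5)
There is a genuine gap — in fact two. (Note first that the thesis does not prove this lemma itself; it is quoted from \citet{MR1985088}, and your ``middle vertices'' are exactly the chapter's lifting operation, so the natural benchmark is that lifting machinery.) The first problem is that your reduction to a list edge-colouring of the surplus graph $D'$ is not faithful: two length-two paths can collide \emph{without} sharing a middle. If the surplus edge $\{u,v\}$ is routed $u\text{--}w\text{--}v$ and the surplus edge $\{w,x\}$ is routed $w\text{--}u\text{--}x$, both paths use the edge $uw$ of $K_n$, yet both middle choices are admissible (take two isolated $2$-bundles on $\{u,v\}$ and $\{w,x\}$) and your per-middle matching condition is vacuously satisfied, since the middles $w$ and $u$ are distinct. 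So ``precisely the condition that the two paths through $w$ be edge-disjoint'' is false, and your greedy count is unsound: a surplus path that uses $u$ as its \emph{middle} blocks up to two middles for $\{u,v\}$ but is not incident to $u$ or $v$ in $D'$, hence is invisible to the bound $d_{D'}(u)+d_{D'}(v)-2$. The number of such paths through $u$ is not controlled by any local degree, so the inequality $d_D(u)+d_D(v)<n$ does not guarantee a free middle.

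The second, deeper problem is the dense regime, which is exactly where the lemma's content lies, and your treatment of it is inconsistent. Take $D$ consisting of $n-2$ parallel $uv$ edges plus single demands $ux$ and $vy$ with $x,y\notin\{u,v\}$ distinct: then $e(D)=n$ and $d(u)=d(v)=n-1$, so the hypotheses hold. Under your scheme the first copies of $uv$, $ux$, $vy$ are routed directly, and the $n-3$ surplus copies of $uv$ need pairwise distinct middles outside $\{u,v\}\cup N_{\overline D}(u)\cup N_{\overline D}(v)=\{u,v,x,y\}$ — but only $n-4$ such vertices exist. The colouring problem you set up is therefore infeasible, although $D$ is realizable: one must either abandon the direct routing of a first copy (route $vy$ as $v\text{--}x\text{--}y$ and send a surplus copy through the middle $y$) or use a path of length three ($u\text{--}y\text{--}x\text{--}v$). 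Your double-star paragraph implicitly does the former — sending surplus copies through the middles $x$ or $y$ consumes the edges $ux$ or $vy$, contradicting the direct routings fixed in your first step — and the claim that this ``still leaves room for the one or two remaining demand edges'' is precisely what fails here. So the ``delicate point'' you defer is not bookkeeping but the whole lemma; the known proof avoids the impasse by resolving multiplicities vertex by vertex (lifting each multiple edge at $v$ to a non-neighbour of $v$, which the bound $d(v)\le n-1$ always permits, as in the chapter's ``resolution of multiplicities''), combined with induction and explicit case analysis at the high-degree vertices, rather than committing globally to direct first copies plus length-two detours.
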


In the same paper, the following related problem was raised about the possible strengthening of \Fref{lemma:clique}:
\begin{problem}[\cite{MR1985088}]
Let $D$ be an $n$-vertex demand graph such that $\Delta(D)\le n-1$. What is the largest value of $\alpha$ such that $e(D)\le\alpha\cdot n$ implies that $D$ is realizable in $K_n$?
\end{problem}
Obviously, $1\leq\alpha$ due to \Fref{lemma:clique}. It is also easy to see that $\alpha < 2$. Let $D$ be a demand graph on $n\ge 4$ vertices, in which two pairs of vertices, $u,v$ and $x,y$ are both joined by $(n-2)$ parallel edges ($d_D(w)=0$ for $w\not\in\{x,y,u,v\}$). Observe that to realize the demand graph, any disjoint path system must contain a path from $x$ to $y$ passing through $u$ or $v$.
However, there are also $n-2$ disjoint paths connecting $u$ and $v$, meaning that $u$ or $v$ is incident to at least $2+(n-2)=n$ disjoint edges, which is clearly a contradiction. This implies that the number of edges in $D$ cannot exceed $2n-5$. We show that this bound is sharp by proving the following theorem:

\begin{theorem}[\citet{terminalcomplete}]\label{thm:alpha_2n-5}
	Let $D$ be a demand graph on $n$ vertices
	with at most $2n-5$ edges, such that no vertex is incident to more than $n-1$ edges. Then $D$ has a realization in $K_n$. 
\end{theorem}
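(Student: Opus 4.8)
The plan is to prove \Fref{thm:alpha_2n-5} by strong induction on $n$, peeling vertices off $D$ until we reach an instance settled by the Clique-Lemma (\Fref{lemma:clique}). The organising device is the potential $\Phi(D)=2n-e(D)$, which the hypothesis $e(D)\le 2n-5$ expresses as $\Phi(D)\ge 5$. Two observations frame everything. First, if $D$ has no parallel edges it is realized for free, by sending each demand edge along the corresponding single edge of $K_n$; hence all difficulty comes from multiplicities. Second, since $2e(D)\le 4n-10<4n$, the multigraph always contains a vertex of degree at most $3$, so there is always a low-degree vertex to operate on.

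The basic move is to realize all demand edges at a chosen low-degree vertex $v$ by the corresponding edges of $K_n$ and then delete $v$. If $v$ has $d$ incident demand edges, all of multiplicity one and to distinct neighbours, this keeps the host complete (every edge used is incident to the deleted $v$) and changes the potential by exactly $d-2$: deleting a degree-$2$ vertex is potential-neutral, deleting a degree-$3$ vertex even creates slack, while only degrees $0$ and $1$ cost potential. Thus, as long as we can keep finding vertices of degree $2$ or $3$ whose incident edges are simple and whose deletion respects the degree bound, we descend with $\Phi\ge 5$ preserved; once $e(D)=n$ (i.e.\ $\Phi=n$, which must occur, e.g.\ by $n=5$ where $2n-5=n$) we finish by \Fref{lemma:clique}. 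I would record the precise move table ($\Delta\Phi=d-2$ for a simple degree-$d$ deletion) and verify the degree bound $\Delta\le n-2$ for the smaller host, which holds automatically unless some \emph{other} vertex already has degree $n-1$.

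Two genuine obstacles remain, and both are where the value $2n-5$ is actually used. The first is the maximum-degree hypothesis, which is necessary and tight: a vertex $w$ with $d_D(w)=n-1$ saturates all of its $K_n$-edges, so deleting any other vertex would leave $w$ with degree $n-1$ in $K_{n-1}$, which is unrealizable. I would dispose of such $w$ first: if $w$ is a simple spanning star it can simply be deleted (this only raises $\Phi$ and lowers every other degree); otherwise $w$ misses some vertex $z$, and I route a surplus copy of a multi-edge $wu$ through $z$ as $w$-$z$-$u$, recording the consumed part as a new demand on $V\setminus\{w\}$, shrinking the multiplicity at $w$ until it can be eliminated. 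The second, harder obstacle is the tight regime $\Phi=5$, where no potential may be lost, so degree-$0$ and degree-$1$ vertices must be retained as routing hubs and the only admissible deletions are of simple degree-$2$ (or higher) vertices. The crux of the write-up will be to show that in this regime a legal move always exists: the sole way this can fail is a near-extremal concentration of parallel edges, namely the unique-up-to-symmetry obstruction at $2n-4$ edges (two vertex pairs each carrying many parallel edges, exactly the configuration witnessing sharpness), which must be excluded by the single missing edge.

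I expect this last step to be the main difficulty, and I would handle it by a short direct argument: with $\Phi=5$ and no deletable simple vertex, $D$ must consist of a bounded number of heavy pairs together with spare (low-degree and isolated) vertices, and the one unit of slack over the $2n-4$ threshold guarantees enough spare vertices to pack the required length-$2$ detours $a$-$z$-$b$ edge-disjointly, after which the residue is simple and realized directly. Throughout, the explicit path-routing steps reuse the constructions already developed for \Fref{thm:delta_n/3}, so that only the new bookkeeping around $\Phi$ and the degree-$(n-1)$ vertices needs fresh verification.
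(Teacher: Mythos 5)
Your overall frame (vertex-deletion induction with liftings, the potential $\Phi(D)=2n-e(D)\ge 5$, special treatment of degree-$(n-1)$ vertices) is the right genus, and your rerouting of a surplus copy of $wu$ through a non-neighbour $z$ is exactly the paper's ``resolution of multiplicities''. But there is a genuine gap at the step you yourself flag as the crux. It is false that the only way to get stuck (no deletable vertex all of whose incident demand edges are simple, while degree-$0/1$ deletions are unaffordable) is the near-extremal two-heavy-pairs configuration with $2n-4$ edges. With exactly $2n-5$ edges take, for instance, an $(n-4)$-bundle on $\{u,v\}$, an $(n-4)$-bundle on $\{x,y\}$, a $2$-bundle on $\{w,z\}$ and one pendant edge, the rest isolated: every vertex of degree at least $2$ meets a parallel edge, so none of your admissible moves applies, and the instance is nowhere near your claimed obstruction. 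Worse, the number of bundles in stuck instances is in general unbounded (disjoint unions of $C_2$'s already show this, and they are stuck under your move set at \emph{every} value of $\Phi$, since a $C_2$-vertex has degree $2$ but is never ``simple''), so the ``bounded number of heavy pairs'' classification on which your packing argument rests does not hold. The paper escapes this because its basic move is stronger than yours: before deleting a vertex $x$ it lifts away \emph{all} multiplicities at $x$ to non-neighbours, so deletion only requires $\gamma(x)\ge 2$ (two distinct neighbours), not simplicity of the incident edges; the genuinely exceptional case then shrinks to $\gamma\le 1$ everywhere, i.e.\ disjoint unions of bundles, which is dispatched by a separate trick (lift $n-2$ copies of a maximum-multiplicity bundle to all the other vertices and delete both of its endpoints).

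A second, smaller but real, defect is the handling of high-degree vertices. A lift target $z$ gains net degree $+1$ in the reduced instance (it gains $2$ from the lift and loses $1$ when the deleted vertex's star is realized), so legal targets must have degree at most $n-3$: vertices of degree $n-2$, not only $n-1$, are forbidden, contrary to your remark that the degree bound ``holds automatically unless some other vertex already has degree $n-1$''. Since the degree sum is $4n-10$, there can be up to three vertices of degree $\ge n-2$ (the paper's set $B$), which are forced to be pairwise adjacent and may pairwise form bundles, and resolving one can be blocked by the others; this is precisely where the paper needs its casework $|B|\in\{0,1,2,3\}$, including routing two edges inside $B$ through an isolated hub vertex when $|B|=3$. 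So your skeleton is salvageable, but repairing the two broken claims essentially forces you to reproduce the paper's resolution-of-multiplicities move, its bundle case, and its $B$-casework.
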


Before the proofs, we fix further notation and terminology.
For convenience, we call a pair of edges joining the same two vertices a $C_2$. For $k>2$, $C_k$ denotes the cycle on $k$ vertices.
For a subset $S\subset V(G)$ of vertices let $e(S,V(G)-S)$ denote the number of edges with exactly one endpoint in $S$. Let $G[S]$ denote the subgraph induced in $G$ by the subset of vertices $S$. We call a pair of vertices joined by $k$ parallel edges a \textbf{\boldmath $k$-bundle}.

\medskip

For a vertex $v$ we denote the set of neighbors by $N(v)$ and use $\gamma(v)=|N(v)|$. We define  the \textbf{multiplicity} $m(v)$ of a vertex $v$ as follows: $m(v)=d(v)-\gamma(v)$. Observe that $m(v)$ is the minimal number of edges incident to $v$ that need to be replaced by longer paths in a realization to guarantee an edge-disjoint path-system for the terminals of $v$.

\medskip

\begin{figure}
	\centering
	\begin{subfigure}{.5\textwidth}
		\centering

		\begin{tikzpicture}

			\node[draw, circle] (u) at (-1,0) {$u$};
			\node[draw, circle] (v) at (1,0) {$v$};
			\node[draw, circle] (w) at (2,2) {$w$};
			\node[draw, circle] (z) at (-2,2) {$z$};

			\foreach \x/\y/\b/\c in {u/v/30/dotted,u/v/0/dashed,u/v/-30/thin,v/w/-30/thin}
				{
					\draw[\c] (\x) edge[bend left=\b] (\y);
				}
		\end{tikzpicture}

		\caption{Before}
	\end{subfigure}%
	\begin{subfigure}{.5\textwidth}
		\centering

		\begin{tikzpicture}

			\node[draw, circle] (u) at (-1,0) {$u$};
			\node[draw, circle] (v) at (1,0) {$v$};
			\node[draw, circle] (w) at (2,2) {$w$};
			\node[draw, circle] (z) at (-2,2) {$z$};

			\foreach \x/\y/\b/\c in {u/z/0/dotted,z/v/30/dotted,u/w/30/dashed,w/v/0/dashed,u/v/-30/thin,v/w/-30/thin}
				{
					\draw[\c] (\x) edge[bend left=\b] (\y);
				}
		\end{tikzpicture}

		\caption{After}
	\end{subfigure}
	\caption{Lifting 2 edges of $uv$ to $z$ and $w$}\label{fig:lifting}
\end{figure}
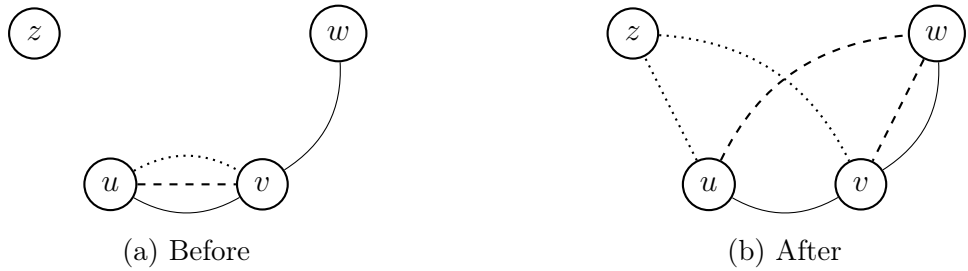

We define an operation that is repeatedly used in our proofs: given an edge ${uv}\in E$, we say that we \textbf{lift} ${uv}$ to a vertex $w$ when the edge ${uv}$ is substituted by ${uw}$ and ${wv}$ (forming a path of length 2). Note that this operation increases the degree of $w$ by 2, but does not affect the degree of any other vertex (including $u$ and $v$). Also, as a by-product of the operation, if $w$ is already joined by an edge to $u$ or $v$, the multiplicity of the appropriate pairs increase by one (see \Fref{fig:lifting}).

\medskip

Finally, note that if a graph $G$ has $n$ vertices and $d(v)\leq n-1$, all multiplicities of $v$ can be easily resolved by subsequent liftings.
Indeed, $v$ has $n-1-\gamma(v)$ non-neighbors and $m(v)=d(v)-\gamma(v)\leq n-1-\gamma(v)$ multiplicities, thus we can assign every edge of $v$ causing a multiplicity to a non-neighbor to which that particular edge can be lifted without creating new multiplicities at $v$. As a result, $d(v)$ does not change but $\gamma(v)$ becomes equal to $d(v)$. We call this the \textbf{resolution of the multiplicities} of $v$ (see \Fref{fig:resolve}).

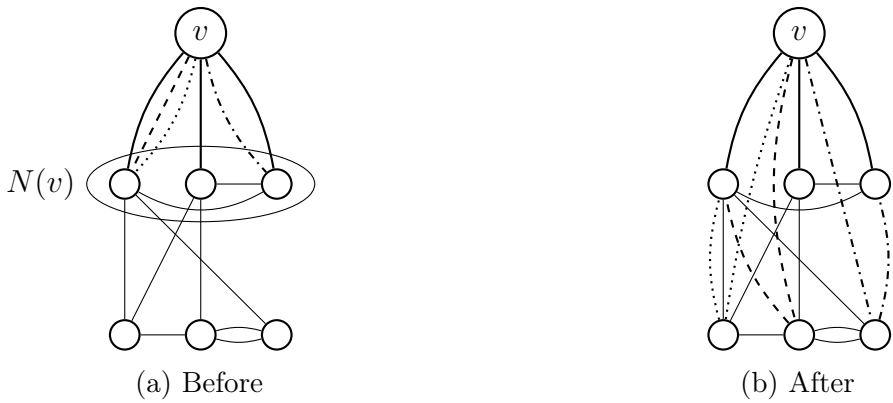
\begin{figure}
	\centering
	\begin{subfigure}{.5\textwidth}
		\centering

		\begin{tikzpicture}

			\node[draw, circle] (v) at (0,3) {$v$};

			\draw[thin] (0,1) ellipse (1.5 and 0.5);

			\foreach \x in {0,1,2}
			\node[draw, circle] (u\x) at (\x-1,1) {};

			\foreach \x in {0,1,2}
			\node[draw, circle] (w\x) at (\x-1,-1) {};

			\foreach \x/\b/\c in {0/15/dotted,0/0/dashed,0/-15/thick,1/0/thick,2/-15/dashdotted,2/15/thick}
				{
					\draw[\c] (v) edge[bend left=\b] (u\x);
				}

			\foreach \x/\y/\b in {u0/u2/-30,u1/u2/0,w1/w2/15,w1/w2/-15,u0/w0/0,w0/w1/0,u0/w2/0,w0/u1/0,u1/w1/0}
			\draw[thin] (\x) edge[bend left=\b] (\y);

			\node[anchor=east] (label_u) at (-1.5,1) {$N(v)$};
			\node[anchor=west,white] (label_u_invisible) at (1.5,1) {$N(v)$};

		\end{tikzpicture}

		\caption{Before}
	\end{subfigure}%
	\begin{subfigure}{.5\textwidth}
		\centering

		\begin{tikzpicture}

			\node[draw, circle] (v) at (0,3) {$v$};

			\foreach \x in {0,1,2}
			\node[draw, circle] (u\x) at (\x-1,1) {};

			\foreach \x in {0,1,2}
			\node[draw, circle] (w\x) at (\x-1,-1) {};

			\foreach \x/\b/\c in {0/-15/thick,1/0/thick,2/15/thick}
				{
					\draw[\c] (v) edge[bend left=\b] (u\x);
				}

			\foreach \x/\y/\b in {u0/u2/-30,u1/u2/0,w1/w2/15,w1/w2/-15,u0/w0/0,w0/w1/0,u0/w2/0,w0/u1/0,u1/w1/0}
			\draw[thin] (\x) edge[bend left=\b] (\y);

			\foreach \x/\y/\b/\c in {v/w0/-5/dotted,w0/u0/15/dotted,v/w1/-15/dashed,w1/u0/15/dashed,v/w2/0/dashdotted,w2/u2/-15/dashdotted}
				{
					\draw[\c] (\x) edge[bend left=\b] (\y);
				}

		\end{tikzpicture}
		\caption{After}
	\end{subfigure}
	\caption{Resolving the multiplicities at $v$}\label{fig:resolve}
\end{figure}

\section{Proof of \texorpdfstring{\Fref{thm:delta_n/3}}{Theorem~\ref{thm:delta_n/3}}}\label{sec:delta_n/3}

We show that if $D=(V,E)$ is a demand multigraph on $n$ vertices and $\Delta(G)\leq 2\lfloor\frac{n}{6}\rfloor-4$, then $D$ can be transformed into a simple graph by replacing parallel edges by paths of $D$. We prove the statement by induction on $n$. Observe first that the statement is obvious for $n < 18$. For $18\leq n < 24$, note that the demand graph $D$ is the disjoint union of 2-bundles, circles, paths, and isolated vertices (i.e., a 2-matching). It is easy to realize 2-matchings in $K_n$; the verification of the statement is left to the reader.

\medskip

From now on assume $n\geq 24$. We may assume without loss of generality that $D$ is an $\big(2\lfloor\frac{n}{6}\rfloor-4\big)$-regular multigraph; if necessary, additional parallel edges may be added to $D$. Should a single vertex $v$ fail to meet the degree requirement, we bump up its degree by further lifting operations as follows: as the deficit $\big(2\lfloor\frac{n}{6}\rfloor-4\big)-d(v)$ must be even, we can lift an arbitrary edge $e\in E([V(D)-v])$ to $v$. We remind the reader that lifting $e$ to $v$ increases $d(v)$ by two while it does not affect the degree of the rest of the vertices.

\medskip

We will use the well-known 2-Factor-Theorem of \citet{MR1554815}. Be aware that a 2-factor of a multigraph may contain several $C_2$'s (however, this is the only way parallel edges may appear in it).
\begin{theorem}[\citet{MR1554815}]\label{thm:petersen}
	Let $G$ be a $2k$-regular multigraph. Then $E(G)$ can be decomposed into the union of $k$ edge-disjoint $2$-factors.
\end{theorem}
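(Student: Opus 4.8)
The plan is to use the classical Euler-orientation argument that reduces the decomposition to a proper edge colouring of an auxiliary bipartite graph. First I would observe that since $G$ is $2k$-regular, every vertex has even degree, so each connected component of $G$ admits an Eulerian circuit. Fixing one such circuit per component and orienting every edge in the direction it is traversed yields an orientation of $G$ in which each vertex has in-degree exactly $k$ and out-degree exactly $k$: a vertex of degree $2k$ is entered and left $k$ times along its circuit. (A loop is simply traversed once, contributing one unit to both the in- and out-degree of its vertex.)

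Next I would pass to the \emph{split} bipartite graph $H$. Set $V(H)=\{v^+ : v\in V(G)\}\cup\{v^- : v\in V(G)\}$, and for every oriented edge $u\to v$ of $G$ place an edge $\{u^+,v^-\}$ into $H$, keeping multiplicities and sending a loop $u\to u$ to the edge $\{u^+,u^-\}$. Because every vertex of $G$ has out-degree $k$, each $u^+$ has degree $k$ in $H$; because every vertex has in-degree $k$, each $v^-$ also has degree $k$. Thus $H$ is a $k$-regular bipartite multigraph.

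The central step is then to decompose $H$ into $k$ perfect matchings $M_1,\dots,M_k$. This is the standard fact that a $k$-regular bipartite (multi)graph is $k$-edge-colourable: by Hall's theorem a regular bipartite graph has a perfect matching, and deleting its edges leaves a $(k-1)$-regular bipartite graph, so one proceeds by induction on $k$. Finally, each matching $M_i$ pulls back to a set $F_i$ of edges of $G$: the edge of $M_i$ incident to $u^+$ is the unique member of $F_i$ oriented out of $u$, and the edge incident to $u^-$ is the unique member oriented into $u$. Hence in the underlying undirected multigraph every vertex is incident to exactly two edges of $F_i$ (with a loop selected by $\{u^+,u^-\}$ supplying both), so each $F_i$ is a spanning $2$-regular subgraph, that is, a $2$-factor. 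The sets $F_1,\dots,F_k$ partition $E(G)$ precisely because $M_1,\dots,M_k$ partition $E(H)$, which gives the required decomposition.

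The part requiring the most care is the bookkeeping for multigraphs: one must check that loops and parallel edges translate correctly under the split, so that the two matched edges at $u^+$ and $u^-$ always correspond to two genuinely distinct half-edges at $u$, yielding degree exactly $2$ (possibly realised as a $C_2$ or a loop, as the theorem's surrounding remark anticipates). Everything else is a direct application of the Euler-circuit orientation together with the edge-colourability of regular bipartite graphs, so no further obstacle should arise.
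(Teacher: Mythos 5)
Your proof is correct, and it is essentially the approach the paper itself relies on: the theorem is only cited (to Petersen), but the proof of \Fref{thm:K_nalgo} uses exactly your construction --- an Eulerian orientation with in- and out-degree $k$ at every vertex, followed by the bipartite split into $v^+$/$v^-$ copies whose perfect matchings pull back to $2$-factors. Your bookkeeping for loops, parallel edges, and the resulting $C_2$'s is also consistent with the paper's remark that $C_2$'s are the only parallel edges a $2$-factor of a multigraph may contain.
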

Some operations, which are performed later in the proof, are featured in the following definition, claim, and lemma.
\begin{definition}[Lifting coloring]
	Let $F$ be a multigraph, and $c:E(F)\cup V(F)\to \{1,2,3\}$ be a coloring of the edges and vertices of $F$. We call $c$ a {\itshape lifting coloring\/} of $F$ if and only if
	\begin{enumerate}
		\item for any edge $e=uv\in E(F)$, $c(u)\neq c(e)$ and $c(v)\neq c(e)$, and
		\item for any two edges $e_1,e_2\in E(F)$ incident to a common vertex we have $c(e_1)\neq c(e_2)$.
	\end{enumerate}
	Moreover, if the number of vertices in different color classes differ by either 0, 1, or 2, then we call $c$ a {\itshape balanced lifting coloring\/} of $F$.
\end{definition}

\begin{claim}\label{claim:liftcolor}
	Let $F$ be a multigraph such that $\forall v\in V(F)$ we have $d_F(v)\le 2$. If $w_1,w_2,w_3\in V(F)$ are three pairwise non-adjacent different vertices, then $F$ has a balanced lifting coloring where $w_i$ gets color $i$.
\end{claim}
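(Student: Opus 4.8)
The plan is to use the hypothesis $d_F(v)\le 2$ to reduce the claim to a transparent combinatorial core. Since every vertex has degree at most $2$, the multigraph $F$ is a disjoint union of paths, cycles (possibly a $C_2$, i.e.\ a double edge), and isolated vertices, and I would treat one component at a time with two coloring engines. For a \textbf{path}, first choose the colors of its $p-1$ edges as a sequence $x_1,\dots,x_{p-1}$ on $\{1,2,3\}$ with consecutive entries distinct; then every interior vertex is \emph{forced} to the unique color avoiding its two incident edge colors, while each of the two endpoints has a free choice among the two colors different from its single incident edge. One checks directly that this always produces a lifting coloring, that the endpoint freedom lets us prescribe any single vertex color and orient the (at most $1$) surplus, and that the near-periodic choice $1,2,3,1,2,3,\dots$ realizes a distribution of spread $\le 1$.

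For a \textbf{cycle} $C_m$ the situation is rigid but clean: a lifting coloring is obtained exactly from a proper $3$-coloring of its $m$ edges (consecutive edges distinct), after which each vertex is forced to the color not on its two incident edges, and the edge--endpoint condition is then automatic. Reading the edge-color sequence as a \emph{closed walk of length $m$ on the triangle $K_3$}, the vertex between two consecutive edges receives the apex color missing from the corresponding $K_3$-edge; hence the three vertex color classes have sizes equal to the three edge-usage counts of the walk. A closed walk forces even incident usage at each $K_3$-vertex, so the usage vector $(a_1,a_2,a_3)$ satisfies $a_1+a_2+a_3=m$, all $a_i$ of the same parity, with connected support; conversely any such vector is realized by an Eulerian-type closed walk. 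Thus every cycle admits a lifting coloring of spread $\le 2$ (spread $0$ precisely when $3\mid m$), and by rotating the walk and applying a color permutation we may put any prescribed color at any prescribed vertex. In particular a $C_2$ is \emph{forced} to color its two adjacent vertices alike, which is consistent since no two of the $w_i$ are adjacent.

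Next I would place $w_1,w_2,w_3$, which lie in at most three components. Their pairwise non-adjacency guarantees no conflict with condition~(1) of a lifting coloring, is compatible with the $C_2$-monochromaticity above, and rules out all three lying on a $C_3$ (which has no three independent vertices). In each affected component I use the freedom just described (phase/endpoint choice on a path, rotation/relabeling on a cycle) to give $w_i$ color $i$ while keeping that component's distribution of spread $\le 2$; since the $w_i$ contribute one vertex of each color, this prescription is itself balance-friendly.

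Finally I would assemble a \emph{balanced} coloring globally. The engine is the elementary observation that if a running total $S=(s_1,s_2,s_3)$ has $\max-\min\le 2$ and the next component admits a class-size vector of spread $\le 2$, then sorting $S$ increasingly and the new vector decreasingly and adding coordinatewise keeps every pairwise difference inside $[-2,2]$, so the new total again has spread $\le 2$; inducting from $S=(0,0,0)$ balances any family of freely relabelable components. I would fix the at most three $w$-components first (each of spread $\le 2$) and then orient the remaining components by this pairing rule; since each step lowers an oversized running spread by $2$, a bounded number of free components drives the total spread back to $\le 2$, after which it is maintained. \emph{The main obstacle} is exactly this reconciliation: when some $w_i$ sits on a short, forced-imbalanced cycle (a $C_2$ giving $(2,0,0)$, a $C_4$ giving $(2,2,0)$), its pinned orientation can push the initial running spread up to $6$, so one must argue there is always enough flexible material to absorb it — handled by the pairing reduction when free components are present, and, when almost all vertices lie in the $w$-components, by exploiting directly the ample flexibility of those necessarily long paths or cycles to balance within them.
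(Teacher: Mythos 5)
Your proposal is correct in outline, but there is nothing in the paper to compare it against: the paper's ``proof'' of this claim is a single sentence declaring the casework ``easy but \dots{} rather lengthy'' and leaving it to the reader. What you have written is essentially the missing argument, and its load-bearing parts check out. The decomposition into paths, cycles and isolated vertices is forced by $d_F(v)\le 2$; your path engine is right (interior vertices forced, two free choices at each endpoint, near-periodic edge colors give spread $\le 1$); and your closed-walk-on-$K_3$ correspondence for cycles is both correct and the cleanest way to see the structure. In particular the parity argument is sound: the walk has even degree at each $K_3$-vertex, so the class sizes $(a_1,a_2,a_3)$ all share a parity, and conversely any same-parity vector is realized by an Euler circuit --- note that connectivity of the support is automatic in $K_3$ since any two of its edges meet, so that hypothesis can be dropped. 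This also correctly explains why a lone cycle never achieves spread $1$ (mixed parity is forbidden), why a $C_2$ is forced monochromatic, and why a $C_4$ yields only $(2,2,0)$- or $(4,0,0)$-type vectors. The sorted-merge step is a complete proof as stated: with $s_1\le s_2\le s_3$, $s_3-s_1\le 2$ and $t_1\ge t_2\ge t_3$, $t_1-t_3\le 2$, each pairwise difference of the sum lies in $[-2,0]+[0,2]=[-2,2]$, and free components are indeed freely color-permutable because both lifting conditions are local to a component.

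The one genuinely thin spot is the final paragraph, and it is exactly the casework the paper also skips: components containing two or three of the $w_i$, and the regime with no free components to absorb a pinned imbalance. Two observations would tighten it. First, the pins work in your favor rather than against you: each pinned component must place $w_i$ in color class $i$, which already rules out the worst vectors (a pinned $C_4$ with opposite pins of colors $2$ and $3$ cannot realize $(4,0,0)$, nor $(2,2,0)$ on colors $\{1,2\}$, so it is forced to $(0,2,2)$ --- whence, e.g., $C_2(w_1)\sqcup C_4(w_2,w_3)$ lands on $(2,2,2)$ with no free material at all). Second, each pinned component typically has more than one admissible spread-$\le 2$ vector (e.g.\ a $C_4$ with a single pin of color $2$ can realize either $(2,2,0)$ or $(0,2,2)$), so one should choose these vectors greedily to minimize the pinned subtotal's spread \emph{before} invoking the merge lemma; a short check of the $\le 3$ pinned components then bounds the initial spread well below your worst-case $6$. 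With those two points made explicit, your argument is a complete proof of the claim --- more than the paper itself provides.
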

\begin{proof}
	The proof is easy but its complete presentation requires a rather lengthy (but straightforward) casework. We leave the verification of the statement to the reader. \Fref{fig:liftcoloring} shows an example output of this lemma.
	\begin{figure}
		\centering

		\begin{tikzpicture}
			\def \uno {thick}
			\def \due {dotted}
			\def \tre {dashed}
			\def \dotsize {5pt}

			\foreach \x/\c in {1/\uno,2/\due,3/\tre}
			\node[draw, circle, \c] (x\x) at (\x-2,3) {$x_\x$};


			\def \2fshift {1};

			\foreach \i/\c in {1/\uno,2/\due,3/\due}
			\node[shift={(-3,1)}, circle, draw, inner sep=\dotsize, \c] (b\i) at ({+90+90* \i}:1) {};

			\foreach \i/\c in {1/\due,2/\due,3/\uno,4/\tre}
			\node[shift={(0,0.5)}, circle, draw, inner sep=\dotsize, \c] (c\i) at ({+90+72* \i}:1) {};

			\foreach \i/\c in {1/\tre,2/\uno,3/\uno}
			\node[shift={(2,0.5)},draw, circle, inner sep=\dotsize, \c] (d\i) at (1,2.5-\i) {};

			\foreach \i/\j/\c in {b1/x1/\due,x1/b3/\tre,b3/b2/\uno,b2/b1/\tre}
			\draw (\i) edge[very thick,\c] (\j);

			\foreach \i/\j/\c in {c4/x2/\uno,x2/c1/\tre,c1/c2/\uno,c2/c3/\tre,c3/c4/\due}
			\draw (\i) edge[very thick,\c] (\j);

			\foreach \i/\j/\c in {d1/x3/\due,x3/d1/\uno,d2/d3/\due,d3/d2/\tre}
			\draw (\i) edge[very thick, bend left=15,\c] (\j);

		\end{tikzpicture}

		\caption{A balanced lifting coloring, where $x_1,x_2,x_3$ get pairwise different colors.}\label{fig:liftcoloring}
	\end{figure}
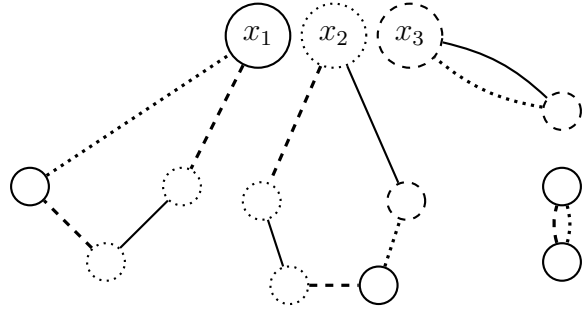
\end{proof}

\begin{lemma}\label{lemma:main}
	Let $D$ be a demand graph on $n$ vertices, such that $\Delta(D)\le \lfloor\frac{n}{3}\rfloor-4$.
	Furthermore, let $X=\{x_1,x_2,x_3\}$ be a subset of $V(D)$ of cardinality 3, such that $|E(D[X])|=0$.
	Let $B$ be an at most 3-element subset of $V(D)\setminus X$.
	Let $F$ be a $2$-matching of $D$, i.e., there are at most 2 edges of $F$ incident to any vertex of $D$.
	Moreover, either $d_F(x_i)=2$ or $d_D(x_i)\le \lfloor\frac{n}{3}\rfloor-5$ for $i=1,2,3$.
	Then there exists a demand graph $H$ which satisfies
	\begin{itemize}
		\item $V(H)=V(D)\setminus X$,
		\item $E(H)\supset E(D[V(H)])\setminus F$,
		\item $\{e\in E(H):\ e\text{ is incident to at least one of }B\}\subset E(D)$, and
		\item for any $v\in V(H)$ we have $d_H(v)\le d_D(v)-d_F(v)+\mathbb{1}(v\notin B)$.
	\end{itemize}
	Moreover, if $H$ has a realization in $K_{n-3}$, then $D$ is realizable in $K_n$.
\end{lemma}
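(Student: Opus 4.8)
The plan is to realize the reduction by \emph{deleting} $X=\{x_1,x_2,x_3\}$ and re-using these three vertices as \emph{routers}: given a realization of $H$ in $K_{n-3}$, I will reattach the $3(n-3)+3$ edges of $K_n$ incident to $X$ and use them to realize exactly those demands of $D$ that $H$ does not carry, namely the edges of $F$ lying inside $V(H)$ and the demands incident to $X$. The principal device is the lifting operation (\Fref{fig:lifting}) steered by a balanced lifting coloring.

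First I would fix the routing pattern. Since $|E(D[X])|=0$ and $F\subseteq D$, the vertices $x_1,x_2,x_3$ are pairwise non-adjacent in $F$, so \Fref{claim:liftcolor} applies and yields a balanced lifting coloring $c\colon E(F)\cup V(F)\to\{1,2,3\}$ with $c(x_i)=i$. I read the colour $j$ on an edge $e\in E(F)$ as the instruction ``lift $e$ through the router $x_j$''. Concretely, each $F$-edge $e=uv$ with $u,v\in V(H)$ is removed from the demands inside $V(H)$ (so it does not appear in $H$) and is earmarked to be realized in $K_n$ by the path $u-x_{c(e)}-v$; the two conditions defining a lifting colouring guarantee that no router edge is claimed twice by these liftings (adjacent $F$-edges receive distinct colours, so no endpoint is used twice at one router) and that a lifted edge never collides at a router with a vertex that must itself reach that router.

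Next I would build $H$ and verify its two structural constraints together with the degree inequality. I set $E(H)\supseteq E(D[V(H)])\setminus F$, and then add, for each non-$F$ demand $vx_i$ incident to $X$, a single re-routing demand $vw$ with $w\in V(H)$ whenever the direct edge $vx_i$ cannot be used (because it is already earmarked for a lifting through $x_i$); the demand $vx_i$ will then be realized by the path $v-w-x_i$, combining the $H$-path for $vw$ with the fresh edge $wx_i$. The point of the bookkeeping is that deleting the $F$-edges at $v$ contributes $-d_F(v)$, each added re-routing demand at $v$ is charged to a distinct non-$F$ demand $vx_i$ that has disappeared, and the one unavoidable extra unit is absorbed by $\mathbb{1}(v\notin B)$; choosing the re-routing vertices $w$ so as never to raise the degree of a $B$-vertex (and re-using existing $D$-edges there) secures both $d_H(v)\le d_D(v)-d_F(v)+\mathbb{1}(v\notin B)$ and the requirement that the $H$-edges meeting $B$ already lie in $E(D)$.

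Finally I would check that a realization of $H$ in $K_{n-3}$ lifts to one of $D$ in $K_n$. The $H$-realization, living inside $K_{n-3}$, already resolves every demand of $D$ inside $V(H)$ except the $F$-edges; the fresh edges at $x_1,x_2,x_3$ then resolve the lifted $F$-edges and the demands incident to $X$ as above. The heart of the proof, and the step I expect to be the main obstacle, is the edge-disjointness accounting \emph{at the three routers}: one must show that at each $x_j$ the router edges consumed by (i) the $F$-edges coloured $j$, (ii) the demands of $D$ incident to $x_j$, and (iii) the re-routings passing through $x_j$ are pairwise distinct and number at most $n-1$. This is exactly where a \emph{balanced} colouring is indispensable, since each router should absorb only about a third of the load, and it explains why the hypotheses read $\Delta(D)\le\lfloor n/3\rfloor-4$ and impose the dichotomy $d_F(x_i)=2$ or $d_D(x_i)\le\lfloor n/3\rfloor-5$: the former caps the total work per router, while the latter supplies the slack needed precisely at those $x_i$ that are not already saturated by $F$.
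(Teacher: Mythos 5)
Your architecture is in essence the paper's own proof narrated from the realization side: a balanced lifting coloring from \Fref{claim:liftcolor} steers the $F$-edges through the three routers, the conflicting direct demands at $X$ are re-routed through fresh intermediate vertices (the paper's ``resolution of multiplicities'' at $x_i$), $B$ is shielded by excluding it from the targets, and the composition with a realization of $H$ in $K_{n-3}$ is identical. The genuine gap is that the router-capacity count you defer as ``the main obstacle'' is precisely where every hypothesis of the lemma is spent, and in the one place where you interpret those hypotheses you name the wrong mechanism. At a router $x_i$, the direct demands that may need re-routing number up to $d_D(x_i)\le\lfloor\frac{n}{3}\rfloor-4$ (every demand edge $vx_i$ whose fresh edge is consumed because $v$ is an endpoint of a colour-$i$ $F$-edge, including parallel demands), while the guaranteed supply of safe targets --- colour-$i$ vertices outside $X\cup B$, for which condition 1 of the lifting colouring ensures the fresh edge $wx_i$ is not claimed by a lifting --- is only $|Y_i|\ge\lfloor\frac{n}{3}\rfloor-5$. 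That is one short at a saturated router. The dichotomy $d_F(x_i)=2$ does not ``cap the total work per router''; it closes this unit deficit, and only if the $F$-edges incident to $X$ are themselves lifted. The paper chooses $c(x_i)\equiv i+1\pmod 3$, so the two $F$-edges at a saturated $x_i$ carry colours $i$ and $i-1$, and the colour-$(i-1)$ edge is lifted away to the router $x_{i-1}$; hence only $d_D(x_i)-1\le\lfloor\frac{n}{3}\rfloor-5$ original edges remain at $x_i$ to resolve. Your convention $c(x_i)=i$, combined with lifting only those $F$-edges with both endpoints in $V(H)$, leaves the $F$-edges at $X$ in place and forfeits exactly this unit, so in the worst case the conflicts at $x_i$ outnumber the targets.

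Two further obligations you leave implicit also require the paper's care. First, a target $w\in Y_i$ with $wx_i\in E(D)$ cannot be used, since the fresh edge $wx_i$ is needed for that demand; the paper's marking argument shows that each such excluded target pairs off with a demand at $x_i$ that needs no re-routing (a colour-$i$ vertex is never the endpoint of a colour-$i$ edge, so $wx_i$ is conflict-free), keeping supply and demand matched --- without this observation the count degrades further. Second, each $w$ may serve as a target at most once globally, which both your degree bound $d_H(v)\le d_D(v)-d_F(v)+\mathbb{1}(v\notin B)$ and the simplicity of the fresh edges depend on; the paper secures it by drawing targets from the pairwise disjoint classes $Y_1,Y_2,Y_3$, one use each. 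None of this is beyond repair --- adopting the paper's colour convention and lifting all of $F$, including the edges meeting $X$, restores the count --- but as written the decisive step of your proposal is missing, and its one articulated calibration is incorrect.
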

\begin{proof}
	We will perform a series of liftings in $D$ in two phases, obtaining $D'$ and $D''$. At the end of the second phase, we will achieve that there are no incident parallel edges to $X$ in $D''$. Therefore, setting $H=D''-X$ will satisfy the second claim of the lemma.

	\medskip

	First, we determine the series of liftings to be executed in the first phase. Notice that \Fref{claim:liftcolor} implies the existence of a balanced lifting coloring $c$ of $F$ such that $c(x_i)\equiv i+1 \pmod 3$. Lift each edge $f\in F$ to $x_{c(f)}$, except if $f$ is incident to $x_{c(f)}$, then leave $f$ where it is. Let $F'$ be the set of lifted edges, that is
	\[ F'=\biguplus\limits_{\substack{f\in F,\\ x_{c(f)}\notin f}}\Big\{\text{the two edges joining $x_{c(f)}$ to the two vertices of $f$}\Big\}, \]
	where $\biguplus$ denotes the disjoint union. Let the multigraph $D'$ be defined on the same vertex set as $D$, and let its edge set be
	\[ E(D')=\{e\in E(D):\ e\notin F\text{ or }x_{c(e)}\in e\}\biguplus F'. \]
	In other words, $D'$ is the demand graph into which $D$ is transformed by lifting the elements of $F$. Let $Y=V(D)\setminus X$. Observe that $d_{D'}(y)=d_{D}(y)$ for $y\in Y$.	Let
	\[Y_i=\{y\in Y\setminus B\ |\ c(y)=i\}\]
	be the color $i$ vertices in $Y\setminus B$.  The balancedness of $c$ guarantees that
	\[ |Y_i|=|c^{-1}(i)\setminus X \setminus B|\ge |c^{-1}(i)|-1-|B|\ge \left\lfloor\frac{n}{3}\right\rfloor - 5. \]

	\medskip

	In the second phase, our task is to resolve all multiplicities of $x_i$ in $D'$. Observe that as edges of $F$ of the same color formed a matching, out of every two parallel edges that are incident to $x_i$ in $D'$, at least one is an initial edge in $E(D')\setminus F'$. The vertex $x_i$ is incident to $d_{D'}(x_i)-d_{F'}(x_i)$ edges of $E(D')\setminus F'$; we plan to lift these edges to elements of $Y_i$ by using every vertex in $Y_i$ for lifting at most once. If $d_F(x_i)=2$, then one of the two edges of $F$ incident to $x$ has color $i-1$, and this edge is lifted to $x_{i-1}$ in $D'$. Thus
	\[ d_{D'}(x_i)-d_{F'}(x_i)\le
		\left\{
		\begin{array}{lr}
			d_D(x_i)-1,                               & \text{ if }d_F(x_i)=2; \\
			\left\lfloor\frac{n}{3}\right\rfloor - 5, & \text{ if }d_F(x_i)<2; \\
		\end{array}
		\right\}
		\le |Y_i|.\]
	As elements of $c^{-1}(i)$ are not incident to edges of color $i$, the set $Y_i\setminus B$ offers enough space to carry out the liftings. That being said, note that neighbors of $x_i$ in $Y_i$ cannot be used for lifting as they would create additional multiplicities. On the other hand, if $v\in Y_i$ and  $e={vx_i}\in E(D)$, then $e$ is an initial edge of $x_i$ that either generates no multiplicity at all or is part of a bundle of parallel edges, one of which we do not lift. In other words, for every vertex of $Y_i$ that is excluded from the lifting we mark an initial edge of $x_i$ that we do not need to lift. Because of this, resolution of the remaining multiplicities at $x_i$ can be performed in $Y_i-N(x_i)$. Let $D''$ denote the demand graph obtained after resolving every multiplicity of $x_1$, $x_2$, and $x_3$.

	\medskip

	At most 1 element of $E(D')\setminus E(F')$ has been lifted to each $y\in Y$, therefore there are no multiple edges between the sets $X$ and $Y$ in the demand graph $D''$. Moreover, $D''[X]=D'[X]$ is a subgraph of a triangle, which emerges as we lift the at most one edge of color $i+2$ of $x_i$ to $x_{i+1}$ (take the indices cyclically), for $i=1,2,3$.

	\medskip

	Any vertex $y\in Y$ of color $i$ has at most two incident edges in $F'$, joining $y$ to a subset of $\{x_{i+1},x_{i+2}\}$.
	\begin{itemize}
		\item If an edge has been lifted to $y\in Y$ of color $i$, then $y$ is adjacent to $x_i$ and $d_{D''}(y)=d_{D'}(y)+2$. Thus, $y$ is joined to at least $d_{F'}(y)+1$ elements of $X$ in $D''$. As no edge of color $i$ can be incident to $y$, we have $d_{F'}(y)=d_F(y)$. Therefore
		\[ d_{D''[Y]}(y)\le d_{D''}(y)-d_{F'}(y)-1=d_{D'}(y)-d_{F}(y)+1=d_{D}(y)-d_{F}(y)+1.\]

		\item If no edges have been lifted to $y\in Y$, then $d_{D''}(y)=d_{D'}(y)$ and $y$ is adjacent to at least $d_{F}(y)$ elements of $X$ in $D''$. Therefore
		\[ d_{D''[Y]}(y)=d_{D''}(y)-d_F(y)\le d_{D'}(y)-d_F(y)=d_{D}(y)-d_F(y). \]
	\end{itemize}
	As elements of $B$ are excluded from $Y_i$, 0 edges are lifted to them, and so we proved the statement of the lemma.
\end{proof}

Let $X_1=\{x_1,x_2,x_3\}$ be a subset of 3 elements of $V(D)$, such that $D[X_1]$ has 0 edges. Such a set trivially exists, as any two non-adjacent vertices have $(n-2)-2\Delta(D)\ge \frac{n}{3}+2$ common non-neighbors. Since the degree in $D$ is at least $2\cdot(24/6)-4=4$, \Fref{thm:petersen} implies the existence of two disjoint 2-factors, $A_1$ and $A_2$ of $D$.
Notice that $A_2-X_1$ has 3 path components (as a special case, an isolated vertex is a path on one vertex).
Extend $A_2-X_1$ to a maximal $2$-matching $F_2$ of $D-X_1-A_1$. It is easy to see that there exists a 3-element subset $B_1$ of $V(D)\setminus X_1$ such that \begin{itemize}
	\item $B_1$ induces 0 edges in $D-A_1$,
	\item $\{v\in V(D)\setminus X_1:\ d_{F_2}(v)= 0\}\subset B_1$, and
	\item $B_2=\{v\in V(D)\setminus X_1:\ d_{F_2}(v)=1\}\setminus B_1$ has cardinality at most 3.
\end{itemize}

\medskip

We are ready to use \Fref{lemma:main}. First, apply it to $D$, where we lift $F=A_1$ to elements of $X=X_1$, while not creating new edges incident to $B=B_1$. Let the obtained graph be $H_1$. We have $\Delta(H_1)\le \Delta(D)-\delta(A_1)+1=\Delta(D)-1$. Furthermore, $E(H_1[B_1])\subseteq E(D[B_1])=\emptyset$, and for all $v\in B_1$ we have $d_{H_1}(v)\le \Delta(D)-\delta(A_1)\le \Delta(D)-2$.

\medskip

We apply \Fref{lemma:main} once again. Now $H_1$ is our base demand graph, $F_2$ is the $2$-matching to be lifted to elements of $B_1$, and we avoid lifting to elements of $B_2$. Let the resulting demand graph be $H_2$, whose vertex set is $V(D)\setminus X_1\setminus B_1$ of cardinality $n-6$. We have
\begin{align*}
	d_{H_2}(v) &\le\left\{
	\begin{array}{ll}
		d_{H_1}(v)-d_{F_2}(v)+1 & \text{ if }v\notin B_2, \\
		d_{H_1}(v)-d_{F_2}(v)   & \text{ if }v\in B_2.
	\end{array}
	\right\}\le \\
	&\le\left\{\begin{array}{ll}
		(\Delta(D)-1)-2+1 & \text{ if }v\notin B_2, \\
		(\Delta(D)-1)-1   & \text{ if }v\in B_2.
	\end{array}
	\right\}\le\Delta(D)-2=2\left\lfloor\frac{n-6}{6}\right\rfloor-4.
\end{align*}

\medskip

By induction on $n$, we know that $H_2$ is realizable in $K_{n-6}$, implying that $H_1$ is realizable in $K_{n-3}$, which in turn implies that $D$ has a realization in $K_n$.

\section{Proof of \texorpdfstring{\Fref{thm:alpha_2n-5}}{Theorem~\ref{thm:alpha_2n-5}}}\label{sec:alpha_2n-5}
We prove our statement by induction on $n$. For $n\leq 4$ the statement is straightforward, the cases $n=5,6$ require a somewhat cumbersome casework. Note that if $n\geq 4$, we may assume that $D$ has exactly $2n-5$ edges, otherwise we join two non-neighbors whose degree is smaller than $n-1$.

\medskip

For the inductive step, we choose a vertex $x$, resolve each of its multiplicities, and delete it from the demand graph. There are two additional conditions to assert as the number of vertices decreases from $n$ to $n-1$:
\begin{itemize}
	\item[i)] The number of edges of $D$ must decrease by at least two.
	\item[ii)] Every vertex of degree $n-1$ must lose at least one edge. Decreasing the degree $d(v)$ of a vertex $v$ can be achieved by lifting an edge incident to $v$ to $x$. Note that this operation might create additional multiplicities that need to be resolved before the deletion of $x$.
\end{itemize}
In addition, observe that we can lift at least one edge to a vertex $v$ without its degree exceeding the degree bound for $n'=n-1$ if and only if $d(v)< n-2$. Let
\[B=\{z_1,\dots,z_{|B|}\}=\{v\in V(D): d(v)\geq n-2\}.\]
As $\sum\limits_{v\in V(D)}d(v) = 4n-10$, it follows that $|B|\leq 3$. We perform a casework on $|B|$.

\medskip

\begin{description}
	\item[$|B|=0:$] If $B$ is empty, then the only condition we need to guarantee is the deletion of at least two edges in $D$.  We have two cases.
	      \begin{itemize}
					\item \textit{If $\forall x\in V(D)$ we have $\gamma(x)\le 1$,} then $D$ is the disjoint union of bundles and isolated vertices. Let $\{x,y\}$ be the edge with the highest multiplicity. If $d(x)=d(y)\le n-2$, and every other degree is at most $n-4$, then lift copies of $\{x,y\}$ to the other $n-2$ vertices and delete both $x$ and $y$; we can use induction on the remaining graph. If $D$ is composed of an $n-2$ and an $n-3$ bundle, it is easy to find a realization of $D$ in $K_n$ directly.
		      \item \textit{If there is an $x\in V(D)$ with $\gamma(x)\ge 2$:} we have $n-1-\gamma(x)$ vertices as a lifting target to resolve the $d(x)-\gamma(x)$ multiplicities of $x$. Obviously, $d(x)-\gamma(x)\leq n-3 -\gamma(x)$, thus we have enough space to resolve all multiplicities of $x$. After the deletion of $x$, the graph has $\gamma(x)\ge 2$ fewer edges, and the maximum degree is still two less than the number of vertices.
	      \end{itemize}

	\item[$|B|=1:$] We perform the same operation as in the previous case with the choice $x=z_1$. Observe that  our inequality becomes $d(z_1)-\gamma(z_1)\leq n - 1 -\gamma(z_1)$, thus we have enough vertices in the multigraph to perform all the necessary liftings.

	\item[$|B|=2:$] Observe first that $z_1$ and $z_2$ are joined by an edge $e$ or else \[2n-5\geq e(B,V(D)-B)= d(z_1)+d(z_2)\geq 2n-4,\] a contradiction. Let us first assume that $z_1$ or $z_2$ (say, $z_1$) has more than one neighbor (i.e., $e(B,V(D)-B)>0$). Observe that in this case
	\[ m(z_1)=d(z_1)-\gamma(z_1)\leq (n-1)-\gamma(z_1),\]
	thus, each multiplicity of $z_1$ can be resolved by lifting the appropriate edges to $V(D)-\{z_1\}-N(z_1)$.

	\medskip

  In the remaining case $z_1$ and $z_2$ form a bundle of at most $n-1$ edges. We can lift $n-2$ of these edges to $V(D)-B$ without difficulties, delete one of the vertices in $B$, and proceed by induction.

	\item[$|B|=3$:] Observe that any two vertices of $\{z_1,z_2,z_3\}$ must be joined by an edge, else the same reasoning as above leads to a contradiction. Note also, that a simple average degree calculation guarantees the existence of an isolated vertex $x$. We distinguish two cases:
	      \begin{itemize}
		      \item[i)] If $e(B,V(D)-B)=0$, we may assume that $V(D)-B$ induces an edge, otherwise $3(n-3)\geq 4n-10\Rightarrow n\leq 7$ and all edges are contained in $B$. For $n=5,6,7$, that leads to 4 possible demand graphs whose realization can easily be completed; a case for $n=6$ is shown in \Fref{fig:realization}.

		            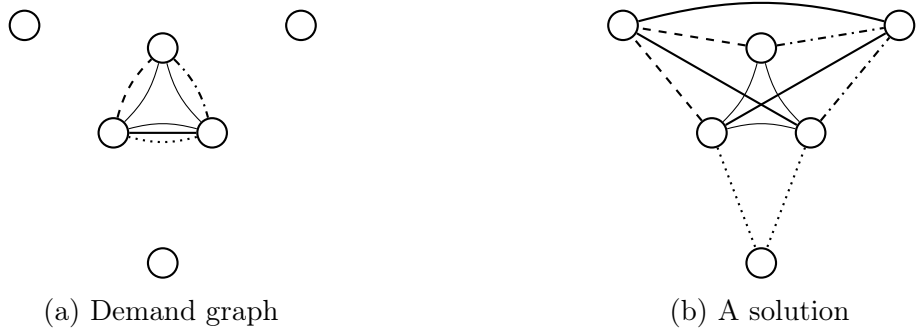
\begin{figure}
			            \centering
			            \begin{subfigure}{.5\textwidth}
				            \centering

				            \begin{tikzpicture}
					            \def \radius {1.5cm}

					            \foreach \s in {1,2,3}
						            {
							            \node[draw, circle] (\s) at ({90+120 * \s}:\radius/2) {};
							            \node[draw, circle] (O\s) at ({30+360/3 * \s}:1.4*\radius) {};
						            }
					            \foreach \x/\y/\c in {1/2/thin,2/1/dotted,1/3/dashed,3/1/thin,2/3/thin,3/2/dashdotted}
					            \draw[\c] (\x) edge[bend left=15] (\y);

					            \draw[thick] (1) edge (2);
					            \draw[color=white] (O1) edge[bend left=15] (O3);
				            \end{tikzpicture}

				            \caption{Demand graph}\label{fig1:before}
			            \end{subfigure}%
			            \begin{subfigure}{.5\textwidth}
				            \centering

				            \begin{tikzpicture}

					            \def \radius {1.5cm}

					            \foreach \s in {1,2,3}
						            {
							            \node[draw, circle] (\s) at ({90+120 * \s}:\radius/2) {};
							            \node[draw, circle] (O\s) at ({30+360/3 * \s}:1.4*\radius) {};
						            }

					            \foreach \x/\y in {1/2,2/3,3/1}
					            \draw[thin] (\x) edge[bend left=15] (\y);

					            \foreach \x/\y/\z/\c in {1/2/O2/dotted,2/3/O3/dashdotted,3/1/O1/dashed}
						            {
							            \draw[\c] (\x) edge (\z);
							            \draw[\c] (\y) edge (\z);
						            }

					            \draw[thick] (1) edge (O3);
					            \draw[thick] (2) edge (O1);
					            \draw[thick] (O1) edge[bend left=15] (O3);

				            \end{tikzpicture}

				            \caption{A solution}\label{fig1:after}
			            \end{subfigure}
			            \caption{A demand graph and a possible realization in $K_6$}\label{fig:realization}
		            \end{figure}

		            Let $f$ denote an arbitrary edge in $V(D)-B$. We lift two non-parallel edges of $B$ as well as $f$ to $x$; observe that the degrees of all vertices in $B$ dropped by at least 1. As $n\geq 7$, the multiple edge created at vertex $x$ can be lifted to a vertex of $V(D)-B$ that was not incident to $f$.

		      \item[ii)] If $e(B,V(D)-B) > 0$, let $f$ be an edge between $B$ and $V(D)-B$. Without loss of generality, we may assume $f$ is incident to $z_3$. We lift $f$ as well as an edge $e$ between $z_1$ and $z_2$ to $x$; as $e$ and $f$ are disjoint, no new multiplicity is created, thus we can delete $x$ and proceed by induction.
	      \end{itemize}
\end{description}

%
%
%
%
%

\chapter{Complete grid base graphs}\label{chap:grid}

\section{Introduction}
A long-standing open question concerning path-pairability of graphs is the
minimal possible value of the maximum degree $\Delta(G)$ of a path-pairable
graph $G$. \citet{MR1677781} proved that the
maximum degree must grow together with the number of vertices in path-pairable
graphs.
They in fact showed that a path-pairable simple graph with maximum degree $\Delta$ has at most $2\Delta^\Delta$ vertices. The result yields that for a path-pairable simple graph $G$ on $n$ vertices we have
\[ c\cdot\frac{\log n}{\log \log n}\le \Delta(G).\]
This bound is conjectured to be asymptotically sharp, although to date only constructions of much higher order of magnitude have been found. Until recently, the best-known construction was due to \citet{MR1985088} who showed that two dimensional complete grids on an even number of vertices (of at least 6) are path-pairable.

\medskip

A two-dimensional complete grid is the \textbf{Cartesian product} $K_s\square K_t$ of two complete graphs $K_s$ and $K_t$ and it can be constructed by taking the Cartesian product of the sets $\{1,2,\dots\,s\}$ and $\{1,2,\dots\,t\}$ and joining two distinct vertices if they share a coordinate. Higher dimensional complete grids can be defined similarly: let $d,t_1,\ldots,t_d$ be positive integers and let $V$ denote the set of $d$-dimensional vectors of positive integer coordinates not exceeding $t_i$ in the $i$\textsuperscript{th} coordinate, that is,

\[ V_{(t_1,\ldots,t_d)}=\{(a_1,\dots,a_d): 1\leq a_i\leq t_i, i=1,2,\ldots,d\}. \]

The $d$-dimensional grid graph $K_{(t_1,\ldots,t_d)}$ is constructed by taking $V_{(t_1,\ldots,t_d)}$ as its vertex set, and two vertices are joined by an edge if the corresponding vectors differ at exactly one coordinate. Note that this graph is isomorphic to the Cartesian product $K_{t_1}\square K_{t_2}\square\ldots\square K_{t_d}$. For $t_1=t_2=\cdots=t_d=t$ we use the notation
\[ K_t^d=\overbrace{K_t\square\ldots\square K_t}^d.\]
For a more detailed introduction to the Cartesian product of graphs the reader is referred to~\cite{product}.

\medskip

With $s=t$ the construction of \citeauthor{MR1985088} gives examples of path-pairable graphs on $n=s\cdot t$ vertices with maximum degree $2\cdot\sqrt{n}$. This bound was recently improved to $\sqrt{n}$ by \citet{MR3518137}. It was also conjectured in~\cite{MR1985088} that $K_t\square K_t\square K_t$ is path-pairable for sufficiently large even values of $t$.

\medskip

In this chapter, we significantly improve the upper bound on the minimal value of the maximum degree of path-pairable graphs: we prove that high dimensional complete grids are path-pairable. We eventually study the more general terminal-pairability variant of the above path-pairability problem and prove the following theorem:

\begin{theorem}[\citet{MR3612429}]\label{thm:gridmain}
	Let $G=K_t^d $ and let $D=(V(D),E(D))$ be a demand graph with $V(D)=V(K_t^d)$ and $\Delta(D)\leq 2\lfloor\frac{t}{12}\rfloor- 2$. Then $D$ can be realized in $G$.
\end{theorem}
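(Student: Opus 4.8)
The plan is to induct on the dimension $d$. The base case $d=1$ is $K_t^1=K_t$, where the claim is implied by \Fref{thm:delta_n/3}, since $2\lfloor\frac{t}{12}\rfloor-2\le 2\lfloor\frac{t}{6}\rfloor-4$. For the inductive step write $K_t^d=K_t\square K_t^{d-1}$ and split its vertex set $\{1,\dots,t\}\times V(K_t^{d-1})$ into the $t^{d-1}$ first-coordinate \emph{fibers} $C_{\mathbf w'}=\{(k,\mathbf w'):1\le k\le t\}$, each a copy of $K_t$, and the $t$ \emph{hyperplanes} $H_c=\{c\}\times V(K_t^{d-1})$, each a copy of $K_t^{d-1}$. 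The structural fact that drives everything is that an edge of $K_t^d$ changes exactly one coordinate, so it lies either in the first direction (inside a unique fiber) or in one of the other directions (inside a unique hyperplane); the fibers and hyperplanes therefore partition $E(K_t^d)$, and paths confined to fibers are automatically edge-disjoint from paths confined to hyperplanes.

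For a demand edge $e=\{u,v\}$ with $u=(u_1,\mathbf u')$ and $v=(v_1,\mathbf v')$ I would route it as a two-leg path with a single bend: travel inside $C_{\mathbf u'}$ from $u$ to the bend vertex $b=(v_1,\mathbf u')$, then inside $H_{v_1}$ from $b$ to $v$. This records a fiber demand $\{u_1,v_1\}$ in the $K_t$ carried by $C_{\mathbf u'}$ and a hyperplane demand $\{\mathbf u',\mathbf v'\}$ in the $K_t^{d-1}$ carried by $H_{v_1}$. Gathering these over all demand edges produces, for each column $\mathbf w'$, a one-dimensional demand graph $D_{\mathbf w'}$ on $C_{\mathbf w'}$, and, for each first coordinate $c$, a $(d-1)$-dimensional demand graph $D^{(c)}$ on $H_c$. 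By the partition fact, if every $D_{\mathbf w'}$ is realized inside its fiber and every $D^{(c)}$ inside its hyperplane, then splicing each fiber path to its continuing hyperplane path at the shared bend vertex gives a globally edge-disjoint realization of $D$, because the fibers are mutually vertex-disjoint, the hyperplanes are mutually vertex-disjoint, and the two families use different edge-directions.

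The crux is choosing the bends so that the derived degrees stay in range. Each original endpoint of $e$ contributes $1$ to exactly one derived graph (a fiber contribution at the endpoint whose fiber is used, a hyperplane contribution at the other), so the original endpoints accumulate at most $\Delta(D)$ in total; the trouble is the bend vertex $b$, which picks up one fiber demand and one hyperplane demand at once. A vertex $w=(c,\mathbf w')$ can be the bend of $e$ only when one endpoint lies in $C_{\mathbf w'}$ and the other in $H_c$, and for each such $e$ the bend could instead be placed at the mirror vertex $(u_1,\mathbf v')$. Bounding the maximum bend multiplicity is thus a binary orientation problem: every routed edge must send its bend to one of two prescribed vertices, and I must keep each vertex's bend load small. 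I expect to settle this by a Hall- or flow-type balancing of the bends, or by decomposing the relevant auxiliary multigraph into $2$-factors via \Fref{thm:petersen} and distributing bends evenly along them, exactly in the spirit of the lifting arguments of \Fref{lemma:main}. The target is $\Delta(D_{\mathbf w'})\le 2\lfloor\frac{t}{6}\rfloor-4$ and $\Delta(D^{(c)})\le 2\lfloor\frac{t}{12}\rfloor-2$; the asymmetry between these two budgets --- the fibers carry a $K_t$ demand and tolerate the larger bound, while the hyperplanes must be handed a demand satisfying the very hypothesis of the theorem so that the induction closes --- is precisely what the constant $\tfrac1{12}$ reflects, and pinning down the exact slack is where I expect the real work to lie.

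Granting the balancing, \Fref{thm:delta_n/3} realizes each fiber demand inside its $K_t$ and the induction hypothesis realizes each hyperplane demand inside its $K_t^{d-1}$, and the two realizations use disjoint edge-directions, so the spliced paths form the required edge-disjoint realization of $D$ in $K_t^d$, completing the induction. The one hazard to monitor is that resolving a bend whose vertex already carries demand (the lifting situation of \Fref{fig:lifting}) does not silently push some derived degree over threshold; the additive $-2$ and $-4$ in the two bounds are there to absorb exactly this kind of loss.
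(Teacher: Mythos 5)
Your overall architecture (induct on $d$, split $K_t^d$ into $t^{d-1}$ copies of $K_t$ and $t$ copies of $K_t^{d-1}$, route each demand edge through a bend, solve columns by \Fref{thm:delta_n/3} and layers by induction) matches the paper, but your routing scheme differs in a way that creates a genuine gap at exactly the point you flag as ``the crux.'' You route each demand edge as a \emph{two-leg} path, which forces the bend's layer index to equal the first coordinate of one of the two endpoints --- a binary choice per edge. The balancing lemma you then need (each vertex's bend load plus own-edge load within budget, \emph{simultaneously} for the fiber demands and the hyperplane demands, since both are controlled by the same binary choices) is left unproven, and your suggested tools do not supply it: Petersen's theorem (\Fref{thm:petersen}) distributes edges over a \emph{free} $t$-way choice of color classes, whereas here each edge has a list of only two admissible layers, so a $2$-factor decomposition of the projected multigraph says nothing about respecting those lists. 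The difficulty is real, not cosmetic: the bend of an edge routed fiber-first from $u=(u_1,\mathbf w')$ lands at $(v_1,\mathbf w')$, so bend loads aggregate \emph{fiber-wide by the partner's first coordinate}. If all $tq$ cross edges incident to a fiber $C_{\mathbf w'}$ have partners with the same first coordinate $c$ (which the degree bound permits), the vertex $(c,\mathbf w')$ can absorb only about $q$ of them, all remaining edges must take the mirror option, and then every vertex of the fiber sits exactly at its hyperplane budget $q$ with zero slack. In every concentration scenario one can check the budgets survive only tightly, so no greedy, Hall-type, or plain Eulerian-orientation argument closes it; a constrained-orientation lemma with these coordinate-aggregated side conditions would have to be proved from scratch, and it is not clear it holds with your stated constants.

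The paper avoids this entirely by adding a third leg: a demand edge $\{(\underline a,i),(\underline b,j)\}$ with $\underline a\neq\underline b$ is replaced by \emph{three} demand edges $\{(\underline a,i),(\underline a,k)\}$, $\{(\underline a,k),(\underline b,k)\}$, $\{(\underline b,k),(\underline b,j)\}$, where the intermediate layer $k$ may be \emph{any} of the $t$ layers. This makes the layer assignment a free $t$-way choice, so Petersen now applies cleanly: project the cross-column demand edges to $V(K_t^{d-1})$, pad the projection $H$ to be $(t\cdot q)$-regular, decompose into $\tfrac q2\cdot t$ edge-disjoint $2$-factors, and group them into $t$ subgraphs $H_1,\dots,H_t$ with $\Delta(H_k)\le q$; an edge whose projection lies in $H_k$ routes through layer $k$. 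Then each layer receives a demand graph of maximum degree at most $q=2\lfloor\frac t{12}\rfloor-2$ (closing the induction), and each column receives its original at most $q$ demands plus at most $q$ bend demands, hence at most $2q\le 2\lfloor\frac t6\rfloor-4$, within the budget of \Fref{thm:delta_n/3}. To repair your write-up you should either prove your binary balancing lemma in full (expect substantial work, and possibly a loss in the constant) or adopt the three-leg routing, after which the rest of your argument goes through essentially verbatim.
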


\Fref{thm:gridmain} immediately implies the following corollary:

\begin{corollary}\label{cor}
	If $t\geq 24$, $K_t^d$ is path-pairable.
\end{corollary}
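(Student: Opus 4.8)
The plan is to induct on the dimension $d$, peeling off the last coordinate direction and using the complete base graph $K_t$ as the engine for the coordinate we remove. The base case $d=1$ is $K_t^1=K_t$, and here one checks the elementary inequality $2\lfloor\frac{t}{12}\rfloor-2\le 2\lfloor\frac{t}{6}\rfloor-4$ (valid for $t\ge 12$, since $\lfloor\frac{t}{6}\rfloor-\lfloor\frac{t}{12}\rfloor\ge 1$), so that the hypothesis $\Delta(D)\le 2\lfloor\frac{t}{12}\rfloor-2$ already puts us inside the range covered by \Fref{thm:delta_n/3}; this realizes the base case directly. For $d\ge 2$ I would write $K_t^d=K_t^{d-1}\square K_t$ and decompose the vertex set into $t$ \emph{slabs} $S_1,\dots,S_t$, each a copy of $K_t^{d-1}$ (fixed last coordinate), together with the $t^{d-1}$ \emph{fibers}, each a copy of $K_t$ in the last direction (fixed projection $x\in V(K_t^{d-1})$).

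\textbf{The reduction step.} Writing a vertex as $(x,i)$ with $x\in V(K_t^{d-1})$ and $i\in[t]$, I would classify each demand edge. A same-fiber demand (both endpoints $(x,i),(x,j)$) is realized by the single edge of its fiber $K_t$; a same-slab demand stays in its slab for the recursion; a genuinely crossing demand $(x,i)$--$(y,j)$ with $x\neq y$ and $i\neq j$ is assigned a \emph{target slab} $k$ and split into a slab demand edge $(x,k)$--$(y,k)$ in $S_k$ together with two fiber segments $(x,i)$--$(x,k)$ and $(y,j)$--$(y,k)$, which become demand edges inside the two fibers $F_x,F_y\cong K_t$. The point of this split is twofold. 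First, the demand graph induced on any fiber has maximum degree at most $\Delta(D)\le 2\lfloor\frac{t}{12}\rfloor-2\le 2\lfloor\frac{t}{6}\rfloor-4$, so \emph{every} fiber routing can be realized inside its $K_t$ by \Fref{thm:delta_n/3}. Second, if the target slabs are chosen so that each slab's induced demand graph has maximum degree at most $2\lfloor\frac{t}{12}\rfloor-2$, then each slab is realizable in $K_t^{d-1}$ by the induction hypothesis. Concatenating the direct same-fiber edges, the realized fiber segments, and the recursively realized slab paths gives edge-disjoint paths for all of $D$.

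\textbf{Balancing the slabs.} The choice of target slabs is where the work concentrates, and I would run it through the machinery already developed in Chapter~\ref{chap:complete}. First regularize $D$ (raising it to an even-degree regular multigraph by lifting spare edges, exactly as in the setup of \Fref{thm:delta_n/3}), then apply Petersen's \Fref{thm:petersen} to split the demands into edge-disjoint $2$-factors. Each $2$-factor contributes degree $2$ at every vertex, and I would distribute its crossing edges across the slabs using a balanced three-class assignment in the spirit of the balanced lifting coloring of \Fref{claim:liftcolor} and the resolution-of-multiplicities device of \Fref{lemma:main}, so that (a) each slab vertex receives a controlled, roughly equal share of the routed load, and (b) no surviving multiplicities are left along the fibers. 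Handling the $2$-factors one at a time and summing the per-factor guarantees keeps every slab demand graph within the required bound.

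\textbf{Main obstacle.} The hard part will be precisely item~(b) above: showing that a slab assignment exists under which \emph{every} slab-demand graph — counting both the retained same-slab demands and the routed helper edges — stays below $2\lfloor\frac{t}{12}\rfloor-2$. A purely averaging argument only bounds the mean load over the $t$ slabs and allows the within-slab originals and the routed helpers to pile up at a single slab vertex, potentially doubling the degree. This is exactly the gap that the constant must absorb, and it explains why the theorem is stated with $\frac{t}{12}$ rather than $\frac{t}{6}$: the factor-two slack between $\frac{t}{12}$ and $\frac{t}{6}$ is the budget that the balanced Petersen-plus-coloring bookkeeping must spend to control the load \emph{per vertex}, not merely per slab. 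Once \Fref{thm:gridmain} is established, \Fref{cor} is immediate, since a perfect matching has $\Delta(D)=1\le 2\lfloor\frac{t}{12}\rfloor-2$ as soon as $t\ge 24$.
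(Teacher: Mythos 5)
Your overall architecture coincides with the paper's: \Fref{cor} is deduced in one line from \Fref{thm:gridmain} (a perfect matching has $\Delta(D)=1\le 2\lfloor\frac{t}{12}\rfloor-2$ once $t\ge 24$), and your inductive scheme for the theorem itself — your slabs and fibers are the paper's layers $L_k$ and columns $l_j$, with the same three-edge detour through a target layer, Petersen's \Fref{thm:petersen} for balancing, and \Fref{thm:delta_n/3} for the $K_t$ subproblems. However, two linked steps in your reduction would fail as sketched, and they are not mere bookkeeping.

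First, you retain same-slab demands for the recursion and reroute only the ``genuinely crossing'' ones. Write $q=2\lfloor\frac{t}{12}\rfloor-2$. A slab vertex $(x,k)$ can then carry up to $q$ retained demand edges plus, under any balanced assignment, up to $q$ routed middle edges $(x,k)(y,k)$, so the slab demand graph can reach degree $2q$ — while the inductive hypothesis for $K_t^{d-1}$ only tolerates $q$. There is no room to strengthen the hypothesis, since the bound would then double at every dimension. Your ``main obstacle'' paragraph places the factor-two slack between $\frac{t}{12}$ and $\frac{t}{6}$ in the slabs, but that slack does not exist inside the recursion at all; it is available only at the $K_t$ subproblems via \Fref{thm:delta_n/3}. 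Second, and correspondingly, your claim that every fiber's demand graph has maximum degree at most $\Delta(D)$ is false: the fiber $F_x$ also receives the landing segments $(x,j)(x,k)$ of every crossing demand with projection endpoint $x$ that is assigned target $k$, so the correct fiber bound is $2q$ — and this is precisely where the budget is spent, since \Fref{thm:delta_n/3} requires $2q\le 2\lfloor\frac{t}{6}\rfloor-4$ in each column. The paper resolves both issues simultaneously by routing \emph{every} cross-column demand, including same-layer ones, through the auxiliary multigraph $H$ on $V(K_t^{d-1})$ obtained by projection: $\Delta(H)\le tq$, so after regularizing, \Fref{thm:petersen} splits $E(H)$ into $\frac{tq}{2}$ two-factors, which are grouped into $t$ bundles $H_1,\dots,H_t$ with $\Delta(H_k)\le q$; assigning each crossing demand the layer of its bundle caps every layer's demand degree at $q$ (nothing retained to collide with) and every column's at $2q$, per vertex and automatically. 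Note also that \Fref{claim:liftcolor} and \Fref{lemma:main} are the wrong tools here: they implement a three-way split tailored to deleting three vertices of $K_n$ inside the proof of \Fref{thm:delta_n/3}, whereas the required $t$-way, per-vertex-balanced split falls out directly from the Petersen grouping of $H$.
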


The above construction provides examples of path-pairable graphs on $n=t^d$ vertices with maximum degree
\[ \Delta(K_t^d)=(t-1)\cdot d= \log n\cdot \frac{t-1}{\log t}.\]
Observe that $t$ can be chosen to be a constant ($t=24$) thus we have obtained path-pairable graphs on $n$ vertices with $\Delta\approx 7.24\log n$.

\section{Proof of \texorpdfstring{\Fref{thm:gridmain}}{Theorem~\ref{thm:gridmain}}}
For $i=1,\ldots,t$, let $L_i$ be the subgraph of $K_t^d$ induced by \[\left\{(a_1,\dots,a_{d-1},i): 1\leq a_j\leq t_j, j=1,2,\ldots,d-1\right\}.\]
We call $L_1,\dots,L_{t}$ the \textbf{layers} of $K_t^d$. Similarly, by fixing the first $d-1$ coordinates we get $t^{d-1}$ copies of $K_t$;
we denote these complete subgraphs by $l_1,\dots, l_{t^{d-1}}$ and refer to them as \textbf{columns}.

\medskip

Given an edge $uv$ of the demand graph with $u,v\in K_t^d$ we replace $uv$ by a path of three edges $uu'$, $u'v'$, and $v'v$, where $u',v'\in L_i$ for some $i\in\{1,2,\dots,t\}$ and $u,u'$ and $v,v'$ lie in the same columns.
Having done that, we consider the new demand edges defined within the $t$ layers and $t^{d-1}$ columns and break the initial problem into $t^{d-1}+t$ subproblems that we solve inductively. The upcoming paragraphs discuss the details of the drafted solution.

\medskip

For the discussion of the base case $d=1$ as well as for the inductive step we use \Fref{thm:delta_n/3}. We mention that instead of using this theorem we could use a weaker version of it with $\Delta(D)\leq \frac{t}{4+2\sqrt{3}}$ proved by \citet{MR1189290}. With every further step of our proof unchanged, a result corresponding to the weaker theorem can be proved with a smaller bound on $\Delta(D)$.

\medskip

Let $q$ be an even number with $2\leq q \leq \lfloor\frac{t}{6}\rfloor-1$ and let $D=(V(D),E(D))$ be a demand multigraph with $V(D)=K_t^d$ and $\Delta(D)\leq q$. Let $E'(D)$ denote the set of demand edges whose vertices lie in different $l_i, l_j$ columns. We construct an auxiliary graph $H$ with $V(H)=V(K_t^{d-1})$ and project every edge of $E'(D)$ into $H$ by deleting the last coordinates of the vertices. It is easy to see that $\Delta(H)\leq t\cdot q$.

\medskip

We may assume without loss of generality that $D$ is $(t\cdot q)$-regular by joining additional pairs of vertices or replacing edges by paths of length two if necessary. Again, we use the 2-factor decomposition theorem of Petersen (\Fref{thm:petersen}) to distribute the original demand edges among the layers $L_1,\dots, L_t$ and define new subproblems on them.

\medskip

Obviously, the graph $H$ satisfies the conditions of \Fref{thm:petersen}, thus $E(H)$ can be partitioned into $\frac{q}{2}\cdot t$ edge-disjoint 2-factors. By arbitrarily grouping the above 2-factors into $\frac{q}{2}$-tuples, we can partition $E(H)$ into $t$ edge-disjoint subgraphs $H_1,\dots,H_t$ with $\Delta(H_i)\leq q$.

\medskip

Assume now that the vertices $u=(\underline{a},i)$ and $v=(\underline{b},j)$  ($a,b\in {[t]}^{d-1}$) are joined by a demand edge belonging to $E'(D)$ (thus $\underline{a}\neq \underline{b}$) and the corresponding edge in $H$ is contained by $H_k$. We then replace the demand edge $uv$ by the following triple of newly established demand edges:
\[
\left\{(\underline{a},i),(\underline{a},k)\right\}, \left\{(\underline{a},k),(\underline{b},k)\right\},
\left\{(\underline{b},k),(\underline{b},j)\right\}.\]
We claim the following:
\begin{itemize}
	\item[\textit{(i)}] For  every layer $L_j$ the condition $\Delta(L_j)\leq q$ holds.
	\item[\textit{(ii)}] For every column $l_j$ the condition $\Delta(l_j)\leq 2q$ holds.
\end{itemize}
The first statement obviously follows from the partition of $E'(D)$. For the second one, observe that a vertex $v$ in $l_j$ is initially incident to $q$ demand edges and at most $q$ additional demand edges are joined to it (otherwise \textit{(i)} is violated). Notice now that every layer $L_j$ contains a $(d-1)$-dimensional subproblem that can be solved (within the layer) by the inductive hypothesis. Also, every column $l_j$ contains a subproblem (note that the original demand edges in $E(D)\backslash E'(D)$ are incorporated into these subproblems) that can be solved by \Fref{thm:delta_n/3}. This completes our proof.

\section{Remarks}

By using \Fref{thm:delta_n/3} and the described inductive approach we proved that $K_t^d$ is path-pairable for $t\geq 24$, $d\in\mathbb{Z}^+$.
Even if the bound in \Fref{thm:delta_n/3} is improved to the point of being sharp, it only improves the constant $5.2$ in \Fref{thm:gridmain} and decreases the lower bound on $t$ in Corollary~\ref{cor}, however, it does not affect the $\Omega(\log n)$ order of magnitude of the maximum degree.

\medskip

Let us measure the sharpness of our theorem. Assume $d\ge 2$ and that $t$ is even. Take the following matching $M$ of the vertices of $V(K_t^d)$: pair vertex $(x_1,\ldots,x_d)$ with $(t+1-x_1,t+1-x_2,\ldots,t+1-x_d)$. Let $D=(V(K_t^d),q\cdot M)$, i.e., every edge in $M$ is taken with multiplicity $q$. Since the vertices of each edge are different in each coordinate, \fref{ineq:pigeonhole} becomes
\[\frac12\cdot t^d\cdot q\cdot d \le \frac12\cdot d\cdot (t-1)\cdot t^d, \]
implying that $q \le (t-1)$. This means that there is at most a factor of $(6+\varepsilon_t)$ between the extremal bound and the result of \Fref{thm:gridmain}.

\medskip

We mention that one particularly interesting and promising path-pairable candidate (with the same order of magnitude of vertices but with a better constant for $\Delta$) is the $d$-dimensional hypercube $Q_d$ on $2^d$ vertices ($\Delta(Q_d) = d$). Observe that hypercubes are special members of the above studied complete grid family as $Q_d =K_2^d$.
Although it is known that $Q_d$ is not path-pairable
for even values of $d$ (see~\cite{MR1167462}), the question is open for odd dimensional
hypercubes for $d\geq 5$ ($Q_1$ and $Q_3$ are both path-pairable).

\begin{conjecture}[\cite{MR1189290}]
	The $(2k+1)$-dimensional hypercube $Q_{2k+1}$ is path-pairable for all
	$k\in\mathbb{N}$.
\end{conjecture}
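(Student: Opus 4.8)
The plan is to prove the conjecture by induction on $k$, using the factorization $Q_{2k+1}=Q_{2k-1}\square Q_2$, which exhibits $Q_{2k+1}$ as four copies $A,B,C,D$ of $Q_{2k-1}$ arranged along the $4$-cycle $Q_2=C_4$, with corresponding vertices of cyclically adjacent copies joined. The base cases $Q_1$ and $Q_3$ are path-pairable, as recorded above. Because the inductive step will create, inside each copy, a demand graph that is no longer a matching, I would not attempt to prove path-pairability directly; instead I would strengthen the statement to a terminal-pairability claim $P(k)$: every demand graph on $V(Q_{2k-1})$ of maximum degree at most some $g(k)\ge 1$ is realizable in $Q_{2k-1}$. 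Since a perfect matching has maximum demand degree $1$, any invariant with $g(k)\ge 1$ recovers path-pairability, and the real content is to propagate $P(k-1)\Rightarrow P(k)$ while keeping $g\ge 1$ throughout.

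Given a matching $M$, I would first classify its demand edges by the $C_4$-position of their endpoints: in-copy edges, fibre edges (joining cyclically adjacent copies), and crossing edges whose endpoints sit in non-adjacent copies. Mirroring the routing in \Fref{thm:gridmain}, each crossing demand $\{u,v\}$ with $u\in A$ and $v\in C$ would be replaced by a three-edge path that first slides $u$ and $v$ along their $C_4$-fibres into a common copy and then leaves an in-copy demand to be resolved recursively. The slides would be distributed among the four copies by $2$-factoring an auxiliary projected demand graph on $V(Q_{2k-1})$ via \Fref{thm:petersen}, exactly as in the grid argument, so that the demand degree induced inside any single copy stays within the budget $g(k-1)$, and the in-copy subproblems are then discharged by the inductive hypothesis $P(k-1)$.

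The hard part will be the degree-criticality of the hypercube. Unlike a complete grid, where $t\ge 24$ supplies the slack $2\lfloor\frac{t}{12}\rfloor-2$ that lets one triple demand edges freely (so that \Fref{thm:delta_n/3} can absorb the inflated columns), here every factor is $K_2$, one has $\Delta(Q_{2k+1})=2k+1$, and the pigeonhole bound \fref{ineq:pigeonhole} leaves essentially no room: each vertex owns only two $C_4$-fibre edges with which to export its crossing demands. Consequently the balancing of slides across the four copies must be almost perfectly efficient, and this is precisely where the parity of the dimension enters. Routing an $A$-to-$C$ demand along the fibre alone is the antipodal matching problem on $C_4$, which is unsolvable in isolation — a four-cycle admits no edge-disjoint pair of antipodal geodesics — and this is the local mechanism behind the non-pairability of $Q_d$ for even $d$ established in~\cite{MR1167462}. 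The crux of the proof is therefore to show that when the total dimension $2k+1$ is odd one can always trade a $C_4$-fibre slide against an in-copy detour so that the global parity obstruction cancels, whereas for even dimension the identical bookkeeping leaves an unavoidable deficit.

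As an alternative route to the most resistant instances, I would attack the antipodal and near-antipodal matchings separately by producing an explicit edge-disjoint decomposition of $Q_{2k+1}$ into $2^{2k}$ antipodal geodesics; since such a decomposition forces each geodesic to use every coordinate direction exactly once, it is naturally encoded by a family of orderings of the $2k+1$ directions, and I expect a clean algebraic construction over $\mathbb{Z}/(2k+1)$ — available only because $2k+1$ is odd — to furnish one. Short-range matchings would be handled by the inductive $C_4$-scheme above, and a general matching would be reduced to these two regimes by first rerouting its long demands toward antipodal-type demands and absorbing the residual short demands locally. Establishing that the invariant $g(k)\ge 1$ survives every step, and in particular that the parity cancellation is uniform in $k$, is the single point on which the whole argument stands or falls.
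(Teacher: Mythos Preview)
This statement is a \emph{conjecture}, not a theorem: the paper explicitly presents it as open (``the question is open for odd dimensional hypercubes for $d\ge 5$'') and offers no proof. There is therefore nothing in the paper to compare your attempt against.

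What you have written is a research plan, not a proof, and you say so yourself in the last sentence: ``Establishing that the invariant $g(k)\ge 1$ survives every step \ldots\ is the single point on which the whole argument stands or falls.'' That single point is essentially the entire conjecture. The $C_4$-decomposition $Q_{2k+1}=Q_{2k-1}\square Q_2$ and the Petersen-balancing idea are natural first moves, but the slack that makes the grid argument in \Fref{thm:gridmain} work --- namely $t\ge 24$ so that $2\lfloor t/12\rfloor-2\ge 2$ --- is exactly what vanishes when $t=2$. Your proposed strengthening $P(k)$ with a degree budget $g(k)\ge 1$ cannot be propagated by the scheme you describe: lifting a crossing demand into a copy raises the in-copy demand degree by $2$ at the landing vertex, so to recurse you would need $g(k-1)\ge g(k)+2$, which drives $g$ negative immediately. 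The ``parity cancellation'' you invoke to rescue this is asserted, not demonstrated, and no mechanism is given for why odd dimension makes it work; the observation that $C_4$ fails the antipodal problem is correct but does not by itself distinguish $Q_{2k}$ from $Q_{2k+1}$ at the inductive step. Likewise, the proposed edge-disjoint decomposition of $Q_{2k+1}$ into $2^{2k}$ antipodal geodesics is itself a nontrivial open-flavoured claim, and even granting it, the reduction of a general matching to ``antipodal plus short-range'' is left entirely unspecified. In short, every step that would carry actual content is deferred.
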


\chapter{Related subjects, algorithms and complexity}\label{chap:termcomplexity}

\section{Immersions}

Recently, the study of graph immersions has become increasingly popular. As the following definition shows, it is very closely related to the concepts of terminal-pairability.
\begin{definition}
	Let $H$ and $G$ be {(multi)}graphs. There is an immersion of $H$ in $G$ (or $H$ is immersed in $G$, or $G$ contains an immersion of $H$) iff there is map $\phi:V(H)\cup E(H)\to V(G)\cup E(G)$ such that
	\vspace{-6pt}
	\begin{itemize}
		\item $\phi$ maps vertices of $H$ into distinct vertices of $G$,
		\item a loop on a vertex $u$ is mapped to a cycle of $G$ which traverses $\phi(u)$,
		\item an edge $uv$ of $H$ is mapped to a path connecting $\phi(u)$ to $\phi(v)$ in $G$, and
		\item for two distinct edges $e_1$ and $e_2$ in $H$, their images $\phi(e_1)$ and $\phi(e_2)$ are edge-disjoint.
	\end{itemize}
	This relation is denoted as $H\le_{\mathrm{i}} G$. If in addition, for any vertex $v\in V(H)$ and edge $e\in E(H)$ we have $v\notin e\implies \phi(v)\notin V(\phi(e))$, then $H$ has a strong immersion in $G$, denoted $H\le_{\mathrm{si}} G$.
\end{definition}

\begin{remark}
	In the study of graph immersions, the lifting operation is defined as the inverse of our lifting operation.
\end{remark}

In the terminal-pairability problem, the function mapping vertices of $H$ to vertices of $G$ is fixed. However, if $G=K_n$ and $H$ is loopless, the notion of realizable and immersible coincide.

\medskip

The transitive property of the immersion relations means that both $\le_{\mathrm{i}}$ and $\le_{\mathrm{si}}$ define partial orders on the set of finite graphs. The following fundamental result about graph immersions was conjectured by Nash-Williams.
\begin{theorem}[\citet{MR2595703}]\label{thm:RobertsonSeymour}
	In any infinite sequence of graphs ${(G_i)}_{i=1}^\infty$, there exists a pair $i<j$ such that $G_i\le_{\mathrm{i}} G_j$.
\end{theorem}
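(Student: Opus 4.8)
The statement asserts that the finite graphs are \emph{well-quasi-ordered} by the immersion relation $\le_{\mathrm{i}}$: no infinite strictly descending chain exists (this is automatic, since $H\le_{\mathrm{i}} G$ forces $|E(H)|\le |E(G)|$), and, crucially, no infinite \emph{antichain} exists. The plan is to argue by contradiction via Nash-Williams' \emph{minimal bad sequence} technique. Call a sequence $(G_i)_{i=1}^\infty$ \emph{bad} if $G_i\not\le_{\mathrm{i}} G_j$ for all $i<j$; the theorem is equivalent to the nonexistence of a bad sequence. Assuming one exists, I would build a minimal bad sequence greedily: choose $G_1$ with fewest edges among graphs that begin some bad sequence, and having fixed $G_1,\dots,G_{k-1}$, choose $G_k$ with fewest edges so that $G_1,\dots,G_k$ still extends to a bad sequence. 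The resulting $(G_i)$ is bad, and the point of minimality is that any collection of graphs each obtained from some $G_i$ by a genuine reduction (a small edge cut, a deletion, a lift) is itself well-quasi-ordered, so that Higman's Lemma can be applied to sequences of such reduced pieces.

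The heart of the argument is a dichotomy governed by tree-width. First I would settle the \emph{bounded} case: for each fixed $w$, the graphs of tree-width at most $w$ are well-quasi-ordered under $\le_{\mathrm{i}}$. Here one encodes each such graph by a bounded-width tree decomposition whose bags, adhesion sets, and the routing information describing how edges cross adhesions are objects of bounded size over a finite alphabet. A Kruskal-type tree theorem --- the well-quasi-ordering of trees labelled from a well-quasi-ordered set, itself a consequence of Higman's Lemma --- then produces, from any infinite family, an embedding of one labelled decomposition tree into another, and this embedding can be promoted to an immersion of the underlying graphs.

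For the \emph{unbounded} case, if the graphs of the minimal bad sequence have unbounded tree-width, I would invoke the Excluded Grid Theorem of Robertson and Seymour: sufficiently large tree-width forces an arbitrarily large wall. Exploiting the structural theory of graphs excluding a fixed immersion --- which one obtains by transporting the Graph Minors structure theorem to $\le_{\mathrm{i}}$, using the coincidence of minors and immersions on subcubic graphs to bootstrap --- a graph $G_j$ of enormous tree-width can be shown to contain an immersion of any fixed earlier $G_i$, because a large enough wall supplies the edge-disjoint routing needed to realise the demand prescribed by $G_i$. This yields $G_i\le_{\mathrm{i}} G_j$ with $i<j$, contradicting badness. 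Threading the two branches together, one splits each member of the minimal sequence along a bounded-order edge cut and feeds the sequences of smaller sides into Higman's Lemma, where minimality guarantees they are well-quasi-ordered.

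The main obstacle is, beyond doubt, the structure theory powering the unbounded-width branch. Proving that a large wall guarantees the desired immersion requires the full apparatus of the Robertson--Seymour structural decomposition adapted from minors to immersions: controlling apex vertices and vortices, analysing tangles, and managing the edge-disjoint routing of the demanded paths across the wall. It is exactly this deep structural input --- not the (comparatively formal) minimal-bad-sequence bookkeeping --- that places the result among the hardest theorems in structural graph theory and puts a self-contained proof far beyond the scope of a short sketch.
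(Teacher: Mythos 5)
You should first note what the paper actually does with this statement: it does not prove it. The theorem is quoted from Robertson and Seymour (Graph Minors XXIII, the resolution of Nash-Williams' immersion conjecture) and is used in the thesis purely as background for the discussion of immersions, so there is no in-paper argument to compare against and your sketch must stand on its own. It does not: beyond the honest admission that the structural core is deferred, both branches of your tree-width dichotomy fail concretely.

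The unbounded branch is false as stated. An immersion $H\le_{\mathrm{i}} G$ forces $d_G(\phi(u))\ge d_H(u)$ for every vertex $u$, since the edge-disjoint paths realizing the edges at $u$ leave $\phi(u)$ on distinct edges. A wall is subcubic, so no graph with a vertex of degree at least $4$ immerses into any wall, however enormous; a sequence of ever-larger walls has unbounded tree-width, yet the Excluded Grid Theorem buys you no immersion of a fixed earlier term. Tree-width is simply the wrong parameter for $\le_{\mathrm{i}}$: the relevant decompositions are along \emph{edge} cuts (Wollan's tree-cut width), and the structure theory for graphs excluding a fixed immersion (bounded degree apart from a bounded set of vertices) postdates the 2010 wqo proof and cannot be obtained by the ``subcubic bootstrap'' you invoke --- the coincidence of immersion with topological-minor containment on subcubic hosts does not transport the Graph Minors structure theorem to $\le_{\mathrm{i}}$. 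The bounded branch has the matching flaw: in a tree decomposition the adhesions are vertex sets of bounded size, but a bounded \emph{vertex} cut can be crossed by unboundedly many edge-disjoint paths when its vertices have high degree, so your ``routing information across adhesions'' is not an object of bounded size over a finite alphabet, and the Kruskal/Higman machinery never gets off the ground. This is precisely why the genuine proof separates out high-degree vertices and works with edge-based decompositions. Your minimal-bad-sequence skeleton is the standard and correct outer shell, but as written both halves of the dichotomy it feeds on are unsound, so the proposal is not a proof of the theorem even in outline.
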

This property of the finite graphs with the order $\le_{\mathrm{i}}$ is called \textbf{well-partial-ordered}. An immediate consequence of the theorem is that any subset of graphs that is upward-closed w.r.t.~$\le_{\mathrm{i}}$ has finitely many minimal elements. Similarly, any property which is downward-closed on immersion order can be described by finitely many graphs. This is a useful property, since for every fixed graph $H$ there is a polynomial time algorithm to check whether there exists an immersion of $H$ in the input graph $G$, see~\cite{MR1146902}.

\medskip

\citeauthor{MR2063516} have explored connections between the immersion order and graph colorings, and made the following conjecture.

\begin{conjecture}[\citet{MR2063516}]\label{conj:immersion}
	If $\chi(G)\ge n$, then $K_n\le_{\mathrm{i}} G$.
\end{conjecture}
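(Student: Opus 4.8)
The plan is to attack \Fref{conj:immersion} through the edge-disjoint paths viewpoint that pervades this part of the thesis: asking for $K_n\le_{\mathrm{i}} G$ is exactly asking for a set $S=\{s_1,\dots,s_n\}\subseteq V(G)$ of \emph{branch vertices} together with a realization in $G$ of the complete demand graph on $S$, i.e.\ $\binom n2$ pairwise edge-disjoint paths, one joining each pair $s_is_j$. Thus the conjecture is a terminal-pairability statement in which the host $G$ is arbitrary but highly chromatic, the demand is the densest possible graph $K_n$, and we are additionally free to choose where the terminals sit. First I would pass to a \emph{minimal counterexample}: since taking subgraphs can only decrease $\chi$ and cannot create an immersion, a vertex-minimal $G$ with $\chi(G)\ge n$ but $K_n\not\le_{\mathrm{i}} G$ may be assumed to be vertex-$n$-critical, so that $\chi(G)=n$ and $\chi(G-v)=n-1$ for every $v$.

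The second step is to exploit the rigidity of color-critical graphs. Vertex-$n$-criticality forces $\delta(G)\ge n-1$, $2$-connectivity, and, via Gallai's theory, strong restrictions on the subgraph induced by the vertices of degree exactly $n-1$ (its blocks are cliques or odd cycles). The aim of this step is to locate $n$ candidate branch vertices of large degree and to establish a quantitative edge-connectivity bound between them, because once the terminals are pinned down the problem becomes pure routing. The third step is the routing itself: here I would invoke Mader-type splitting-off together with the known minimum-degree immersion machinery, which guarantees a $K_n$-immersion once the minimum degree exceeds $Cn$ for a suitable absolute constant $C$, and which converts local edge-connectivity at the chosen terminals into the required $\binom n2$ edge-disjoint paths. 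In spirit this is the global analogue of \Fref{thm:delta_n/3}, where a bounded-degree demand is routed inside a complete host; here we must route the \emph{complete} demand inside a host that is only guaranteed to be locally dense.

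The main obstacle, and the reason the conjecture remains open, is precisely the quantitative gap between what criticality supplies and what the routing step consumes. Criticality yields only $\delta(G)\ge n-1$, whereas every available method for extracting a $K_n$-immersion requires minimum degree linear in $n$ with a constant strictly larger than $1$ (historically around $200n$, reduced since to roughly $11n$). The hard instances are exactly the near-$(n-1)$-regular critical graphs, where there is no slack to split off or reroute, and no present technique closes the constant from $C>1$ down to $C=1$. Bridging this gap is the whole content of the conjecture, and I would expect genuinely new structural insight into color-critical graphs --- rather than a refinement of the routing lemmas developed in this thesis --- to be necessary before the full statement can be reached.
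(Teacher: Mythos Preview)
The statement you were asked to address is a \emph{conjecture}, not a theorem: the paper explicitly labels it as such, attributes it to Abu-Khzam and Langston, and offers no proof. It is presented purely as context and motivation for the surrounding discussion of immersions. There is therefore no argument in the paper to compare your proposal against.

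Your write-up is not a proof either, and to your credit you say so: you sketch a natural minimal-counterexample strategy, identify the routing step as the heart of the matter, and then correctly explain why the approach stalls --- criticality only gives $\delta(G)\ge n-1$, while the known immersion-extraction tools need $\delta(G)\ge Cn$ with $C>1$. That diagnosis is accurate and matches the state of the art the paper alludes to (the $200n$ bound of DeVos et al.\ and the $11n+7$ bound of Dvo\v{r}\'ak and Yepremyan). So your submission is a sound piece of commentary on an open problem, but it is not a proof, and none was expected: the paper itself treats \Fref{conj:immersion} as unresolved.
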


This is the immersion analogue of Hajós's refuted conjecture (where immersion order is replaced with topological minor order), and Hadwiger's unsolved conjecture (where immersion order is replaced with minor order). Observe, that
\[ \mathcal{F}_n=\left\{ G\text{ finite graph}\ :\ \chi(G)<n\text{ and }\forall H\le_{\mathrm{i}}G\text{ satisfies }\chi(H)<n \right\}\]
is a downward-closed set w.r.t.~$\le_{\mathrm{i}}$. Thus, its complement is upward-closed, and therefore has finitely many minimal elements; these graphs are called \textbf{\boldmath $n$-immersion-critical}. It is easy to see that $K_n$ is $n$-immersion-critical (any graph properly immersed in $K_n$ has two vertices that are not joined by an edge). If true, \Fref{conj:immersion} would imply that $K_n$ is immersed in any graph that is not in $\mathcal{F}_n$, i.e., $K_n$ is the only $n$-immersion-critical graph. They also proved that

\begin{theorem}[\citet{MR2063516}]
	If $G$ is $n$-immersion-critical and $G\not\cong K_n$, then $G$ is $n$-edge-connected.
\end{theorem}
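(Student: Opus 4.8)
The plan is to first translate $n$-immersion-criticality into concrete colouring facts and then run a recolouring argument across a hypothetical small edge cut. Since $G\le_{\mathrm{i}}G$, the fact that $G$ lies in the complement of $\mathcal{F}_n$ together with minimality forces $\chi(G)\ge n$: if some proper immersion of $G$ had chromatic number $\ge n$ it would itself be a strictly smaller member of the complement, contradicting minimality, so $G$ must be the witness, and every proper immersion $H<_{\mathrm{i}}G$ lies in $\mathcal{F}_n$ and hence satisfies $\chi(H)<n$. Recall the immersion reductions are edge deletion, vertex deletion, and split-off (the inverse of this paper's lifting). Deleting any edge $e$ gives $\chi(G-e)\le n-1$, while $\chi(G-e)\ge\chi(G)-1$; thus $\chi(G)=n$ and $G$ is edge-colour-critical. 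Deleting a vertex gives $\chi(G-v)\le n-1$, so if $\deg(v)\le n-2$ we could extend an $(n-1)$-colouring of $G-v$ to $v$, whence $\delta(G)\ge n-1$; and $G$ is connected, since a component with chromatic number $n$ would be a proper immersion. In particular, $\delta(G)\ge n-1$ with $G\not\cong K_n$ forces $|V(G)|\ge n+1$.

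Next I would assume, for contradiction, that $\lambda(G)\le n-1$ and fix a partition $V(G)=A\sqcup B$ with both parts nonempty and $k:=e(A,B)\le n-1$ crossing edges. Both $G[A]$ and $G[B]$ are proper immersions (vertex deletions), so each has a proper colouring from the palette $[n-1]=\{1,\dots,n-1\}$; call these $c_A$ and $c_B$. For a permutation $\pi\in S_{n-1}$, recolour $B$ by $b\mapsto\pi(c_B(b))$; the result is a proper $(n-1)$-colouring of $G$ unless some crossing edge $ab$ has $c_A(a)=\pi(c_B(b))$. Each crossing edge thus forbids exactly those $\pi$ using one prescribed cell $(c_B(b),c_A(a))$ of the bipartite palette graph $K_{n-1,n-1}$, where permutations correspond to perfect matchings. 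Decomposing $K_{n-1,n-1}$ into $n-1$ edge-disjoint perfect matchings via a Latin square, each forbidden cell lies in exactly one matching, so the $k$ forbidden cells meet at most $k$ matchings. When $k\le n-2$ some matching survives, yielding an admissible $\pi$ and an $(n-1)$-colouring of $G$, contradicting $\chi(G)=n$. This disposes of the case $\lambda(G)\le n-2$.

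The remaining case $\lambda(G)=n-1$ is where the entire difficulty sits, because $K_n$ itself realises this value, and here I expect the main obstacle. My plan is to analyse the failure of the matching argument when $k=n-1$: with exactly $n-1$ forbidden cells among $(n-1)^2$, a Hall/K\"onig deficiency count shows that gluing can fail for a fixed pair $(c_A,c_B)$ only in two symmetric extremal configurations, namely either all $n-1$ crossing edges have their $B$-endpoints coloured identically while their $A$-endpoints are rainbow, or the reverse. The idea is then to exploit the freedom in choosing $c_A,c_B$ (and in permuting colours within each side independently) to argue that if gluing fails for \emph{every} choice, then the $n-1$ crossing endpoints on the rainbow side must get pairwise distinct colours in every proper $(n-1)$-colouring of that side, which forces them to span a $K_{n-1}$; transporting this across the cut and invoking $\delta(G)\ge n-1$ and edge-criticality should assemble a $K_n$ and contradict $G\not\cong K_n$. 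I anticipate that converting ``rainbow in every colouring'' into a genuine clique, and then bootstrapping a single such clique into the full $K_n$, is the technical heart of the proof and will require Kempe-chain/colour-exchange arguments together with a minimality choice of the cut (for instance minimising $|A|$); by contrast, the structural preliminaries and the $k\le n-2$ gluing are routine.
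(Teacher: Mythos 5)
Note first that the thesis states this theorem only as a citation to \citet{MR2063516} and contains no proof of it, so your proposal can only be judged on its own merits. Its first two thirds are sound: the derivation of $\chi(G)=n$, edge-criticality, $\delta(G)\ge n-1$, and the Latin-square gluing across a cut of size $k\le n-2$ are all correct, and your classification of the $(n-1)$-cell blocking configurations is right (a K\"onig/deficiency count shows a set of $n-1$ cells meets every permutation matrix of order $n-1$ only if it is a full row or a full column). The genuine gap is exactly where you place it, the case $k=n-1$, but the plan you sketch for it does not work. The pivotal claim --- that if the $A$-endpoints of the cut receive pairwise distinct colors in \emph{every} proper $(n-1)$-coloring of $G[A]$, then they span a $K_{n-1}$ --- is false in general: two nonadjacent vertices can be forced into distinct color classes by structure other than an edge (this is precisely what the Haj\'os identification exploits), so ``always rainbow'' does not convert into a clique, and the bootstrap to $K_n$ collapses.

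There is also a structural reason the plan, as outlined, cannot be repaired without a new ingredient: every immersion you invoke ($G-e$, $G-v$, $G[A]$, $G[B]$) is a subgraph, and subgraph-type information alone can never push past $(n-1)$-edge-connectivity. The Haj\'os join of two copies of $K_n$ is $n$-chromatic and color-critical yet has an edge cut of exactly $n-1$ edges, so any argument using only ``proper subgraphs are $(n-1)$-colorable'' must admit this configuration. The missing idea is the split-off operation (the inverse of this paper's lifting), which is the only power the immersion order has beyond the subgraph order. Concretely: take the cut minimum, so $G[A]$ and $G[B]$ are connected; given cut edges $a_1b_1$ and $a_2b_2$, route a path from $a_1$ to $a_2$ inside $A$ and split off along $b_1a_1\cdots a_2b_2$ to exhibit $G[B]+b_1b_2$ as a \emph{proper} immersion of $G$. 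Its $(n-1)$-coloring restricts to a proper $c_B$ of $G[B]$ with $c_B(b_1)\neq c_B(b_2)$, destroying the full-row (monochromatic-$B$) blocking configuration; symmetrically one obtains $c_A$ with two $A$-endpoints colored differently, destroying the full column. Your own Latin-square count then finishes $k=n-1$, modulo the degenerate cases in which all cut edges share an endpoint on one side, which need separate (routine) care. In short: keep your gluing framework, discard the rainbow-implies-clique step, and inject split-offs across the cut; without them the theorem is out of reach.
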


A trivial consequence of $n$-edge-connectivity is that $\delta(G)\ge n$.
The above ideas motivate the following problem, which is the dual of the terminal-pairability problem in complete graphs (Problem~\ref{problem:tpcomplete}).
\begin{problem}[\citet{MR2729363}]
Determine the minimum value of $f(n)$ such that any simple graph with minimum degree $f(n)$ contains an immersion of $K_n$.
\end{problem}

Clearly, $f(n)\ge n-1$. For small values of $n\le 7$, it has been verified in~\cite{MR2729363} that ${f(n)=n-1}$. However, a class of counterexamples to this equality have been constructed for $n\ge 8$ in~\cite{MR3231086}. The first bound proved on $f(n)$ is the following theorem.

\begin{theorem}[\citet{MR3223965}]
	If $H$ is a simple graph with $\delta(H)\ge 200n$, then $K_n\le_{\mathrm{si}}{H}$.
\end{theorem}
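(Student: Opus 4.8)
The plan is to produce $n$ \emph{branch vertices} together with $\binom{n}{2}$ pairwise edge-disjoint paths, one joining each pair of branch vertices and none passing through a third branch vertex internally; by definition this is exactly a strong immersion $K_n\le_{\mathrm{si}}H$. The whole difficulty is the simultaneous edge-disjoint routing, so the first move is to trade the minimum-degree hypothesis for something directly usable in routing, namely high edge-connectivity. Using $\delta(H)\ge 200n$ I would first pass, by a standard averaging/minimality argument (every graph of minimum degree $d$ contains a subgraph whose edge-connectivity is a constant fraction of $d$), to a subgraph $H_0\subseteq H$ with $\lambda(H_0)\ge cn$ for an absolute constant $c$ and with every vertex still of degree at least $100n$. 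I would then fix the branch set $T=\{v_1,\dots,v_n\}$ to be any $n$ vertices of $H_0$.

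The routing step is where the real content lies. Naively one might hope to route each of the $\binom{n}{2}$ demand pairs inside its own connected piece, but that would require roughly $n^2$ edge-disjoint spanning subgraphs, hence quadratic edge-connectivity, which a linear minimum-degree bound does not supply. The correct leverage is the \emph{degree surplus} at the branch vertices: each $v_i$ has degree at least $100n$, of which at most $n-1$ edges reach other branch vertices and at most $n-1$ are ever needed as first edges of paths leaving $v_i$, so each branch vertex retains a reservoir of at least $98n$ free edges into $V(H_0)\setminus T$. I would consider a maximum system of edge-disjoint, $T$-internally-avoiding paths realizing a maximal subset of the demand pairs, and suppose for contradiction that some pair $\{v_i,v_j\}$ is unrealized. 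Applying Menger's theorem to the residual graph $H_0-(T\setminus\{v_i,v_j\})$ with the already-used edges deleted produces an edge cut separating $v_i$ from $v_j$, all of whose edges are either incident to a deleted branch vertex or already saturated by the current paths. Bounding the number of such edges against the reservoir of free edges at $v_i$, with $\lambda(H_0)\ge cn$ controlling how the cut can avoid that reservoir, should force the contradiction; this is exactly the pigeonhole-flavoured counting of \fref{ineq:pigeonhole}, and it is also where the constant $200$ is consumed and left unoptimised.

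The main obstacle, and the step I expect to be delicate, is making this cut-counting rigorous while respecting the strong-immersion constraint: deleting the other branch vertices from the residual graph can in principle damage connectivity, so the argument must keep each branch vertex's surviving reservoir disjoint from the small separating cut. I would handle this in one of two ways. Either by a splitting-off reduction: repeatedly split off pairs of edges at non-branch vertices, invoking Mader's splitting-off theorem to preserve every local edge-connectivity among the terminals of $T$, thereby shrinking $H_0$ down to a graph carried on $T$ alone, where the complete demand is realizable for free; or, staying with the packing argument, by ordering the routings so that a high-degree detour around each offending branch vertex is always available. In either route the quantitative heart is the same inequality balancing the fixed linear demand $n-1$ at each branch vertex against its linear degree, which is precisely why a linear minimum-degree hypothesis $\delta(H)\ge 200n$ suffices to force $K_n\le_{\mathrm{si}}H$.
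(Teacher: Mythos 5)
There is a genuine gap, and it is fatal to the proposal as written: you fix the branch set $T$ to be \emph{any} $n$ vertices of $H_0$, but the theorem is false for an arbitrary branch set, so no routing argument can close the proof from there. Concretely, let $H$ consist of two cliques on $200n+1$ vertices joined by a matching of $n$ edges; then $\delta(H)\ge 200n$, and $K_n\le_{\mathrm{si}}H$ holds trivially (take all branch vertices in one clique), yet if you place $n/2$ branch vertices in each clique, the roughly $n^2/4$ demand pairs with endpoints on opposite sides must each cross a cut of only $n$ edges --- exactly the cut-counting of \fref{ineq:pigeonhole} shows this is infeasible. The same example explains why your two repair attempts cannot work. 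The Menger-plus-maximality count fails because the separating cut in the residual graph need not be anywhere near $v_i$: its $O(n)$ edges can all be saturated by previously routed paths, and the reservoir of $98n$ free edges at each branch vertex protects only the first and last edges of paths, not bottlenecks in the middle; linear edge-connectivity simply cannot carry $\binom{n}{2}$ simultaneous demands across a balanced cut. The Mader splitting-off variant fails for the same reason: splitting off all non-terminals yields a multigraph on $T$ in which every pair has large \emph{local} edge-connectivity, but local edge-connectivity $\Theta(n)$ does not make the complete demand realizable (a path of $n$-bundles $v_1\mbox{--}v_2\mbox{--}\cdots\mbox{--}v_n$ has pairwise local edge-connectivity $n$, yet the middle cut must carry $\Theta(n^2)$ demand paths). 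Note also that your first reduction is essentially circular: $\lambda(H_0)\ge cn$ already implies $\delta(H_0)\ge cn$, so trading minimum degree for edge-connectivity buys nothing toward the routing, which is the entire difficulty.

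The missing idea --- and the heart of the cited proof of \citet{MR3223965}, which this thesis quotes without reproducing --- is that the branch vertices must be \emph{found}, not chosen freely: one must locate $n$ vertices sitting inside a single highly interlinked region of the graph, pairwise joined by many short (length at most two or three) and largely private connections, so that each demand pair can be routed through its own intermediate vertices and the per-cut load stays linear. Most of the work in that paper, and the source of the constant $200$, goes into extracting such a terminal set from the minimum-degree hypothesis (via an analysis of small edge cuts in a minimal counterexample), after which the routing is comparatively soft. Your sketch has the right target (edge-disjoint, $T$-avoiding paths certify $\le_{\mathrm{si}}$) and correctly identifies routing as the crux, but without a mechanism for selecting a well-placed $T$ the quantitative core of the argument is absent.
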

\citet{dvorak2015complete} claim to have improved the lower bound on the minimum degree to $11n+7$.

\medskip

\Fref{thm:delta_n/3} has the following alternative statement in the language of immersions.
\begin{corollary}
	If $H$ is a loopless multigraph on at most $n$ vertices with $\Delta(H)\le 2\lfloor\frac{n}{6}\rfloor-4$, then $H\le_{\mathrm{i}} K_n$.
\end{corollary}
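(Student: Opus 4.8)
The plan is to derive this corollary directly from \Fref{thm:delta_n/3}, exploiting the fact noted earlier that for a loopless demand graph, realizability in $K_n$ and immersibility into $K_n$ are the same notion. The work is almost entirely bookkeeping, since the substantive combinatorics is already packaged inside \Fref{thm:delta_n/3}.

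First I would promote $H$ to a demand graph on exactly $n$ vertices. Fix an injection of $V(H)$ into the vertex set of $K_n$ and, if $|V(H)|<n$, adjoin the remaining $n-|V(H)|$ vertices as isolated vertices; call the resulting demand graph $D$, so that $V(D)=V(K_n)$. Isolated vertices carry no edges, hence $\Delta(D)=\Delta(H)\le 2\lfloor\frac{n}{6}\rfloor-4$, and the hypothesis of \Fref{thm:delta_n/3} is met.

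Next I would invoke \Fref{thm:delta_n/3} to obtain a realization of $D$ in $K_n$: a family of pairwise edge-disjoint paths $\{P_e\}_{e\in E(D)}$, where $P_e$ joins the two endpoints of the demand edge $e$. Defining $\phi$ to send each vertex of $H$ to its chosen image in $V(K_n)$ and each edge $e$ to the path $P_e$, I would check the clauses of the immersion definition: the vertex images are distinct because the embedding is injective; the loop clause is vacuous because $H$ is loopless; each edge is sent to a path connecting the images of its endpoints; and the required edge-disjointness of the $\phi(e_1)$, $\phi(e_2)$ is exactly the edge-disjointness guaranteed by a realization. Hence $H\le_{\mathrm{i}} K_n$.

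The only point requiring any care --- and the reason the statement is phrased with the weak order $\le_{\mathrm{i}}$ rather than $\le_{\mathrm{si}}$ --- is that the realizing paths may pass through branch vertices $\phi(w)$ with $w$ not incident to the edge in question; this is permitted for weak immersion but would violate strong immersion, so no strengthening to $\le_{\mathrm{si}}$ should be attempted here. Beyond verifying this correspondence cleanly, there is no genuine obstacle: all the difficulty lives in \Fref{thm:delta_n/3} itself.
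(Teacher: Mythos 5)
Your proof is correct and matches the paper's (implicit) argument exactly: the paper derives this corollary from \Fref{thm:delta_n/3} together with its earlier observation that for $G=K_n$ and loopless $H$ the notions of realizability and immersibility coincide, which is precisely the padding-plus-translation bookkeeping you carry out. Your remark on why only the weak order $\le_{\mathrm{i}}$ is claimed (paths may pass through images of non-incident vertices) is a sound and welcome clarification.
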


\medskip

Combining our result with that of \citet{dvorak2015complete}, we get a sufficient condition (with fairly strong asymptotic consequences) on when a loopless multigraph immerses in a simple graph.

\begin{theorem}
	Let $H$ be a loopless multigraph, and let $G$ be a simple graph. If \[\max\left\{33\Delta(H)+172,11|V(H)|+7\right\}\le \delta(G),\]
	then $H\le_{\mathrm{i}} G$.
\end{theorem}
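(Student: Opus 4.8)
The plan is to combine the corollary to \Fref{thm:delta_n/3}, which immerses a loopless multigraph into a complete graph, with the strong-immersion result of \citet{dvorak2015complete}, which extracts a large complete graph from any graph of sufficiently high minimum degree. Since $\le_{\mathrm{i}}$ is a transitive relation (it is a partial order on finite graphs), it suffices to produce a single integer $n$ that makes both black boxes applicable simultaneously, i.e.\ for which $H\le_{\mathrm{i}} K_n$ and $K_n\le_{\mathrm{i}} G$; transitivity then yields $H\le_{\mathrm{i}}G$.

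First I would set
\[ n=\max\bigl\{3\Delta(H)+15,\ |V(H)|\bigr\}. \]
The constant $15$ is reverse-engineered from the arithmetic below. With this $n$ the hypotheses of the corollary hold: trivially $n\ge |V(H)|$, so $H$ has at most $n$ vertices, and one must check the degree bound $\Delta(H)\le 2\lfloor n/6\rfloor -4$. It is enough to verify this at $n_0=3\Delta(H)+15$, since $\lfloor n/6\rfloor$ is non-decreasing in $n$ and hence the inequality, once established at $n_0$, survives the passage to the possibly larger value $\max\{n_0,|V(H)|\}$. Writing $\Delta(H)=2m$ gives $\lfloor n_0/6\rfloor=m+2$ and $2\lfloor n_0/6\rfloor-4=2m=\Delta(H)$; writing $\Delta(H)=2m+1$ gives $n_0=6m+18$, $\lfloor n_0/6\rfloor=m+3$, and $2\lfloor n_0/6\rfloor-4=2m+2\ge\Delta(H)$. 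Thus the corollary to \Fref{thm:delta_n/3} applies and yields $H\le_{\mathrm{i}} K_n$.

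Next I would feed the same $n$ into the improved bound of \citet{dvorak2015complete}: whenever $\delta(G)\ge 11n+7$ one has $K_n\le_{\mathrm{si}} G$, and a fortiori $K_n\le_{\mathrm{i}} G$. It then remains only to observe that the hypothesis of the theorem is exactly this condition for our chosen $n$:
\[ 11n+7=\max\bigl\{11(3\Delta(H)+15)+7,\ 11|V(H)|+7\bigr\}=\max\bigl\{33\Delta(H)+172,\ 11|V(H)|+7\bigr\}. \]
Combining $H\le_{\mathrm{i}} K_n$ with $K_n\le_{\mathrm{i}} G$ and the transitivity of $\le_{\mathrm{i}}$ completes the argument.

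There is essentially no hard part here: the entire content is the choice of $n$, calibrated so that the two additive constants ($+172$ and $+7$) line up precisely with the single expression $11n+7$ coming from the strong-immersion theorem. The only point warranting genuine care is the elementary floor computation and its parity case analysis for $\lfloor n_0/6\rfloor$, together with the monotonicity remark that lets me reduce the verification of the degree bound to the single value $n_0$.
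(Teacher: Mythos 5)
Your proposal is correct and coincides with the proof the paper intends: the theorem is exactly the composition, via transitivity of $\le_{\mathrm{i}}$, of the corollary to \Fref{thm:delta_n/3} (giving $H\le_{\mathrm{i}}K_n$) with the Dvořák--Yepremyan bound $\delta(G)\ge 11n+7\Rightarrow K_n\le_{\mathrm{si}}G$, and your choice $n=\max\{3\Delta(H)+15,\,|V(H)|\}$ recovers the stated constants since $11(3\Delta(H)+15)+7=33\Delta(H)+172$. Your parity check that $\Delta(H)\le 2\lfloor n_0/6\rfloor-4$ at $n_0=3\Delta(H)+15$, together with the monotonicity of $\lfloor n/6\rfloor$, is exactly the calibration needed and is accurate.
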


\section{Algorithms and complexity}
\begin{table}
	\centering
	\bgroup%
	\def\arraystretch{1.5}
	\begin{tabular}{ r  l  }
		\toprule
		\multicolumn{2}{c}{Maximum Edge-Disjoint Paths problem (\textsc{MaxEDP})} \\
		\midrule
		\textit{Input} & Two loopless multigraphs, $D$ and $G$, on the same vertex set \\
		\cmidrule(lr){2-2}
		\textit{Feasible solution} & A subgraph $D^* \subseteq D$ and its realization in $G$\\
		\cmidrule(lr){2-2}
		\textit{Objective} & Maximize $e(D^*)$ \\
		\cmidrule(lr){2-2}
		\textit{Decision version} & Given $(D,G,k)$, decide whether $\max e(D^*)\ge k$. \\
		\bottomrule
	\end{tabular}
	\egroup%

	\bigskip

	\caption{Definition of the Maximum Edge-Disjoint Paths problem (\textsc{MaxEDP})}\label{table:maxedpdef}
\end{table}

The maximum edge-disjoint paths problem (see~\Fref{table:maxedpdef}) is among the early problems shown to be \textsc{NP}-complete by Richard Karp~\cite{Karp}, although he referred to it as the ``disjoint paths problem''.
For a fixed number of paths the problem is solvable in polynomial time (see~\cite{MR2595703}). However, if the number of required paths is part of the input, then the (decision version of the) problem is \textsc{NP}-complete even for series-parallel~\cite{MR1869356} and complete graphs~\cite{MR2202473}. This has been one of the reasons that forced us to consider an extremal approach to the terminal-pairability problem.

\medskip

Surprisingly, and inadvertently, \Fref{thm:delta_n/3} gives the to date tightest approximation to the maximum edge-disjoint paths problem in complete graphs (see~\Fref{thm:3approxEDP}). Although these results are by-products of our study, we believe that this efficiency is not a coincidence, even though our approach has been an extremal one from the beginning.

\medskip

We store graphs concurrently as edge lists and adjacency lists; an edge contains pointers to its copies in both lists.
For a note on models of computation and graph representations, the reader is advised to consult~\Fref{appendix:modelsofcomputation}.

\subsection{Algorithmic versions of Theorem~\ref{thm:delta_n/3}~and~\ref{thm:gridmain}}

We will use the following results to find 2-factors.
\begin{theorem}[{\citet[Thm.~2]{MR1805711}}]\label{thm:bipmatching}
	Given a regular bipartite multigraph on $m$ edges, there is a deterministic algorithm that finds a complete matching in $O(m)$.
\end{theorem}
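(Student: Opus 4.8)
The plan is to prove the statement by a divide-and-conquer reduction on the common degree $d$ of the regular bipartite multigraph $G$, halving $d$ at each level by \emph{Euler partitioning} and recursing on a single half. Write $n$ for the common size of the two colour classes, so that $m=dn$, and note that a complete matching exists at every stage by K\"onig's theorem (equivalently, Hall's condition is immediate for a $d$-regular bipartite graph); the content of the statement is therefore the running time. The key observation driving the $O(m)$ bound is that, although the recursion has depth $O(\log d)$, each level discards half of the edges and recurses only on the retained half, so the edge counts form a geometric series $m+\tfrac{m}{2}+\tfrac{m}{4}+\cdots=O(m)$. Thus it suffices to exhibit a splitting subroutine that runs in time linear in the current number of edges and produces a smaller bipartite multigraph that still contains a complete matching.

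The splitting subroutine is the Euler partition. I would decompose $E(G)$ into edge-disjoint trails and circuits by an Euler traversal of each component, in time linear in its edge count, and then two-colour the edges according to the parity of their position along each trail or circuit. This always yields spanning subgraphs $G_1,G_2$ with $|d_{G_1}(v)-d_{G_2}(v)|\le 1$ at every vertex. The cleanest special case is even $d$: then every vertex has even degree, each component is Eulerian, the partition uses circuits only, and each colour class is exactly $\tfrac{d}{2}$-regular, so either half may be kept (they have equal edge count $m/2$) and we recurse. The base case $d=1$ needs no work, since a $1$-regular bipartite multigraph \emph{is} a complete matching.

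The main obstacle is guaranteeing that a complete matching survives the split whenever odd-degree vertices are present (in particular for odd $d$, where all $2n$ vertices have odd degree and the partition contains $n$ open trails). Near-regularity alone does \emph{not} force Hall's condition: from degrees in $\{\lfloor d/2\rfloor,\lceil d/2\rceil\}$ one only recovers $|N(S)|\ge\tfrac{h}{h+1}|S|$, which is too weak. The delicate point is therefore to orient each trail and to \emph{choose the retained half} so that the surplus edges contributed by trail endpoints are distributed favourably — concretely, so that in the kept graph one part has its minimum degree at least as large as the other part's maximum degree, which yields $(h+1)|S|\le e(S,N(S))\le (h+1)|N(S)|$ and hence $|N(S)|\ge|S|$ for every subset $S$ of the appropriate part. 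Certifying that such a half can be identified \emph{and} that the complete matching genuinely survives, all within linear time and without an extra logarithmic factor, is the crux; it is precisely here that the careful trail bookkeeping of the underlying algorithm must be invoked.

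Finally I would assemble the pieces. The invariant maintained down the recursion is ``the current bipartite multigraph is regular or near-regular and contains a complete matching,'' preserved by the Euler-partition split together with the endpoint analysis above. Each split is linear in its input size, using linear-time Euler-circuit and Euler-trail construction supported by adjacency lists and edge lists maintained in tandem, so summing the geometric series over the $O(\log d)$ levels gives total time $O(m)$. Correctness at termination follows from the base case $d=1$, completing the argument.
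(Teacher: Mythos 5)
The first thing to note is that the paper contains no proof of this statement at all: it is imported verbatim as Theorem~2 of \citet{MR1805711} and used as a black box in the proof of \Fref{thm:K_nalgo}, so your proposal can only be measured against the cited algorithm of Cole, Ost, and Schirra. Your skeleton --- Euler partition, recurse on one half, geometric series $m+\tfrac{m}{2}+\tfrac{m}{4}+\cdots$, base case $d=1$ --- is indeed the classical Gabow/Cole--Hopcroft skeleton, and it is sound for even $d$, where the partition consists of circuits only and both halves are exactly $\tfrac d2$-regular.

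The genuine gap is exactly where you locate it, but the repair you sketch cannot work. You propose orienting trails so that in the kept half one part's minimum degree is at least the other part's maximum degree; with degrees in $\{\tfrac{d-1}{2},\tfrac{d+1}{2}\}$ and equal part sizes, a degree count shows this forces the kept half to be exactly $\tfrac{d+1}{2}$-regular, and no single Euler split can deliver that. In a bipartite graph, a trail of odd length has its endpoints in \emph{opposite} parts and the alternating $2$-colouring gives both endpoints their surplus edge in the \emph{same} colour class, while a trail of even length has both endpoints in the same part but gives them surpluses in \emph{opposite} classes; flipping the colouring of a trail swaps both surpluses simultaneously, so the surplus can never be concentrated on one side within one class once any open trail exists. (And you are right that near-regularity alone is insufficient: with degrees in $\{1,2\}$, the graph with edges $u_1v_1$, $u_2v_1$, $u_3v_2$, $u_3v_3$ has $|N(\{u_1,u_2\})|=1$, so Hall fails.) The cited algorithm sidesteps odd degrees entirely rather than taming them: it pads the graph so the degree becomes a power of two --- give each real edge multiplicity $q=\lfloor 2^k/d\rfloor$ and add $r=2^k-qd$ copies of an arbitrary perfect matching between the two colour classes, fake edges permitted, with $2^k$ large enough that $2^k>rn$ (e.g.\ $2^k\ge dn$) --- so every split is a circuit split between two regular halves; always keeping the half with fewer fake edges halves the fake count each round, so all fakes die before the degree reaches $1$, and the final $1$-regular graph is a genuine matching in the original graph. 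Even then, $O(m)$ rather than $O(m\log d)$ is not automatic: the multigraph must be held in a compact multiplicity representation in which a split traverses only odd-multiplicity edges, and shaving the remaining logarithmic factor is precisely the technical content of \citet{MR1805711}. This padding idea and that accounting are the missing substance behind what your third paragraph defers to as ``careful trail bookkeeping.''
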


\begin{theorem}[{\citet[Thm.~1]{MR1805711}}]\label{thm:bipedgecolor}
	Given a regular bipartite multigraph on $m$ edges with maximum degree $\Delta$, there is a deterministic $O(m\log \Delta)$ time algorithm that finds a proper edge coloring using $\Delta$ colors.
\end{theorem}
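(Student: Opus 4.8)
The plan is to produce the coloring as a decomposition of $E(G)$ into $\Delta$ perfect matchings, built by a divide-and-conquer recursion on the degree. Since $G$ is regular and bipartite, König's edge-coloring theorem guarantees that such a decomposition exists; the entire task is to find it quickly. Two linear-time primitives drive the recursion. The first is the \emph{Euler partition}: if $G$ is $2k$-regular, then traversing an Euler circuit of each connected component and $2$-coloring its edges alternately splits $E(G)$ into two sets $E_1,E_2$ in $O(m)$ time. Because every component of a $2k$-regular bipartite multigraph has an even number of edges, the alternation closes up consistently, and each vertex — visited $k$ times by the circuit, each visit pairing an in-edge with an out-edge of opposite colors — contributes exactly $k$ edges to each side. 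Hence $(V,E_1)$ and $(V,E_2)$ are both $k$-regular bipartite multigraphs. The second primitive is \Fref{thm:bipmatching}, which extracts a perfect matching of a regular bipartite multigraph on $e$ edges in $O(e)$ time.

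The recursion itself is then immediate. If $\Delta=0$ there is nothing to do, and if $\Delta=1$ the graph is already a single matching, which I color with one color. If $\Delta$ is even, I Euler-partition $G$ into two $\tfrac{\Delta}{2}$-regular graphs and recurse on each, using two disjoint palettes of $\tfrac{\Delta}{2}$ colors. If $\Delta$ is odd, I peel off one perfect matching with \Fref{thm:bipmatching}, assign it a fresh color, and recurse on the remaining $(\Delta-1)$-regular (hence even-degree) graph. A trivial induction shows that each branch preserves regularity and bipartiteness and that the number of colors consumed equals the degree: $\tfrac{\Delta}{2}+\tfrac{\Delta}{2}$ in the even case and $1+(\Delta-1)$ in the odd case. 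So the output is a proper edge coloring with exactly $\Delta$ colors.

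For the running time I would organize the recursion into $O(\log\Delta)$ \emph{super-levels} according to the current degree, where passing from degree $d$ down to $\lfloor d/2\rfloor$ costs at most one matching extraction (only when $d$ is odd) followed by one Euler partition. The key accounting observation is that the subproblems occurring at a fixed super-level are edge-disjoint, and each operation only deletes edges or splits them evenly, so the total number of edges summed over all subproblems at any super-level is at most $m$. Consequently every primitive applied at a super-level runs in time linear in that super-level's edge count, the work per super-level telescopes to $O(m)$, and over the $O(\log\Delta)$ super-levels this yields the claimed $O(m\log\Delta)$ bound.

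The part that needs the most care — and the reason the single-logarithm bound is not automatic — is the odd-degree handling: a careless implementation that re-extracts matchings on nearly the full graph at many levels, or that recomputes the Euler circuits from scratch with super-linear overhead, degrades the bound. The crux is therefore to certify that (i) \Fref{thm:bipmatching} really delivers each peeled matching in time proportional to the \emph{current} subgraph size, and (ii) the per-super-level edge budget of $m$ holds even though odd and even steps interleave unevenly across the branches of the recursion. Once these two points are pinned down, the telescoping estimate closes and the theorem follows.
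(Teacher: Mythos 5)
Your argument is correct, but note that the thesis contains no proof of this statement at all: it is imported as a black box from \citet{MR1805711} (Cole, Ost, and Schirra), exactly as \Fref{thm:bipmatching} is. What you have reconstructed is the classical Gabow-style divide-and-conquer that underlies the cited result: Euler-partition when the degree is even (your parity argument is sound --- an Euler circuit in a bipartite multigraph is a closed walk, hence of even length, so the alternating $2$-coloring closes up, and each of the $k$ visits to a vertex pairs two consecutive, oppositely colored edges, giving $k$-regularity on both sides), and peel one perfect matching when the degree is odd, which keeps the remainder regular since a perfect matching covers every vertex exactly once. Your accounting also holds: Euler splits partition the edge set and peeling only deletes edges, so the subproblems at any fixed super-level carry at most $m$ edges in total, each primitive is linear in the current subgraph by \Fref{thm:bipmatching}, and there are $O(\log\Delta)$ super-levels, giving $O(m\log\Delta)$. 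The one honest caveat is that your proof is conditional on the $O(m)$ matching primitive, which is the genuinely hard part of \citet{MR1805711}; historically the Euler-split recursion was well known, and the contribution of the cited paper is precisely the linear-time matching algorithm that makes your telescoping bound close. So as a derivation of Theorem~\ref{thm:bipedgecolor} from Theorem~\ref{thm:bipmatching}, your proposal is complete and matches the intended argument; it is not, and need not be, a proof of the cited paper's result from scratch.
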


Using randomization, there are even more efficient algorithms to find perfect matchings (see \citet{MR3504633}), however, using them would not improve the order of magnitude of the running time of the following theorem.

\begin{theorem}\label{thm:K_nalgo}
	Given a loopless multigraph $D$ on $n$ vertices with $\Delta(D)\le 2\lfloor\frac{n}{6}\rfloor-4$, there is
	a deterministic $O({\Delta(D)}^2 n)$ time algorithm which finds a realization of $D$ in $K_n$.
\end{theorem}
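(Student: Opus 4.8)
The plan is to turn the inductive proof of \Fref{thm:delta_n/3} into a recursion that peels off six vertices per step, reducing an instance on $K_n$ to one on $K_{n-6}$, and then to realize the returned paths by reversing the liftings. The demand graph is kept in the dual representation advocated earlier in this chapter (concurrent edge and adjacency lists with cross-pointers), so that a single lift, a single edge deletion, and a degree query each cost $O(1)$. One recursion step executes, in order: (i) pad the current demand graph to an even-regular multigraph; (ii) locate an independent triple $X_1=\{x_1,x_2,x_3\}$ with $|E(D[X_1])|=0$; (iii) extract two edge-disjoint $2$-factors $A_1,A_2$; (iv) build $F_2$, $B_1$, $B_2$ exactly as in the proof of \Fref{thm:delta_n/3}; (v) apply \Fref{lemma:main} twice---first lifting $F=A_1$ to $X_1$, then lifting $F=F_2$ to $B_1$---to produce the reduced instance $H_2$ on $n-6$ vertices; and (vi) recurse, then splice the returned paths back through the two (constructive) lifting steps at $O(1)$ cost per recorded lift.

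The one real deviation from the proof of \Fref{thm:delta_n/3} is in step (i), and it is the crux of the complexity bound. The proof pads $D$ up to full regularity $2\lfloor n/6\rfloor-4=\Theta(n)$, which would force $\Theta(n^2)$ demand edges at every level and $\Theta(n)$ levels, giving $\Theta(n^3)$ work irrespective of the true input degree. Instead I would pad only to the smallest even degree $q'\ge\Delta(D)$; a short parity check shows this preserves the inductive hypothesis. If $\Delta(D)$ is even, take $q'=\Delta(D)$, and \Fref{lemma:main} yields $\Delta(H_2)\le q'-2=2\lfloor(n-6)/6\rfloor-4$; if $\Delta(D)$ is odd then $\Delta(D)\le 2\lfloor n/6\rfloor-5$, so $q'=\Delta(D)+1\le 2\lfloor n/6\rfloor-4$ and again $\Delta(H_2)\le q'-2\le 2\lfloor(n-6)/6\rfloor-4$. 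Consequently the padded maximum degree drops by $2$ at every level, the recursion reaches the trivial regime $\Delta\le 2$ (a $2$-matching, realized directly) or the base case $n<24$ after only $L=O(\Delta(D))$ levels, and at level $i$ the padded graph has $q'_i\le\Delta(D)$ and $m_i=O(q'_i\,n_i)=O(\Delta(D)\,n)$ edges.

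For the per-level cost, padding to $q'$-regularity adds $O(q'n)=O(\Delta(D)n)$ parallel edges greedily between deficient vertices ($O(1)$ each, the even deficit being guaranteed by $q'$ even and $\sum d(v)$ even). Steps (ii), (iv), the balanced lifting colouring of the $2$-matching $F$ from \Fref{claim:liftcolor}, the liftings themselves, and the resolutions of multiplicities in $Y_i-N(x_i)$ each touch $O(n+\Delta(D))=O(n)$ vertices or edges. The dominant subroutine is step (iii): I would realize \Fref{thm:petersen} in linear time by taking an Eulerian orientation of each (even-degree) component, forming the associated $(q'/2)$-regular bipartite multigraph whose perfect matchings are exactly the $2$-factors, and invoking \Fref{thm:bipmatching} twice (first extracting $A_1$, then applying it to the residual regular bipartite graph to extract $A_2$); this costs $O(m_i)=O(\Delta(D)n)$ and, crucially, avoids the $\log\Delta$ factor that a full edge colouring via \Fref{thm:bipedgecolor} would incur. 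Summing $O(\Delta(D)n)$ over the $O(\Delta(D))$ levels gives the claimed $O(\Delta(D)^2 n)$.

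The main obstacle I anticipate is precisely the tension resolved in step (i): one must confirm that minimal padding never violates the degree window required by \Fref{lemma:main} (the parity argument above) while \emph{simultaneously} guaranteeing both $O(\Delta(D))$ levels and $O(\Delta(D)n)$ work per level---full-regularity padding gives the wrong bound whenever $\Delta(D)=o(n)$. A secondary technical point is ensuring that every primitive (multiplicity resolution, locating $X_1$ and $B_1,B_2$, and the Eulerian-orientation construction feeding \Fref{thm:bipmatching}) runs without rescanning the whole graph, which the cross-linked edge/adjacency representation makes routine.
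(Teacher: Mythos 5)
Your proposal is correct and follows essentially the same route as the paper's proof: the paper likewise pads only to $2\lceil\Delta(D)/2\rceil$-regularity (your ``minimal even degree'' $q'$), extracts the two $2$-factors via an Eulerian orientation and \Fref{thm:bipmatching} on the associated regular bipartite multigraph, runs \Fref{lemma:main} twice per level with $O(1)$-cost lifts recorded via edge labels, and charges $O(\Delta(D)n)$ to each of the $O(\Delta(D))$ levels. Your parity analysis of the degree window makes explicit a point the paper leaves implicit, but it introduces no new ideas beyond the paper's argument.
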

\mynote{can we prove a $O(m\Delta(D))$ running time?}
\begin{proof}
	\textbf{Preprocessing.} We label each edge of $D$ with a unique label. Whenever we lift a labeled edge, the two new edges inherit their ancestors label. Via these labels, we can recover the edge-disjoint paths in the solution.

	\medskip

	Let the vertex set of the demand graph be $V(D)=\{v_1,\ldots,v_n\}$. By lifting existing edges or joining non-maximal degree vertices (via an edge with a yet unused label), we can make the input graph $2\lceil\frac{\Delta(D)}{2}\rceil$-regular in $O(\Delta(D)n)$ time.

	\medskip

	\textbf{Iterative step (see \Fref{thm:delta_n/3}).}
	If $D$ is 2-regular after preprocessing, a realization can be found in $O(n)$ time.

	\medskip

	We can find an Eulerian orientation $\overrightarrow{D}$ in $O(\Delta(D)n)$ time. Construct a bipartite graph $G$ by taking two copies, $\{v'_1,\ldots,v'_n\}$ and $\{v''_1,\ldots,v''_n\}$, of the vertex set of $D$, and join $v'_i$ to $v''_j$ iff there is an edge from $v_i$ to $v_j$ in $\overrightarrow{D}$. Observe, that perfect matchings of $G$ correspond to 2-factors of $D$.

	\medskip

	Via \Fref{thm:bipmatching},
	two perfect matchings of $G$ can be found in $O(\Delta(D)n)$ time, which correspond to two edge-disjoint 2-factors, $A_1$ and $A_2$, of $D$.

	\medskip

	A lifting coloring of a 2-matching can be constructed in $O(n)$ time. A run of \Fref{lemma:main} can be computed in $O(n)$ time, as a lifting operation can be performed in $O(1)$. A set $X_1$ can be chosen in $O(n)$, and $A_2-X_1$ can be extended to a maximal (not maximum!) 2-matching in $O(n)$, as well. Similarly, $B_1$ and $B_2$ can be determined in $O(n)$ time. The edges incident to the 6 vertices removed by two iterations of \Fref{lemma:main} are saved to a separate solution graph.

	\medskip

	By lifting or adding a constant number of edges, we can make the remaining graph $H_2$ regular. Recurse on $H_2$.

	\medskip

	\textbf{Running time.} The degree of the demand graph decreases by two in every iteration, whose running time is dominated by finding 2-factors. Theoretically, it could be profitable to compute a 2-factor decomposition of $D$ during preprocessing (for example, via \Fref{thm:bipedgecolor}) and maintain this structure for subsequent iterations. However, it is unclear how an Eulerian orientation could be maintained in $o(\Delta(D)n)$ time, let alone a 2-factor decomposition.
\end{proof}

In the following we show using an argument which is similar to \citeauthor{MR2416955}'s~\cite[see][Thm.~6]{MR2416955}, that there is a polynomial time $(3+\varepsilon_n)$-approximation scheme for the \textsc{MaxEDP} problem in $K_n$. We will need the following theorem.

\begin{theorem}[{\citet[Thm.~4.1]{gabow1983efficient}}]\label{thm:gabow}
	Let $H$ be a multigraph on the vertex set $\{v_1,v_2,\ldots,v_n\}$ with $m$ edges. Let $u_i\in \mathbb{N}$. The spanning subgraph of $H$ with the maximum number of edges in which $d_H(v_i)\le u_i$ (for all $i=1,\ldots,n$) can be found in $O(mn\log n)$ time and $O(m)$ space.
\end{theorem}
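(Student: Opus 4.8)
The plan is to recognize the quantity in question as the optimum of a \emph{maximum-cardinality degree-constrained subgraph} (DCS) problem and to solve it by reduction to ordinary maximum matching. Writing $d_i=d_H(v_i)$ for the degree of $v_i$ in $H$, I would first normalize the bounds, replacing each $u_i$ by $\min(u_i,d_i)$; this does not change the optimum, since no vertex can be incident to more than $d_i$ selected edges. After this normalization $\sum_i u_i\le\sum_i d_i=2m$, and it is this quantity that controls the size of every object built below.

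Next I would set up the classical Tutte-type gadget that turns a DCS instance into a matching instance. For each vertex $v_i$ create $d_i$ \emph{external} nodes, one for every edge of $H$ incident to $v_i$, together with $d_i-u_i$ \emph{internal} nodes, and join every internal node of $v_i$ to every external node of $v_i$. For each edge $e=\{v_i,v_j\}$ of $H$, join the external node of $v_i$ belonging to $e$ to the external node of $v_j$ belonging to $e$. In a maximum matching of the resulting auxiliary graph $A$, declare an edge $e$ of $H$ \emph{selected} exactly when its two external nodes are matched to one another; the internal nodes of $v_i$ then absorb precisely the $d_i-u_i$ unselected incidences, so the constraint $d(v_i)\le u_i$ is automatically enforced, and maximizing the matching maximizes the number of selected edges. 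This yields the correspondence between maximum matchings of $A$ and maximum degree-constrained subgraphs of $H$, settling correctness.

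The hard part — and the actual content of Gabow's reduction technique — is efficiency rather than correctness. The naive gadget above has $\sum_i d_i(d_i-u_i)=\Theta\!\big(\sum_i d_i^2\big)$ edges, which can be far larger than the target budgets $O(m)$ space and $O(mn\log n)$ time. The remedy is to replace each dense internal/external complete-bipartite block by a \emph{sparse} selector gadget: a linear-size arrangement of auxiliary nodes (of path- or tree-like shape) that still permits any $u_i$ of the $d_i$ external nodes to be simultaneously matched ``across'' while forcing the remaining $d_i-u_i$ to be matched internally. With such selectors, $A$ has $O\!\big(\sum_i d_i\big)=O(m)$ nodes and $O(m)$ edges. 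The main technical obstacle is verifying that this compressed gadget preserves the matching/DCS correspondence \emph{exactly}; this is precisely the step that requires careful, somewhat lengthy casework, and is where I expect the real work to lie.

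Finally I would run an efficient general-graph maximum-matching routine on the sparse auxiliary graph $A$ and unravel the resulting matching back through the gadgets to recover the optimal subgraph of $H$. Since $A$ has only $O(m)$ edges and $O(m)$ vertices, a phased augmenting-path computation together with the standard scaling bookkeeping (which is the source of the $\log n$ factor) gives the stated $O(mn\log n)$ time and $O(m)$ space. Equivalently, one may phrase the entire construction as a bidirected network-flow problem and solve it directly, which is the viewpoint Gabow adopts; I would defer the precise running-time accounting to that reference.
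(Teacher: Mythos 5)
First, a point of orientation: the thesis contains no proof of this statement at all --- it is imported verbatim from \citet[Thm.~4.1]{gabow1983efficient} and used as a black box in the proof of \Fref{thm:3approxEDP}. So your proposal can only be measured against the cited source, and your roadmap does correctly identify Gabow's general method: reduce the degree-constrained subgraph (DCS) problem to matching via per-vertex gadgets (equivalently, view it as a bidirected flow problem) while keeping everything linear in size. Even at the level of the sketch, though, one claim needs repair: it is not true that the gadget enforces $d(v_i)\le u_i$ ``automatically.'' Nothing prevents a matching of the auxiliary graph $A$ from matching more than $u_i$ external nodes of $v_i$ across; what is true is that such a matching strands internal nodes of $v_i$, and an exchange argument (deleting a selected edge at a violating vertex frees an external node for an internal one, so the matching size never drops) shows that the maximum matching of $A$ has size $\sum_i (d_i-u_i)$ plus the maximum size of a \emph{feasible} DCS. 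Only after this argument can you read an optimal subgraph off an optimal matching.

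The genuine gap is the complexity accounting, which is precisely the content of the theorem being quoted. Even granting a linear-size selector gadget whose matchings correspond exactly to DCS solutions --- you assert its existence and correctly flag the verification as ``where the real work lies,'' but do not supply it --- running a general maximum-matching algorithm on a graph with $O(m)$ vertices and $O(m)$ edges costs on the order of $m^{3/2}$ (e.g., Micali--Vazirani), and since $H$ is a \emph{multigraph}, $m$ is not bounded by any function of $n$; hence $m^{3/2}$ is not $O(mn\log n)$, and no generic ``scaling bookkeeping'' converts a $\sqrt{m}$ factor into $n\log n$. Gabow's bound does not come from a black-box matching call on the gadget: it comes from organizing the augmenting computation on the reduced instance so that the number of phases is governed by parameters of the original graph (this is where both the factor $n$ and the $\log n$ arise), which is exactly the accounting you deferred to the reference. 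As written, your argument establishes correctness of the reduction and polynomial solvability, but not the stated $O(mn\log n)$ time and $O(m)$ space --- which, in fairness, puts it in the same position as the thesis itself, since both ultimately rest on the citation.
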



We are ready to prove our approximation result of \textsc{MaxEDP} in complete graphs.

\begin{theorem}\label{thm:3approxEDP}
	Let $D$ be a demand graph on the vertex set of $K_n$. There is an $O(mn\log n + n^3)$ time algorithm which gives a $(3+O(1/n))$-approximation solution to the \textsc{MaxEDP} problem in $K_n$.
\end{theorem}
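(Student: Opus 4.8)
The plan is to run the obvious ``cap the degree and realize'' algorithm and to analyze its quality through Petersen's $2$-factor theorem. Set $a=2\lfloor\frac n6\rfloor-4$. First I would invoke \Fref{thm:gabow} with the uniform degree bound $u_i=a$ at every vertex to compute, in $O(mn\log n)$ time, a submultigraph $D_a\subseteq D$ of maximum edge number subject to $\Delta(D_a)\le a$; write $m_a=e(D_a)$. Since $\Delta(D_a)\le 2\lfloor\frac n6\rfloor-4$, \Fref{thm:delta_n/3} guarantees that $D_a$ is realizable in $K_n$, and \Fref{thm:K_nalgo} produces an explicit realization in $O(a^2n)=O(n^3)$ time. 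The algorithm outputs $D_a$ together with this realization; it is feasible for \textsc{MaxEDP} by construction, so the whole task reduces to bounding $\mathrm{OPT}/m_a$, where $\mathrm{OPT}$ is the optimum edge number.

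For the upper bound on $\mathrm{OPT}$, let $D^*\subseteq D$ be an optimal feasible solution, so $\mathrm{OPT}=e(D^*)$. Because $D^*$ is realized by edge-disjoint paths and at most $d_{K_n}(v)=n-1$ pairwise edge-disjoint paths can emanate from any vertex $v$, we have $\Delta(D^*)\le n-1$. Let $b=2\lceil\frac{n-1}2\rceil\le n$ be the least even integer that is at least $n-1$, and (for the analysis only) augment $D^*$ on the same vertex set to a $b$-regular multigraph $\hat D$ by adding auxiliary edges. By \Fref{thm:petersen}, $E(\hat D)$ decomposes into $k=b/2$ edge-disjoint $2$-factors $F_1,\dots,F_k$, each of which, being a spanning $2$-regular subgraph on $n$ vertices, has exactly $n$ edges.

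Now I would use a clean averaging step. The genuine edges of $D^*$ split among the $2$-factors, $E(D^*)=\biguplus_{i=1}^k\bigl(E(D^*)\cap F_i\bigr)$; order them so that $|E(D^*)\cap F_1|\ge\cdots\ge|E(D^*)\cap F_k|$ and put $s=a/2$. The $s$ heaviest classes carry at least an $s/k$ fraction of the total, so $S:=E(D^*)\cap\bigcup_{i=1}^s F_i$ satisfies $|S|\ge\frac sk\,e(D^*)=\frac ab\,\mathrm{OPT}$. As $S\subseteq E(D^*)\subseteq E(D)$ and, viewed as a subgraph, $\Delta(S)\le\Delta(F_1\cup\cdots\cup F_s)\le 2s=a$, the set $S$ is an $a$-bounded submultigraph of $D$. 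Maximality of $D_a$ then gives $m_a\ge|S|\ge\frac ab\,\mathrm{OPT}$, whence
\[
\mathrm{OPT}\le\frac ba\,m_a\le\frac na\,m_a=\Bigl(3+O\bigl(\tfrac1n\bigr)\Bigr)m_a,
\]
using $a=2\lfloor\frac n6\rfloor-4\ge\frac n3-6$. Since the algorithm returns a realizable subgraph with exactly $m_a$ edges, this is the claimed $(3+O(1/n))$-approximation, and the running time is $O(mn\log n+n^3)$.

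The delicate point I expect to be the real obstacle is securing the constant $3$ rather than $4$ in the ratio. A crude decomposition of the degree-$(n-1)$ graph $D^*$ into $\lceil (n-1)/a\rceil$ pieces of degree $\le a$ loses a ceiling and only yields a $4$-approximation, because $\frac{n-1}{a}$ is strictly above $3$. The $2$-factor averaging circumvents this: every $2$-factor of the regularized $\hat D$ has \emph{exactly} $n$ edges, so choosing the $s=a/2$ heaviest $2$-factors recovers precisely an $\frac sk=\frac ab$ fraction of $e(D^*)$ with no rounding loss. The one thing to be careful about is to retain only the genuine edges $S\subseteq E(D^*)$ and discard the auxiliary edges introduced when regularizing $D^*$, so that $S$ is certifiably a subgraph of the input $D$ and hence a valid competitor for $D_a$.
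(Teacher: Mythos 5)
Your proposal is correct and follows essentially the same route as the paper's proof: cap the degrees at $2\lfloor n/6\rfloor-4$ via Gabow's algorithm, realize the capped subgraph with the algorithmic version of the $\Delta\le 2\lfloor n/6\rfloor-4$ realization theorem in $O(n^3)$ time, and bound the loss by Petersen-decomposing the optimum into $\lceil\frac{n-1}{2}\rceil$ two-factors and keeping the $\lfloor n/6\rfloor-2$ heaviest. The only cosmetic difference is that you make explicit the regularization of $D^*$ before applying Petersen (and the subsequent discarding of auxiliary edges), a step the paper leaves implicit by speaking of a partition into $2$-matchings.
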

\begin{proof}
	Let $D_\text{opt}$ be a subgraph of $D$ which is realizable in $K_n$, such that it has the maximum possible number of edges. Obviously, $\Delta(D_\text{opt})\le n-1$. Run the algorithm of \Fref{thm:gabow} on $D$ with $u_i=2\lfloor\frac{n}{6}\rfloor-4$ (for $i=1,\ldots,n$) to obtain $D^*$.

	\medskip

	According to \Fref{thm:petersen}, we can partition $E(D_\text{opt})$ into $\lceil\frac{n-1}{2}\rceil$ edge-disjoint 2-matchings. 	Order the 2-matchings in decreasing order of their cardinality, and choose the first $\lfloor\frac{n}{6}\rfloor-2$. Let the spanning subgraph of $D_\text{opt}$ formed by the union of the chosen 2-matchings be $D'$.
	Since $\Delta(D')\le 2\lfloor\frac{n}{6}\rfloor-4$, we have $e(D')\le e(D^*)$. Furthermore, because we take the largest 2-matchings,
	\[ e(D^*)\ge e(D')\ge \frac{\lfloor\frac{n}{6}\rfloor-2}{\lfloor\frac{n}{2}\rfloor}\cdot e(D^\text{opt})\ge  \left(\frac13-\frac{5+\frac13}{n-1}\right)\cdot e(D^\text{opt}). \]

	\medskip

	By \Fref{thm:K_nalgo}, we can compute a realization of $D^*$ in $K_n$ in $O(n^3)$ time.
\end{proof}

\Fref{thm:K_nalgo} and \Fref{thm:gridmain} have the following consequence.

\begin{corollary}
	Let $D$ be a loopless multigraph as a demand graph in $K_t^d$ (which has  $n=t^d$ vertices). If $\Delta(D)\le 2\lfloor\frac{t}{12}\rfloor-2$, then there is
	a deterministic $O(d\cdot n\cdot {\Delta(D)}^2)$ time algorithm which finds a realization of $D$ in $K_t^d$.
\end{corollary}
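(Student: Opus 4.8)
The plan is to turn the inductive proof of \Fref{thm:gridmain} into a recursive algorithm and track its cost through the recursion on the dimension $d$. Write $q=\Delta(D)$ and $n=t^{d}$, and let $T(d)$ denote the time to realize an $n$-vertex demand graph of maximum degree $\le q$ in $K_t^{d}$. At the top level the proof partitions the cross-column demand edges $E'(D)$ among the $t$ layers $L_1,\dots,L_t$ and thereby splits the instance into $t$ layer subproblems (each a demand graph in $K_t^{d-1}$ of maximum degree $\le q$) together with $t^{d-1}$ column subproblems (each a demand graph in $K_t$ of maximum degree $\le 2q$). Solving the layer subproblems by recursion and the column subproblems by \Fref{thm:K_nalgo} gives the recurrence
\[ T(d)\le t\cdot T(d-1)+C(d),\qquad T(1)=O\bigl(q^{2}t\bigr), \]
where $C(d)$ collects the non-recursive work performed at dimension $d$.

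First I would bound $C(d)$. Preprocessing (labelling the edges so that the final edge-disjoint paths can be recovered, and lifting or adding edges to make the instance regular, exactly as in \Fref{thm:K_nalgo}) costs $O(q\,t^{d})$. Building the auxiliary graph $H$ on $V(K_t^{d-1})$ by deleting last coordinates, and making it $(tq)$-regular, is $O(q\,t^{d})$ since $H$ has $O(q\,t^{d})$ edges. To produce the pieces $H_1,\dots,H_t$ of maximum degree $\le q$ I would reuse the device from the proof of \Fref{thm:K_nalgo}: fix an Eulerian orientation of $H$ in $O(q\,t^{d})$ time, form the associated regular bipartite graph whose perfect matchings correspond to $2$-factors of $H$, and split it into $tq/2$ edge-disjoint $2$-factors by a single run of the bipartite edge-colouring algorithm of \Fref{thm:bipedgecolor} in $O\bigl(q\,t^{d}\log(tq)\bigr)$ time (\Fref{thm:bipmatching} handles the perfect-matching primitive). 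Grouping these into $q/2$-tuples yields $H_1,\dots,H_t$, and then, reading off for each $e\in E'(D)$ the index $k$ with projection in $H_k$ and replacing $e$ by its routing triple through layer $L_k$, takes a further $O(q\,t^{d})$.

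Next I would charge the column subproblems. There are $t^{d-1}$ of them, each a demand graph in $K_t$ of maximum degree $\le 2q$, which satisfies the hypothesis of \Fref{thm:delta_n/3} (since $2q\le 2\lfloor t/6\rfloor-4$), so \Fref{thm:K_nalgo} realizes each in $O\bigl((2q)^{2}t\bigr)=O(q^{2}t)$ time, for a total of $O(q^{2}t^{d})$. This dominates, so $C(d)=O(q^{2}t^{d})$ (the colouring term $O(q\,t^{d}\log(tq))$ is absorbed because $q=O(t)$). Unrolling,
\[ T(d)\le t^{\,d-1}T(1)+\sum_{j=0}^{d-2} t^{\,j}\,C(d-j)=O\bigl(d\,q^{2}t^{d}\bigr)=O\bigl(d\,n\,\Delta(D)^{2}\bigr), \]
since each of the $d$ terms equals $O(q^{2}t^{d})$; the edge labels propagated through every lifting let us output the actual edge-disjoint paths, exactly as in \Fref{thm:K_nalgo}.

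The main obstacle is keeping the per-level overhead within the $O(q^{2}t^{d})$ budget set by the column solves. The delicate point is the $2$-factor decomposition of $H$: extracting one $2$-factor at a time would need $\Theta(q)$ passes and break the budget, which is why I route it through a single bipartite edge-colouring and absorb the resulting $\log(tq)$ factor using $q=O(t)$. A secondary concern, mirroring the closing remark in the proof of \Fref{thm:K_nalgo}, is that it is unclear how to maintain an Eulerian orientation or a $2$-factor decomposition across recursion levels in $o(q\,t^{d})$ time, so each level recomputes these from scratch; fortunately this recomputation already fits the budget and does not disturb the final $O\bigl(d\,n\,\Delta(D)^{2}\bigr)$ bound.
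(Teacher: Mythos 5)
Your overall skeleton --- recursing through the layer/column decomposition of \Fref{thm:gridmain}, solving the $t^{d-1}$ column subproblems of degree $\le 2q$ by \Fref{thm:K_nalgo}, and unrolling $T(d)\le t\,T(d-1)+C(d)$ --- is exactly the paper's. The genuine gap is in your handling of the $2$-factor decompositions. You recompute a decomposition of the $(tq)$-regular auxiliary graph $H$ at \emph{every} recursion level via \Fref{thm:bipedgecolor}, paying $O(q\,t^{d}\log(tq))$ per level, and you claim this is absorbed into the $O(q^{2}t^{d})$ column budget ``because $q=O(t)$''. That implication runs the wrong way: $q=O(t)$ only gives $\log(tq)=O(\log t)$, whereas absorption requires $\log(tq)=O(q)$, i.e.\ $q=\Omega(\log t)$, which the hypothesis $\Delta(D)\le 2\lfloor\frac{t}{12}\rfloor-2$ does not provide. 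For a bounded-degree demand graph ($q=O(1)$) on $K_t^d$ with $t$ large, your algorithm spends $\Theta(d\,n\log t)$ on edge colourings alone, while the claimed bound is $O(d\,n)$. As written, your analysis yields only $O\bigl(d\,n\,\Delta(D)^{2}+d\,n\log(t\,\Delta(D))\bigr)$, which is weaker than the statement whenever $\Delta(D)=o(\sqrt{\log t}\,)$.

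The paper avoids this with two moves you are missing. First, the edge colouring is applied \emph{once}, to $D$ itself after it is made regular of even degree $\Delta(D)$ --- not to the much denser $(t\Delta)$-regular $H$ --- costing $O(t^{d}\,\Delta(D)\log\Delta(D))$; the log factor is $\log\Delta(D)$ rather than $\log(t\Delta(D))$, and since $\log\Delta\le\Delta$ it is absorbed unconditionally into $O(d\,n\,\Delta(D)^{2})$. Second, this decomposition is \emph{inherited} down the recursion: each layer subproblem's demand graph is by construction assembled from $q/2$ of the already-computed $2$-factors, so no level below the root ever re-runs the colouring, and the per-level non-column work is the purely linear term $O(\Delta(D)\,t^{d})$ in the paper's recurrence
\[ T_t^{\Delta(D)}(d)=O(\Delta(D)\cdot t^d)+t^{d-1}\cdot T_t^{2\Delta(D)}(1)+t\cdot T_t^{\Delta(D)}(d-1). \]
Everything else in your write-up --- the regularization, the Eulerian-orientation/bipartite reduction, the check that $2q\le 2\lfloor\frac{t}{6}\rfloor-4$ so the columns satisfy \Fref{thm:delta_n/3}, and the unrolling of the recurrence --- is sound; only the per-level recomputation and its absorption argument need to be replaced by the one-time-decomposition-plus-inheritance device.
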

\begin{proof}
	Without loss generality, we may make $D$ regular with an even degree. We can use \Fref{thm:bipedgecolor} to find a 2-factor decomposition of $D$ in $O(t^d\cdot\Delta(D)\cdot \log \Delta(D))$.
	Furthermore, this decomposition can be inherited by the layers $L_i$, so it does not have to be recomputed when the algorithms invokes recursion on the layers.
	Let $T_t^{\Delta(D)}(d)$ be a bound on the running time of the rest of the algorithm. We have
	\begin{align*}
		T_t^{\Delta(D)}(1)&=O({\Delta(D)}^2 t) \\
		T_t^{\Delta(D)}(d)&=O(\Delta(D)\cdot t^d)+t^{d-1}\cdot T_t^{2\Delta(D)}(1)+t\cdot T_t^{\Delta(D)}(d-1)
	\end{align*}
	Solving the recursion, we get
	\[ T_t^{\Delta(D)}(d)= O\left( d\cdot t^d \cdot {\Delta(D)}^2 \right)=O(d\cdot n\cdot {\Delta(D)}^2), \]
	which clearly dominates the time spent preprocessing the graph.
\end{proof}

\subsection{Comparing our results to the state of the art}
According to a result of \citet{MR2080076}, the solution produced by a shortest-path-first or a bounded-length greedy algorithm is not better than a 3-approximation result for every input graph. \Fref{thm:3approxEDP} almost achieves this bound by producing a $(3+\varepsilon_n)$-approximation for any instance.

\medskip

The champion before \Fref{thm:3approxEDP} was the 3.75-approximation algorithm of \citet{MR2416955}. On demand graphs where $\overline{d}(D)=o(\Delta(D))$, our algorithm is up to a factor of $n$ slower than that of \citeauthor{MR2416955}.
However, if $\Delta(D)\le (1+o(1))\cdot n$, we may replace \Fref{thm:gabow} with a 2-matching decomposition to gain a $\log n$ on the running time. The solution produced by this modified algorithm is a $\left(\frac{3\Delta(D)}{n}+o(1)\right)$-approximation of the optimum.

%

\section{Further base graphs}

I am hopeful that our new results demonstrated in this part will increase interest in the terminal-pairability problem. Continuing this line of research, complete bipartite base graphs have been studied by \citeauthor{CEGyM1} in two settings: the case when the demand graph is bipartite with respect to the classes of the base graph has been studied in~\cite{CEGyM1}, and when no structural restrictions (other than maximum degree) are made on the demand graph is explored in~\cite{CEGyM2}.

\medskip

A risky and undertaking research direction is to study the terminal-pairability problem very generally, and to try to discover sufficient conditions for a demand graph to be realizable in a base graph, without a priori specifying too much information about any of them.

\medskip

The degree conditions in our theorems are special cut conditions. Can we prove stronger theorems if we consider more than only single vertex cuts? In other words, does interpreting the problem as an integer multi-commodity flow task help?
\begin{problem}
	Suppose $D$ and $G$ are loopless multigraphs on the vertex set $V=\{1,\ldots, n\}$. What is the minimum value of $f(n)$ so that
	\[ f(n)\le \min_{\emptyset\neq A\subset V} \frac{e_G(A,V-X)}{e_D(A,V-X)}\implies\text{$D$ is realizable in $G$?} \]
\end{problem}

\medskip

By choosing $G$ as a regular expander graph, one can prove that $f(n)\ge \Omega(\log n)$. If $G$ is required to be simple, a similar construction implies $f(n)\ge \Omega(\log n/\log\log n)$.


\begin{spacing}{0.9}





	\printbibliography[heading=bibintoc, title={References}]

\end{spacing}


\begin{appendices} 
\fancyhead[LO]{\nouppercase Appendix \thechapter}

\chapter{A note on models of computation and representations of graphs}\label{appendix:modelsofcomputation}

When multigraphs are part of the input, one should exercise great care when determining running times. The problem has its roots in the details of the graph representation used when describing the input and output graphs.

\medskip

Let us choose the very common edge list representation. Consider the following problem: given an input multigraph with $m\le \binom{n}{2}$ edges on the vertex set $\{1,\ldots, n\}$, output any simple graph with $m$ edges on the vertex set of the input graph. Clearly, the output needs $\Omega(m\log n)$ space. However, if the edges of the multigraph are incident to only a subset of the vertex set, say, $\{1,2,3,4\}$, then the input may need only $O(m+\log n)$ bits of space. Thus, an algorithm solving this problem cannot have a running time which is at most a linear function of the size of the input.

\medskip

The running time of breadth first and depth first search algorithms is usually regarded as $O(m+n)$, but a factor of $\log n$ is clearly missing. However, these algorithms are correctly regarded as having a linear running time as a function of the size of the input. It is easy to see that describing a simple graph with $\Omega(n)$ edges requires $\Omega(n\log n)$ space, and thus the extra $\log n$ factor is usually not a problem.

\medskip

One could argue that the previous problem is only a question of whether one chooses the unit cost or the logarithmic cost RAM machine model. However, we may exacerbate the problem (of describing the running time of an algorithm in terms of the size of its input) further by describing a multiedge by the vertices it joins and its multiplicity. Then the size of the input may be as low as $O(\log m+\log n)$.

\medskip

However, the logarithmic cost RAM machine has surprising limitations. \citet{MR963171} showed, that storing $n$ arbitrary bits takes $\Omega(n\log^* n)$ time in this model.

\medskip

For these reasons,
our choice for the model of computation is the unit cost RAM machine for both \Fref{chap:artcomplexity} and \Fref{chap:termcomplexity}. Alternatively, one may multiply the running time of our algorithms in \Fref{chap:termcomplexity} by a factor of $O(\log n)$ (where $n$ is the number of vertices of the output graph) to get the logarithmic cost running times of our algorithms.
However, the $O(n)$ algorithms outlined in \Fref{chap:artcomplexity} remain linear even in the logarithmic cost model.

\chapter{Algorithms on orthogonal polygons}\label{appendix:algoPartI}

\makeatletter
\providecommand*{\toclevel@algorithm}{1}%
\makeatother

\begin{algorithm}
  \caption{Finding the horizontal cuts of an orthogonal polygon, part I}\label{algo:horizontalcuts1}
  \begin{algorithmic}[1] 
      \Require $P$ orthogonal polygon
      \Ensure $\textit{pair}[v]$ will contain the vertical side of $P$ which the other end of the horizontal cut starting at the reflex vertex $v$ intersects
      \Statex
      \Function{Find horizontal cuts}{$P$}
        \State $n\gets$ number of vertices of $P$
        \State $T\gets \Call{Triangulate}{P}$\Comment{List of triangles}
        \State Initialize $L[v]=\emptyset$, doubly linked lists for each vertex $v$ of $P$
        \ForAll{$t\in T$}
          \For{$i=1,2,3$}
            \State $L[t.v_i]\gets$ append a link to $t$
            \State $t.l_i\gets$ a link to $t$'s location in $L[t.v_i]$
          \EndFor
        \EndFor
        \Statex
        \ForAll{side or diagonal $s$ of $P$}
          \State Initialize $S[s]$ to empty doubly linked list
          \If{$s$ is a vertical side of $P$}
            \State $S[s]\gets$ insert $s$
          \EndIf
        \EndFor
        \algstore{Rcutbreak}
    \end{algorithmic}
\end{algorithm}

\begin{algorithm}
  \caption{Finding the horizontal cuts of an orthogonal polygon, part II}\label{algo:horizontalcuts2}
  \begin{algorithmic}
    \algrestore{Rcutbreak}
    \State $Q\gets $ a queue of vertices with $|L[v]|=1$
    \While{$Q\neq\emptyset$}\label{step:beginloop}
      \State $u\gets $ pop the first element of $Q$
      \State $t\gets L[u]$
      \State $v,w\gets$ other two vertices of $t$ so that $w.y\le v.y$
      \State $s_u,s_v,s_w\gets $ sides of $t$ opposite the vertex in the index
      \State Delete $t$ from $T$ and the 3 links to it in $L$, update $Q$
    \If{$w.y\le u.y\le v.y$}
      \State $S[s_u]\gets S[s_w]$ append $S[s_v]$\label{step:project1}
    \ElsIf{$s_u$ is horizontal}
      \State Process $S[s_v]$ and $S[s_w]$ in $y$-order and update $\textit{pair}[]$ for the vertices in them\label{step:process1}
    \ElsIf{$v.y<u.y$}
      \State $S[s_u]\gets$ segment of $S[s_v]$ visible along the $x$-axis from $s_u$\label{step:project2}
      \State $R\gets$ segment of $S[s_v]$ \textbf{not} visible along the $x$-axis from $s_u$
      \State Process $R$ and $S[s_w]$ in $y$-order and update $\textit{pair}[]$ for the vertices in them\label{step:process2}
    \ElsIf{$w.y>u.y$}
      \State $S[s_u]\gets$ segment of $S[s_w]$ visible along the $x$-axis from $s_u$\label{step:project3}
      \State $R\gets$ segment of $S[s_w]$ \textbf{not} visible along the $x$-axis from $s_u$
      \State Process $R$ and $S[s_v]$ in $y$-order and update $\textit{pair}[]$ for the vertices in them\label{step:process3}
    \EndIf
    \Statex
    \If{$s_u$ is a vertical side of $P$}\label{step:sideofP}
      \ForAll{reflex vertex $z$ in $S[s_u]$}
        \If{the horizontal cut of $z$ starts towards $s_u$ }
          \State $\textit{pair}[z]\gets s_u$
        \EndIf
      \EndFor
      \If{$v$ is a reflex vertex}
          \State $\textit{pair}[v]\gets $ the first or last element of $S[s_u]$ that contains a point with the same $y$-coordinate as $v$
      \EndIf
      \If{$w$ is a reflex vertex}
          \State $\textit{pair}[w]\gets $ the first or last element of $S[s_u]$ that contains a point with the same $y$-coordinate as $w$
      \EndIf
    \EndIf
  \EndWhile
  \State\Return the list $\textit{pair}[]$
\EndFunction
  \end{algorithmic}
\end{algorithm}

\begin{algorithm}
  \caption{Constructing the horizontal $R$-tree of an orthogonal polygon}\label{algo:Rtree}
  \begin{algorithmic}[1]
    \Function{horizontal $R$-tree}{$P$}
      \State $\textit{opposite\_side}{[]}\gets\Call{Find horizontal cuts}{P}$\Comment{see~\Fref{algo:horizontalcuts1}}
      \ForAll{reflex vertex $v$ of $P$}
      	\State $w\gets$ new vertex at the height of $v.y$ in $\textit{opposite\_side}[v]$
      	\State if not present, insert $w$ into $P$
      	\State $\textit{cut\_pair}[v]\gets w$
      	\State $\textit{cut\_pair}[w]\gets v$
      \EndFor
      \Statex
      \State $G\gets (\{P\},\emptyset)$\Comment{$G$ is a graph with a single node $P$}
      \State Initialize $\textit{cut\_to\_edge}[]$ empty
      \State $\textit{CurrentNode}\gets P$
      \ForAll{vertex $v$ of $P$ in clockwise order}
		 \If{$v$ is a reflex or a new vertex}
		 	\If{$\textit{cut\_to\_edge}[v]=\emptyset$}
		 		\State Split $\textit{CurrentNode}$ along $\{v,\textit{cut\_pair}[v]\}$ in $G$\label{step:splitpolygon}
		 		\State $e\gets \{N_1,N_2\}$ the two new pieces whose union is $\textit{CurrentNode}$
		 		\State $G\gets G+e$
		 		\State $\textit{cut\_to\_edge}[v]\gets e$
        \State $\textit{cut\_to\_edge}[\textit{cut\_pair}[v]]\gets e$
		 		\State $\textit{CurrentNode}\gets $ the piece containing $\textit{clockwise\_next}_P(v)$
		 	\Else
				\State $\textit{CurrentNode}\gets $ pair of $\textit{CurrentNode}$ in $\textit{cut\_to\_edge}[v]$
		 	\EndIf
		 \EndIf
      \EndFor
      \State\Return $G$
    \EndFunction
  \end{algorithmic}
\end{algorithm}

\begin{algorithm}
  \caption{Finding a minimum cardinality horizontal $r$-guard system}\label{algo:mhsc}
  \begin{algorithmic}[1]
    \Function{Solve \textsc{MHSC}}{P}
      \State $T_H\gets \Call{horizontal $R$-tree}{P}$\Comment{\Fref{algo:Rtree}}
      \State $T_V\gets \Call{vertical $R$-tree}{P}$
      \ForAll{vertical slice $t\in T_V$}
        \State $a,b\gets$ vertical sides of $P$ bounding $t$
        \State $h_a\gets$ horizontal slice in $V(T_H)$ containing $a$
        \State $h_b\gets$ horizontal slice in $V(T_H)$ containing $b$
        \State $\textit{N}[t]\gets \{h_a, h_b\}$
      \EndFor
      \Statex
      \State $r\gets $ arbitrary node of $T_H$ to serve as root
      \State $\textit{dist}[]\gets \Call{Breadth First Search}{T_H, r}$\Comment{distance from $r$}
      \Statex
      \State $\textit{LCA}[]\gets \Call{Lowest Common Ancestors}{T_H, r, \textit{N}[]}$\Comment{Algorithm of~\cite{MR801823}}
      \State \Comment{$\textit{LCA}[t]$ contains the lowest common ancestors of the elements of $\textit{N}[t]$}
      \Statex
      \State $S\gets\emptyset$
      \State Set every node of $T_H$ unmarked
      \ForAll{$t\in V(T_V)$ so that $\textit{dist}[\textit{LCA}[t]]$ is not increasing}\Comment{reverse BFS-order}
        \If{both elements of $\textit{N}[t]$ are unmarked}
          \State $S\gets S\cup \{\textit{LCA}[t]\}$
          \State \Call{Set Mark}{\textit{LCA}[t]}
        \EndIf
      \EndFor
      \State\Return $S$
    \EndFunction

    \Statex

    \Function{Set Mark}{$u$}
      \ForAll{neighbor $w$ of $u$ in $T_H$}
        \If{$\textit{dist}[w]>\textit{dist}[u]$ and $w$ is unmarked}
          \State \Call{Set Mark}{w}
        \EndIf
      \EndFor
    \EndFunction
  \end{algorithmic}
\end{algorithm}

\end{appendices}


\end{document}